\newtheorem{theorem}{Theorem}
\numberwithin{theorem}{section}
\theoremstyle{plain}
\newtheorem*{acknowledgement}{Acknowledgement}
\newtheorem{theoremann}{Theorem}
\newtheorem{corollary}[theorem]{Corollary}
\newtheorem*{corollaryann}{Corollary}
\newtheorem{definition}[theorem]{Definition}
\newtheorem{lemma}[theorem]{Lemma}
\newtheorem{proposition}[theorem]{Proposition}
\theoremstyle{remark}
\newtheorem{remark}[theorem]{Remark}
\newtheorem{example}[theorem]{Example}
\numberwithin{equation}{section}
\newcommand*{\Scale}
[2][4]{\scalebox{#1}{$#2$}}
\definecolor{lg}{rgb}{0.8,0.8,0.8}
\begin{document}
\title[$K$-theoretic Poitou--Tate duality]{$K$-theoretic Poitou--Tate duality in\linebreak higher dimensions:\ proper case}
\author{Oliver Braunling}
\address{FH Dortmund, University of Applied Sciences and Arts, Dortmund,
Germany\linebreak\linebreak Instituto de Ciencias Matem\'{a}ticas (ICMAT),
Madrid, Spain}
\thanks{The author acknowledges support for this article as part of Grant
RYC2019-027693-I funded by MCIN/AEI/10.13039/501100011033.}

\begin{abstract}
We generalize Blumberg--Mandell's $K$-theoretic Poitou--Tate duality to
arithmetic schemes of arbitrary dimension, smooth and proper over
$S$-integers. As in our earlier papers on the subject, we discuss how to model
the compactly supported side via the $K$-theory of locally compact modules.

\end{abstract}
\maketitle

We generalize Blumberg--Mandell's $K$-theoretic Poitou--Tate duality
\cite{MR4121155} and the ideas of Clausen's Thesis \cite{clausenthesis} to
smooth $\mathcal{O}_{S}$-proper arithmetic schemes of higher dimension.

\textit{Dimension One:} We first motivate how this story began. Let $p^{r}$ be
a power of an odd prime, $r\geq1$, and $\mathcal{O}_{S}$ the localization of
the ring of integers of a number field $F$ at finitely many primes and such
that $\frac{1}{p}\in\mathcal{O}_{S}$. Let $\mathcal{F}$ be an \'{e}tale
$\mathbf{Z}/p^{r}$-module sheaf on $\operatorname*{Spec}\mathcal{O}_{S}$ and
$\mathcal{F}^{\ast}:=\operatorname*{RHom}(\mathcal{F},\mathbf{Z}/p^{r}(1))$
its dual. Then Poitou--Tate duality, or in this format better known as
Artin--Verdier duality, tells us that%
\begin{equation}
H^{s}(\mathcal{O}_{S},\mathcal{F})\otimes H_{c}^{3-s}(\mathcal{O}%
_{S},\mathcal{F}^{\ast})\longrightarrow H_{c}^{3}(\mathcal{O}_{S}%
,\mathbf{Z}/p^{r}(1))\overset{\operatorname*{tr}}{\longrightarrow}%
\mathbf{Z}/p^{r} \label{lint1}%
\end{equation}
is a perfect pairing of finite abelian groups. This pairing encompasses most
of classical class field theory, so it is of considerable interest. It might
also be seen as an arithmetic sibling of Poincar\'{e} duality and bewitchingly
lets $\operatorname*{Spec}\mathcal{O}_{F}$ appear \'{e}tale $3$-dimensional.
The original question was: Does this pairing have a $p$-complete $K(1)$-local
$K$-theory counterpart?

This is a plausible hope as Thomason's descent spectral sequence has the shape%
\[
E_{2}^{s,t}:=H^{s}\left(  \mathcal{O}_{S},\mathbf{Z}/p^{r}\left(  -\tfrac
{t}{2}\right)  \right)  \qquad\Longrightarrow\qquad\pi_{-s-t}L_{K(1)}%
K/p^{r}(\mathcal{O}_{S})\text{,}%
\]
so for $\mathcal{F}$ being Tate twists of $\mathbf{Z}/p^{r}$, the cohomology
groups on the left side of Poitou--Tate duality show up on the $E_{2}$-page
for $K(1)$-local $K$-theory.

Imagine there was a corresponding spectral sequence with compactly supported
\'{e}tale cohomology $H_{c}^{\bullet}$ instead, say converging to something we
may preliminarily denote by \textquotedblleft$L_{K(1)}K_{c}/p^{r}%
(\mathcal{O}_{S})$\textquotedblright\ and think of $K_{c}$ as compactly
supported (Nisnevich or Zariski) $K$-theory.\ Think of its $K(1)$-localization
as transforming it into compactly supported \'{e}tale $K$-theory. Then one
could hope, as an ad hoc idea, that the Poitou--Tate pairing of Eq.
\ref{lint1} lifts to a pairing of $K$-theory spectra%
\begin{equation}
L_{K(1)}K(\mathcal{O}_{S})\otimes L_{K(1)}K_{c}(\mathcal{O}_{S}%
)\longrightarrow L_{K(1)}K_{c}(\mathcal{O}_{S})\overset{\operatorname*{tr}%
}{\longrightarrow}L_{K(1)}\mathbb{S}/p^{r}\text{,} \label{lint2}%
\end{equation}
where \textquotedblleft$\otimes$\textquotedblright\ now means the $K(1)$-local
smash product and $\mathbb{S}$ the sphere spectrum. The vision then would be
that once we use the descent spectral sequence and its $K_{c}$-counterpart,
this spectral pairing of Eq. \ref{lint2} would induce a pairing of spectral
sequences%
\[
E_{\bullet}^{\bullet,\bullet}\otimes E_{c,\bullet}^{\bullet,\bullet
}\longrightarrow E_{c,\bullet}^{\bullet+\bullet,\bullet+\bullet}%
\]
which, on the $E_{2}$-page, induces the original Poitou--Tate pairing of Eq.
\ref{lint1}. And, even stronger, one could hope for the spectral pairing to be
a perfect pairing in the $K(1)$-local setting.\ Then this would be a
spectrum-lift of classical Poitou--Tate duality, with the original pairing
showing up on the $E_{2}$-page. An early version of this idea can be found in
Clausen's Thesis \cite{clausenthesis}, which was probably the original
starting point for this type of question.

Blumberg--Mandell \cite{MR4121155} developed a concrete implementation of
these ideas: They consider $\mathcal{O}_{S}=\mathcal{O}_{F}\left[  \tfrac
{1}{p}\right]  $, and define $L_{K(1)}K_{c}$ via an \'{e}tale hypersheaf
variant of $j_{!}$ for $j\colon\operatorname*{Spec}\mathcal{O}_{S}%
\hookrightarrow\operatorname*{Spec}\mathcal{O}_{F}$ and as a fiber to a
compactification. The spectral trace map cannot be defined before
$K(1)$-localization, but once we are $K(1)$-local, it takes the form%
\begin{equation}
L_{K(1)}K_{c}(\mathcal{O}_{S})\overset{\operatorname*{tr}}{\longrightarrow
}I_{\mathbf{Z}_{p}}\mathbb{S}_{\widehat{p}}\text{,} \label{lhuv1}%
\end{equation}
where $I_{\mathbf{Z}_{p}}$ is the $p$-adic Anderson dual of the $K(1)$-local
sphere. In Clausen's variant, he proceeds a bit differently at this point, but
the philosophy is the same.

Finally, Blumberg--Mandell prove all this by bootstrapping from the original
Poitou--Tate duality statement\footnote{a bit similar to Marc Levine's idea to
prove Thomason's results by first proving them for motivic cohomology and then
bootstrapping from the AH\ spectral sequence, as in \cite{MR1740880}.}. Their
approach does not give an independent proof of Poitou--Tate duality, but
constructs a spectral lift to $K(1)$-local $K$-theory. Cho \cite{cho2025} then
extended this picture also to $p=2$.

In our previous article on the subject \cite{klca1}, we have developed a
different variant of these ideas. Let $p$ be odd again. As $\mathcal{O}_{S}$
is a regular ring, one may define its $K$-theory as the $K$-theory of the
category of \textit{finitely generated} $\mathcal{O}_{S}$-modules
$\mathsf{Mod}_{\mathcal{O}_{S}}$. There is a corresponding category
${\mathsf{LCA}}_{\mathcal{O}_{S}}$ whose objects are \textit{locally compact}
$\mathcal{O}_{S}$-modules, i.e., each object is a possibly infinitely
generated topological $\mathcal{O}_{S}$-module such that $0$ admits a compact
neighbourhood. Let ${\mathsf{LC}}_{\mathcal{O}_{S}}$ be its quotient modulo
real vector space modules. In \cite{klca1} we show that one can take
$K_{c}:=\Sigma^{-1}K({\mathsf{LC}}_{\mathcal{O}_{S}})$, i.e., these locally
compact modules naturally model compactly supported $K$-theory.

There is a natural pairing%
\begin{equation}
\mathsf{Mod}_{\mathcal{O}_{S}}\times{\mathsf{LC}}_{\mathcal{O}_{S}%
}\longrightarrow{\mathsf{LC}}_{\mathcal{O}_{S}} \label{lint3}%
\end{equation}
by taking a tensor product of topological modules, and considering those in
$\mathsf{Mod}_{\mathcal{O}_{S}}$ as equipped with the discrete topology. This
induces a $K(\mathcal{O}_{S})$-module structure on $K({\mathsf{LC}%
}_{\mathcal{O}_{S}})$, and this pairing%
\[
K(\mathcal{O}_{S})\otimes K({\mathsf{LC}}_{\mathcal{O}_{S}})\longrightarrow
K({\mathsf{LC}}_{\mathcal{O}_{S}})
\]
turns out to be exactly what one needs to realize the pairing in Eq.
\ref{lint2}. Next, inspired by an idea that we have learned from Clausen,
there turns out to be a general $!$-pushforward, an exact functor%
\begin{equation}
j_{!}\colon{\mathsf{LC}}_{\mathcal{O}_{S}}\longrightarrow{\mathsf{LC}%
}_{\mathcal{O}_{F}}\text{,} \label{lhuv3}%
\end{equation}
which enters the construction of the evaluation map in Eq. \ref{lhuv1}. The
existence of such a pushforward is cheap once we go $K(1)$-local or \'{e}tale
hypersheafify and may use the \'{e}tale $6$-functor formalism and all its
adjunctions $-$ but here we have a lift of this map to the Zariski/Nisnevich
setting. This is surprising as \textit{no transfer} $K(\mathcal{O}%
_{S})\rightarrow K(\mathcal{O}_{F})$ \textit{exists} (the underlying map is
not proper!). This is part of our general conjectural belief that locally
compact modules should be able to model compactly supported $K$-theory in the
Nisnevich setting. However, this is not known and there are no such results in
the literature yet.

The reader might dislike this sudden introduction of topological modules into
the game, but we would not know how to write down such a $!$-pushforward when
defining the compactly supported side as in Blumberg--Mandell \cite{MR4121155}%
. Moreover, this angle portrays the ordinary and compactly supported side of
the original Poitou--Tate duality, Eq. \ref{lint1}, as an incarnation of a
dualism%
\[
\text{finitely generated }\mathcal{O}_{S}\text{-modules}\quad\leftrightarrow
\quad\text{locally compact }\mathcal{O}_{S}\text{-modules,}%
\]
and can be stated as%
\begin{equation}
I_{\mathbf{Z}_{p}}L_{K(1)}K(\mathcal{O}_{S})\overset{\sim}{\longrightarrow
}L_{K(1)}K({\mathsf{LC}}_{\mathcal{O}_{S}})\text{.} \label{llv3}%
\end{equation}
The \textit{finite generation} vs.\textit{ local compactness} dualism is also
seen to be true for finite extensions of $\mathbf{F}_{q}$ or $\mathbf{Q}_{p}$
in \cite{klca1}:%
\begin{equation}%
\begin{array}
[c]{rclccl}%
I_{\mathbf{Z}_{p}}L_{K(1)}K(L) & \overset{\sim}{\longrightarrow} &
L_{K(1)}K({\mathsf{LC}}_{L}) & \qquad & \qquad & \text{for }L/\mathbf{F}%
_{q}\text{ a finite extension,}\\
I_{\mathbf{Z}_{p}}L_{K(1)}K(L) & \overset{\sim}{\longrightarrow} &
L_{K(1)}K({\mathsf{LC}}_{L}) & \qquad & \qquad & \text{for }L/\mathbf{Q}%
_{p}\text{ a finite extension,}\\
I_{\mathbf{Z}_{p}}L_{K(1)}K(\mathcal{O}_{S}) & \overset{\sim}{\longrightarrow}
& L_{K(1)}K({\mathsf{LC}}_{\mathcal{O}_{S}}) & \qquad & \qquad & \text{for
}\mathcal{O}_{S}\text{ the }S\text{-integers of }F\text{.}%
\end{array}
\label{lhub1}%
\end{equation}
In the first two cases, there are no topological $L$-modules which are also
real vector spaces (in both cases any $x$ must satisfy $p^{m}x\longrightarrow
0$ when $m\longrightarrow+\infty$, but this is false over the reals). Hence,
there is no difference between ${\mathsf{LC}}_{L}$ or ${\mathsf{LCA}}_{L}$.
The $K$-theory spectrum $K({\mathsf{LCA}}_{\mathcal{O}_{S}})$ exists before
$K(1)$-localization, and outputs the automorphic/idelic objects as one would
expect them to be from class field theory in $\pi_{1}$:%
\[%
\begin{array}
[c]{lccl}%
\pi_{1}K({\mathsf{LCA}}_{L})\cong\mathbf{Z} & \qquad & \qquad & \text{for
}L/\mathbf{F}_{q}\text{ a finite extension,}\\
\pi_{1}K({\mathsf{LCA}}_{L})\cong L^{\times} & \qquad & \qquad & \text{for
}L/\mathbf{Q}_{p}\text{ a finite extension,}\\
\pi_{1}K({\mathsf{LCA}}_{\mathcal{O}_{S}})\cong\dfrac{\mathbb{A}^{\times}%
}{F^{\times}\cdot\prod_{w\notin S}\mathcal{O}_{w}^{\times}} & \qquad & \qquad
& \text{for }\mathcal{O}_{S}\text{ the }S\text{-integers of }F\text{,}%
\end{array}
\]
where $\mathbb{A}$ denotes the ad\`{e}les of $F$.

The uniformity of the formulation of dualities in Eq. \ref{lhub1} will
inevitably entice everyone to hope for a more general all-encompassing
statement. Note also that the above fields have different \'{e}tale
cohomological dimensions, yet no reference to this dimension is seen in the
duality statements. This suggests that the general statement ought to be
entirely independent of such dimensions, as long as they are finite.

\textit{Higher Dimensions:} Saito \cite{MR1045856} and Geisser--Schmidt
\cite{MR3867292} have generalized classicial Poitou--Tate duality in \'{e}tale
cohomology to higher-dimensional arithmetic schemes $\pi\colon
\mathcal{X\rightarrow O}_{S}$ smooth (for Saito) or flat regular (for
Geisser--Schmidt) of constant relative dimension $d$ over $\mathcal{O}_{S}$.
In the smooth case it amounts to using the \'{e}tale $6$-functor formalism
combining $j_{!}$ from Poitou--Tate duality with Poincar\'{e} duality for
$\pi_{!}$, and in the flat regular case using the motivic dualizing complex of
Geisser \cite{MR2680487}. They obtain a perfect pairing%
\begin{equation}
H^{s}(\mathcal{X},\mathcal{F})\otimes H_{c}^{2d+3-s}(\mathcal{X}%
,\mathcal{F}^{\ast})\longrightarrow H_{c}^{2d+3}(\mathcal{X},\mathbf{Z}%
/p^{r}(d+1))\overset{\operatorname*{tr}}{\longrightarrow}\mathbf{Z}/p^{r}
\label{lint5}%
\end{equation}
of finite abelian groups for $\mathcal{F}$ an \'{e}tale $\mathbf{Z}/p^{r}%
$-module sheaf on $\mathcal{X}$ and the generalized dual $\mathcal{F}^{\ast
}:=\operatorname*{RHom}(\mathcal{F},\mathbf{Z}/p^{r}(d+1))$. One can just take
$\mathcal{X}:=\mathcal{O}_{S}$ so that $d=0$ and this specializes to classical
Poitou--Tate duality.

Obviously, one must wonder whether there is a $K$-theoretic spectral lift of
this higher-dimensional Poitou--Tate duality in the spirit of
Blumberg--Mandell. Moreover, how should one define ${\mathsf{LC}}%
_{\mathcal{X}}$ so that the underlying pairing can again just taken to be%
\begin{equation}
\mathsf{Mod}_{\mathcal{X}}\times{\mathsf{LC}}_{\mathcal{X}}\longrightarrow
{\mathsf{LC}}_{\mathcal{X}} \label{lint4}%
\end{equation}
as in Eq. \ref{lint3}? Here, $\mathsf{Mod}_{\mathcal{X}}$ should be coherent
$\mathcal{O}_{\mathcal{X}}$-module sheaves and ${\mathsf{LC}}_{\mathcal{X}}$,
presumably, some kind of locally compact $\mathcal{O}_{\mathcal{X}}$-module
sheaves on $\mathcal{X}$. And then there should be a $!$-pushforward%
\[
(j\circ\pi)_{!}\colon{\mathsf{LC}}_{\mathcal{X}}\longrightarrow\mathsf{LC}%
_{\mathcal{O}_{F}}%
\]
as in Eq. \ref{lhuv3}. As before, we would expect this to exist already on the
Nisnevich level, but to agree with the \'{e}tale $!$-pushforward\footnote{by
\emph{\'{e}tale }$!$\emph{-pushforward} we mean the co-unit transformation
induced by $f_{!}f^{!}\longrightarrow1$ for the adjoint pair $(f_{!},f^{!})$
of the \'{e}tale $6$-functor formalism.} once we go $K(1)$-local or \'{e}tale hypersheafify.

In this paper, we present a positive and a negative result regarding these
expectations. The positive result is as follows:

\begin{theoremann}
\label{introThmA}Let $S$ be finite such that $\frac{1}{p}\in\mathcal{O}_{S}$.
Suppose $\pi\colon\mathcal{X\rightarrow O}_{S}$ is a smooth proper arithmetic
scheme. Define the Lurie tensor product%
\[
\underline{{\mathsf{LC}}}_{\mathcal{X}}:=\operatorname*{Perf}\mathcal{X}%
\otimes_{\mathcal{O}_{S}}{\mathsf{LC}}_{\mathcal{O}_{S}}\text{.}%
\]
Then there is a $!$-pushforward exact functor%
\[
\rho_{!}\colon\underline{{\mathsf{LC}}}_{\mathcal{X}}\longrightarrow
\underline{{\mathsf{LC}}}_{\mathbf{Z}[\frac{1}{p}]}%
\]
and a natural pairing as in Eq. \ref{lint4},%
\[
L_{K(1)}K(\operatorname*{Perf}\mathcal{X})\otimes L_{K(1)}K(\underline
{{\mathsf{LC}}}_{\mathcal{X}})\longrightarrow L_{K(1)}K(\underline
{{\mathsf{LC}}}_{\mathcal{X}})\overset{\rho_{!}}{\longrightarrow}%
L_{K(1)}K(\underline{{\mathsf{LC}}}_{\mathbf{Z}[\frac{1}{p}]})\overset
{\operatorname*{tr}}{\longrightarrow}I_{\mathbf{Z}_{p}}\mathbb{S}_{\widehat
{p}}\text{.}%
\]

\begin{enumerate}
\item This is a perfect pairing.

\item There are descent spectral sequences for $K(\mathcal{X})$ and
$K(\underline{{\mathsf{LC}}}_{\mathcal{X}})$, the pairing induces a pairing of
spectral sequences, and on the $E_{2}$-page, we recover the original
higher-dimensional Poitou--Tate pairing of Eq. \ref{lint5}.
\end{enumerate}
\end{theoremann}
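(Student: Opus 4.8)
The plan is to build the displayed pairing by formal manipulations and then to deduce perfectness by bootstrapping from the one-dimensional duality $I_{\mathbf{Z}_p}L_{K(1)}K(\mathcal{O}_S)\xrightarrow{\sim}L_{K(1)}K(\mathsf{LC}_{\mathcal{O}_S})$ of Eq.~\ref{llv3}, exploiting that $\operatorname{Perf}\mathcal{X}$ is a \emph{smooth and proper}, hence dualizable, $\operatorname{Perf}\mathcal{O}_S$-linear category. Part (2) will then follow by transporting the descent spectral sequences through this reduction and comparing with the higher-dimensional Poitou--Tate pairing of Saito and Geisser--Schmidt, Eq.~\ref{lint5}.

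\emph{The pairing.} The Lurie tensor product $\underline{\mathsf{LC}}_{\mathcal{X}}=\operatorname{Perf}\mathcal{X}\otimes_{\operatorname{Perf}\mathcal{O}_S}\mathsf{LC}_{\mathcal{O}_S}$ is canonically a module over the commutative algebra object $\operatorname{Perf}\mathcal{X}$ in $\operatorname{Perf}\mathcal{O}_S$-linear stable $\infty$-categories, so the lax symmetric monoidality of $K$ and of $L_{K(1)}K$ makes $L_{K(1)}K(\underline{\mathsf{LC}}_{\mathcal{X}})$ a module over the $E_\infty$-ring $L_{K(1)}K(\operatorname{Perf}\mathcal{X})$; this supplies the first arrow. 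For the $!$-pushforward I would take the composite
\[
\rho_!\colon \underline{\mathsf{LC}}_{\mathcal{X}}\ \xrightarrow{\ R\pi_*\otimes\operatorname{id}\ }\ \mathsf{LC}_{\mathcal{O}_S}\ \xrightarrow{\ j_!\ }\ \mathsf{LC}_{\mathcal{O}_F}\ \xrightarrow{\ \operatorname{tr}\ }\ \mathsf{LC}_{\mathbf{Z}[\frac1p]}=\underline{\mathsf{LC}}_{\mathbf{Z}[\frac1p]},
\]
where $R\pi_*$ preserves perfect complexes and is $\operatorname{Perf}\mathcal{O}_S$-linear by the projection formula, since $\pi$ is proper and of finite Tor-dimension, so that $\pi_!=\pi_*$ in the $6$-functor sense; $j_!$ is the exact functor of Eq.~\ref{lhuv3} from \cite{klca1}; and $\operatorname{tr}$ is the honest transfer along the finite map $\operatorname{Spec}\mathcal{O}_F\to\operatorname{Spec}\mathbf{Z}[\frac1p]$. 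Finally, the base case of Eq.~\ref{lhub1} for $F=\mathbf{Q}$ gives $L_{K(1)}K(\underline{\mathsf{LC}}_{\mathbf{Z}[\frac1p]})\simeq I_{\mathbf{Z}_p}L_{K(1)}K(\mathbf{Z}[\frac1p])$, and Anderson-dualizing the unit $\mathbb{S}\to K(\mathbf{Z}[\frac1p])$ produces the trace $I_{\mathbf{Z}_p}L_{K(1)}K(\mathbf{Z}[\frac1p])\to I_{\mathbf{Z}_p}\mathbb{S}_{\widehat p}$.

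\emph{Perfectness, part (1).} I would invoke the projection formula for localizing invariants along a smooth proper morphism: since $\operatorname{Perf}\mathcal{X}$ is a dualizable $\operatorname{Perf}\mathcal{O}_S$-linear category, the canonical comparison map
\[
L_{K(1)}K(\operatorname{Perf}\mathcal{X})\otimes_{L_{K(1)}K(\mathcal{O}_S)}L_{K(1)}K(\mathcal{M})\ \longrightarrow\ L_{K(1)}K\bigl(\operatorname{Perf}\mathcal{X}\otimes_{\operatorname{Perf}\mathcal{O}_S}\mathcal{M}\bigr)
\]
is an equivalence for every $\operatorname{Perf}\mathcal{O}_S$-linear $\mathcal{M}$. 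Writing $R:=L_{K(1)}K(\mathcal{O}_S)$ and taking $\mathcal{M}=\mathsf{LC}_{\mathcal{O}_S}$, Eq.~\ref{llv3} identifies $L_{K(1)}K(\underline{\mathsf{LC}}_{\mathcal{X}})$ with $L_{K(1)}K(\operatorname{Perf}\mathcal{X})\otimes_R I_{\mathbf{Z}_p}R$. Because $\operatorname{Perf}\mathcal{X}$ is smooth and proper, $L_{K(1)}K(\operatorname{Perf}\mathcal{X})$ is a perfect $R$-module, so this last object is $I_{\mathbf{Z}_p}\bigl(L_{K(1)}K(\operatorname{Perf}\mathcal{X})^{\vee}\bigr)$, and its $R$-linear dual $(-)^{\vee}$ is identified with $L_{K(1)}K(\operatorname{Perf}\mathcal{X})$ itself by $K(1)$-local étale Poincaré duality for $\pi$; the Poincaré twist $(d)[2d]$ is trivialized by Bott periodicity in $L_{K(1)}K/p^r$, which is precisely the mechanism accounting for the twist $(d+1)=(1)+(d)$ in Eq.~\ref{lint5}. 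This yields an equivalence $L_{K(1)}K(\underline{\mathsf{LC}}_{\mathcal{X}})\simeq I_{\mathbf{Z}_p}L_{K(1)}K(\operatorname{Perf}\mathcal{X})$, and a diagram chase unwinding the $\operatorname{Perf}\mathcal{X}$-action, $\rho_!$, and $\operatorname{tr}$ shows it is the map adjoint to the theorem's pairing, whence (1).

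\emph{Spectral sequences, part (2), and the obstacle.} On the $K(\mathcal{X})$-side the descent spectral sequence is Thomason's, with $E_2^{s,t}=H^s_{et}(\mathcal{X},\mathbf{Z}/p^r(-t/2))$ after mod $p^r$ and completion. On the $\underline{\mathsf{LC}}_{\mathcal{X}}$-side I would start from the descent spectral sequence of $L_{K(1)}K(\mathsf{LC}_{\mathcal{O}_S})$ built in \cite{klca1} — whose $E_2$-page is $H^{\bullet}_c(\mathcal{O}_S,\mathbf{Z}/p^r(\ast))$, the Artin--Verdier dual of $H^{\bullet}(\mathcal{O}_S,\mathbf{Z}/p^r(1-\ast))$ — and push it through $R\pi_*$; since $\pi$ is proper one has $R\Gamma_c(\mathcal{X},-)=R\Gamma_c(\mathcal{O}_S,R\pi_*(-))$, so the $E_2$-page becomes $H^{\bullet}_c(\mathcal{X},\mathbf{Z}/p^r(\ast))$, equivalently $\operatorname{Hom}(H^{2d+3-\bullet}_{et}(\mathcal{X},\mathbf{Z}/p^r(\ast)),\mathbf{Z}/p^r)$ by Saito/Geisser--Schmidt. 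Lax monoidality of $L_{K(1)}K$ makes the two spectral sequences pair compatibly with the theorem's spectral pairing; on $E_2$ the pairing is the étale cup product followed by $R\pi_*$ and the Artin--Verdier trace, which is exactly Eq.~\ref{lint5}, and the trace constructed above realizes Eq.~\ref{lint5}'s trace on $E_2$ by the analysis in \cite{klca1}. The main obstacle I anticipate is the projection formula together with the attendant twist bookkeeping: proving that $L_{K(1)}K$ carries $\operatorname{Perf}\mathcal{X}\otimes_{\operatorname{Perf}\mathcal{O}_S}(-)$ to $L_{K(1)}K(\operatorname{Perf}\mathcal{X})\otimes_R(-)$ and matching the coherent relative dualizing datum of $\pi$ with the étale Poincaré twist $(d)[2d]$ after $K(1)$-localization — this is where smoothness and properness of $\pi$, and the impossibility of a trace before localizing, are genuinely used. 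A secondary point is checking that the $\rho_!$ defined above agrees, after $K(1)$-localization, with the étale $!$-pushforward, via proper base change and the comparison of the Zariski $j_!$ with the étale $j_!$ already established in \cite{klca1}.
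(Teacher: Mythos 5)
Your construction of the pairing and of $\rho_{!}$ is essentially the paper's (the paper factors $\rho_{!}$ as $\pi_{\ast}$ followed by the forgetful functor down to $\mathcal{O}_{F}[\tfrac{1}{p}]$ and a finite pushforward to $\mathbf{Z}[\tfrac{1}{p}]$; note that your intermediate target $\mathsf{LC}_{\mathcal{O}_{F}}$ should be $\mathsf{LC}_{\mathcal{O}_{F}[\frac{1}{p}]}$, since $\mathcal{O}_{F}$ is not a $\mathbf{Z}[\tfrac{1}{p}]$-algebra). But your argument for perfectness rests on a genuine gap: the claimed ``projection formula'' that $L_{K(1)}K(\operatorname{Perf}\mathcal{X}\otimes_{\operatorname{Perf}\mathcal{O}_{S}}\mathcal{M})\simeq L_{K(1)}K(\operatorname{Perf}\mathcal{X})\otimes_{L_{K(1)}K(\mathcal{O}_{S})}L_{K(1)}K(\mathcal{M})$ for dualizable $\operatorname{Perf}\mathcal{X}$. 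Smoothness and properness make $\mathcal{U}^{\operatorname{loc}}(\operatorname{Perf}\mathcal{X})$ dualizable in $\mathbf{Mot}(\mathcal{O}_{S})$, which identifies $K(\operatorname{Perf}\mathcal{X}\otimes\mathcal{M})$ with a mapping spectrum out of the dual motive --- not with a tensor product of $K$-theory spectra over $K(\mathcal{O}_{S})$. Such a K\"unneth formula fails already for $\mathcal{M}=\operatorname{Perf}Y$ with $X,Y$ smooth proper varieties, and $K(1)$-localization does not rescue it. The subsequent claims that $L_{K(1)}K(\mathcal{X})$ is a perfect $L_{K(1)}K(\mathcal{O}_{S})$-module and is self-dual ``by $K(1)$-local \'etale Poincar\'e duality'' are likewise unestablished and are essentially what the theorem is trying to prove.

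The paper avoids all of this. It never computes $L_{K(1)}K(\underline{\mathsf{LC}}_{\mathcal{X}})$ as a relative tensor product of spectra; instead it tensors the exact sequence $\operatorname{Perf}\mathcal{O}_{S}\to\bigoplus_{v}\operatorname{Perf}F_{v}\to\mathsf{LC}_{\mathcal{O}_{S}}$ with $\operatorname{Perf}\mathcal{X}$ (exactness of $-\otimes_{R}\mathsf{Z}$ on exact sequences plus $\operatorname{Perf}\mathcal{X}\otimes_{\mathcal{O}_{S}}\operatorname{Perf}F_{v}\simeq\operatorname{Perf}\mathcal{X}_{v}$, a statement about perfect complexes on fiber products, not a K\"unneth formula) to obtain $K(\underline{\mathsf{LC}}_{\mathcal{X}})\simeq\operatorname{cofib}(K(\mathcal{X})\to\bigoplus_{v}K(\mathcal{X}_{v}))$, and then identifies this by \'etale excision with the global sections of $j_{!}\pi_{\ast}\pi^{\ast}\mathcal{K}$ (Prop.~\ref{prop_Equiv1}). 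Perfectness is then bootstrapped exactly as in your part (2): the pairing of descent spectral sequences is the cup product on the $E_{2}$-page, it is perfect there by Saito/Geisser--Schmidt, hence perfect on $E_{\infty}$ and, by induction over the finite filtration, on homotopy groups. So your $E_{2}$-page comparison is the correct engine for part (1) as well --- but it must be fed by the cofiber/excision identification of the compactly supported side, not by the projection formula.
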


We conjecture that after identifying $L_{K(1)}K(\underline{{\mathsf{LC}}%
}_{\mathcal{X}})$ with the global sections of compactly supported
Bott-inverted (\'{e}tale hypersheaf) $K$-theory, Prop. \ref{prop_Equiv1}, the
above functor $\rho_{!}$ between categories induces the same map as the
\'{e}tale $!$-pushforward along $\rho\colon\mathcal{X}\longrightarrow
\operatorname*{Spec}\mathbf{Z}[\frac{1}{p}]$. We do not prove this, but verify
it to the extent needed to establish the above theorem.

Equation \ref{llv3} generalizes as follows:

\begin{corollaryann}
Under the assumptions of Theorem \ref{introThmA}, the pairing induces an
equivalence%
\[
I_{\mathbf{Z}_{p}}L_{K(1)}K(\mathcal{X})\overset{\sim}{\longrightarrow
}L_{K(1)}K(\underline{{\mathsf{LC}}}_{\mathcal{X}})\text{.}%
\]

\end{corollaryann}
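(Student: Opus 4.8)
The plan is to deduce the statement formally from Theorem~\ref{introThmA}, using nothing beyond the closed symmetric monoidal structure of the $K(1)$-local category and the standard behaviour of Anderson duality. First I would splice the three arrows of Theorem~\ref{introThmA} into a single morphism
\[
\Phi\colon L_{K(1)}K(\mathcal{X})\otimes L_{K(1)}K(\underline{{\mathsf{LC}}}_{\mathcal{X}})\longrightarrow I_{\mathbf{Z}_{p}}\mathbb{S}_{\widehat{p}}
\]
in $L_{K(1)}\mathrm{Sp}$, where $K(\mathcal{X})$ abbreviates $K(\operatorname{Perf}\mathcal{X})$ as elsewhere in the paper (and in any case these agree, $\mathcal{X}$ being regular Noetherian). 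Writing $I_{\mathbf{Z}_{p}}(-):=F(-,I_{\mathbf{Z}_{p}}\mathbb{S}_{\widehat{p}})$ for the $K(1)$-local Anderson dualizing functor, the tensor--hom adjunction turns $\Phi$ into two mates
\[
\Phi^{\flat}\colon L_{K(1)}K(\mathcal{X})\longrightarrow I_{\mathbf{Z}_{p}}L_{K(1)}K(\underline{{\mathsf{LC}}}_{\mathcal{X}}),\qquad\Phi^{\sharp}\colon L_{K(1)}K(\underline{{\mathsf{LC}}}_{\mathcal{X}})\longrightarrow I_{\mathbf{Z}_{p}}L_{K(1)}K(\mathcal{X}),
\]
and the assertion that $\Phi$ is a perfect pairing (Theorem~\ref{introThmA}(1)) is exactly the statement that $\Phi^{\flat}$ and $\Phi^{\sharp}$ are equivalences.

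Now $\Phi^{\sharp}$ is an equivalence $L_{K(1)}K(\underline{{\mathsf{LC}}}_{\mathcal{X}})\xrightarrow{\sim}I_{\mathbf{Z}_{p}}L_{K(1)}K(\mathcal{X})$, and its inverse is precisely the asserted map $I_{\mathbf{Z}_{p}}L_{K(1)}K(\mathcal{X})\xrightarrow{\sim}L_{K(1)}K(\underline{{\mathsf{LC}}}_{\mathcal{X}})$, so there is essentially nothing left to prove. For robustness I would also record the argument in case perfectness has been verified only in the guise ``$\Phi^{\flat}$ is an equivalence'': then apply $I_{\mathbf{Z}_{p}}(-)$ to $\Phi^{\flat}$ to get $I_{\mathbf{Z}_{p}}L_{K(1)}K(\mathcal{X})\xrightarrow{\sim}I_{\mathbf{Z}_{p}}I_{\mathbf{Z}_{p}}L_{K(1)}K(\underline{{\mathsf{LC}}}_{\mathcal{X}})$ and identify the target with $L_{K(1)}K(\underline{{\mathsf{LC}}}_{\mathcal{X}})$. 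For this last identification it suffices that $L_{K(1)}K(\underline{{\mathsf{LC}}}_{\mathcal{X}})$ is Anderson reflexive; since $\Phi^{\flat}$ already exhibits $L_{K(1)}K(\mathcal{X})$ as an Anderson dual, the triangle identity for the self-adjunction $I_{\mathbf{Z}_{p}}\dashv I_{\mathbf{Z}_{p}}$ gives the triple-dual collapse $I_{\mathbf{Z}_{p}}^{3}\simeq I_{\mathbf{Z}_{p}}$, which already yields $I_{\mathbf{Z}_{p}}^{2}L_{K(1)}K(\underline{{\mathsf{LC}}}_{\mathcal{X}})\simeq L_{K(1)}K(\underline{{\mathsf{LC}}}_{\mathcal{X}})$. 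Alternatively one notes that the descent spectral sequence of Theorem~\ref{introThmA}(2) has $E_{2}$-page built from the finite groups of Eq.~\ref{lint5}, so $\pi_{\ast}L_{K(1)}K(\underline{{\mathsf{LC}}}_{\mathcal{X}})$ is degreewise finitely generated over $\mathbf{Z}_{p}$ and hence Anderson reflexive.

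I do not expect a genuine obstacle here: all the substance sits in Theorem~\ref{introThmA}, and what remains is bookkeeping in $L_{K(1)}\mathrm{Sp}$. The only points meriting a line of care are that the trace and the perfectness statement are defined purely $K(1)$-locally, so the resulting equivalence is one of $K(1)$-local spectra and does not descend to an equivalence of the integral spectra $K(\mathcal{X})$ and $K(\underline{{\mathsf{LC}}}_{\mathcal{X}})$; and, if one presents the proof through $\Phi^{\flat}$, that the Anderson-reflexivity input is legitimate, for which the smooth and proper hypotheses on $\mathcal{X}$ (via finiteness of its \'{e}tale cohomology with $\mathbf{Z}/p^{r}$-coefficients) are exactly what is needed.
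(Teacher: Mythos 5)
Your proposal is correct and is exactly the (implicit) argument the paper intends: with the paper's definition of a perfect pairing, right perfectness of the pairing in Theorem \ref{introThmA} literally says that the adjoint mate $L_{K(1)}K(\underline{{\mathsf{LC}}}_{\mathcal{X}})\rightarrow I_{\mathbf{Z}_{p}}L_{K(1)}K(\mathcal{X})$ is an equivalence, and the corollary is its inverse. The extra discussion of Anderson reflexivity is unnecessary here but harmless.
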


Compare this to what an optimist could expect to be true: There is a (mild)
smoothness assumption and a rather strong properness assumption, while only
regular flat should be needed. Indeed, the properness assumption cannot be
removed using the techniques of this paper since the definition of
$\underline{{\mathsf{LC}}}_{\mathcal{X}}$ is only the correct one under this hypothesis.

Still, the properness assumption is far weaker than asking for properness over
$\operatorname*{Spec}\mathbf{Z}$ (the only smooth proper schemes over
$\mathbf{Z}$ known to the author all motivically reduce to shifts and twists
of Tate motives). But our $\mathcal{X}$ could for example be an integral model
of an elliptic curve with the places of bad reduction put into $S$. This is
not motivically cellular.

The second result of this paper is of negative nature: The Lurie tensor
product $\underline{{\mathsf{LC}}}_{\mathcal{X}}=\operatorname*{Perf}%
\mathcal{X}\otimes_{\mathcal{O}_{S}}{\mathsf{LC}}_{\mathcal{O}_{S}}$ is very
non-explicit. Can you say what an object in this category is? One should
wonder whether $\underline{{\mathsf{LC}}}_{\mathcal{X}}$ can also be taken to
be locally compact $\mathcal{O}_{\mathcal{X}}$-module sheaves. We proceed as
follows: on an affine open $U$ define $K{\mathsf{LC}}_{\mathcal{X}%
,\operatorname*{pre}}^{\operatorname*{naive}}(U)$ to be the $K$-theory of the
category of locally compact $\mathcal{O}_{\mathcal{X}}(U)$-modules. Then
define $K{\mathsf{LC}}_{\mathcal{X}}^{\operatorname*{naive}}$ via Zariski
co-descent\footnote{Co-descent is the correct thing to demand: The value-wise
dual $U\mapsto I_{\mathbf{Z}_{p}}K(U)$ of a Zariski sheaf should be a Zariski
co-sheaf. In fact, one could consider the Nisnevich topology here, but we
stick to the more classical viewpoint.} from affine opens on $\mathcal{X}$. We
show that on sufficiently small affine opens the value of this co-sheaf agrees
with $K{\mathsf{LC}}_{\mathcal{X},\operatorname*{pre}}^{\operatorname*{naive}%
}(U)$, but we do not know if this is true for all affines. If $\pi
\colon\mathcal{X\rightarrow O}_{S}$ has relative dimension zero,
$K{\mathsf{LC}}_{\mathcal{X}}^{\operatorname*{naive}}(\mathcal{X})$ agrees
with $K(\underline{{\mathsf{LC}}}_{\mathcal{X}})$. However, if we allow
ourselves to look at the case of $S$ being all places, the following strongly
suggests that $K{\mathsf{LC}}_{\mathcal{X}}^{\operatorname*{naive}}$ is the
wrong object to work with:

Write $F_{v}$ for the metric completion of the number field $F$ at a place
$v\in S$.

\begin{theoremann}
\label{introThmB}Let $S$ be the set of all places and suppose $\pi
\colon\mathcal{X}\rightarrow\operatorname*{Spec}F$ is an integral separated
scheme of finite type over the number field $F$. Let $U\subseteq\mathcal{X}$
be any open. Then $K{\mathsf{LC}}_{\mathcal{X}}^{\operatorname*{naive}}(U)$
agrees with%
\[
\operatorname{cofib}\left(  \bigoplus\limits_{x\in U_{(0)}}K\left(
\kappa(x)\right)  \longrightarrow\left.  \underset{v\in S\;}{\prod
\nolimits^{\prime}}\right.  \bigoplus\limits_{x\in U_{v,(0)}}K\left(
\kappa(x)\right)  :\bigoplus\limits_{x\in U_{v,(0)}}K(\mathcal{O}_{\kappa
(x)})\right)  \text{,}%
\]
where $U_{(0)}$ denotes the set of closed points in the open $U$,
$U_{v}:=U\times_{F}F_{v}$ and $U_{v,(0)}$ the closed points of $U_{v}$, and
$\kappa(x)$ the respective residue fields. For $x\in U_{v,(0)}$, the residue
field $\kappa(x)$ is a finite field extension of $F_{v}$, and $\mathcal{O}%
_{\kappa(x)}$ denotes its ring of integers (or $F_{v}$ itself in the case $v$
is a real or complex place).
\end{theoremann}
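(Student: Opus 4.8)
The plan is to compute $K\mathsf{LC}^{\operatorname{naive}}_{\mathcal{X}}(U)$ by first analyzing the value of the presheaf $K\mathsf{LC}^{\operatorname{naive}}_{\mathcal{X},\operatorname{pre}}$ on affine opens and then feeding this into the Zariski co-descent that defines the co-sheafification. For an affine open $\operatorname{Spec}A \subseteq \mathcal{X}$ with $A$ a finitely generated domain over $F$, the key input is the structure theory of the category ${\mathsf{LCA}}_A$ of locally compact $A$-modules: since $A$ is an $F$-algebra and $F$ is a number field, every locally compact $A$-module carries compatible actions, and one expects a localization/d\'{e}vissage sequence expressing $K({\mathsf{LC}}_A)$ in terms of the $K$-theory of skyscraper-type modules supported at the closed points of $\operatorname{Spec} A$ and their completions. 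Concretely, a locally compact $A$-module decomposes according to the (finitely many relevant) maximal ideals, and at each closed point $x$ the relevant data is a locally compact module over the residue field $\kappa(x)$, which is a finitely generated field extension of $F$ — but the \emph{topological} structure forces the topology to come from the places of $\kappa(x)$ lying over the places of $F$. This is where the adelic/restricted-product shape enters.

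First I would establish, for $U = \operatorname{Spec} A$ affine, a fiber sequence of the form
\[
\bigoplus_{x\in U_{(0)}} K(\kappa(x)) \longrightarrow \left.\prod_{v\in S}\right.^{\prime}\ \bigoplus_{x\in U_{v,(0)}} \bigl(K(\kappa(x)) : K(\mathcal{O}_{\kappa(x)})\bigr) \longrightarrow K\mathsf{LC}^{\operatorname{naive}}_{\mathcal{X},\operatorname{pre}}(U),
\]
by identifying the category of locally compact $A$-modules (mod real vector spaces) with a suitable category built from: the discrete part (giving the global residue fields $\kappa(x)$ for $x$ a closed point of $U$), the compact part, and the mixed part, with the topology at each closed point governed by the completions $\kappa(x)\otimes_F F_v$. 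The restricted product over $S$ (all places) with the compact-open subgroups $\bigoplus_x K(\mathcal{O}_{\kappa(x)})$ is exactly the $K$-theory analogue of the ring of adeles of the function field of $U$, and the cofiber description follows from the localization sequence $j_! \to 1$ and the identification $K_c = \Sigma^{-1}K({\mathsf{LC}})$ from \cite{klca1} applied relative to $F$. The zero-dimensional case (where $U_{(0)}$ is a single point and this reduces to the known adelic statement for number fields in \cite{klca1}) serves as the base case and sanity check.

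Next I would pass from affine opens to a general open $U$ via the Zariski co-descent defining $K\mathsf{LC}^{\operatorname{naive}}_{\mathcal{X}}$. The point is that both sides of the claimed formula are Zariski co-sheaves in $U$: the left-hand summand $\bigoplus_{x\in U_{(0)}} K(\kappa(x))$ is manifestly a co-sheaf (closed points of an open cover glue by inclusion-exclusion, and $K$-theory of a field is insensitive to this), and likewise the restricted product over $v$ of $\bigoplus_{x\in U_{v,(0)}}$ is a co-sheaf because $U \mapsto U_v$ commutes with open covers and the restricted-product/compact-open structure is local on $U_v$. Since cofiber commutes with colimits, the cofiber is a co-sheaf, and the map of co-sheaves is an equivalence on affines by the previous step, hence an equivalence on all opens. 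Here one must be slightly careful that the co-descent is taken over \emph{all} affine opens, not merely a basis, but since the formula holds on every affine this is not an obstruction.

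\textbf{The main obstacle} will be the first step: correctly identifying the $K$-theory of ${\mathsf{LCA}}_{A}$ (or rather ${\mathsf{LC}}_A$) for a finitely generated $F$-algebra domain $A$ of positive dimension, and in particular proving that the \emph{only} closed points that contribute are those of $U$ and of the $U_v$, with residue-field $K$-theory as stated — i.e., that there is no hidden contribution from higher-dimensional points or from subtler topological modules. This requires a d\'{e}vissage argument in the locally compact setting that is genuinely more delicate than the number-field case, because $A$ is no longer a Dedekind domain: one must filter a locally compact $A$-module by its torsion/divisible/connected parts relative to the maximal ideals of $A$ and check that each graded piece is controlled by a residue field and its archimedean/non-archimedean completions. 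I expect this to reduce, after localizing at each closed point, to the one-dimensional local picture already understood in \cite{klca1}, but assembling the local pictures into the global restricted product — and verifying the compact-open subgroup is exactly $\bigoplus_x K(\mathcal{O}_{\kappa(x)})$ — is the technical heart of the argument and precisely the phenomenon that exhibits $K\mathsf{LC}^{\operatorname{naive}}$ as ``too adelic'' and hence the wrong object in positive relative dimension.
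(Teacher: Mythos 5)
Your overall architecture (a fiber sequence on affines with fiber the $K$-theory of zero\-/dimensionally supported discrete modules and cofiber the adelic restricted product, followed by Zariski co-descent) matches the paper's, and you correctly flag the d\'{e}vissage on affines as the technical heart. But there is a genuine gap exactly there: you propose to prove the affine formula for \emph{every} affine open $U=\operatorname{Spec}A$, by filtering a locally compact $A$-module into compact/adelic/discrete pieces "relative to the maximal ideals of $A$." For a general finitely generated $F$-algebra this filtration, which does exist canonically in $\mathsf{LCA}_F$, need \emph{not} be compatible with the $A$-action, and the splitting certainly need not be: the paper's explicit counterexample is $X=\mathbf{Q}\oplus\mathbf{R}$ with $T(a,b)=(0,a)$, a locally compact $\mathbf{Q}[T]$-module whose $\mathsf{LCA}_{\mathbf{Q}}$-splitting cannot be promoted to $\mathsf{LCA}_{\mathbf{Q}[T]}$. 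Consequently the paper does \emph{not} prove (and explicitly states it does not know) that $K{\mathsf{LC}}^{\operatorname{naive}}_{\mathcal{X},\operatorname{pre}}(U)$ has the claimed form on all affines; your Step 1 as stated would fail for, e.g., $U=\mathbf{A}^1_F$.

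The missing idea is the restriction to affines whose coordinate ring is \emph{spanned by units} (every element a finite sum of units). For such rings the unit action preserves the canonical compact--adelic--discrete filtration of the underlying object of $\mathsf{LCA}_F$ (units act by homeomorphisms), and sums of filtration-preserving endomorphisms still preserve it, so the filtration and its splitting become $R$-equivariant ("Property $(\mathbf{F})$"). One then shows every closed point of a finite type integral $F$-scheme has a dense affine neighbourhood spanned by units, so these form a basis; the affine computation is carried out only on this basis, and co-descent is run from it (which suffices, since a co-sheaf is determined by its values on a basis and the right-hand side is checked to satisfy co-descent on this subfamily). Even granting Property $(\mathbf{F})$, identifying the cofiber also requires an effaceability argument for $\operatorname{Ext}^1$ into adelic objects (to get full faithfulness of $\mathsf{D}^b(\mathsf{M})\to\mathsf{D}^b(\mathsf{LCA}_R)$ via injective hulls of zero-dimensionally supported modules), which your sketch does not address; this is where the Noetherian support theory genuinely enters, rather than a pointwise localization at maximal ideals.
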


The restricted product is to be read as in number theory:%
\[
\left.  \underset{v\in S\;}{\prod\nolimits^{\prime}}\right.  A_{v}%
:B_{v}:=\operatorname*{colim}_{T\subseteq S\text{, }\#T<\infty}\left(
\prod_{v\in T}A_{v}\oplus\prod_{v\in S\setminus T}B_{v}\right)  \text{,}%
\]
the colimit indexed over the poset of finite subsets of $S$ under inclusion.
This colimit makes sense even when $A_{v},B_{v}$ are spectra and we are given
maps $B_{v}\rightarrow A_{v}$. These are the maps $K(\mathcal{O}_{\kappa
(v)})\rightarrow K(\kappa(x))$, ring of integers inside its field fraction, in
the theorem. The proof of Theorem \ref{introThmB} relies on our earlier work
jointly with Arndt \cite{kthyartin}. No smoothness or properness is needed for
this result. We see that $K{\mathsf{LC}}_{\mathcal{X}}^{\operatorname*{naive}%
}$ is evidently far away from $K(\underline{{\mathsf{LC}}}_{\mathcal{X}})$. To
wit, once our scheme has infinitely many closed points, the \'{e}tale
cohomology groups of the descent spectral sequence for $L_{K(1)}K{\mathsf{LC}%
}_{\mathcal{X}}^{\operatorname*{naive}}/p^{r}$ have the same direct sums over
all closed points, making these groups absurdly big.\medskip

In the future, we plan to prove a version of Theorem \ref{introThmA}\ without
the properness assumption, requiring a better definition of $\underline
{{\mathsf{LC}}}_{\mathcal{X}}$. Using Geisser--Schmidt \cite{MR3867292}, it
should also be possible to permit $S$ to be an infinite set of places and to
replace $\pi$ smooth by merely being flat finite type and $\mathcal{X}$
regular. Following \cite{cho2025} it might also be possible to incorporate the
prime $p=2$ into the whole picture. A function field version ought to exist,
where $\pi$ gets replaced by a smooth fibration of a variety to a punctured
curve over a finite field. Going into more exotic directions, there also ought
to be a version for Azumaya sheaves on $\mathcal{X}$, as suggested by our
joint work with Henrard and van Roosmalen \cite{MR4358282,noncomclassgroup}.

\section{Setup}

\subsection{Running Conventions\label{sect_Setup}}

Fix a universe once and for all. All our rings will be unital and commutative,
and ring homomorphisms must preserve the ring unit. Write $\operatorname*{Cat}%
\nolimits_{\infty}^{\operatorname*{perf}}$ for the $\infty$-category of small
idempotent complete stable $\infty$-categories and exact functors.

We also fix an \textit{odd} prime $p$, a number field $F$, and a possibly
infinite set of places $S$ in $F$. Write $\mathcal{O}_{F}$ for the ring of
integers in $F$, and $S_{\infty}$ for its set of infinite places. If $v$ is a
place, $F_{v}$ denotes the metric completion of $F$ at $v$. Whenever $v$ is a
finite place, $\mathcal{O}_{v}$ denotes the ring of integers in $F_{v}$, or
equivalently the metric completion of $\mathcal{O}_{F}$ at $v$. We use the
convention that $\mathcal{O}_{v}:=F_{v}$ if $v$ is an infinite place. We write
$\mathcal{O}_{S}$ for the ring of $S$-integers. It can be defined as%
\begin{equation}
\mathcal{O}_{S}:=\left\{  x\in F\mid x\in\mathcal{O}_{v}\text{ for all
}v\notin S\right\}  \text{.} \label{l_ringSIntegers}%
\end{equation}
If $S$ contains all places, $\mathcal{O}_{S}=F$. We will always assume that
$S$ is chosen such that $\frac{1}{p}\in\mathcal{O}_{S}$ and such that
$S_{\infty}\subseteq S$. Let%
\begin{equation}
\pi\colon\mathcal{X}\longrightarrow\mathcal{S}:=\operatorname*{Spec}%
\mathcal{O}_{S} \label{l_q1}%
\end{equation}
be a smooth and separated finite type $\mathcal{S}$-scheme of constant
relative dimension $d\geq0$. By an \emph{arithmetic scheme} over $\mathcal{S}$
we mean any such $\pi$. Define $\mathcal{X}_{v}:=\mathcal{X}\times
_{\operatorname*{Spec}\mathcal{O}_{S}}\operatorname*{Spec}F_{v}$. Writing
$s_{v}$ for the scheme map induced from $\mathcal{O}_{S}\subseteq F\subseteq
F_{v}$, and letting $i_{v}:=\mathcal{X}\times s_{v}$, we obtain the pullback
diagram%
\begin{equation}%
{
\begin{tikzcd}
	{\mathcal{X}_v} & {\mathcal{X}} \\
	{\operatorname{Spec}F_v} & {\mathcal{S}.}
	\arrow["{i_v}", from=1-1, to=1-2]
	\arrow[from=1-1, to=2-1]
	\arrow["\pi", from=1-2, to=2-2]
	\arrow["{s_v}"', from=2-1, to=2-2]
\end{tikzcd}
}
\label{l_d3}%
\end{equation}
The term \textquotedblleft$K$-theory\textquotedblright\ always refers to
non-connective $K$-theory, as in \cite{MR2762556, MR3070515}.

\subsection{Arguments on the motivic side}

\subsubsection{Generalities\label{subsubsect_Generalities}}

Let $R$ be a commutative ring\footnote{in the old-school sense of
\cite{MR1011461}, but in order to line it up with the viewpoint of
\cite{LurieHA,MR3607274}, we should regard it as a discrete $\mathbb{E}%
_{\infty}$-ring for the duration of this section.}. We shall employ the
calculus of $R$-linear stable $\infty$-categories and non-commutative
localizing motives over the base $R$. All we need has been developed and laid
out in detail by Hoyois, Scherotzke and Sibilla in \cite[\S 4-5]{MR3607274}
(see also \cite[Appendix B]{linmota}).

Their framework is far more general than what we need here, so let us give a
contracted overview covering the very few tools we need: Let $\mathcal{E}%
:=\operatorname*{Perf}(R)\in\operatorname*{CAlg}(\operatorname*{Cat}%
\nolimits_{\infty}^{\operatorname*{perf}})$ be the small, stable, idempotent
complete, symmetric monoidal $\infty$-category of perfect complexes over $R$.
Its tensor product is bi-exact. We may write $\mathbf{1}_{\mathcal{E}}$ for
its tensor unit, or more explicitly take the complex $\mathbf{1}_{\mathcal{E}%
}:=\left[  \cdots0\rightarrow R\rightarrow0\cdots\right]  $ with $R$
concentrated in degree zero. Every object is dualizable, $\mathcal{F}^{\vee
}:=\operatorname*{Hom}(\mathcal{F},\mathbf{1}_{\mathcal{E}})$ is the dual, in
particular $\mathcal{E}$ has a rigid tensor structure.

Since this is the only case we need, we slightly simplify the notation of
\cite{MR3607274} and write

\begin{enumerate}
\item $\operatorname*{Cat}\nolimits_{\infty}^{\operatorname*{perf}%
}(R):=\operatorname*{Mod}_{\mathcal{E}}(\operatorname*{Cat}\nolimits_{\infty
}^{\operatorname*{perf}})$, and call its objects $R$\emph{-linear} (small
idempotent complete) \emph{stable }$\infty$\emph{-categories},\footnote{in
\cite[Definition 4.3]{MR3607274} this would be denoted by $\operatorname*{Cat}%
\nolimits_{\infty}^{\operatorname*{perf}}(\mathcal{E})$.}

\item and $\otimes_{R}$ for its symmetric monoidal $\infty$%
-structure.\footnote{which would be denoted by $\otimes_{\mathcal{E}}$
\textit{loc. cit.}}
\end{enumerate}

\begin{remark}
[{\cite[\S 4.1]{MR3607274}}]\label{rmk_BiexactFun}Write $\operatorname*{Fun}%
\nolimits_{\mathcal{E}}^{\operatorname*{ex}}(\mathsf{A},\mathsf{B})$ for the
full subcategory of functors which are exact and $\mathcal{E}$-linear. There
is an adjunction%
\[
\operatorname*{Fun}\nolimits_{\mathcal{E}}^{\operatorname*{ex}}(\mathsf{A}%
\otimes_{R}\mathsf{B},\mathsf{C})\overset{\sim}{\longrightarrow}%
\operatorname*{Fun}\nolimits_{\mathcal{E}}^{\operatorname*{ex}}(\mathsf{A}%
,\operatorname*{Fun}\nolimits_{\mathcal{E}}^{\operatorname*{ex}}%
(\mathsf{B},\mathsf{C}))
\]
for all $\mathsf{A},\mathsf{B},\mathsf{C}$ in $\operatorname*{Cat}%
\nolimits_{\infty}^{\operatorname*{perf}}(R)$. It allows us to define
$R$-linear functors out of $\mathsf{A}\otimes_{R}\mathsf{B}$ by setting up
functors $\mathsf{A}\times\mathsf{B}\longrightarrow\mathsf{C}$, which are
exact and $R$-linear in both arguments.
\end{remark}

A sequence%
\begin{equation}
\mathsf{A}\longrightarrow\mathsf{B}\longrightarrow\mathsf{C}
\label{l_def_exseq}%
\end{equation}
in $\operatorname*{Cat}\nolimits_{\infty}^{\operatorname*{perf}}(R)$ is called
\emph{exact} if its underlying sequence in $\operatorname*{Cat}%
\nolimits_{\infty}^{\operatorname*{perf}}$ (i.e., forgetting the $R$-linear
structure) is exact (see \cite[Definition 5.3]{MR3607274}). In particular,
this means it is also an exact sequence in the sense of
Blumberg--Gepner--Tabuada \cite{MR3070515}.

\begin{remark}
All in all, the forgetful functor $\operatorname*{Cat}\nolimits_{\infty
}^{\operatorname*{perf}}(R)\rightarrow\operatorname*{Cat}\nolimits_{\infty
}^{\operatorname*{perf}}$ preserves finite limits and colimits and preserves
and reflects exactness. However, the monoidal structures are incompatible.
\end{remark}

In light notational variation from \cite[Definition 5.14]{MR3607274}, write
$\left.  \mathbf{Mot}(R)\right.  $ for the $\infty$\emph{-category of
localizing }$R$\emph{-linear motives}\footnote{We have no use for additive
motives in this text.}. This can be thought of as non-commutative motives as
in \cite{MR3070515}, just so that everything is additionally $R$-linear
(instead of having spectra as the `base'). Write%
\[
\mathcal{U}^{\operatorname*{loc}}\colon\operatorname*{Cat}\nolimits_{\infty
}^{\operatorname*{perf}}(R)\longrightarrow\left.  \mathbf{Mot}(R)\right.
\]
for the universal localizing invariant, as in \cite[Theorem 5.17]{MR3607274}.
We shall need the following properties, which go hand in hand (\cite[Theorem
5.18]{MR3607274}):\newline(1) One can enrich $\left.  \mathbf{Mot}(R)\right.
$ to a symmetric monoidal $\infty$-category so that $\mathcal{U}%
^{\operatorname*{loc}}$ is symmetric monoidal.\newline(2) If $\mathsf{A}%
\longrightarrow\mathsf{B}\longrightarrow\mathsf{C}$ is exact in
$\operatorname*{Cat}\nolimits_{\infty}^{\operatorname*{perf}}(R)$ and
$\mathsf{Z}\in\operatorname*{Cat}\nolimits_{\infty}^{\operatorname*{perf}}%
(R)$, then%
\[
\mathsf{A}\otimes_{R}\mathsf{Z}\longrightarrow\mathsf{B}\otimes_{R}%
\mathsf{Z}\longrightarrow\mathsf{C}\otimes_{R}\mathsf{Z}%
\]
is exact.\newline(3) Tensoring with $\mathcal{E}$ acts as the tensor unit,
$\mathsf{A}\otimes_{R}\operatorname*{Perf}(R)\overset{\sim}{\longrightarrow
}\mathsf{A}$.

Suppose $\rho\colon X\rightarrow\operatorname*{Spec}R$. Then $\rho$ renders
$\operatorname*{Perf}(X)$ a $\operatorname*{Perf}(R)$-module, and we may
regard $\operatorname*{Perf}(X)$ as an object in $\operatorname*{Cat}%
\nolimits_{\infty}^{\operatorname*{perf}}(R)$.

If $X_{i}$ are qcqs schemes and $p_{i}\colon X_{i}\rightarrow
\operatorname*{Spec}R$ are \textit{flat}, write $X_{1}\times_{R}X_{2}%
:=X_{1}\times_{\operatorname*{Spec}R}X_{2}$ for the ordinary fiber product of
schemes as in \cite{MR0463157}\footnote{when reading Lurie, one must be
careful that $X_{1}\times_{R}X_{2}$ (in the equation below this line) could
also refer to a derived product as in spectral algebraic geometry. However,
when everything is flat, this possible pitfall is always safely avoided. This
is the only reason for our flatness assumption here.}. Then we also have the
equivalence%
\begin{equation}
\operatorname*{Perf}(X_{1})\otimes_{R}\operatorname*{Perf}(X_{2})\overset
{\sim}{\longrightarrow}\operatorname*{Perf}(X_{1}\times_{R}X_{2}) \label{l_d2}%
\end{equation}
in $\operatorname*{Cat}\nolimits_{\infty}^{\operatorname*{perf}}(R)$, which is
induced from the exact functor%
\[
\mathcal{F}_{1}\otimes\mathcal{F}_{2}\longmapsto p_{1}^{\ast}\mathcal{F}%
_{1}\otimes_{\mathcal{O}_{X_{1}\times_{R}X_{2}}}p_{2}^{\ast}\mathcal{F}%
_{2}\text{.}%
\]
Exact categories, as described in B\"{u}hler \cite{MR2606234}, are a
convenient source of stable $\infty$-categories. We shall use this in the
following format:

An $R$\emph{-linear exact category} is an exact category enriched in
$R$-modules. Every small $R$-linear exact category $\mathsf{E}$ gives rise to
an $R$-linear dg category of complexes (for boundedness conditions $\ast
\in\{+,-,b\}$), which we may call $\operatorname*{D}%
\nolimits_{\operatorname*{dg}}^{\ast}(\mathsf{E})$, defined as%
\[
\operatorname*{D}\nolimits_{\operatorname*{dg}}^{\ast}(\mathsf{E}%
):=\operatorname*{K}\nolimits^{\ast}(\mathsf{E})/\operatorname*{Ac}%
\nolimits^{\ast}(\mathsf{E})\text{,}%
\]
where $\operatorname*{K}\nolimits^{\ast}(\mathsf{E})$ is the category of
suitably bounded complexes $\cdots\longrightarrow X_{i}\longrightarrow\cdots$
with $X_{i}\in\mathsf{E}$ (which does not depend on the exact structure of
$\mathsf{E}$ at all and can be formed for any additive $R$-linear category),
and $\operatorname*{Ac}\nolimits^{\ast}(\mathsf{E})$ denotes the complexes
which need to be considered as acyclic given the exact structure on
$\mathsf{E}$. The details for this construction can be found in \cite[\S 10]%
{MR2606234}.

Then $\operatorname*{D}\nolimits_{\operatorname*{dg}}^{\ast}(\mathsf{E})$ is a
pre-triangulated $R$-linear dg category and its dg nerve, which we denote by
$\operatorname*{D}\nolimits_{\infty}^{\ast}(\mathsf{E})$, is a small stable
$R$-linear $\infty$-category. If $\mathsf{E}$ is idempotent complete in the
sense of additive categories (\cite[\S 6]{MR2606234}), $\operatorname*{D}%
\nolimits_{\infty}^{b}(\mathsf{E})$ is also idempotent complete by
\cite[Theorem 2.8]{MR1813503} and therefore $\operatorname*{D}%
\nolimits_{\infty}^{b}(\mathsf{E})\in\operatorname*{Cat}\nolimits_{\infty
}^{\operatorname*{perf}}(R)$.

\begin{example}
\label{ex_DbPfVersusPerfectComplexes}For any ring $R$, write $\mathsf{P}%
_{f}(R)$ for the exact category of finitely generated projective $R$-modules.
Then $\operatorname*{D}\nolimits_{\infty}^{b}(\mathsf{P}_{f}(R))\cong
\operatorname*{Perf}(R)$. This uses that perfect complexes are
quasi-isomorphic to two-sided bounded complexes of finitely generated
projective $R$-modules, and conversely all these are compact objects in
$\operatorname*{D}\nolimits_{\infty}(\mathsf{Mod}_{R})$.
\end{example}

\section{Locally compact modules over rings of integers}

Let $R$ be a commutative ring. For us, compact spaces are tacitly required to
be Hausdorff.

\begin{definition}
\label{def_LCA_OS}We write $\mathsf{LCA}_{R}$ for the category whose

\begin{enumerate}
\item objects are $R$-modules $X$ along with a locally compact topology $\tau$,

\item and morphisms are continuous $R$-linear maps.
\end{enumerate}
\end{definition}

Alternatively, one may regard this as a category of LCA (locally compact
abelian) groups equipped with an $R$-module structure such that each
$\alpha\in R$ acts linearly and continuously,%
\[
\mathsf{LCA}_{R}\cong\operatorname*{Fun}([R],\mathsf{LCA}_{\mathbf{Z}%
})\text{,}%
\]
where $[R]$ denotes the $1$-category with a single object $\ast$ with
endomorphism ring $\operatorname*{End}(\ast):=R$. However, note that also when
regarded from this viewpoint, we must demand that homomorphisms in
$\mathsf{LCA}_{\mathbf{Z}}$ are \textit{continuous} group homomorphisms of LCA\ groups.

This is a quasi-abelian $R$-linear category\footnote{See \cite[\S 4]%
{MR2606234} for the concept of quasi-abelian categories.}. In particular, all
kernels and cokernels exist, but for morphisms $\varphi\colon X\rightarrow Y$
the induced map $\overline{\varphi}$ in%
\[
X\rightarrow X/\ker\varphi\overset{\overline{\varphi}}{\longrightarrow
}\operatorname*{im}\varphi\rightarrow Y
\]
need not be an isomorphism (this is the key failing which causes
$\mathsf{LCA}_{R}$ not to be an abelian category). Every quasi-abelian
category has a canonical exact structure (\cite[Prop. 4.4]{MR2606234}).\ It is
easy to make it explicit for $\mathsf{LCA}_{R}$:

(1) Admissible monics are the injective closed maps,

(2) Admissible epics are the surjective open maps.

We write $\operatorname*{Hom}\nolimits_{R}(X,Y)$ for morphisms in this
category, but remember that besides being $R$-linear, this tacitly includes
the condition to be continuous.

\begin{example}
As another indicator how this category fails to be abelian, if $\mathbf{R}%
_{\operatorname*{disc}}$ denotes the real numbers with the discrete topology,
$\operatorname*{id}\colon\mathbf{R}_{\operatorname*{disc}}\rightarrow
\mathbf{R}$ is a map in $\mathsf{LCA}_{\mathbf{Z}}$, and it is both an
admissible monic and admissible epic, yet fails to be an isomorphism. The two
objects $\mathbf{R}_{\operatorname*{disc}}$, $\mathbf{R}$ only differ on the
level of the underlying topology.
\end{example}

Whenever we talk about \emph{algebraic }$R$-modules, this is meant to stress
that we either forget an existing topology or have not even defined a topology
on the module to start with.

\subsection{Symmetric monoidal structure over the base $R$%
\label{subsubsect_SymmMonoidalLCAOS}}

Suppose $X,Y,Z\in\mathsf{LCA}_{R}$. We always equip $\operatorname*{Hom}%
\nolimits_{R}(X,Y)$ with the compact-open topology. Then $\operatorname*{Hom}%
\nolimits_{R}(X,Y)$ is always a topological $R$-module, but depending on $X$
and $Y$, may or may not be locally compact. Whenever it is, we can treat
$\operatorname*{Hom}\nolimits_{R}(X,Y)$ as an inner Hom-object. But as this is
not generally possible for all objects $X$ and $Y$, there is no satisfactory
symmetric monoidal structure on $\mathsf{LCA}_{R}$. Moskowitz \cite[\S 4]%
{MR0215016} contains various sufficient conditions to ensure
$\operatorname*{Hom}\nolimits_{R}(X,Y)$ is locally compact. Guided by the
Hom-tensor adjunction, one can use the partially defined inner Hom-objects to
set up a partially defined ($1$-categorical) symmetric monoidal structure
$X\otimes_{R}Y$ for objects $X,Y\in\mathsf{LCA}_{R}$. Its defining property is
to realize the adjunction%
\[
\operatorname*{Hom}\nolimits_{R}(X\otimes_{R}Y,Z)\cong\operatorname*{Hom}%
\nolimits_{R}(X,\operatorname*{Hom}\nolimits_{R}(Y,Z))
\]
whenever all involved objects exist and lie in $\mathsf{LCA}_{R}$. Inevitably,
the reader will anticipate that there is a reasonable symmetric monoidal
structure lurking in the background, but closed under tensoring only in a
bigger category than $\mathsf{LCA}_{R}$. This is true, but we will not pursue
this point further in this text. There is a powerful notion of \textit{dual
object }available in $\mathsf{LCA}_{R}$, even though it does not have the
shape of an inner Hom in general: Define the \emph{Pontryagin dual} $X^{\vee
}:=\operatorname*{Hom}\nolimits_{\mathbf{Z}}(X,\mathbb{T})$, the group of all
continuous abelian group homomorphisms\footnote{read the $\operatorname*{Hom}%
\nolimits_{\mathbf{Z}}$ as the Hom-object of $\mathsf{LCA}_{\mathbf{Z}}$},
where $\mathbb{T}:=\mathbf{R}/\mathbf{Z}$ is the circle with its usual
topology.\footnote{It feels noteworthy that already Krull had been looking for
a version of Pontryagin duality for locally compact modules over Dedekind
domains, worked out to some extent by his student St\"{o}hr
\cite{MR262223,MR291157} as early as 1969.} Then $X^{\vee}$ carries a
continuous $R$-module action via precomposition, i.e.,%
\[
(\alpha\cdot\varphi)(x):=\varphi(\alpha\cdot x)\qquad\text{for }\alpha\in
R\text{ and }x\in X\text{,}%
\]
so that $X^{\vee}\in\mathsf{LCA}_{R}$. The natural map to the double dual
$X\longrightarrow X^{\vee\vee}$ is always an isomorphism. Moreover,
$\operatorname*{Hom}_{R}(X,Y)\cong\operatorname*{Hom}_{R}(Y^{\vee},X^{\vee})$.
For $R=\mathbf{Z}$, the circle $\mathbb{T}$ is naturally a locally compact
$\mathbf{Z}$-module and then this dual happens to exist in the format of an
inner Hom object, but not in general.

For all rings $R$, $(-)^{\vee}\colon\mathsf{LCA}_{R}^{\operatorname*{op}%
}\longrightarrow\mathsf{LCA}_{R}$ is an exact equivalence of exact categories.
This renders $\mathsf{LCA}_{R}$ an exact category with duality in the sense of
\cite[Definition 2.1]{MR2600285}, \cite{MR4293796}. For $R=\mathbf{Z}$, it
also shows that $\mathbb{T}$ is an injective object.

\begin{example}
\label{ex_PropertiesInLCAOS}Suppose $R=\mathcal{O}_{S}$, the ring of
$S$-integers as in \S \ref{sect_Setup}. As a quick introduction what relevant
objects in $\mathsf{LCA}_{\mathcal{O}_{S}}$ look like, let us summarize:

\begin{enumerate}
\item All discrete $\mathcal{O}_{S}$-modules naturally form objects in
$\mathsf{LCA}_{\mathcal{O}_{S}}$, there is a fully faithful exact functor
$\mathsf{Mod}_{\mathcal{O}_{S}}\rightarrow\mathsf{LCA}_{\mathcal{O}_{S}}$.

\item Every discrete projective $\mathcal{O}_{S}$-module is also projective in
$\mathsf{LCA}_{\mathcal{O}_{S}}$.

\item However, discrete injective $\mathcal{O}_{S}$-modules will (unless they
are zero) never be injective in $\mathsf{LCA}_{\mathcal{O}_{S}}$.

\item Pontryagin duality swaps projectives with injectives, so the easiest way
to exhibit injectives is to take a discrete projective $\mathcal{O}_{S}%
$-module $X$, and then consider its dual $X^{\vee}$. This will be a compact module.

\item There are projective and injective objects in $\mathsf{LCA}%
_{\mathcal{O}_{S}}$ which do not arise as in (2) or (4), for example for
$F=\mathbf{Q}$ and $S$ the single real infinite place so that $\mathcal{O}%
_{S}=\mathbf{Z}$, the real vector spaces $\mathbf{R}^{n}$ are the only objects
in $\mathsf{LCA}_{\mathbf{Z}}$ which are both injective and projective
\cite{MR1620000}.

\item If $F$ is a number field and $S$ contains all places, the ad\`{e}le
sequence $F\hookrightarrow\mathbb{A}_{F}\twoheadrightarrow\mathbb{A}_{F}/F$ is
an injective resolution of $F$, and simultaneously a projective resolution of
$\mathbb{A}_{F}/F$.
\end{enumerate}

Facts (1)-(5) can be found in \cite[\S 4]{klca1}, and (6) is from
\cite{noncomclassgroup}.
\end{example}

\subsection{Vector modules}

Let $\mathsf{LCA}_{\mathcal{O}_{S},\mathbf{R}}$ be the full subcategory of
$\mathsf{LCA}_{\mathcal{O}_{S}}$ of objects $X$ whose underlying LCA group
admits an isomorphism to a finite-dimensional real vector space. Write
$\mathsf{Mod}_{\mathcal{O}_{S}\otimes\mathbf{R}}^{fg}$ for the abelian
category of finitely generated $(\mathcal{O}_{S}\otimes_{\mathbf{Z}}%
\mathbf{R})$-modules, without topology.

By \cite[Theorem 2.8]{MR1813503} both $\operatorname*{D}\nolimits_{\infty}%
^{b}\mathsf{LCA}_{\mathcal{O}_{S},\mathbf{R}}$ and $\operatorname*{D}%
\nolimits_{\infty}^{b}\mathsf{LCA}_{\mathcal{O}_{S}}$ are idempotent complete.

\begin{lemma}
\label{lem_TT1}The full subcategory $\mathsf{LCA}_{\mathcal{O}_{S},\mathbf{R}%
}\subseteq\mathsf{LCA}_{\mathcal{O}_{S}}$

\begin{enumerate}
\item is closed under Pontryagin duality,

\item is a Serre subcategory of $\mathsf{LCA}_{\mathcal{O}_{S}}$,

\item $\mathsf{LCA}_{\mathcal{O}_{S},\mathbf{R}}\overset{\sim}{\longrightarrow
}\mathsf{Mod}_{\mathcal{O}_{S}\otimes\mathbf{R}}^{fg}$ as abelian categories,

\item $\operatorname*{D}\nolimits_{\infty}^{b}\mathsf{LCA}_{\mathcal{O}%
_{S},\mathbf{R}}\longrightarrow\operatorname*{D}\nolimits_{\infty}%
^{b}\mathsf{LCA}_{\mathcal{O}_{S}}$ is fully faithful.
\end{enumerate}
\end{lemma}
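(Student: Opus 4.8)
The plan is to prove the four assertions essentially in the order stated, since the later ones build on the earlier ones. For (1), I would observe that the Pontryagin dual of a finite-dimensional real vector space $\mathbf{R}^n$ is again $\mathbf{R}^n$ (this is classical for $\mathsf{LCA}_{\mathbf{Z}}$), so a module $X$ whose underlying LCA group is $\mathbf{R}^n$ has $X^\vee$ with the same property; one only needs to recall that the $\mathcal{O}_S$-action on $X^\vee$ is by precomposition, which does not change the underlying group, so $X^\vee$ again lies in $\mathsf{LCA}_{\mathcal{O}_S,\mathbf{R}}$. For (2), I would use the standard structure theory of LCA groups: a closed subgroup of $\mathbf{R}^n$ is of the form $\mathbf{R}^a \times \mathbf{Z}^b$, and a Hausdorff quotient is of the form $\mathbf{R}^a \times \mathbf{T}^b$; to be a subobject or quotient object \emph{in} $\mathsf{LCA}_{\mathcal{O}_S}$ via an admissible mono or epi (closed injection, open surjection) with source or target a vector module, the lattice and torus factors are forced to vanish because the $\mathcal{O}_S$-module structure makes every element infinitely $\ell$-divisible for primes $\ell$ inverted in $\mathcal{O}_S$ — and more basically because $\mathbf{Z}^b$ and $\mathbf{T}^b$ do not admit $\mathbf{Q}$-vector space structures while $\mathbf{R}^n$ does, and $\mathcal{O}_S \supseteq \mathbf{Z}[\tfrac1p]$ already forces a $\mathbf{Z}[\tfrac1p]$-action. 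Extensions are handled by noting that an extension of one vector module by another in $\mathsf{LCA}_{\mathcal{O}_S}$ has underlying group an extension of $\mathbf{R}^m$ by $\mathbf{R}^n$ in $\mathsf{LCA}_{\mathbf{Z}}$, hence is $\mathbf{R}^{m+n}$ (the extension splits topologically), so the subcategory is closed under the three Serre operations; together with fullness this gives the Serre subcategory claim. Here I should be a little careful to distinguish \emph{Serre subcategory} in the quasi-abelian setting from the abelian one, but since $\mathsf{LCA}_{\mathcal{O}_S,\mathbf{R}}$ will turn out to be abelian by (3), the cleanest route is to first establish (3) and then phrase (2) in terms of it.

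For (3), the functor is the obvious one sending $X \in \mathsf{LCA}_{\mathcal{O}_S,\mathbf{R}}$ to its underlying $\mathcal{O}_S$-module, which is a module over $\mathcal{O}_S \otimes_{\mathbf{Z}} \mathbf{R}$ because scalar multiplication by $\mathbf{R}$ is available and continuous on any finite-dimensional real vector space and commutes with the $\mathcal{O}_S$-action; finite generation over $\mathcal{O}_S \otimes_{\mathbf{Z}}\mathbf{R}$ holds because the underlying real vector space is finite-dimensional. I would check essential surjectivity by equipping any finitely generated $(\mathcal{O}_S \otimes \mathbf{R})$-module $M$ with its canonical (unique) locally compact topology as a finite-dimensional real vector space — uniqueness of the topology is exactly the classical fact that a finite-dimensional topological $\mathbf{R}$-vector space has a unique Hausdorff topology, which also shows the functor is full and faithful on the nose (continuity of $\mathcal{O}_S$-linear maps is automatic, as any $\mathbf{R}$-linear map between finite-dimensional real vector spaces is continuous). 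This same uniqueness shows the functor is an equivalence of \emph{abelian} categories: short exact sequences on the module side are automatically strict exact on the topological side because subspaces are closed and quotients carry the quotient topology, which is again the unique one.

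Finally, (4) follows formally: a Serre subcategory (here even better, a full subcategory closed under extensions, kernels, and cokernels, with the subspace exact structure agreeing with the ambient one — which is what (2) and (3) give us) induces a fully faithful functor on bounded derived $\infty$-categories. Concretely, I would invoke the standard fact that for an exact subcategory $\mathsf{E}' \subseteq \mathsf{E}$ closed under extensions and under passing to kernels/cokernels of admissible morphisms, with the induced exact structure, the inclusion $\operatorname{D}^b_\infty(\mathsf{E}') \to \operatorname{D}^b_\infty(\mathsf{E})$ is fully faithful; this is where the Serre subcategory property of (2) is used essentially, to guarantee that a complex of objects of $\mathsf{E}'$ which is acyclic in $\mathsf{E}$ is already acyclic in $\mathsf{E}'$, so that no new quasi-isomorphisms are inverted. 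The idempotent completeness of both bounded derived $\infty$-categories was already recorded just before the lemma via \cite[Theorem 2.8]{MR1813503}, so there is nothing further to check there. The main obstacle, such as it is, is purely bookkeeping in (2): making sure that the various torus and lattice factors really are excluded by the $\mathcal{O}_S$-linearity and that admissible monos/epis in $\mathsf{LCA}_{\mathcal{O}_S}$ restrict correctly; everything else is an application of classical LCA structure theory plus the uniqueness of the topology on a finite-dimensional real vector space.
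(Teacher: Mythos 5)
Parts (1)--(3) of your proposal are fine. For (2) you take a slightly different route from the paper: you exclude the $\mathbf{Z}^b$ (resp.\ $\mathbf{T}^b$) factors directly, using that multiplication by $p$ must be bijective on any $\mathcal{O}_S$-module since $\tfrac{1}{p}\in\mathcal{O}_S$, whereas the paper first isolates the connected component $U_{\mathbf{R}}$ via continuity of the scalar action, derives an integrality contradiction for $U/U_{\mathbf{R}}\cong\mathbf{Z}^m$, and handles quotients by Pontryagin duality rather than by the structure theory of Hausdorff quotients of $\mathbf{R}^n$. Both arguments work; yours is marginally more self-contained for the quotient case, the paper's dualization is marginally shorter. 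Your (3) coincides with the paper's (automatic continuity and uniqueness of the topology on finite-dimensional real vector spaces).

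Part (4) has a genuine gap. You invoke as a ``standard fact'' that a Serre (or fully exact, kernel/cokernel-closed) subcategory $\mathsf{E}'\subseteq\mathsf{E}$ induces a fully faithful functor $\operatorname*{D}^b_\infty(\mathsf{E}')\to\operatorname*{D}^b_\infty(\mathsf{E})$. This is false in general, even for Serre subcategories of abelian categories: what is automatic is only that $\mathsf{D}^b_{\mathsf{E}'}(\mathsf{E})\hookrightarrow\mathsf{D}^b(\mathsf{E})$ is fully faithful, while $\mathsf{D}^b(\mathsf{E}')\to\mathsf{D}^b_{\mathsf{E}'}(\mathsf{E})$ requires extra resolution-type conditions (Keller's conditions \textit{C1/C2}, or the hypotheses of Kashiwara--Schapira, Thm.\ 13.2.8 --- precisely the criteria this paper has to verify by hand in Lemma \ref{lemma_toolbox} and Prop.\ \ref{prop_EquivOfDerCats}). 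Your remark about acyclic complexes only shows that no new quasi-isomorphisms are created; it does not compare mapping spaces. Full faithfulness here amounts to showing that $\operatorname*{Ext}^s_{\mathsf{LCA}_{\mathcal{O}_S,\mathbf{R}}}(X,Y)\to\operatorname*{Ext}^s_{\mathsf{LCA}_{\mathcal{O}_S}}(X,Y)$ is an isomorphism for all $s$, and the substantive input is that both sides vanish for $s\geq1$: the source because $\mathcal{O}_S\otimes_{\mathbf{Z}}\mathbf{R}$ is a semisimple ring, and the target because vector modules are both projective and injective objects of $\mathsf{LCA}_{\mathcal{O}_S}$ (a nontrivial fact imported from \cite[Theorem 4.19 (5)]{klca1}). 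Without that projectivity/injectivity statement your argument for (4) does not close.
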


\begin{proof}
(1) We only need to observe that the Pontryagin dual of $\mathbf{R}$ is
isomorphic to $\mathbf{R}$ by the standard identification $\mathbf{R}%
\rightarrow\operatorname*{Hom}\nolimits_{\mathbf{Z}}(\mathbf{R},\mathbb{T})$,
$x\mapsto(y\mapsto e^{2\pi ixy})$. (2) It is clear that $\mathsf{LCA}%
_{\mathcal{O}_{S},\mathbf{R}}$ is closed under extensions in $\mathsf{LCA}%
_{\mathcal{O}_{S}}$.\ We need to show that if $V\in\mathsf{LCA}_{\mathcal{O}%
_{S},\mathbf{R}}$, then once regarded as an object in $\mathsf{LCA}%
_{\mathcal{O}_{S}}$, all admissible subobjects\footnote{In an exact category,
an \emph{admissible subobject} is a subobject $U\hookrightarrow V$ where one
(then all) representing monics can be taken to be an admissible monic with
respect to the exact structure.} must also lie in $\mathsf{LCA}_{\mathcal{O}%
_{S},\mathbf{R}}$. Let $U\hookrightarrow V$ be an admissible subobject. Then
$U$ is a closed subgroup in a real vector space. Following \cite[Thm.
6]{MR0442141} this implies that $U\simeq\mathbf{Z}^{m}\oplus\mathbf{R}^{n}$
for suitable values $0\leq m,n<\infty$ in the category of LCA groups. Let
$U_{\mathbf{R}}$ be the subgroup corresponding to the $\mathbf{R}^{n}%
$-summand, hence evidently connected. As the scalar action is continuous, for
all $\alpha\in\mathcal{O}_{S}$, $\alpha U_{\mathbf{R}}$ must be a connected
subgroup of the LCA group $U$, and therefore also lie inside $U_{\mathbf{R}}$.
We conclude that $U_{\mathbf{R}}$ is an algebraic $\mathcal{O}_{S}$-submodule
of $U$. It follows that $U/U_{\mathbf{R}}$, whose underlying subgroup is
$\mathbf{Z}^{m}$, is an algebraic $\mathcal{O}_{S}$-module. Assume
$U/U_{\mathbf{R}}\neq0$. Since $\frac{1}{p}\in\mathcal{O}_{S}$, we must have
$\frac{1}{p}(U/U_{\mathbf{R}})=U/U_{\mathbf{R}}$, so $\frac{1}{p}$ is integral
over the ring $\mathbf{Z}$, which is absurd. Hence, $U/U_{\mathbf{R}}=0$ and
it follows that $U\in\mathsf{LCA}_{\mathcal{O}_{S},\mathbf{R}}$. Conversely,
if $V\twoheadrightarrow U$ is an admissible quotient, Pontryagin duality
transforms it into an admissible subobject $U^{\vee}\hookrightarrow V^{\vee}$,
and the same argument applies. (4) As $\mathcal{O}_{S}\otimes_{\mathbf{Z}%
}\mathbf{R}$ is a semisimple ring, all $\operatorname*{Ext}^{s}$-groups with
$s\geq1$ vanish in $\mathsf{Mod}_{\mathcal{O}_{S}\otimes\mathbf{R}}^{fg}$. Our
claim follows once $\operatorname*{Ext}_{\mathcal{O}_{S}\otimes_{\mathbf{Z}%
}\mathbf{R}}^{s}(X,Y)\longrightarrow\operatorname*{Ext}_{\mathsf{LCA}%
_{\mathcal{O}_{S}}}^{s}(X,Y)$ is an isomorphism for all $X,Y\in\mathsf{LCA}%
_{\mathcal{O}_{S},\mathbf{R}}$ and $s\geq0$, but this is easy to check for
$s=0$, since all continuous $\mathcal{O}_{S}$-linear maps on a
finite-dimensional real vector space must be $\mathbf{R}$-linear as
well\footnote{and conversely all $\mathbf{R}$-linear maps on a
finite-dimensional real vector space are automatically continuous}, and for
$s\geq1$ use that objects in $\mathsf{LCA}_{\mathcal{O}_{S},\mathbf{R}}$ are
projective (and injective) in $\mathsf{LCA}_{\mathcal{O}_{S}}$ \cite[Theorem
4.19 (5)]{klca1}. (3) Use the previous argument for $s=0$. It follows that the
category $\mathsf{LCA}_{\mathcal{O}_{S},\mathbf{R}}$ is equivalent, as an
exact category, to the category of finitely generated $(\mathcal{O}_{S}%
\otimes_{\mathbf{Z}}\mathbf{R})$-modules, and thus both are abelian.
\end{proof}

\begin{definition}
\label{def_C1}We define the $\mathcal{O}_{S}$-linear stable $\infty$-category
$\mathsf{LC}_{\mathcal{O}_{S}}$ as the quotient
\[
\mathsf{LC}_{\mathcal{O}_{S}}:=\operatorname*{D}\nolimits_{\infty}%
^{b}\mathsf{LCA}_{\mathcal{O}_{S}}/\operatorname*{D}\nolimits_{\infty}%
^{b}\mathsf{LCA}_{\mathcal{O}_{S},\mathbf{R}}%
\]
inside $\operatorname*{Cat}\nolimits_{\infty}^{\operatorname*{perf}%
}(\mathcal{O}_{S})$. Pontryagin duality induces an exact equivalence of stable
$\infty$-categories $(-)^{\vee}\colon\mathsf{LC}_{\mathcal{O}_{S}%
}^{\operatorname*{op}}\overset{\sim}{\longrightarrow}\mathsf{LC}%
_{\mathcal{O}_{S}}$.
\end{definition}

The quotient exists and Pontryagin duality extends thanks to Lemma
\ref{lem_TT1}.

\begin{lemma}
\label{lem_TT2}Suppose $S$ is a finite set. There is a fiber sequence%
\[
\mathcal{U}^{\operatorname*{loc}}(\operatorname*{Perf}\mathcal{O}_{S}%
)\overset{\alpha}{\longrightarrow}\bigoplus_{v\in S\setminus S_{\infty}%
}\mathcal{U}^{\operatorname*{loc}}(\operatorname*{Perf}F_{v})\overset{\beta
}{\longrightarrow}\mathcal{U}^{\operatorname*{loc}}({\mathsf{LC}}%
_{\mathcal{O}_{S}})
\]
in $\left.  \mathbf{Mot}(\mathcal{O}_{S})\right.  $, where $\alpha$ is induced
from the exact functor $X\mapsto X\otimes_{\mathcal{O}_{S}}F_{v}$.
\end{lemma}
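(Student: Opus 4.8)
The plan is to exhibit the asserted fiber sequence as the image under $\mathcal{U}^{\operatorname*{loc}}$ of a short exact sequence of $\mathcal{O}_S$-linear stable $\infty$-categories, then identify the terms. The natural candidate is the sequence
\[
\operatorname*{Perf}\mathcal{O}_{S}\longrightarrow\prod_{v\in S\setminus S_{\infty}}\operatorname*{Perf}F_{v}\longrightarrow\mathsf{LC}_{\mathcal{O}_{S}},
\]
where the first map sends $X$ to the tuple $(X\otimes_{\mathcal{O}_S}F_v)_v$ and the second comes from the composite $\mathsf{Mod}_{\mathcal{O}_S}\hookrightarrow\mathsf{LCA}_{\mathcal{O}_S}$ postcomposed with the quotient functor, applied to each $F_v$ viewed as a discrete $\mathcal{O}_S$-module. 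Since $\mathcal{U}^{\operatorname*{loc}}$ is a localizing invariant it sends Verdier quotient sequences to fiber sequences and commutes with finite direct sums (which for a finite $S$ is all that is needed to match $\prod$ and $\bigoplus$), so it suffices to prove the displayed sequence of categories is exact in $\operatorname*{Cat}\nolimits_{\infty}^{\operatorname*{perf}}(\mathcal{O}_S)$, i.e. that the first functor is fully faithful on the bounded derived level and that $\mathsf{LC}_{\mathcal{O}_S}$ is the idempotent-completed Verdier quotient by its essential image.

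The first main step is to describe $\mathsf{LCA}_{\mathcal{O}_S}$ structurally. Using the classification of locally compact modules over $S$-integers from \cite{klca1}, every object of $\mathsf{LCA}_{\mathcal{O}_S}$ sits in a canonical filtration whose subquotients are: a real vector module (killed when we pass to $\mathsf{LC}_{\mathcal{O}_S}$), a module of ``adelic type'' built out of the completions $F_v$, and a discrete $\mathcal{O}_S$-module together with its Pontryagin dual (a compact $\mathcal{O}_S$-module). Concretely the key point is that the quotient
$\mathsf{LCA}_{\mathcal{O}_S}/\mathsf{Mod}_{\mathcal{O}_S}^{\mathrm{fg}}$, after also killing real vector modules and compact modules, is controlled by the finite-adelic piece $\prod'_{v}F_v:\mathcal{O}_v$; and the inclusion $\mathcal{O}_v\hookrightarrow F_v$ generates, via the exact structure on $\mathsf{LCA}_{\mathcal{O}_S}$, exactly a copy of $\operatorname*{Perf}F_v$ in the quotient because the compact module $\mathcal{O}_v$ and the discrete module $F_v/\mathcal{O}_v$ both become equivalent (up to shift) to the discrete module $F_v$ after we have already modded out the discrete finitely generated modules. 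I would assemble this as a sequence of localization (recollement) steps among the relevant derived $\infty$-categories, each step either quotienting by a Serre subcategory or using a known equivalence, and bookkeep that nothing outside $\bigoplus_v \operatorname*{Perf}F_v$ survives and nothing in it is killed. A clean way to organize the computation of the cofiber is to use the duality: Pontryagin duality is an exact self-equivalence (Definition \ref{def_C1}), it swaps discrete with compact, swaps $F_v$ with itself, and swaps $\alpha$ with its dual, so one only has to analyze ``half'' of the filtration and dualize.

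The step I expect to be the genuine obstacle is verifying that the cofiber of $\alpha$ is \emph{exactly} $\mathcal{U}^{\operatorname*{loc}}(\mathsf{LC}_{\mathcal{O}_S})$ and not merely a summand or an extension involving it — in other words, pinning down that the quotient of $\operatorname*{D}_\infty^b\mathsf{LCA}_{\mathcal{O}_S}$ by the discrete modules and the real vector modules is generated, as a thick subcategory, precisely by the images of the $F_v$ with no residual contributions from compact modules or from infinite places. Controlling the compact-module part requires invoking Pontryagin duality to trade it against the already-quotiented discrete part (so it contributes nothing new after idempotent completion), and controlling the infinite places requires the running hypothesis $\frac1p\in\mathcal{O}_S$ together with the fact (Lemma \ref{lem_TT1}) that the archimedean contributions are exactly the real vector modules we have removed. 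Once these two cancellations are in place, the long exact / fiber sequence produced by $\mathcal{U}^{\operatorname*{loc}}$ applied to the recollement is the claimed one, with $\alpha$ visibly induced by $X\mapsto X\otimes_{\mathcal{O}_S}F_v$; and the $\mathcal{O}_S$-linearity of every functor in sight is automatic because all the functors ($-\otimes_{\mathcal{O}_S}F_v$, the inclusion of discrete modules, Pontryagin duality, and the Verdier quotients) are manifestly $\mathcal{O}_S$-linear.
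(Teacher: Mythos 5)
Your opening move does not work as stated: you propose to realize the fiber sequence as $\mathcal{U}^{\operatorname*{loc}}$ of an exact sequence
$\operatorname*{Perf}\mathcal{O}_{S}\rightarrow\prod_{v}\operatorname*{Perf}F_{v}\rightarrow\mathsf{LC}_{\mathcal{O}_{S}}$
in $\operatorname*{Cat}_{\infty}^{\operatorname*{perf}}(\mathcal{O}_{S})$, and you explicitly require the first functor to be fully faithful with $\mathsf{LC}_{\mathcal{O}_{S}}$ the idempotent-completed Verdier quotient. But $X\mapsto(X\otimes_{\mathcal{O}_{S}}F_{v})_{v}$ is not fully faithful: already on endomorphisms of the unit it is the non-isomorphism $\mathcal{O}_{S}\rightarrow\bigoplus_{v\in S\setminus S_{\infty}}F_{v}$. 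So the displayed sequence of categories is not exact in the sense of Definition (Eq. \ref{l_def_exseq}), and the lemma's fiber sequence cannot be obtained as the image of a single Verdier quotient presentation of this shape. It only holds at the level of localizing motives, where it is assembled by rotating genuine localization sequences. The remainder of your argument — the compact/adelic/discrete filtration, the recollement steps, the ``bookkeeping'' that nothing extra survives — is precisely the content of the cited result \cite[Theorem 4.30]{klca1}, which you are in effect re-deriving in outline rather than using; and the step you yourself flag as the genuine obstacle is exactly the part left unproved.

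The paper's proof avoids all of this by bootstrapping. It takes as input the already-established fiber sequence
$\mathcal{U}^{\operatorname*{loc}}(\operatorname*{Perf}\mathcal{O}_{S})\rightarrow\bigoplus_{v\in S}\mathcal{U}^{\operatorname*{loc}}(\operatorname*{Perf}F_{v})\rightarrow\mathcal{U}^{\operatorname*{loc}}(\mathsf{LCA}_{\mathcal{O}_{S}})$
of \cite[Theorem 4.30]{klca1} (note: summed over \emph{all} of $S$, including the archimedean places, and with target $\mathsf{LCA}_{\mathcal{O}_{S}}$, not the quotient), together with two facts from Lemma \ref{lem_TT1}: the equivalence $\mathsf{LCA}_{\mathcal{O}_{S},\mathbf{R}}\simeq\mathsf{Mod}_{\mathcal{O}_{S}\otimes\mathbf{R}}^{fg}$, which identifies $\bigoplus_{v\in S_{\infty}}\mathcal{U}^{\operatorname*{loc}}(\operatorname*{Perf}F_{v})$ with $\mathcal{U}^{\operatorname*{loc}}(\mathsf{LCA}_{\mathcal{O}_{S},\mathbf{R}})$, and the full faithfulness of $\operatorname*{D}_{\infty}^{b}\mathsf{LCA}_{\mathcal{O}_{S},\mathbf{R}}\rightarrow\operatorname*{D}_{\infty}^{b}\mathsf{LCA}_{\mathcal{O}_{S}}$, which gives the fiber sequence $\mathcal{U}^{\operatorname*{loc}}(\mathsf{LCA}_{\mathcal{O}_{S},\mathbf{R}})\rightarrow\mathcal{U}^{\operatorname*{loc}}(\mathsf{LCA}_{\mathcal{O}_{S}})\rightarrow\mathcal{U}^{\operatorname*{loc}}(\mathsf{LC}_{\mathcal{O}_{S}})$. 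The claimed sequence is then the columnwise cofiber in the resulting three-by-three diagram. If you want a self-contained argument, you would have to reproduce the proof of \cite[Theorem 4.30]{klca1}; if you are willing to cite it, the right move is the formal diagram above, not a new quotient presentation.
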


\begin{proof}
Consider the diagram%
\[%
\adjustbox{max width=\textwidth}{
\begin{tikzcd}
	&& {\bigoplus\limits_{v \in S_{\infty}} \mathcal{U}^{\operatorname{loc}
}(\operatorname{Perf} F_{v})} && {\mathcal{U}^{\operatorname{loc} }%
(\mathsf{LCA}_{\mathcal{O}_{S},\mathbf{R} })} \\
	\\
	{\mathcal{U}^{\operatorname{loc} }(\operatorname{Perf} \mathcal{O}_{S})}
&& {\bigoplus\limits_{v \in S} \mathcal{U}^{\operatorname{loc} }%
(\operatorname{Perf} F_{v})} && {\mathcal{U}^{\operatorname{loc} }%
(\mathsf{LCA}_{\mathcal{O}_{S} })} \\
	\\
	{\mathcal{U}^{\operatorname{loc} }(\operatorname{Perf} \mathcal{O}_{S})}
&& {\bigoplus\limits_{v \in S \setminus S_{\infty} } \mathcal{U}%
^{\operatorname{loc} }(\operatorname{Perf} F_{v})} && {\mathcal{U}%
^{\operatorname{loc} }(\mathsf{LC}_{\mathcal{O}_{S} }),}
	\arrow["\lambda"', equals, from=1-3, to=1-5]
	\arrow[from=1-3, to=3-3]
	\arrow[from=1-5, to=3-5]
	\arrow[from=3-1, to=3-3]
	\arrow[equals, from=3-1, to=5-1]
	\arrow[from=3-3, to=3-5]
	\arrow[from=3-3, to=5-3]
	\arrow[from=3-5, to=5-5]
	\arrow[from=5-1, to=5-3]
	\arrow[from=5-3, to=5-5]
\end{tikzcd}
}%
\]
where $\lambda$ is induced from the equivalence of abelian categories of Lemma
\ref{lem_TT1} (3). The middle row is the fiber sequence of \cite[Theorem
4.30]{klca1}. \textit{Loc. cit.} this is only formulated for
$\operatorname*{Cat}\nolimits_{\infty}^{\operatorname*{perf}}$, but it is easy
to see that it lifts to the $\mathcal{O}_{S}$-linear setting, and the
definition of exactness (see Eq. \ref{l_def_exseq}) reduces to exactness in
$\operatorname*{Cat}\nolimits_{\infty}^{\operatorname*{perf}}$. The downward
arrows are fiber sequences since the initial arrows of both middle and right
column are induced from fully faithful exact functors. This is clear for the
middle, and true for the right column by Lemma \ref{lem_TT1} (4).
\end{proof}

\section{Definition via Lurie tensor product}

In this entire section we shall assume that $S$ is a \textit{finite} set, and
otherwise as in \S \ref{sect_Setup}. We will occasionally repeat this
assumption in the statement of propositions to avoid forgetting about it when
citing a statement.

Relying on the map $\pi$ of Eq. \ref{l_q1}, we have $\operatorname*{Perf}%
\mathcal{X}\in\operatorname*{Cat}\nolimits_{\infty}^{\operatorname*{perf}%
}(\mathcal{O}_{S})$, so using the intrinsic symmetric monoidal structure of
$\operatorname*{Cat}\nolimits_{\infty}^{\operatorname*{perf}}(\mathcal{O}%
_{S})$, we may proceed as follows.

\begin{definition}
\label{def_LCA_X}We call $\underline{{\mathsf{LC}}}_{\mathcal{X}%
}:=\operatorname*{Perf}\mathcal{X}\otimes_{\mathcal{O}_{S}}{\mathsf{LC}%
}_{\mathcal{O}_{S}}$ the stable $\infty$-category of (horizontally)
\emph{locally compact }$\mathcal{O}_{\mathcal{X}}$\emph{-modules}%
.\footnote{This definition works for any $\pi\colon\mathcal{X}\rightarrow
\operatorname*{Spec}\mathcal{O}_{S}$ as in our Setup \S \ref{sect_Setup}, but
philosophically only yields the correct object when $\pi$ is proper. When
$\pi$ is non-proper, its $K$-theory represents something with
\textquotedblleft compact horizontal support\textquotedblright\ (but arbitrary
support \textquotedblleft vertically\textquotedblright\ in the fibers). It is
conceptually roughly orthogonal to the compact vertical cohomology of
\cite[Ch. 1, \S 6]{MR658304}.}
\end{definition}

When $\mathcal{X}$ is affine of relative dimension zero over
$\operatorname*{Spec}\mathcal{O}_{S}$, we now have two competing definitions:
We could consider ${\mathsf{LC}}_{\mathcal{X}}$, as locally compact modules
over this extension ring modulo reals (Definition \ref{def_C1}), or take
$\underline{{\mathsf{LC}}}_{\mathcal{X}}$ as in Definition \ref{def_LCA_X}.
Corollary \ref{cor1} below will show that we obtain the same non-commutative
motive either way.

\begin{proposition}
\label{prop_W1}Suppose $S$ is finite. Then the sequence%
\[
\mathcal{U}^{\operatorname*{loc}}(\operatorname*{Perf}\mathcal{X}%
)\underset{\alpha^{\prime}}{\longrightarrow}\bigoplus\limits_{v\in S\setminus
S_{\infty}}\mathcal{U}^{\operatorname*{loc}}(\operatorname*{Perf}%
\mathcal{X}_{v})\underset{\beta^{\prime}}{\longrightarrow}\mathcal{U}%
^{\operatorname*{loc}}(\underline{{\mathsf{LC}}}_{\mathcal{X}})\text{,}%
\]
where $\alpha^{\prime}$ is induced by the exact functors $i_{v}^{\ast}$ (from
Diagram \ref{l_d3}), and $\beta^{\prime}$ constructed in the proof below, is
exact in $\left.  \mathbf{Mot}(\mathcal{O}_{S})\right.  $.
\end{proposition}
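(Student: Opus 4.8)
The plan is to obtain the claimed sequence by tensoring the fiber sequence of Lemma~\ref{lem_TT2} with $\operatorname*{Perf}\mathcal{X}$ over $\mathcal{O}_{S}$ and then identifying each of the three resulting terms. Recall from the properties of $\mathcal{U}^{\operatorname*{loc}}$ collected above (\cite[Theorem 5.18]{MR3607274}) that tensoring an exact sequence in $\operatorname*{Cat}\nolimits_{\infty}^{\operatorname*{perf}}(\mathcal{O}_{S})$ with a fixed object $\mathsf{Z}$ again yields an exact sequence, and that $\mathcal{U}^{\operatorname*{loc}}$ is symmetric monoidal, so it carries $\otimes_{\mathcal{O}_{S}}$ of categories to $\otimes$ in $\mathbf{Mot}(\mathcal{O}_{S})$. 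Thus, taking $\mathsf{Z}:=\operatorname*{Perf}\mathcal{X}$ and applying $\mathcal{U}^{\operatorname*{loc}}(-\otimes_{\mathcal{O}_{S}}\operatorname*{Perf}\mathcal{X})$ to the fiber sequence of Lemma~\ref{lem_TT2} produces an exact sequence
\[
\mathcal{U}^{\operatorname*{loc}}(\operatorname*{Perf}\mathcal{O}_{S}\otimes_{\mathcal{O}_{S}}\operatorname*{Perf}\mathcal{X})\longrightarrow\bigoplus_{v\in S\setminus S_{\infty}}\mathcal{U}^{\operatorname*{loc}}(\operatorname*{Perf}F_{v}\otimes_{\mathcal{O}_{S}}\operatorname*{Perf}\mathcal{X})\longrightarrow\mathcal{U}^{\operatorname*{loc}}({\mathsf{LC}}_{\mathcal{O}_{S}}\otimes_{\mathcal{O}_{S}}\operatorname*{Perf}\mathcal{X}),
\]
using that $\mathcal{U}^{\operatorname*{loc}}$ commutes with finite direct sums. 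It remains to match each term with the one in the statement.

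For the first term, property~(3) above gives $\operatorname*{Perf}\mathcal{O}_{S}\otimes_{\mathcal{O}_{S}}\operatorname*{Perf}\mathcal{X}\simeq\operatorname*{Perf}\mathcal{X}$, and one checks the resulting identification sends the tensored map to the one induced by $i_{v}^{\ast}$. For the third term, $\operatorname*{Perf}\mathcal{X}\otimes_{\mathcal{O}_{S}}{\mathsf{LC}}_{\mathcal{O}_{S}}$ is by Definition~\ref{def_LCA_X} exactly $\underline{{\mathsf{LC}}}_{\mathcal{X}}$ (using commutativity of $\otimes_{\mathcal{O}_{S}}$), and $\beta'$ is \emph{defined} as the image under $\mathcal{U}^{\operatorname*{loc}}$ of the induced map, so there is nothing further to check there. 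The crux is the middle term: I need the base-change identification
\[
\operatorname*{Perf}F_{v}\otimes_{\mathcal{O}_{S}}\operatorname*{Perf}\mathcal{X}\;\overset{\sim}{\longrightarrow}\;\operatorname*{Perf}\mathcal{X}_{v},
\]
compatibly with the map from the first term being $i_{v}^{\ast}$. This is precisely Eq.~\ref{l_d2} applied to $X_{1}=\operatorname*{Spec}F_{v}$, $X_{2}=\mathcal{X}$ over $R=\mathcal{O}_{S}$: both $s_{v}\colon\operatorname*{Spec}F_{v}\to\mathcal{S}$ and $\pi\colon\mathcal{X}\to\mathcal{S}$ are flat (the former since $F_{v}$ is a localization-completion of a Dedekind domain, the latter since $\pi$ is smooth), so the ordinary fiber product $\operatorname*{Spec}F_{v}\times_{\mathcal{O}_{S}}\mathcal{X}=\mathcal{X}_{v}$ agrees with the derived one, and Eq.~\ref{l_d2} gives the equivalence. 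Under this equivalence the functor $\mathcal{F}_{1}\otimes\mathcal{F}_{2}\mapsto p_{1}^{\ast}\mathcal{F}_{1}\otimes p_{2}^{\ast}\mathcal{F}_{2}$ restricted along $\operatorname*{Perf}\mathcal{O}_{S}\otimes_{\mathcal{O}_{S}}\operatorname*{Perf}\mathcal{X}\to\operatorname*{Perf}F_{v}\otimes_{\mathcal{O}_{S}}\operatorname*{Perf}\mathcal{X}$ becomes the pullback $i_{v}^{\ast}$ along $i_{v}\colon\mathcal{X}_{v}\to\mathcal{X}$ in Diagram~\ref{l_d3}, using the standard compatibility of $\otimes$ and $(-)^{\ast}$ in a Cartesian diagram.

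The step I expect to be the main obstacle is not the abstract exactness — that is formal once Lemma~\ref{lem_TT2} is tensored — but the bookkeeping needed to verify that the map $\alpha'$ in the tensored sequence really is the one induced by $i_{v}^{\ast}$ \emph{on the nose}, i.e., that the chain of equivalences (property~(3), symmetry of $\otimes_{\mathcal{O}_{S}}$, Eq.~\ref{l_d2}, and base change in Diagram~\ref{l_d3}) is compatible with the map $\alpha$ of Lemma~\ref{lem_TT2}, which there is induced by $X\mapsto X\otimes_{\mathcal{O}_{S}}F_{v}$. Concretely one has to trace that $\alpha\otimes_{\mathcal{O}_{S}}\operatorname*{id}_{\operatorname*{Perf}\mathcal{X}}$, after the identifications, is $i_{v}^{\ast}$; this is a diagram chase in $\operatorname*{Cat}\nolimits_{\infty}^{\operatorname*{perf}}(\mathcal{O}_{S})$ using the projection-formula-type compatibilities of the Lurie tensor product with pullback along the flat, separated, finite-type map $\pi$. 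Since everything in sight is flat, there are no derived-tensor subtleties, and the verification reduces to the functoriality statement underlying Eq.~\ref{l_d2}. I would state this compatibility explicitly as the content of the construction of $\beta'$ and $\alpha'$, and then the exactness of the sequence follows immediately from property~(2) of $\mathcal{U}^{\operatorname*{loc}}$.
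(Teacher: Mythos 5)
Your proposal is correct and follows essentially the same route as the paper: tensor the fiber sequence of Lemma \ref{lem_TT2} with $\operatorname*{Perf}\mathcal{X}$ over $\mathcal{O}_{S}$, invoke the exactness-preservation and symmetric monoidality of $\mathcal{U}^{\operatorname*{loc}}$, and identify the terms via property (3) and the base-change equivalence of Eq. \ref{l_d2} (whose flatness hypothesis is met). The compatibility check you flag as the main obstacle is exactly the bookkeeping the paper also performs when identifying $\alpha\otimes\operatorname*{id}$ with $i_{v}^{\ast}$ and defining $\beta'$ as $\operatorname*{id}_{\mathcal{X}}\otimes\beta$.
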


If we additionally assume that $\pi$ is proper, we may set up a little more
structure: Since $\pi$ is flat, then whenever $\pi$ is proper, we get an
induced pushforward for perfect complexes \cite[Thm. 6.1.3.2]{LurieSAG},%
\begin{equation}
\pi_{\ast}\colon\operatorname*{Perf}\mathcal{X}\longrightarrow
\operatorname*{Perf}\mathcal{O}_{S}\text{,} \label{l_h0}%
\end{equation}
and this induces a map to the respective tensor products $-\otimes
_{\mathcal{O}_{S}}{\mathsf{LC}}_{\mathcal{O}_{S}}$, which by abuse of language
we shall still denote by $\pi_{\ast}$,%
\begin{equation}
\pi_{\ast}\colon\underline{{\mathsf{LC}}}_{\mathcal{X}}\longrightarrow
{\mathsf{LC}}_{\mathcal{O}_{S}}\text{.} \label{l_h0a}%
\end{equation}

\begin{corollary}
\label{cor_v1}If additionally $\pi$ is proper, Prop. \ref{prop_W1} can be
extended to: The diagram%
\begin{equation}%
{
\begin{tikzcd}
	{\mathcal{U}^{\operatorname*{loc}}(\operatorname*{Perf}\mathcal{X})}
&& {\underset{v\in S \setminus S_{\infty}}{\bigoplus}\mathcal{U}%
^{\operatorname*{loc}}(\operatorname*{Perf}{\mathcal{X}}_{v})} && {\mathcal
{U}^{\operatorname*{loc}}({\underline{{\mathsf{LC}}}_{\mathcal{X}}}),} \\
	\\
	{\mathcal{U}^{\operatorname*{loc}}(\operatorname*{Perf}\mathcal{O} _{S})}
&& {\underset{v\in S \setminus S_{\infty}}{\bigoplus}\mathcal{U}%
^{\operatorname*{loc}}(\operatorname*{Perf}F_{v})} && {\mathcal{U}%
^{\operatorname*{loc}}({{{\mathsf{LC}}}_{\mathcal{O}_{S}}}),}
	\arrow["{{{\alpha^{\prime}}}}", from=1-1, to=1-3]
	\arrow["{{{\pi}_{*}}}", from=1-1, to=3-1]
	\arrow["{{{\beta^{\prime}}}}", from=1-3, to=1-5]
	\arrow["{{{\pi}_{*}}}", from=1-3, to=3-3]
	\arrow["{{{\pi}_{*}}}", from=1-5, to=3-5]
	\arrow[from=3-1, to=3-3]
	\arrow[from=3-3, to=3-5]
\end{tikzcd}
}
\label{l_d0}%
\end{equation}
commutes and has exact rows in $\left.  \mathbf{Mot}(\mathcal{O}_{S})\right.
$.
\end{corollary}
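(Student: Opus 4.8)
The plan is to show that Diagram~\ref{l_d0} carries no information beyond Proposition~\ref{prop_W1}, Lemma~\ref{lem_TT2} and the bifunctoriality of $\otimes_{\mathcal{O}_{S}}$ on $\mathbf{Mot}(\mathcal{O}_{S})$. The two rows are exact: the top one is exactly the sequence of Proposition~\ref{prop_W1}, the bottom one exactly that of Lemma~\ref{lem_TT2}. So only the commutativity of the two squares is at stake. Recall that the proof of Proposition~\ref{prop_W1} produces its sequence by applying $\mathcal{U}^{\operatorname*{loc}}(\operatorname*{Perf}\mathcal{X})\otimes_{\mathcal{O}_{S}}(-)$ to the sequence of Lemma~\ref{lem_TT2}, using $\mathcal{U}^{\operatorname*{loc}}(\operatorname*{Perf}\mathcal{X})\otimes_{\mathcal{O}_{S}}\mathcal{U}^{\operatorname*{loc}}(\operatorname*{Perf}\mathcal{O}_{S})\simeq\mathcal{U}^{\operatorname*{loc}}(\operatorname*{Perf}\mathcal{X})$, the equivalence $\mathcal{U}^{\operatorname*{loc}}(\operatorname*{Perf}\mathcal{X})\otimes_{\mathcal{O}_{S}}\mathcal{U}^{\operatorname*{loc}}(\operatorname*{Perf}F_{v})\simeq\mathcal{U}^{\operatorname*{loc}}(\operatorname*{Perf}\mathcal{X}_{v})$ of Eq.~\ref{l_d2} (licit because both $\mathcal{X}$ and $\operatorname*{Spec}F_{v}$ are flat over $\mathcal{O}_{S}$), and the defining equation $\underline{{\mathsf{LC}}}_{\mathcal{X}}=\operatorname*{Perf}\mathcal{X}\otimes_{\mathcal{O}_{S}}{\mathsf{LC}}_{\mathcal{O}_{S}}$ of Definition~\ref{def_LCA_X}; under these identifications $\alpha^{\prime}=\operatorname*{id}\otimes\alpha$ and $\beta^{\prime}=\operatorname*{id}\otimes\beta$. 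The bottom row of Diagram~\ref{l_d0} is likewise $\mathcal{U}^{\operatorname*{loc}}(\operatorname*{Perf}\mathcal{O}_{S})\otimes_{\mathcal{O}_{S}}(-)$ applied to the same sequence, with $\operatorname*{Perf}\mathcal{O}_{S}$ the tensor unit. Hence Diagram~\ref{l_d0} is simply the result of tensoring the morphism $\mathcal{U}^{\operatorname*{loc}}(\pi_{\ast})\colon\mathcal{U}^{\operatorname*{loc}}(\operatorname*{Perf}\mathcal{X})\to\mathcal{U}^{\operatorname*{loc}}(\operatorname*{Perf}\mathcal{O}_{S})$ of Eq.~\ref{l_h0} with the exact sequence of Lemma~\ref{lem_TT2}; since $\otimes_{\mathcal{O}_{S}}$ is a bifunctor, this is automatically a commuting diagram --- \emph{provided} the three vertical maps are the ones just described.

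The one genuinely substantial point is the identification of the middle vertical. By construction it is the componentwise pushforward $\bigoplus_{v}\pi_{v,\ast}\colon\bigoplus_{v}\operatorname*{Perf}\mathcal{X}_{v}\to\bigoplus_{v}\operatorname*{Perf}F_{v}$ along the base changes $\pi_{v}\colon\mathcal{X}_{v}\to\operatorname*{Spec}F_{v}$ of $\pi$, which are again proper and flat, so each $\pi_{v,\ast}$ exists and preserves perfect complexes by \cite[Thm.~6.1.3.2]{LurieSAG}; I must verify that, under Eq.~\ref{l_d2}, it agrees with $\pi_{\ast}\otimes_{\mathcal{O}_{S}}\operatorname*{id}_{\operatorname*{Perf}F_{v}}$, the $\pi_{\ast}$ of Eq.~\ref{l_h0} being the left vertical and the $\pi_{\ast}$ of Eq.~\ref{l_h0a} the right one. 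By the universal property of extension of scalars in $\operatorname*{Cat}\nolimits_{\infty}^{\operatorname*{perf}}(\mathcal{O}_{S})$ (\cite{MR3607274}), a $\operatorname*{Perf}F_{v}$-linear exact functor out of $\operatorname*{Perf}\mathcal{X}_{v}\simeq\operatorname*{Perf}\mathcal{X}\otimes_{\mathcal{O}_{S}}\operatorname*{Perf}F_{v}$ is determined, up to natural equivalence, by the $\mathcal{O}_{S}$-linear functor obtained by restriction along $i_{v}^{\ast}\colon\operatorname*{Perf}\mathcal{X}\to\operatorname*{Perf}\mathcal{X}_{v}$; and both $\pi_{v,\ast}$ and $\pi_{\ast}\otimes\operatorname*{id}$ are $\operatorname*{Perf}F_{v}$-linear (for a pushforward to the spectrum of a field this is automatic, as perfect $F_{v}$-complexes are finite direct sums of shifts of $F_{v}$ and the pushforward is exact). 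Now $(\pi_{\ast}\otimes\operatorname*{id})\circ i_{v}^{\ast}$ identifies with $s_{v}^{\ast}\circ\pi_{\ast}$ (compute on pure tensors via the description of the equivalence in Eq.~\ref{l_d2}), so the comparison reduces to the equivalence
\[
\pi_{v,\ast}\circ i_{v}^{\ast}\;\simeq\;s_{v}^{\ast}\circ\pi_{\ast}\colon\ \operatorname*{Perf}\mathcal{X}\longrightarrow\operatorname*{Perf}F_{v}\text{,}
\]
i.e.\ flat base change along the Cartesian square~\ref{l_d3}, valid because $\pi$ is proper and $s_{v}$ is flat. Read directly, this same equivalence \emph{is} the commutativity of the left-hand square of Diagram~\ref{l_d0}; the right-hand square then commutes by the interchange law for $\otimes_{\mathcal{O}_{S}}$, as in the first paragraph.

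I expect the only real obstacle to lie in making this last paragraph coherent: the base-change comparison has to be produced not just objectwise but as a natural equivalence of $\mathcal{O}_{S}$-linear exact functors, compatible with the symmetric monoidal structures, so that after applying $\mathcal{U}^{\operatorname*{loc}}$ it genuinely identifies the middle vertical of Diagram~\ref{l_d0} with $\mathcal{U}^{\operatorname*{loc}}(\pi_{\ast})\otimes\operatorname*{id}$. This is the standard coherent base-change and projection-formula package for perfect complexes along a proper flat morphism, which one may take from \cite[\S 6.1]{LurieSAG}. Everything else --- exactness of the rows, and the commutativity of both squares once the verticals are so identified --- is then formal, the latter being nothing but the bifunctoriality of the monoidal product on $\mathbf{Mot}(\mathcal{O}_{S})$.
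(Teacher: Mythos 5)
Your proof is correct and follows essentially the same route as the paper: both rows arise by tensoring the exact sequence of Lemma \ref{lem_TT2} with $\mathcal{U}^{\operatorname*{loc}}(\operatorname*{Perf}\mathcal{X})$ resp.\ with the tensor unit $\mathcal{U}^{\operatorname*{loc}}(\operatorname*{Perf}\mathcal{O}_{S})$, the vertical arrows are induced by the $\pi_{\ast}$ of Eq.~\ref{l_h0}, and commutativity is the bifunctoriality of $\otimes_{\mathcal{O}_{S}}$ in $\mathbf{Mot}(\mathcal{O}_{S})$. Your extra verification that $\pi_{\ast}\otimes\operatorname{id}$ agrees, under Eq.~\ref{l_d2}, with the componentwise pushforward $\pi_{v,\ast}$ via proper flat base change goes beyond the paper's proof (which simply \emph{defines} the middle vertical as induced from Eq.~\ref{l_h0}); it is correct and is in fact tacitly relied upon later, e.g.\ in the proof of Cor.~\ref{cor_v2}.
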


We shall prove both the proposition and the corollary in one go.

\begin{proof}
(Hypothesis: $\pi$ need not be proper) We commence with the fiber sequence of
Lemma \ref{lem_TT2}:
\begin{equation}
\mathcal{U}^{\operatorname*{loc}}(\operatorname*{Perf}\mathcal{O}_{S}%
)\overset{\alpha}{\longrightarrow}\bigoplus_{v\in S\setminus S_{\infty}%
}\mathcal{U}^{\operatorname*{loc}}(\operatorname*{Perf}F_{v})\overset{\beta
}{\longrightarrow}\mathcal{U}^{\operatorname*{loc}}({\mathsf{LC}}%
_{\mathcal{O}_{S}})\text{,} \label{l_d1}%
\end{equation}
in $\left.  \mathbf{Mot}(\mathcal{O}_{S})\right.  $, where $\alpha$ is induced
from the exact functor $M\mapsto M\otimes_{\mathcal{O}_{S}}F_{v}$ (using that
$F_{v}$ is $\mathcal{O}_{S}$-flat), and $\beta$ from the exact functor to
regard a finite-dimensional $F_{v}$-vector space as itself, equipped with the
canonical topology induced from $F_{v}$. More concretely:\ We have
$\mathcal{U}^{\operatorname*{loc}}(\operatorname*{Perf}F_{v})\cong
\mathcal{U}^{\operatorname*{loc}}(\mathsf{Vect}_{F_{v}}^{fd})$, where
$\mathsf{Vect}_{F_{v}}^{fd}$ is the abelian category of finite-dimensional
$F_{v}$-vector spaces. Define an additive functor $\beta\colon\mathsf{Vect}%
_{F_{v}}^{fd}\rightarrow{\mathsf{LCA}}_{\mathcal{O}_{S}}\rightarrow
\mathsf{LC}_{\mathcal{O}_{S}}$ by sending $F_{v}$ to itself, equipped with the
topology coming from the metric attached to $v$ (as $v\notin S_{\infty}$, this
will be a non-archimedean topology). There is a unique extension of this
description to an additive functor, and since the source category is split
exact, as an exact functor. Equivalently, equip any finite-dimensional $F_{v}%
$-vector space with an $F_{v}$-norm and take the induced topology. It is known
that all norms on finite-dimensional $F_{v}$-spaces are equivalent and
therefore induce the same topology (\cite[Thm. 6.2.1]{MR4175370}). Now tensor
Eq. \ref{l_d1} with $\mathcal{U}^{\operatorname*{loc}}(\operatorname*{Perf}%
\mathcal{X})$ from the left, regarded $\mathcal{O}_{S}$-linearly via the fixed
map of Eq. \ref{l_q1} in $\left.  \mathbf{Mot}(\mathcal{O}_{S})\right.  $.
This yields the top row of%
\begin{equation}%
{\adjustbox{scale=0.724}{
\begin{tikzcd}
	{\mathcal{U}^{\operatorname*{loc}}(\operatorname*{Perf}\mathcal{X}%
)\otimes_{\mathcal{O}_S}\mathcal{U}^{\operatorname*{loc}}(\operatorname
*{Perf}\mathcal{O} _{S})} && {\mathcal{U}^{\operatorname*{loc}}(\operatorname
*{Perf}\mathcal{X})\otimes_{\mathcal{O}_S}\underset{v\in S \setminus
S_{\infty}}{\bigoplus}\mathcal{U}^{\operatorname*{loc}}(\operatorname
*{Perf}F_{v})} && {\mathcal{U}^{\operatorname*{loc}}({\underline{{\mathsf{LC}%
}}_{\mathcal{X}}})} \\
	{\mathcal{U}^{\operatorname*{loc}}(\operatorname*{Perf}\mathcal{X})}
&& {\underset{v\in S \setminus S_{\infty}}{\bigoplus}\mathcal{U}%
^{\operatorname*{loc}}(\operatorname*{Perf}{\mathcal{X}}_{v})} && {\mathcal
{U}^{\operatorname*{loc}}({\underline{{\mathsf{LC}}}_{\mathcal{X}}}),}
	\arrow["\alpha", from=1-1, to=1-3]
	\arrow[from=1-1, to=2-1]
	\arrow["\beta", from=1-3, to=1-5]
	\arrow[from=1-3, to=2-3]
	\arrow[equals, from=1-5, to=2-5]
	\arrow["{{\alpha^{\prime}}}"', from=2-1, to=2-3]
	\arrow["{{\beta^{\prime}}}"'{pos=0.7}, from=2-3, to=2-5]
\end{tikzcd}
}
}
\label{DiagW1}%
\end{equation}
where the downward arrows use (1) the equivalences%
\begin{align*}
&  \operatorname*{Perf}\mathcal{X}\otimes_{\mathcal{O}_{S}}%
\operatorname*{Perf}\mathcal{O}_{S}\overset{\sim}{\longrightarrow
}\operatorname*{Perf}\mathcal{X}\\
&  \operatorname*{Perf}\mathcal{X}\otimes_{\mathcal{O}_{S}}%
\operatorname*{Perf}F_{v}\overset{\sim}{\longrightarrow}\operatorname*{Perf}%
\mathcal{X}_{v}%
\end{align*}
in $\operatorname*{Cat}\nolimits_{\infty}^{\operatorname*{perf}}%
(\mathcal{O}_{S})$ from Eq. \ref{l_d2} (note that the flatness assumption is
met), followed by (2) the property of $\mathcal{U}^{\operatorname*{loc}}$ to
be a symmetric monoidal functor, \S \ref{subsubsect_Generalities}. The map is
induced from the exact functor $i_{v}^{\ast}$ of Diag. \ref{l_d3}. The map
$\beta^{\prime}$ is a little more complicated:\ We have no better description
at this point than saying that it is induced from $\operatorname*{id}%
_{\mathcal{X}}\otimes\beta$ in $\left.  \mathbf{Mot}(\mathcal{O}_{S})\right.
$ with $\beta$ from Eq. \ref{l_d1}. (Hypothesis: $\pi$ is proper)\ We now
prove Cor. \ref{cor_v1}. Diagram \ref{l_d0} arises from considering two copies
of the above argument, once for $\mathcal{X}$ itself, and then for
$\mathcal{X}$ replaced by $\operatorname*{Spec}\mathcal{O}_{S}$, and the
downward arrows stem from the map $\pi_{\ast}$ of Eq. \ref{l_h0}.
\end{proof}

Suppose $F^{\prime}/F$ is a finite field extension. Let $S^{\prime}$ denote
the set of all places of $F^{\prime}$ which lie above places in $F$. Then
$S^{\prime}$ contains all infinite places of $F^{\prime}$. Differing from the
notation elsewhere in this text, we temporarily write $\mathcal{O}_{F,S}$
(resp. $\mathcal{O}_{F^{\prime},S^{\prime}}$) for the rings of $S$-integers of
$F$ (resp. $S^{\prime}$-integers of $F^{\prime}$).

There is a base change functor, which is most conveniently defined indirectly
by%
\begin{align}
\mathcal{O}_{F^{\prime},S^{\prime}}\otimes-\colon{\mathsf{LCA}}_{\mathcal{O}%
_{F,S}}  &  \longrightarrow{\mathsf{LCA}}_{\mathcal{O}_{F^{\prime},S^{\prime}%
}}\label{l_d4}\\
X  &  \longmapsto\operatorname*{Hom}\nolimits_{\mathcal{O}_{F}}(\mathcal{O}%
_{F^{\prime}},X^{\vee})^{\vee}\text{,}\nonumber
\end{align}
where the inner $(-)^{\vee}$ is the Pontryagin dual in ${\mathsf{LCA}%
}_{\mathcal{O}_{F,S}}$, then $\operatorname*{Hom}\nolimits_{\mathcal{O}_{F}%
}(\mathcal{O}_{F^{\prime}},X^{\vee})$ carries a natural $\mathcal{O}%
_{F^{\prime},S^{\prime}}$-module structure and is locally compact\footnote{to
get local compactness, see Moskowitz \cite[Theorem 4.3 (2$^{^{\prime}}$%
)]{MR0215016}, and the paragraph before: Note that $\mathcal{O}_{F^{\prime}}$
is finitely generated as an abelian group.}, and the outer $(-)^{\vee}$ is the
Pontryagin dual of ${\mathsf{LCA}}_{\mathcal{O}_{F^{\prime},S^{\prime}}}$. The
functor is exact since Pontryagin duals are exact\footnote{The circle
$\mathbb{T}$ is an injective object in ${\mathsf{LCA}}_{\mathbf{Z}}$. One way
to see this is that by Example \ref{ex_PropertiesInLCAOS} (2), $\mathbf{Z}$ is
projective in ${\mathsf{LCA}}_{\mathbf{Z}}$, but $\mathbb{T}=\mathbf{Z}^{\vee
}$, so we conclude using (4).} and $\mathcal{O}_{F^{\prime}}$ (with the
discrete topology) is a projective object in ${\mathsf{LCA}}_{\mathcal{O}%
_{F,S}}$, so the $\operatorname*{Hom}\nolimits_{\mathcal{O}_{F}}%
(\mathcal{O}_{F^{\prime}},-)$ is exact\footnote{Finitely presented flat
modules are always projective, and by Example \ref{ex_PropertiesInLCAOS} (2)
discrete projective modules are also projective objects when considered in
${\mathsf{LCA}}_{\mathcal{O}_{F,S}}$.}.

\begin{remark}
The definition of base change in Eq. \ref{l_d4} circumvents having to work
with the partially defined monoidal structure from
\S \ref{subsubsect_SymmMonoidalLCAOS}. However, if we instead define Eq.
\ref{l_d4} by literally using the tensor product (this necessitates showing
that in the case at hand its topology is indeed locally compact), the
Hom-tensor adjunction shows agreement of definitions:
\begin{align*}
\mathcal{O}_{F^{\prime},S^{\prime}}\otimes_{\mathcal{O}_{F,S}}X  &
\cong\mathcal{O}_{F^{\prime}}\otimes_{\mathcal{O}_{F}}X\cong
\operatorname*{Hom}\nolimits_{\mathbf{Z}}(\mathcal{O}_{F^{\prime}}%
\otimes_{\mathcal{O}_{F}}X,\mathbb{T})^{\vee}\\
&  \cong\operatorname*{Hom}\nolimits_{\mathcal{O}_{F}}(\mathcal{O}_{F^{\prime
}},\operatorname*{Hom}\nolimits_{\mathbf{Z}}(X,\mathbb{T}))^{\vee}%
\cong\operatorname*{Hom}\nolimits_{\mathcal{O}_{F}}(\mathcal{O}_{F^{\prime}%
},X^{\vee})^{\vee}\text{.}%
\end{align*}
In fact, this a posteriori settles the local compactness of the tensor product.
\end{remark}

We may uniquely extend the base change functor to a pairing and to
${\mathsf{LC}}_{\mathcal{O}_{F,(-)}}$: Write $\mathsf{P}_{f}(R)$ for the exact
category of finitely generated projective $R$-modules. There is a unique
bi-exact pairing of exact categories%
\begin{equation}
\mathsf{P}_{f}(\mathcal{O}_{F^{\prime},S^{\prime}})\times{\mathsf{LCA}%
}_{\mathcal{O}_{F,S}}\longrightarrow{\mathsf{LCA}}_{\mathcal{O}_{F^{\prime
},S^{\prime}}} \label{l_d6}%
\end{equation}
such that restricting to the fixed object $\mathcal{O}_{F^{\prime},S^{\prime}%
}$ in the first slot, the functor agrees with the one from Eq. \ref{l_d4}.
Said differently: There is a unique pairing such that $(\mathcal{O}%
_{F^{\prime},S^{\prime}},X)\mapsto\mathcal{O}_{F^{\prime},S^{\prime}}\otimes
X$ for all objects $X\in{\mathsf{LC}}_{\mathcal{O}_{F,S}}$ and $(\varphi
_{1},\varphi_{2})\mapsto\varphi_{1}\varphi_{2}$ for
endomorphisms.\footnote{Concretely, exactness in the first variable uniquely
extends it to finite rank free modules $\mathsf{F}_{f}(\mathcal{O}_{F^{\prime
},S^{\prime}})$. This constructs a bi-exact pairing $\mathsf{F}_{f}%
(\mathcal{O}_{F^{\prime},S^{\prime}})\times{\mathsf{LC}}_{\mathcal{O}_{F,S}%
}\rightarrow{\mathsf{LC}}_{\mathcal{O}_{F^{\prime},S^{\prime}}}$ for the split
exact structure on $\mathsf{F}_{f}(\mathcal{O}_{F^{\prime},S^{\prime}})$. Then
take the idempotent completion $(-)^{\operatorname*{ic}}$ of this pairing
(this is $2$-functorial, \cite[\S 6]{MR2606234}), and $\mathsf{F}%
_{f}(\mathcal{O}_{F^{\prime},S^{\prime}})^{\operatorname*{ic}}\cong
\mathsf{P}_{f}(\mathcal{O}_{F^{\prime},S^{\prime}})$, while ${\mathsf{LC}%
}_{\mathcal{O}_{F^{\prime},S^{\prime}}}$ is already idempotent complete as it
is quasi-abelian. We obtain Eq. \ref{l_d6}.}

If $X\in{\mathsf{LCA}}_{\mathcal{O}_{F,S}}$ is a real vector space, so is
$\mathcal{O}_{F^{\prime},S^{\prime}}\otimes X$. Thus, the pairing is
well-defined on ${\mathsf{LC}}_{\mathcal{O}_{F,(-)}}$.

Using the translation from Example \ref{ex_DbPfVersusPerfectComplexes}, this
pairing induces a pairing on the derived $\infty$-categories
$\operatorname*{Perf}(\mathcal{O}_{F^{\prime},S^{\prime}})\times{\mathsf{LC}%
}_{\mathcal{O}_{F,S}}$ and via Remark \ref{rmk_BiexactFun} it uniquely turns
${\mathsf{LC}}_{\mathcal{O}_{F,S}}$ into a $\operatorname*{Perf}%
(\mathcal{O}_{F^{\prime},S^{\prime}})$-module:
\begin{equation}
\operatorname*{Perf}(\mathcal{O}_{F^{\prime},S^{\prime}})\otimes
_{\mathcal{O}_{F,S}}{\mathsf{LC}}_{\mathcal{O}_{F,S}}\longrightarrow
{\mathsf{LC}}_{\mathcal{O}_{F^{\prime},S^{\prime}}}\text{.} \label{l_d5}%
\end{equation}
Defining $\mathcal{X}$ to be $\operatorname*{Spec}\mathcal{O}_{F^{\prime
},S^{\prime}}$, the natural map $\pi\colon\mathcal{X}:=\operatorname*{Spec}%
\mathcal{O}_{F^{\prime},S^{\prime}}\rightarrow\operatorname*{Spec}%
\mathcal{O}_{F,S}$ is a relative dimension zero example of our setup in
\S \ref{sect_Setup}, and Eq. \ref{l_d5} could alternatively be written as a
map%
\begin{equation}
\underline{{\mathsf{LC}}}_{\mathcal{O}_{F^{\prime},S^{\prime}}}\longrightarrow
{\mathsf{LC}}_{\mathcal{O}_{F^{\prime},S^{\prime}}}\text{,} \label{l_cd2}%
\end{equation}
comparing the two definitions (Def. \ref{def_C1} vs. Def. \ref{def_LCA_X}).

\begin{corollary}
\label{cor1}Suppose $S$ is finite. Then the morphism of Eq. \ref{l_cd2}
induces an equivalence of non-commutative motives $\mathcal{U}%
^{\operatorname*{loc}}\underline{{\mathsf{LC}}}_{\mathcal{O}_{F^{\prime
},S^{\prime}}}\overset{\sim}{\longrightarrow}\mathcal{U}^{\operatorname*{loc}%
}{\mathsf{LC}}_{\mathcal{O}_{F^{\prime},S^{\prime}}}$.
\end{corollary}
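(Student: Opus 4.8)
The plan is to exhibit the comparison map of Eq.~\ref{l_cd2} as the right-hand vertical of a morphism between the exact sequence of Prop.~\ref{prop_W1} (for $\pi$) and the exact sequence of Lemma~\ref{lem_TT2} (for the bigger ring), whose other two verticals are visibly equivalences. Write $R:=\mathcal{O}_{F,S}$ and $R':=\mathcal{O}_{F^{\prime},S^{\prime}}$, so that $\pi\colon\mathcal{X}:=\operatorname*{Spec}R^{\prime}\to\operatorname*{Spec}R$ is a relative-dimension-zero instance of \S\ref{sect_Setup}; since $S$ is finite, $S^{\prime}$ is finite too, and $\tfrac{1}{p}\in R\subseteq R^{\prime}$ with $S^{\prime}_{\infty}\subseteq S^{\prime}$, so Lemma~\ref{lem_TT2} applies to $R^{\prime}$ as well. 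First I would identify the middle term of Prop.~\ref{prop_W1}: for a finite place $v$ of $F$ in $S$ one has $R^{\prime}\otimes_{R}F_{v}\cong F^{\prime}\otimes_{F}F_{v}\cong\prod_{w\mid v}F^{\prime}_{w}$, so by Eq.~\ref{l_d2} (the flatness hypotheses being met, as already noted in the proof of Prop.~\ref{prop_W1}) $\operatorname*{Perf}\mathcal{X}_{v}\simeq\bigoplus_{w\mid v}\operatorname*{Perf}F^{\prime}_{w}$; moreover the index sets match, $S^{\prime}\setminus S^{\prime}_{\infty}=\bigsqcup_{v\in S\setminus S_{\infty}}\{w\mid v\}$, because a place of $F^{\prime}$ above an infinite place of $F$ is infinite. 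Hence the middle terms of the two sequences are canonically identified, and so are the first terms ($\mathcal{U}^{\operatorname*{loc}}(\operatorname*{Perf}R^{\prime})$ on the nose).

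Next I would build the comparison of the two sequences. The base-change pairing of Eq.~\ref{l_d6} is bi-exact and, by construction, intertwines (up to canonical equivalence) the three functors occurring in Lemma~\ref{lem_TT2} for $R$ --- base change $-\otimes_{R}F_{v}$; regarding a finite-dimensional $F_{v}$-space with its $v$-adic topology; the quotient onto ${\mathsf{LC}}$ --- with the corresponding three functors for $R^{\prime}$; applying $\operatorname{D}^{b}_{\infty}(-)$ and then the symmetric monoidal $\mathcal{U}^{\operatorname*{loc}}$ (\S\ref{subsubsect_Generalities}) it therefore yields a commuting ladder of cofiber sequences in $\mathbf{Mot}(R)$ --- the bottom row being Lemma~\ref{lem_TT2} for $R^{\prime}$, viewed in $\mathbf{Mot}(R)$ via restriction of scalars along $R\to R^{\prime}$ (which preserves exact sequences), and the top row being the tensor of Lemma~\ref{lem_TT2} for $R$ with $\operatorname*{Perf}R^{\prime}$, which is exactly the sequence of Prop.~\ref{prop_W1} for $\pi$. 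Its right-hand vertical is the map induced by the pairing of Eq.~\ref{l_d6}, i.e. by Eq.~\ref{l_d5}, which for $\mathcal{X}=\operatorname*{Spec}R^{\prime}$ is precisely Eq.~\ref{l_cd2}. With the two squares checked to commute, the left and middle verticals being equivalences forces the right one to be an equivalence as well, since both rows are cofiber sequences in the stable $\infty$-category $\mathbf{Mot}(R)$; this gives the claim.

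I expect the main obstacle to be verifying commutativity of the right-hand square, which unwinds to the following topological identification: the base-change functor of Eq.~\ref{l_d4} sends $F_{v}$ equipped with its $v$-adic topology to $R^{\prime}\otimes_{R}F_{v}$ equipped, under $R^{\prime}\otimes_{R}F_{v}\cong\prod_{w\mid v}F^{\prime}_{w}$, with the product of the $w$-adic topologies. To pin this down I would use the Hom--tensor identification recorded just after Eq.~\ref{l_d4}, namely $R^{\prime}\otimes_{R}X\cong\mathcal{O}_{F^{\prime}}\otimes_{\mathcal{O}_{F}}X$, applied to $X=F_{v}$: the right-hand side is a finite free $F_{v}$-module, hence carries a unique locally compact $F_{v}$-module topology, and under the canonical algebra isomorphism $F^{\prime}\otimes_{F}F_{v}\cong\prod_{w\mid v}F^{\prime}_{w}$ this topology is the product of the $w$-adic topologies, each factor being the image of the functor $\beta$ from the proof of Lemma~\ref{lem_TT2}. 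A secondary, purely bookkeeping point is to make the ladder coherent as a genuine map of cofiber sequences rather than merely objectwise; this is formal once one realizes the whole ladder as the image under $\operatorname{D}^{b}_{\infty}$ and $\mathcal{U}^{\operatorname*{loc}}$ of the exact sequence of exact categories underlying Lemma~\ref{lem_TT2} for $R$, acted on by the bi-exact pairing of Eq.~\ref{l_d6}.
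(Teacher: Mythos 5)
Your proposal is correct and follows essentially the same route as the paper: the paper also compares the exact sequence of Prop.~\ref{prop_W1} for $\mathcal{X}=\operatorname*{Spec}\mathcal{O}_{F',S'}$ with the sequence of Lemma~\ref{lem_TT2} applied to $F'$, identifies the middle terms via $\operatorname*{Perf}(\mathcal{O}_{F',S'})\otimes_{\mathcal{O}_{F,S}}\operatorname*{Perf}(F_v)\simeq\operatorname*{Perf}(F'\otimes_F F_v)\simeq\bigoplus_{w\mid v}\operatorname*{Perf}(F'_w)$, and concludes by two-out-of-three for fiber sequences in $\mathbf{Mot}(\mathcal{O}_S)$. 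Your extra care with the topological identification in the right-hand square is a point the paper dispatches with "it is easy to see that the diagram commutes," so no discrepancy there.
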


\begin{proof}
The constructions above the statement of the lemma functorially extend to a
morphism of exact sequences in $\left.  \mathbf{Mot}(\mathcal{O}_{S})\right.
$,%
\[%
\adjustbox{scale=0.85}{
\begin{tikzcd}
	{\mathcal{U}^{\operatorname*{loc}}(\operatorname*{Perf}\mathcal{O}%
_{F^{\prime},S^{\prime}})} & {\underset{v^{\prime} \in S^{\prime} \setminus
{S}^{\prime}_{\infty}}{\bigoplus}\mathcal{U}^{\operatorname*{loc}%
}(\operatorname*{Perf}{\mathcal{O}}_{F^{\prime},v^{\prime}})} & {\mathcal
{U}^{\operatorname*{loc}}({\underline{{\mathsf{LC}}}_{\mathcal{O}_{F^{\prime
},S^{\prime}}}})} \\
	{\mathcal{U}^{\operatorname*{loc}}(\operatorname*{Perf}\mathcal{O}%
_{F^{\prime},S^{\prime}})} & {\mathcal{U}^{\operatorname*{loc}}(\operatorname
*{Perf}\mathcal{O}_{F^{\prime},S^{\prime} })\otimes_{\mathcal{O}_{F,S}%
}\underset{v\in S \setminus S_{\infty}}{\bigoplus}\mathcal{U}^{\operatorname
*{loc}}(\operatorname*{Perf}{\mathcal{O}}_{F,v})} & {\mathcal{U}%
^{\operatorname*{loc}}({\underline{{\mathsf{LC}}}_{\mathcal{O}_{F^{\prime
},S^{\prime}}}})} \\
	{\mathcal{U}^{\operatorname*{loc}}(\operatorname*{Perf}\mathcal{O}%
_{F^{\prime},S^{\prime}})} & {\underset{v^{\prime} \in S^{\prime} \setminus
S^{\prime}_{\infty}}{\bigoplus}\mathcal{U}^{\operatorname*{loc}}%
(\operatorname*{Perf}{\mathcal{O}}_{F^{\prime},v^{\prime}})} & {\mathcal
{U}^{\operatorname*{loc}}({{{\mathsf{LC}}}_{\mathcal{O}_{F^{\prime},S^{\prime
}}}}),}
	\arrow[from=1-1, to=1-2]
	\arrow[from=1-2, to=1-3]
	\arrow[equals, from=2-1, to=1-1]
	\arrow[from=2-1, to=2-2]
	\arrow[equals, from=2-1, to=3-1]
	\arrow[from=2-2, to=1-2]
	\arrow[from=2-2, to=2-3]
	\arrow[from=2-2, to=3-2]
	\arrow[equals, from=2-3, to=1-3]
	\arrow[from=2-3, to=3-3]
	\arrow[from=3-1, to=3-2]
	\arrow[from=3-2, to=3-3]
\end{tikzcd}
}%
\]
where the two top rows are Diagram \ref{DiagW1} (of Prop. \ref{prop_W1}
applied to $\mathcal{X}:=\operatorname*{Spec}\mathcal{O}_{F^{\prime}%
,S^{\prime}}$) but flipped top-down, and the bottom row is from \cite[Theorem
4.30]{klca1}, applied to $F^{\prime}$. The right lower downward arrow is from
Eq. \ref{l_cd2}, and the middle downward arrow comes from evaluating the base
change $Y\longmapsto\mathcal{O}_{F^{\prime},S^{\prime}}\otimes_{\mathcal{O}%
_{F,S}}Y$, but just as ordinary algebraic modules without a topology. It is
easy to see that the diagram commutes. In the top row all upward arrows are
equivalences (that was part of the proof of Prop. \ref{prop_W1}), and in the
bottom row the left and middle downward arrow are equivalences. Hence, so must
be the right downward arrow.
\end{proof}

Specializing from the non-commutative motive to $K$-theory, Prop.
\ref{prop_W1} immediately implies the following.

\begin{corollary}
\label{cor2}Suppose $S$ is finite. There is an equivalence of spectra%
\[
\operatorname*{cofib}\left(  K(\operatorname*{Perf}\mathcal{X})\underset
{\alpha^{\prime}}{\longrightarrow}\bigoplus\limits_{v\in S\setminus S_{\infty
}}K(\operatorname*{Perf}\mathcal{X}_{v})\right)  \underset{\beta^{\prime}%
}{\longrightarrow}K(\underline{{\mathsf{LC}}}_{\mathcal{X}})\text{,}%
\]
where $\alpha^{\prime}$ is induced by the exact functors $i_{v}^{\ast}$ (from
Diagram \ref{l_d3}).
\end{corollary}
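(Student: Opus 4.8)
The plan is to obtain this as the direct specialization of Proposition \ref{prop_W1} from the universal localizing invariant to non-connective $K$-theory. First I would recall that non-connective $K$-theory, evaluated on the underlying stable $\infty$-categories, is a finitary localizing invariant (Blumberg--Gepner--Tabuada \cite{MR3070515}); since the forgetful functor $\operatorname*{Cat}\nolimits_{\infty}^{\operatorname*{perf}}(\mathcal{O}_{S})\to\operatorname*{Cat}\nolimits_{\infty}^{\operatorname*{perf}}$ preserves filtered colimits and preserves and reflects exactness (\S\ref{subsubsect_Generalities}), the restriction of $K$ along it is a finitary localizing invariant of $\mathcal{O}_{S}$-linear categories, and in particular it only sees the underlying stable $\infty$-category, so that passing through the $\mathcal{O}_{S}$-linear structure costs nothing. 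By the universal property of $\mathcal{U}^{\operatorname*{loc}}$ (\cite[Theorem 5.17]{MR3607274}) there is then an exact functor $\overline{K}$ from $\mathbf{Mot}(\mathcal{O}_{S})$ to spectra with $K\simeq\overline{K}\circ\mathcal{U}^{\operatorname*{loc}}$.

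Next I would apply $\overline{K}$ to the exact sequence of Proposition \ref{prop_W1}. Since $\mathcal{U}^{\operatorname*{loc}}$ turns exact sequences of categories into cofiber (equivalently fiber) sequences in $\mathbf{Mot}(\mathcal{O}_{S})$, the sequence
\[
\mathcal{U}^{\operatorname*{loc}}(\operatorname*{Perf}\mathcal{X})\xrightarrow{\alpha'}\bigoplus_{v\in S\setminus S_{\infty}}\mathcal{U}^{\operatorname*{loc}}(\operatorname*{Perf}\mathcal{X}_{v})\xrightarrow{\beta'}\mathcal{U}^{\operatorname*{loc}}(\underline{{\mathsf{LC}}}_{\mathcal{X}})
\]
is such a cofiber sequence, and $\overline{K}$, being exact, carries it to a cofiber sequence of spectra. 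Two bits of bookkeeping enter: one uses the factorization $\overline{K}\circ\mathcal{U}^{\operatorname*{loc}}\simeq K$ to identify the outer terms with $K(\operatorname*{Perf}\mathcal{X})$ and $K(\underline{{\mathsf{LC}}}_{\mathcal{X}})$, and one uses that $\overline{K}$ preserves finite direct sums --- which it does, being exact, the index set $S\setminus S_{\infty}$ being finite because $S$ is finite --- to identify the middle term with $\bigoplus_{v}K(\operatorname*{Perf}\mathcal{X}_{v})$. The left-hand map stays the one induced by the $i_{v}^{\ast}$, matching the statement, and $\beta'$ induces the asserted equivalence out of the cofiber. Finally, since a fiber sequence of spectra is the same datum as a cofiber sequence, this is exactly the claimed equivalence $\operatorname*{cofib}(\alpha')\xrightarrow{\ \sim\ }K(\underline{{\mathsf{LC}}}_{\mathcal{X}})$.

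I do not expect a genuine obstacle here: the mathematical content sits entirely in Proposition \ref{prop_W1}, and this corollary is merely its image under the localizing invariant $K$. If I had to single out the one place worth a sentence of care, it is the compatibility of the finite direct sum with $\overline{K}$ (equivalently, additivity of $K$-theory together with finiteness of $S$), since for infinite $S$ this step breaks --- non-connective $K$-theory does not commute with infinite products --- which is exactly why the finiteness hypothesis on $S$ is kept in the statement.
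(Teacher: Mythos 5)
Your proposal is correct and is exactly the paper's argument: the paper dispatches this corollary with the single sentence ``Specializing from the non-commutative motive to $K$-theory, Prop.~\ref{prop_W1} immediately implies the following,'' which is precisely your factorization of $K$ through $\mathcal{U}^{\operatorname*{loc}}$ plus exactness of the resulting functor to spectra. Your extra care about additivity over the finite index set $S\setminus S_{\infty}$ is a sensible point to flag but does not constitute a departure from the paper's route.
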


\subsection{Arguments on the descent side}

Choose a finite type separated model%
\[
\overline{\pi}\colon\overline{\mathcal{X}}\longrightarrow\operatorname*{Spec}%
\mathcal{O}_{F}%
\]
such that there exists an isomorphism of $\mathcal{O}_{S}$-schemes
$\overline{\mathcal{X}}\times_{\mathcal{O}_{F}}\mathcal{O}_{S}\overset{\sim
}{\longrightarrow}\mathcal{X}$. We consider the closed-open complement
decomposition $\operatorname*{Spec}\mathcal{O}_{F}\hookleftarrow
\operatorname*{Spec}\mathcal{O}_{S}$, along with its base change along $\pi$,
i.e.,%
\begin{equation}%
{
\begin{tikzcd}
	{\overline{\mathcal{X}}_Z} & {\overline{\mathcal{X}}} & {\mathcal{X}} \\
	Z & {\operatorname{Spec}(\mathcal{O}_F)} & {\operatorname{Spec}(\mathcal
{O}_S),}
	\arrow["{i'}", hook, from=1-1, to=1-2]
	\arrow[from=1-1, to=2-1]
	\arrow["{\overline{\pi}}", from=1-2, to=2-2]
	\arrow["{\tilde{j}}"', hook', from=1-3, to=1-2]
	\arrow["\pi", from=1-3, to=2-3]
	\arrow["i"', hook, from=2-1, to=2-2]
	\arrow["j", hook', from=2-3, to=2-2]
\end{tikzcd}
}
\label{lstructdiag1}%
\end{equation}
where $\mathcal{O}_{S}$ is the ring of $S$-integers from \S \ref{sect_Setup},
and $Z$ is the reduced closed complement. This means that $Z=\{v_{1}%
,\ldots,v_{r}\}$ is a finite set of closed points.

\begin{example}
$\operatorname*{Spec}\mathcal{O}_{F}$ has the special property that every open
subscheme is affine and then of the form $\operatorname*{Spec}\mathcal{O}_{S}$
for $S$ being the set of closed points in the complement plus the infinite places.
\end{example}

Let $\mathsf{L}$ denote either the $\infty$-category of $p$-primary torsion
abelian groups or of $p$-primary torsion spectra. Write $\mathsf{Sh}%
_{\mathrm{\acute{e}t}}^{\operatorname*{hyp}}(X,\mathsf{L})$ for hypercomplete
\'{e}tale sheaves with values in $\mathsf{L}$ for the small \'{e}tale site. We
also use the $6$-functor formalism attached to hypercomplete sheaves.

\begin{remark}
[{\cite[Thm. 7.13]{sixfunct}}]\label{rmk_hypersheaves}We elaborate a
little:\ Given a morphism $f\colon X\rightarrow Y$, one can define $f_{\ast}$
just as for presheaves (it sends sheaves to sheaves, and hypersheaves to
hypersheaves). Then one defines $f^{\ast}$ as the left adjoint. One may do
this in the $\infty$-category of \'{e}tale sheaves or in the setting of
\'{e}tale hypersheaves, giving a priori two distinct formalisms $-$ not just
for $(f^{\ast},f_{\ast})$, also for building the rest of the $6$-functor
formalism. We always use the hypersheaf version. However, by Clausen--Mathew
for all schemes of relevance in this text (and more generally for: qcqs
schemes $X$ of finite Krull dimension and such that there exists some $D$ such
that the virtual $p$-cohomological dimension $\operatorname*{vcd}_{p}%
(\kappa(x))\leq D$ for all $x\in X$) the inclusion $\mathsf{Sh}%
_{\mathrm{\acute{e}t}}^{\operatorname*{hyp}}(X,\mathsf{L})\hookrightarrow
\mathsf{Sh}_{\mathrm{\acute{e}t}}(X,\mathsf{L})$ of hypercomplete sheaves into
all sheaves is an equivalence \cite{MR4296353}.
\end{remark}

\begin{remark}
[{\cite[Prop. 7.14]{sixfunct}}]\label{rmk_hypersheaves2}Suppose $X=\lim
_{r}X_{r}$ is a cofiltered inverse limit along affine transition maps in the
category of qcqs schemes. Then the functor $X\mapsto\mathsf{Sh}%
_{\mathrm{\acute{e}t}}(X,\mathsf{L})$ induces an equivalence%
\[
\mathsf{Sh}_{\mathrm{\acute{e}t}}(X,\mathsf{L})\longrightarrow\lim
\limits_{r}\mathsf{Sh}_{\mathrm{\acute{e}t}}(X_{r},\mathsf{L})\text{.}%
\]
In particular, for any closed point $x\in X$, $\mathsf{Sh}_{\mathrm{\acute
{e}t}}(\operatorname*{Spec}\mathcal{O}_{X,x}^{\operatorname*{h}},\mathsf{L})$
can equivalently be expressed as the limit over \'{e}tale neighbourhoods of
$x$. By Clausen--Mathew \cite{MR4296353} as described in Rmk.
\ref{rmk_hypersheaves}, this fact is also available for hypersheaves in the
setting of this text.
\end{remark}

\begin{remark}
\label{rmk_CompactlySupportedEtaleCohomology}If $\mathcal{F}\in\mathsf{Sh}%
_{\mathrm{\acute{e}t}}^{\operatorname*{hyp}}(\mathcal{X},\mathsf{L})$ with
$\pi\colon\mathcal{X}\longrightarrow\mathcal{S}$, we write%
\[
H_{c}^{s}(\mathcal{X},\mathcal{F}):=\pi_{-s}\operatorname*{RHom}%
\nolimits_{\mathcal{O}_{F}}(1_{\mathcal{O}_{F}},j_{!}\pi_{!}\mathcal{F})
\]
with $j\colon\operatorname*{Spec}\mathcal{O}_{S}\hookrightarrow
\operatorname*{Spec}\mathcal{O}_{F}$ for \emph{compactly supported \'{e}tale
cohomology}. There is a subtlety: Usually compactly supported cohomology is
defined a little different from merely using $j_{!}$ for rings of integers in
order to incorporate the infinite places of $F$. We will \emph{not} do this.
These about to be ignored contributions at infinite places are Tate cohomology
groups $H_{T}^{\bullet}(F_{v},\mathcal{F})$ for $\mathcal{F}$ an \'{e}tale
$p$-torsion sheaf. In our setting, for $F_{v}=\mathbf{C}$ these groups are
zero, so it is fine to drop them, and for $F_{v}=\mathbf{R}$ the groups are
$2$-torsion, but also $p$-torsion and since we assume $p$ to be odd, they also
vanish.\ Thus, we still get the same compactly supported cohomology as for
example in \cite{milne2006,MR1045856,MR1327282,MR3867292}.
\end{remark}

We need some version of the mechanism to invert Bott elements. Classically,
choose an element $\tau\in$ $\pi_{2\ell}K/p^{r}(\mathbf{Z}\left[  \frac{1}%
{p}\right]  )$ that becomes a power of a classical Bott element after
adjoining a primitive $p$-th root $\zeta_{p}$ \cite[\S A.7]{MR826102},%
\[
\tau=\beta^{\ell}\in\pi_{2}K/p\left(  \mathbf{Z}\left[  \zeta_{p},\frac{1}%
{p}\right]  \right)  \mapsto\zeta_{p}\in\pi_{1}K/p\left(  \mathbf{Z}\left[
\zeta_{p},\frac{1}{p}\right]  \right)  \text{,}%
\]
or, following the ideas of \cite{MR1740880}, \cite[\S 6]{MR4444265} pick a
$\mathsf{\operatorname*{MGL}}$-theoretic Bott element which induces a
Bott-inverted $\mathsf{\operatorname*{KGL}}/p^{r}[\tau^{-1}]$ which, when
evaluating it on a $\mathbf{Z}\left[  \frac{1}{p}\right]  $-scheme $Y$, agrees
with the localization of $K/p^{r}(Y)$ at the former sense of an element $\tau
$. Yet another recent viewpoint is \cite{etalemotivicspectravoevodskys}. Any
choice of $\tau$ in either mechanism of constructing the localization, yields
the same $K/p^{r}(-)[\tau^{-1}]$. For any $\mathcal{O}_{S}$-scheme $Y$ we may
now speak of \textquotedblleft inverting \textit{the} Bott
element\textquotedblright\ as $K/p^{r}(Y)$ is a module over $K/p^{r}%
(\mathcal{O}_{S})$, and in particular over $K/p^{r}(\mathbf{Z}[\frac{1}{p}])$.
Analogously for a sheaf:

\begin{definition}
\bigskip Pick $r\geq1$. Let%
\begin{equation}
{\mathcal{K}}:=K/p^{r}[\tau^{-1}] \label{l_h1}%
\end{equation}
be the sheaf of mod $p^{r}$ $K$-theory on $\operatorname*{Spec}\mathcal{O}%
_{S}$ (i.e., $K(Y):=K(\operatorname*{Perf}Y)$ for all qcqs $\mathcal{O}_{S}%
$-schemes $Y$) and $(-)[\tau^{-1}]$ refers to inverting a Bott element.
\end{definition}

Then $\mathcal{K}$ has \'{e}tale hyperdescent, for example by \cite[Theorem
1.3]{MR4296353} or \cite[Theorem 1.2]{MR4444265}.

\begin{proposition}
\label{prop_Equiv1}Suppose $S$ is finite. There is a natural equivalence of
spectra%
\[
\Sigma(j_{!}\pi_{\ast}{{\pi}^{\ast}\mathcal{K}})(\operatorname*{Spec}%
\mathcal{O}_{F})\overset{\sim}{\longrightarrow}(K/p^{r})(\underline
{{\mathsf{LC}}}_{\mathcal{X}})[\tau^{-1}]
\]
identifying

\begin{enumerate}
\item the $\Sigma$-shift of the global sections of $j_{!}\pi_{\ast}{{\pi
}^{\ast}\mathcal{K}}$ on $\operatorname*{Spec}\mathcal{O}_{F}$, with

\item the mod $p^{r}$ $K$-theory of $\underline{{\mathsf{LC}}}_{\mathcal{X}}$,
also with the Bott element inverted.
\end{enumerate}
\end{proposition}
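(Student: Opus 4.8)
The plan is to compare two fiber sequences: one built from the categorical presentation of $\underline{{\mathsf{LC}}}_{\mathcal{X}}$, and one from the \'{e}tale recollement on $\operatorname*{Spec}\mathcal{O}_{F}$. First I would invoke Corollary~\ref{cor2}, which writes $K(\underline{{\mathsf{LC}}}_{\mathcal{X}})$ as $\operatorname*{cofib}\bigl(K(\operatorname*{Perf}\mathcal{X})\overset{\alpha'}{\longrightarrow}\bigoplus_{v\in S\setminus S_{\infty}}K(\operatorname*{Perf}\mathcal{X}_{v})\bigr)$, with $\alpha'$ induced by the pullbacks $i_{v}^{\ast}$ of Diagram~\ref{l_d3}. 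Since $(-)/p^{r}$ (a cofiber of multiplication by $p^{r}$) and Bott inversion $(-)[\tau^{-1}]$ (a filtered colimit) both preserve cofiber sequences, and $\operatorname*{cofib}(f)\simeq\Sigma\operatorname*{fib}(f)$ in a stable $\infty$-category, this yields
\[
(K/p^{r})(\underline{{\mathsf{LC}}}_{\mathcal{X}})[\tau^{-1}]\simeq\Sigma\operatorname*{fib}\Bigl((K/p^{r})(\operatorname*{Perf}\mathcal{X})[\tau^{-1}]\longrightarrow\bigoplus_{v\in S\setminus S_{\infty}}(K/p^{r})(\operatorname*{Perf}\mathcal{X}_{v})[\tau^{-1}]\Bigr).
\]
So the task reduces to identifying $(j_{!}\pi_{\ast}\pi^{\ast}\mathcal{K})(\operatorname*{Spec}\mathcal{O}_{F})$ with the fiber on the right, compatibly with its structure map.

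Next I would set up the localization sequence downstairs. Write $U:=\operatorname*{Spec}\mathcal{O}_{S}\overset{j}{\hookrightarrow}X:=\operatorname*{Spec}\mathcal{O}_{F}$ with reduced closed complement $Z$; as $\frac{1}{p}\in\mathcal{O}_{S}$, $Z$ is the finite set of closed points corresponding to the finite places in $S\setminus S_{\infty}$. For any hypercomplete \'{e}tale sheaf $\mathcal{H}$ on $U$, I would apply the recollement fiber sequence $j_{!}\to Rj_{\ast}\to i_{\ast}i^{\ast}Rj_{\ast}$ to $Rj_{\ast}\mathcal{H}$ (using $j^{\ast}Rj_{\ast}=\operatorname{id}$ for the open immersion) and take $\operatorname{R\Gamma}(X,-)$, obtaining
\[
(j_{!}\mathcal{H})(X)\simeq\operatorname*{fib}\Bigl(\mathcal{H}(U)\longrightarrow\bigoplus_{v\in Z}\operatorname{R\Gamma}(\operatorname*{Spec}F_{v},s_{v}^{\ast}\mathcal{H})\Bigr),
\]
where the local term at $v$ is the standard one: $i_{v}^{\ast}Rj_{\ast}\mathcal{H}$, as a complex on $\operatorname*{Spec}\kappa(v)$, computes the cohomology of the punctured henselian trait $\operatorname*{Spec}F_{v}^{h}$, which is a cofiltered limit of punctured \'{e}tale neighbourhoods of $v$ in $U$ so that Remark~\ref{rmk_hypersheaves2} applies; then one passes from $\operatorname*{Spec}F_{v}^{h}$ to $\operatorname*{Spec}F_{v}$ using that henselization and completion have canonically equivalent hypercomplete \'{e}tale $\infty$-topoi.

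Then I would plug in $\mathcal{H}=\pi_{\ast}\pi^{\ast}\mathcal{K}$, writing $\pi_{v}$ for the left-hand vertical map of Diagram~\ref{l_d3}. Since $\pi^{-1}(U)=\mathcal{X}$, and $\pi^{\ast}\mathcal{K}$ is the restriction to the small \'{e}tale site of $\mathcal{X}$ of the \'{e}tale hypersheaf $Y\mapsto K/p^{r}(Y)[\tau^{-1}]$, \'{e}tale hyperdescent (Remark~\ref{rmk_hypersheaves}, \cite{MR4296353,MR4444265}) --- applicable because $\mathcal{X}$ and the $\mathcal{X}_{v}$ are of finite type over $\mathcal{O}_{S}$ resp.\ over the local fields $F_{v}$, hence meet the hypotheses of Remark~\ref{rmk_hypersheaves} --- gives $\mathcal{H}(U)=(\pi^{\ast}\mathcal{K})(\mathcal{X})\simeq(K/p^{r})(\operatorname*{Perf}\mathcal{X})[\tau^{-1}]$. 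For the term at $v$, the properness of $\pi$ is used: proper base change in the six-functor formalism of hypercomplete $\mathsf{L}$-sheaves \cite{sixfunct}, applied to the $p^{r}$-torsion coefficients $\pi^{\ast}\mathcal{K}$ along the cartesian square of Diagram~\ref{l_d3}, gives $s_{v}^{\ast}\pi_{\ast}\pi^{\ast}\mathcal{K}\simeq(\pi_{v})_{\ast}\,i_{v}^{\ast}\pi^{\ast}\mathcal{K}$, whence $\operatorname{R\Gamma}(\operatorname*{Spec}F_{v},s_{v}^{\ast}\mathcal{H})\simeq\operatorname{R\Gamma}(\mathcal{X}_{v},i_{v}^{\ast}\pi^{\ast}\mathcal{K})\simeq(K/p^{r})(\operatorname*{Perf}\mathcal{X}_{v})[\tau^{-1}]$. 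I would then check that under these identifications the restriction map is exactly $\alpha'$ after $(-)/p^{r}[\tau^{-1}]$ (both being the functor induced by $i_{v}^{\ast}$); comparing the two displayed fiber sequences and applying $\Sigma$ to $(j_{!}\pi_{\ast}\pi^{\ast}\mathcal{K})(\operatorname*{Spec}\mathcal{O}_{F})\simeq\operatorname*{fib}(\cdots)$ then produces the asserted equivalence, which is natural since every step is canonical.

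The hard part will be the second step: correctly computing $i_{v}^{\ast}Rj_{\ast}$ and matching its contribution to $\mathcal{X}_{v}$ over the local field $F_{v}$ rather than to $\mathcal{X}$ over a punctured henselian trait. This hinges on the henselization-versus-completion invariance of \'{e}tale (hyper)cohomology and, above all, on proper base change --- the single place where the properness hypothesis is used, consistent with $\pi_{\ast}=\pi_{!}$ holding only for proper $\pi$ and with $\underline{{\mathsf{LC}}}_{\mathcal{X}}$ being the ``correct'' object only then (cf.\ Remark~\ref{rmk_CompactlySupportedEtaleCohomology}). One also has to be scrupulous about the lone suspension (cofiber versus fiber) and about each appearance of ``$\pi^{\ast}\mathcal{K}$ on a finite-type scheme $=$ Bott-inverted mod $p^{r}$ $K$-theory'' being the genuine hyperdescent statement rather than a mere tautology.
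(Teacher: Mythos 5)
Your architecture is the paper's: start from Corollary~\ref{cor2}, compute the global sections of $j_{!}\pi_{\ast}\pi^{\ast}\mathcal{K}$ via the recollement on $\operatorname*{Spec}\mathcal{O}_{F}$, identify the local contributions at the points of $Z$ with cohomology over (punctured henselian, then completed) local traits, use hyperdescent of $\mathcal{K}$ to convert everything into Bott-inverted mod $p^{r}$ $K$-theory, and match the resulting fiber sequence with the cofiber sequence of $\alpha'$. The suspension bookkeeping and the identification $\mathcal{H}(U)=(\pi^{\ast}\mathcal{K})(\mathcal{X})\simeq (K/p^{r})(\operatorname*{Perf}\mathcal{X})[\tau^{-1}]$ are all as in the paper.

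There is, however, one genuine flaw: you justify the commutation $s_{v}^{\ast}\pi_{\ast}\pi^{\ast}\mathcal{K}\simeq(\pi_{v})_{\ast}i_{v}^{\ast}\pi^{\ast}\mathcal{K}$ by \emph{proper} base change and declare this ``the single place where the properness hypothesis is used.'' But Proposition~\ref{prop_Equiv1} does not assume $\pi$ proper --- the standing setup of \S\ref{sect_Setup} only requires $\pi$ smooth, separated and of finite type, and the paper's proof explicitly begins with the disclaimer that $\pi$ need not be proper (properness enters only for the additional square of Corollary~\ref{cor_v2}). As written, your argument proves a strictly weaker statement. The base change you need is not proper base change but base change along the \emph{other} leg of the square: the map $\operatorname{Spec}F_{v}^{\operatorname*{h}}\rightarrow\operatorname*{Spec}\mathcal{O}_{S}$ is a cofiltered limit of \'{e}tale maps, so for the qcqs finite-type morphism $\pi$ and torsion coefficients the commutation $\tilde{w}^{\ast}\pi_{\ast}\simeq\pi'_{\ast}w'^{\ast}$ follows from continuity of the \'{e}tale $\infty$-topos (Remark~\ref{rmk_hypersheaves2}), after which one replaces the henselization by the completion by invariance of \'{e}tale cohomology --- a step you already have in your write-up for the $i_{v}^{\ast}Rj_{\ast}$ computation but then bypass when you run proper base change directly against $\operatorname{Spec}F_{v}$. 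Replacing your proper-base-change step by this limit argument removes the spurious hypothesis and recovers the proposition in the stated generality.
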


This proposition is our variant of \cite[Theorem 2.5]{MR4121155}. We note that
we may invert the Bott element for $K(\underline{{\mathsf{LC}}}_{\mathcal{X}%
})$ since it is also a $K(\mathcal{O}_{S})$-module, Eq. \ref{l_d5}.

\begin{corollary}
\label{cor_v2}Suppose we are in the situation of the previous proposition. If
$\pi$ is proper, we additionally get the commutative square%
\[%
{
\begin{tikzcd}
	{\Sigma(j_{!}\pi_{\ast}{{\pi}^{\ast}\mathcal{K}})(\operatorname{Spec}
\mathcal{O}_{F})} && {(K/p^{r})(\underline{{\mathsf{LC}}}_{\mathcal{X}}%
)[\tau^{-1}]} \\
	\\
	{\Sigma(j_{!}{\mathcal{K}})(\operatorname{Spec} \mathcal{O}_{F})}
&& {(K/p^{r})(\underline{{\mathsf{LC}}}_{\mathcal{O}_S})[\tau^{-1}],}
	\arrow["\sim", from=1-1, to=1-3]
	\arrow["{{{\pi_!}{\pi^{!}}\rightarrow1}}"', from=1-1, to=3-1]
	\arrow["{{{\pi}_{*}}}", from=1-3, to=3-3]
	\arrow["\sim"', from=3-1, to=3-3]
\end{tikzcd}
}%
\]
where $\pi_{\ast}$ on the right is defined as in Eq. \ref{l_h0}-\ref{l_h0a}.
\end{corollary}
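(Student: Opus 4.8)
The plan is to re-derive the equivalence of Proposition~\ref{prop_Equiv1} as a termwise identification of two cofibre sequences, one on the $K$-theory side and one on the descent side, and then to observe that the two vertical arrows of the square are induced by the same morphism between these cofibre sequences. First, on the $K$-theory side, Corollary~\ref{cor2} after reducing mod $p^{r}$ and inverting $\tau$ reads
\[
(K/p^{r})(\underline{{\mathsf{LC}}}_{\mathcal{X}})[\tau^{-1}]\simeq\operatorname*{cofib}\!\Bigl(K/p^{r}(\operatorname*{Perf}\mathcal{X})[\tau^{-1}]\longrightarrow\bigoplus_{v\in S\setminus S_{\infty}}K/p^{r}(\operatorname*{Perf}\mathcal{X}_{v})[\tau^{-1}]\Bigr),
\]
the map being induced by the $i_{v}^{\ast}$ of Diagram~\ref{l_d3}. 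On the descent side, the localisation triangle $j_{!}\mathcal{G}\rightarrow j_{\ast}\mathcal{G}\rightarrow i_{\ast}i^{\ast}j_{\ast}\mathcal{G}$ for a hypersheaf $\mathcal{G}$ on $\mathcal{S}$ (with $i\colon Z\hookrightarrow\operatorname*{Spec}\mathcal{O}_{F}$ the closed complement of Diagram~\ref{lstructdiag1}), applied to $\mathcal{G}=\pi_{\ast}\pi^{\ast}\mathcal{K}$ and evaluated on $\operatorname*{Spec}\mathcal{O}_{F}$, gives
\[
(j_{!}\pi_{\ast}\pi^{\ast}\mathcal{K})(\operatorname*{Spec}\mathcal{O}_{F})\simeq\operatorname*{fib}\!\Bigl(\mathcal{K}(\mathcal{X})\longrightarrow\bigoplus_{v\in S\setminus S_{\infty}}\mathcal{K}(\mathcal{X}_{v})\Bigr),
\]
where hyperdescent of $\mathcal{K}$ gives $(\pi_{\ast}\pi^{\ast}\mathcal{K})(\mathcal{S})=K/p^{r}(\operatorname*{Perf}\mathcal{X})[\tau^{-1}]$, and Remark~\ref{rmk_hypersheaves2} together with proper base change along a proper model $\overline{\pi}$ (which exists because $\pi$ is proper) gives $i_{v}^{\ast}j_{\ast}\pi_{\ast}\pi^{\ast}\mathcal{K}\simeq\mathcal{K}(\mathcal{X}_{v})=K/p^{r}(\operatorname*{Perf}\mathcal{X}_{v})[\tau^{-1}]$. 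Since $\Sigma\operatorname*{fib}\simeq\operatorname*{cofib}$ in a stable $\infty$-category, the two presentations match termwise, yielding the equivalence of Proposition~\ref{prop_Equiv1}; the bottom row of the square is its instance for $\mathcal{X}=\operatorname*{Spec}\mathcal{O}_{S}$, using $\underline{{\mathsf{LC}}}_{\operatorname*{Spec}\mathcal{O}_{S}}\simeq{\mathsf{LC}}_{\mathcal{O}_{S}}$ and $j_{!}\pi_{\ast}\pi^{\ast}\mathcal{K}=j_{!}\mathcal{K}$.

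Now bring in the pushforwards. The right-hand vertical is, by Corollary~\ref{cor_v1} specialised from $\mathbf{Mot}(\mathcal{O}_{S})$ to the localizing invariant $K/p^{r}[\tau^{-1}]$, the map of cofibres induced by the pushforwards $\pi_{\ast}$ of Eq.~\ref{l_h0} on $\operatorname*{Perf}\mathcal{X}$ and on the base changes $\operatorname*{Perf}\mathcal{X}_{v}$. On the descent side, $\pi$ proper gives $\pi_{!}\simeq\pi_{\ast}$, and $\pi$ smooth proper of relative dimension $d$ gives, by Poincar\'{e} duality, $\pi^{!}1\simeq 1(d)[2d]$, hence after inverting the Bott element $\pi^{!}\mathcal{K}\simeq\pi^{\ast}\mathcal{K}$ via the periodicity $\mathcal{K}\simeq\mathcal{K}(d)[2d]$; under these identifications the co-unit $\pi_{!}\pi^{!}\rightarrow 1$ becomes the trace $\pi_{\ast}\pi^{\ast}\mathcal{K}\rightarrow\mathcal{K}$, so the left-hand vertical is $\Sigma j_{!}$ of this trace evaluated on $\operatorname*{Spec}\mathcal{O}_{F}$. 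Unwinding it through the localisation triangle of the first paragraph exhibits it as termwise induced by trace maps $\mathcal{K}(\mathcal{X})\rightarrow\mathcal{K}(\mathcal{O}_{S})$ and $\mathcal{K}(\mathcal{X}_{v})\rightarrow\mathcal{K}(F_{v})$.

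It therefore remains to identify, termwise, the \'{e}tale trace on $\mathcal{K}$-cohomology coming from $\pi_{!}\pi^{!}\rightarrow 1$ with the algebraic proper pushforward $\pi_{\ast}$ on $K/p^{r}(\operatorname*{Perf}-)[\tau^{-1}]$ (over $\mathcal{O}_{S}$ and over each $F_{v}$). This is the substantive point and the step I expect to be the main obstacle: it is exactly the compatibility of the algebraic Gysin pushforward for $\operatorname*{KGL}/p^{r}[\tau^{-1}]$ with the \'{e}tale one, which the introduction only promises to verify ``to the extent needed''. I would prove it by noting that, for $\pi$ flat proper, the presheaf $U\mapsto K/p^{r}(\operatorname*{Perf}\mathcal{X}_{U})[\tau^{-1}]$ together with its algebraic pushforward to $U\mapsto K/p^{r}(\operatorname*{Perf}U)[\tau^{-1}]$ is a map of presheaves on $\mathcal{S}_{\mathrm{\acute{e}t}}$ (again by base change for the pushforward of perfect complexes), whose hypersheafification is a map of hypersheaves $\pi_{\ast}\pi^{\ast}\mathcal{K}\rightarrow\mathcal{K}$; since $\operatorname*{KGL}/p^{r}[\tau^{-1}]$ is an oriented theory whose first Chern class agrees with the \'{e}tale one, this hypersheaf map coincides with the Poincar\'{e}-dual co-unit by the standard construction of Gysin maps from the projective bundle formula and deformation to the normal cone. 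Granting this, both verticals are induced by the same morphism of cofibre sequences, so the square commutes.
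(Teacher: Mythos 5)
Your proposal follows essentially the same route as the paper's proof: both rows are presented as (co)fibre sequences via the recollement along $j$ (the paper uses the cube of Henselizations, \'{e}tale excision and Milne's boundary isomorphism to reduce the $j_!$-side to the cofibre of $K/p^{r}(\operatorname*{Perf}\mathcal{X})[\tau^{-1}]\rightarrow\bigoplus_{v}K/p^{r}(\operatorname*{Perf}\mathcal{X}_{v})[\tau^{-1}]$, matching Cor.~\ref{cor2}), the two verticals are then identified termwise, and everything reduces to the compatibility of the co-unit $\pi_{!}\pi^{!}\rightarrow1$ (under $\pi_{!}=\pi_{\ast}$ and $\pi^{!}\mathcal{K}\simeq\Sigma^{2d}\pi^{\ast}\mathcal{K}\otimes\mathbf{Z}/p^{r}(d)\simeq\pi^{\ast}\mathcal{K}$, i.e.\ Diagram~\ref{lset1}) with the pushforward of perfect complexes of Eq.~\ref{l_h0}. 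The step you correctly flag as the substantive obstacle is precisely the step the paper also does not prove from scratch but delegates to \cite[Lemma 3.3.4]{jinpaper}, so your orientation-theoretic sketch of it is an acceptable stand-in, though as written it remains a sketch rather than a proof.
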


We shall prove both the proposition and the corollary in one go.

\begin{proof}
(Hypothesis: $\pi$ need not be proper) In this proof we shall use base change
theorems for the \'{e}tale site. These are available as we exclusively work
with torsion spectra, notably ${{\pi}^{\ast}\mathcal{K}}$, and various
transfers applied to it. Let $i,j$ be as in Diagram \ref{lstructdiag1}. For
any object $\mathcal{F}\in\mathsf{Sh}_{\mathrm{\acute{e}t}}%
^{\operatorname*{hyp}}(X,\mathsf{L})$ there is a standard recollement fiber
sequence%
\[
j_{!}j^{\ast}\mathcal{F}\longrightarrow\mathcal{F}\longrightarrow i_{\ast
}i^{\ast}\mathcal{F}%
\]
and by adjunction%
\begin{equation}
i_{\ast}i^{!}\mathcal{F}\longrightarrow\mathcal{F}\longrightarrow j_{\ast
}j^{\ast}\mathcal{F}\text{.} \label{lch0}%
\end{equation}
Applied to $\mathcal{F}:=j_{!}\pi_{\ast}{{\pi}^{\ast}\mathcal{K}}$ and using
$j^{\ast}j_{!}=\operatorname*{id}$ as well as the adjunction $(j^{\ast
},j_{\ast})$ in the rightmost term, we obtain a fiber sequence of mapping
spectra%
\begin{equation}
\operatorname*{Hom}\nolimits_{\mathcal{O}_{F}}(1_{\mathcal{O}_{F}},i_{\ast
}i^{!}(j_{!}\pi_{\ast}{{\pi}^{\ast}\mathcal{K}}))\longrightarrow
\operatorname*{Hom}\nolimits_{\mathcal{O}_{F}}(1_{\mathcal{O}_{F}},j_{!}%
\pi_{\ast}{{\pi}^{\ast}\mathcal{K}})\longrightarrow\operatorname*{Hom}%
\nolimits_{\mathcal{O}_{S}}(j^{\ast}1_{\mathcal{O}_{F}},\pi_{\ast}{{\pi}%
^{\ast}\mathcal{K}}) \label{lch1}%
\end{equation}
in $\mathsf{Sp}$. Now, consider the commutative cube of scheme morphisms%
\begin{equation}%
{\adjustbox{max width=\textwidth}
{
\begin{tikzcd}
	{\overline{\mathcal{X} }} &&& {{\mathcal{X} }} \\
	& {\underset{v}{\coprod} \overline{\mathcal{X} } \times_{\mathcal{O}_F}
	{\operatorname{Spec}(\mathcal{O}_{F,v}^{\operatorname{h} })}} &&& {\underset
{v}{\coprod}{\mathcal{X} } \times_{\mathcal{O}_F} {\operatorname{Spec}%
(F_{v}^{\operatorname{h} })}} \\
	\\
	{\operatorname{Spec}(\mathcal{O}_F)} &&& {\operatorname{Spec}(\mathcal{O}_S)}
\\
	& {\underset{v}{\coprod}\operatorname{Spec}(\mathcal{O}_{F,v}^{\operatorname
{h} })} &&& {\underset{v}{\coprod}\operatorname{Spec}(F_{v}^{\operatorname{h}
	})}
	\arrow[from=1-1, to=4-1]
	\arrow[from=1-4, to=1-1]
	\arrow["\pi"{pos=0.6}, from=1-4, to=4-4]
	\arrow[from=2-2, to=1-1]
	\arrow[from=2-2, to=5-2]
	\arrow["{{w'}}"', from=2-5, to=1-4]
	\arrow[from=2-5, to=2-2]
	\arrow["{{\pi'}}"{pos=0.6}, from=2-5, to=5-5]
	\arrow["j"{pos=0.3}, from=4-4, to=4-1]
	\arrow["w", from=5-2, to=4-1]
	\arrow["{{\tilde{w}}}"', from=5-5, to=4-4]
	\arrow["{{j'}}"{pos=0.4}, from=5-5, to=5-2]
\end{tikzcd}
}
}
\label{vchdiag1}%
\end{equation}

which arises as follows:\ (1) the back face is the right square from Diagram
\ref{lstructdiag1}, and (2) the top face stems from base change by the
Henselization $(-)^{\operatorname*{h}}$ of $\mathcal{O}_{F}$ at all $v\in
S\setminus S_{\infty}$. In our notation, this spells out as the fiber product
$-\times_{\operatorname*{Spec}\mathcal{O}_{F}}Z$ (with $Z$ as we had
introduced it in Diagram \ref{lstructdiag1}), and $F_{v}^{\operatorname*{h}%
}:=\operatorname*{Frac}\mathcal{O}_{v}^{\operatorname*{h}}$. Rewriting the
meaning of each term in Eq. \ref{lch1} in more classical terms as cohomology
with support in $Z$, and global sections on $\operatorname*{Spec}%
\mathcal{O}_{F}$ (resp. the open $\operatorname*{Spec}\mathcal{O}_{S}$), this
yields the usual localization long exact sequence attached to the lower row in
Diagram \ref{lstructdiag1} for $j_{!}\pi_{\ast}{{\pi}^{\ast}\mathcal{K}}$.
Rotating it once to the left, this becomes the fiber sequence going downwards
the left column of the following diagram, with boundary map $\partial$,%
\begin{equation}%
{
\begin{tikzcd}
	{R\Gamma(\operatorname*{Spec}\mathcal{O}_{F},j_{!}\pi_{\ast}{{\pi}%
^{*}\mathcal{K}})}
& {\bigoplus_{v\in S\setminus S_{\infty}}R\Gamma(\operatorname*{Spec}
\mathcal{O}_{F}^{\operatorname*{h}},w^{\ast}j_{!}\pi_{\ast}{{\pi}^{*}%
\mathcal{K}})} \\
	{R\Gamma(\operatorname*{Spec}\mathcal{O}_{S},\pi_{\ast}{{\pi}^{*}\mathcal{K}%
})}
& {\bigoplus_{v\in S\setminus S_{\infty}}R\Gamma(\operatorname*{Spec}%
F_{v}^{\operatorname*{h}} ,\tilde{w}^{\ast}\pi_{\ast}{{\pi}^{*}\mathcal{K}})}
\\
	{\Sigma R\Gamma_{Z}(\operatorname*{Spec}\mathcal{O}_{F},j_{!}\pi_{\ast}{{\pi
}^{*}\mathcal{K}})} & {\bigoplus_{v\in S\setminus S_{\infty}}\Sigma
R\Gamma_{\{v\}}(\operatorname*{Spec}\mathcal{O}_{F}^{\operatorname*{h}
},w^{\ast}j_{!}\pi_{\ast}{{\pi}^{*}\mathcal{K}}),}
	\arrow["w", from=1-1, to=1-2]
	\arrow[from=1-1, to=2-1]
	\arrow[from=1-2, to=2-2]
	\arrow["{\tilde{w}}", from=2-1, to=2-2]
	\arrow["\partial"', from=2-1, to=3-1]
	\arrow["{\partial^{\operatorname{h} }}", from=2-2, to=3-2]
	\arrow["w"', from=3-1, to=3-2]
\end{tikzcd}
}
\label{vchdiag2}%
\end{equation}
where we wrote $R\Gamma$ (which, since previously we always worked $\infty
$-categorically, could also reasonably just be denoted by $\Gamma$ resp.
$\Gamma_{Z}$). The localization sequence can also be formed along the open
immersion $j^{\prime}$ from the front face of the cube, leading to the right
column in Diagram \ref{vchdiag2}, with the horizontal arrows induced from $w$
(or rather the unit $\operatorname*{id}\rightarrow w_{\ast}w^{\ast}$). By
\'{e}tale excision \cite[Prop. 5.6.12]{MR3380806} (adapted to hypersheaves of
spectra, using Rmk. \ref{rmk_hypersheaves2}) the lower horizontal arrow is an
isomorphism:\ Concretely, on the level of its homotopy groups%
\begin{align*}
H_{Z}^{m}(\operatorname*{Spec}\mathcal{O}_{F},j_{!}\pi_{\ast}{{\pi}^{\ast
}\mathcal{K}})  &  \cong\bigoplus\limits_{v\in S\setminus S_{\infty}}%
H_{\{v\}}^{m}(\operatorname*{Spec}\mathcal{O}_{F},j_{!}\pi_{\ast}{{\pi}^{\ast
}\mathcal{K}})\\
&  \cong\bigoplus\limits_{v\in S\setminus S_{\infty}}H_{\{v\}}^{m}%
(\operatorname*{Spec}\mathcal{O}_{F,v}^{\operatorname*{h}},w^{\ast}j_{!}%
\pi_{\ast}{{\pi}^{\ast}\mathcal{K}})
\end{align*}
by first using that the closed complement of $\operatorname*{Spec}%
\mathcal{O}_{S}$ in $\operatorname*{Spec}\mathcal{O}_{F}$ decomposes into
isolated closed points $\{v\}$ for $v\in S\setminus S_{\infty}$. Next, by
\cite[Ch. II, \S 1, Prop. 1.1 (a)]{milne2006} and its proof the arrow
$\partial^{\operatorname*{h}}$ on the right side in Diagram \ref{vchdiag2} is
an isomorphism and\footnote{using basechange $w^{\ast}j_{!}\rightarrow
j_{!}^{\prime}\tilde{w}^{\ast}$ for the open immersion $j$}%
\[
R\Gamma(\operatorname*{Spec}\mathcal{O}_{F}^{\operatorname*{h}},w^{\ast}%
j_{!}\pi_{\ast}{{\pi}^{\ast}\mathcal{K}})\cong R\Gamma(\operatorname*{Spec}%
\mathcal{O}_{F}^{\operatorname*{h}},j_{!}^{\prime}(\tilde{w}^{\ast}\pi_{\ast
}{{\pi}^{\ast}\mathcal{K}}))=0\text{.}%
\]
This allows us to simplify and evaluate Diagram \ref{vchdiag2} to%
\begin{equation}%
{
\begin{tikzcd}
	{(j_{!}\pi_{\ast}{{\pi}^{*}\mathcal{K}})({\operatorname*{Spec}\mathcal{O}%
_{F}})} \\
	{K/p^r({\operatorname{Perf}}\mathcal{X})[{\tau}^{-1}]} & {\bigoplus_{v\in
S\setminus S_{\infty}}K/p^r({\operatorname{Perf}}(\mathcal{X}\times
_{\operatorname{Spec}\mathcal{O}_{F }}\operatorname{Spec}F_{v}^{\operatorname
{h} }))[{\tau}^{-1}]} \\
	{\Sigma R\Gamma_{Z}(\operatorname*{Spec}\mathcal{O}_{F},j_{!}\pi_{\ast}{{\pi
}^{*}\mathcal{K}})} & {\bigoplus_{v\in S\setminus S_{\infty}}\Sigma
R\Gamma_{\{v\}}(\operatorname*{Spec}\mathcal{O}_{F}^{\operatorname*{h}
},w^{\ast}j_{!}\pi_{\ast}{{\pi}^{*}\mathcal{K}})}
	\arrow[from=1-1, to=2-1]
	\arrow["{\tilde{w}}", from=2-1, to=2-2]
	\arrow["\partial"', from=2-1, to=3-1]
	\arrow["{\partial^{\operatorname{h} }}%
", Rightarrow, no head, from=2-2, to=3-2]
	\arrow["w"', Rightarrow, no head, from=3-1, to=3-2]
\end{tikzcd}
}
\label{vchdiag3}%
\end{equation}
by evaluation of global sections and by base change\footnote{$\pi$ is qc and
finite type, and via Rmk. \ref{rmk_hypersheaves2} the maps reduce to limits of
termwise \'{e}tale maps} on the right face of the cube in Diagram
\ref{vchdiag1},
\begin{align*}
R\Gamma(\operatorname*{Spec}F_{v}^{\operatorname*{h}},\tilde{w}^{\ast}%
\pi_{\ast}{{\pi}^{\ast}\mathcal{K}})  &  \cong R\Gamma(F_{v}%
^{\operatorname*{h}},\pi_{\ast}^{\prime}w^{\prime\ast}{{\pi}^{\ast}%
\mathcal{K}})\\
&  \cong R\Gamma(\mathcal{X\times}_{\operatorname*{Spec}\mathcal{O}_{F}%
}\operatorname*{Spec}F_{v}^{\operatorname*{h}},w^{\prime\ast}{{\pi}^{\ast
}\mathcal{K}})\\
&  \cong K/p^{r}(\operatorname*{Perf}(\mathcal{X\times}_{\operatorname*{Spec}%
\mathcal{O}_{F}}\operatorname*{Spec}F_{v}^{\operatorname*{h}}))[\tau
^{-1}]\text{.}%
\end{align*}
We observe that by invariance of \'{e}tale cohomology the\ Henselization can
be replaced by completion, so both source and target as well as the map
$\tilde{w}$ in the middle row of Diagram \ref{vchdiag3} can be replaced
isomorphically by%
\[
K/p^{r}(\operatorname*{Perf}\mathcal{X})[\tau^{-1}]\longrightarrow
\bigoplus\limits_{v\in S\setminus S_{\infty}}K/p^{r}(\operatorname*{Perf}%
(\mathcal{X\times}_{\operatorname*{Spec}\mathcal{O}_{S}}\operatorname*{Spec}%
F_{v}))[\tau^{-1}]\text{.}%
\]
As this agrees with the cofiber along $\alpha^{\prime}$ in Corollary
\ref{cor2}, the fiber sequence in the left column of Diagram \ref{vchdiag3}
can be identified with%
\begin{equation}
K/p^{r}(\operatorname*{Perf}\mathcal{X})[\tau^{-1}]\rightarrow\bigoplus
\limits_{v\in S\setminus S_{\infty}}K/p^{r}(\operatorname*{Perf}%
\mathcal{X}_{v})[\tau^{-1}]\text{.} \label{lch2}%
\end{equation}
Our claim follows.\newline(Hypothesis: $\pi$ proper) We now prove Cor.
\ref{cor_v2}. We construct the following diagram for the proper smooth
morphism $\pi\colon\mathcal{X}\longrightarrow\operatorname*{Spec}%
\mathcal{O}_{S}$: The adjunction $\pi_{!}\dashv\pi^{!}$ has the co-unit
transformation $\pi_{!}\pi^{!}\longrightarrow1$, which induces the lower left
vertical map%
\begin{equation}%
\adjustbox{max width=\textwidth}{
\begin{tikzcd}
	{{\pi}^{*}\mathcal{K}{(\mathcal{X})} } && { K/{p^r}[\tau^{-1}]({\mathcal{X}%
}) } \\
	\\
	{{\pi}_{*}\Sigma^{2d}{\pi}^{*}\mathcal{K}\otimes\mathbf{Z}/{{p^r}%
}(d){(\mathcal{O}_S)}} \\
	\\
	{{\pi}_{!}{\pi}^{!}\mathcal{K}{(\mathcal{O}_S)}} \\
	\\
	{\mathcal{K}{(\mathcal{O}_S)}} && { K/{p^r}[\tau^{-1}]({\mathcal{O}_S})}
	\arrow[equals, from=1-1, to=1-3]
	\arrow[equals, from=1-1, to=3-1]
	\arrow["{{{{{{\pi}_{*}[\tau^{-1}]}}}}}", from=1-3, to=7-3]
	\arrow["{{{{{{\pi}_{!}={\pi}_{*}}}}}}", equals, from=3-1, to=5-1]
	\arrow["{{{{{{{{\pi}_{!}{\pi}^{!}\rightarrow1}}}}}}}", from=5-1, to=7-1]
	\arrow[equals, from=7-1, to=7-3]
\end{tikzcd}
}
\label{lset1}%
\end{equation}
To avoid confusion: Recall that going modulo $p$ and inverting $\tau$ was part
of our definition of $\mathcal{K}$ in Eq. \ref{l_h1}, which is why these
operations do not show up in the left column. Since $\pi$ is assumed to be
smooth,%
\begin{equation}
\pi^{!}\mathcal{K}\cong\Sigma^{2d}\pi^{\ast}\mathcal{K}\otimes\mathbf{Z}%
/p^{r}(d)\cong\pi^{\ast}\mathcal{K} \label{lset2}%
\end{equation}
by Bott periodicity (this is the \'{e}tale sheaf incarnation of%
\[
\pi^{!}\mathrm{KGL}_{\mathcal{O}_{S}}\cong\Sigma^{2d,d}\pi^{\ast}%
\mathrm{KGL}_{\mathcal{O}_{S}}\cong\Sigma^{2d,d}\mathrm{KGL}_{\mathcal{X}%
}\cong\mathrm{KGL}_{\mathcal{X}}%
\]
in the motivic category).\ Since $\pi$ is proper, $\pi_{!}=\pi_{\ast}$. The
right downward arrow agrees with the arrow induced by the pushforward of
perfect complexes%
\[
\operatorname*{Perf}\mathcal{X}\longrightarrow\operatorname*{Perf}%
\mathcal{O}_{S}\text{,}%
\]
see \cite[Lemma 3.3.4]{jinpaper}, as we had used it in Eq. \ref{l_h0}. The
same argument applies to the (also proper smooth) morphisms $\pi
\colon\mathcal{X}_{v}\longrightarrow\operatorname*{Spec}F_{v}$. This being
settled, we may consider the co-unit transformation $\pi_{!}\pi^{!}%
\longrightarrow1$ in all of the above proof, notably Eq. \ref{vchdiag1}. As we
had shown that the left column in Diagram \ref{vchdiag3} can be identified
with Eq. \ref{lch2}, we obtain the solid arrows of%
\begin{equation}%
\adjustbox{max width=\textwidth}{
\begin{tikzcd}
	{K/p^r(\underline{{\mathsf{LC}}}_{\mathcal{X}})[\tau^{-1}]}
&& {K/p^r(\underline{{\mathsf{LC}}}_{\mathcal{O}_S})[\tau^{-1}]} \\
	& {R\Gamma(\operatorname*{Spec}\mathcal{O}_{F},j_{!}\pi_{\ast}{{\pi}%
^{*}\mathcal{K}})} && {R\Gamma(\operatorname*{Spec}\mathcal{O}_{F}%
,j_{!}{\mathcal{K}})} \\
	{K/p^r(\operatorname{Perf}\mathcal{X})[\tau^{-1}]} && {K/p^r(\operatorname
{Perf}\mathcal{O}_S)[\tau^{-1}]} \\
	& {R\Gamma(\operatorname*{Spec}\mathcal{O}_{S},\pi_{\ast}{{\pi}^{*}%
\mathcal{K}})} && {R\Gamma(\operatorname*{Spec}\mathcal{O}_{S},{\mathcal{K}})}
\\
	{K/p^r(\operatorname{Perf}\mathcal{X}_{v})[\tau^{-1}]}
&& {K/p^r(\operatorname{Perf}F_{v})[\tau^{-1}]} \\
	& {\Sigma R\Gamma_{Z}(\operatorname*{Spec}\mathcal{O}_{F},j_{!}\pi_{\ast
}{{\pi}^{*}\mathcal{K}})} && {\Sigma R\Gamma_{Z}(\operatorname*{Spec}%
\mathcal{O}_{F},j_{!}{\mathcal{K}})}
	\arrow["{{{\pi_{*}}}}", dashed, from=1-1, to=1-3]
	\arrow[from=1-1, to=3-1]
	\arrow[from=1-3, to=3-3]
	\arrow[equals, from=2-2, to=1-1]
	\arrow["{{{{\pi}_{!}{\pi}^{!}\rightarrow1}}}"'{pos=0.8}, from=2-2, to=2-4]
	\arrow[from=2-2, to=4-2]
	\arrow[equals, from=2-4, to=1-3]
	\arrow[from=2-4, to=4-4]
	\arrow["{{{\pi_{*}}}}"{pos=0.2}, from=3-1, to=3-3]
	\arrow[from=3-1, to=5-1]
	\arrow[from=3-3, to=5-3]
	\arrow[equals, from=4-2, to=3-1]
	\arrow["{{{{\pi}_{!}{\pi}^{!}\rightarrow1}}}"'{pos=0.8}, from=4-2, to=4-4]
	\arrow["\partial"'{pos=0.3}, from=4-2, to=6-2]
	\arrow[equals, from=4-4, to=3-3]
	\arrow["\partial"{pos=0.3}, from=4-4, to=6-4]
	\arrow["{{{\pi_{*}}}}"{pos=0.2}, from=5-1, to=5-3]
	\arrow[equals, from=6-2, to=5-1]
	\arrow["{{{{\pi}_{!}{\pi}^{!}\rightarrow1}}}"', from=6-2, to=6-4]
	\arrow[equals, from=6-4, to=5-3]
\end{tikzcd}
}
\label{vchdiag4}%
\end{equation}
where

\begin{itemize}
\item the front face is induced by $\pi_{!}\pi^{!}\longrightarrow1$,

\item the downward arrows are fiber sequences, coming from the above proof
(concretely from the localization recollement of Eq. \ref{lch0})

\item and the back face stems from the identifications which we have proven in
the first part of the proof.
\end{itemize}

The commutativity of the pushforwards%
\[
\pi_{\ast}\colon\operatorname*{Perf}\mathcal{X}\longrightarrow
\operatorname*{Perf}\mathcal{O}_{S}\qquad\text{(resp. }\pi_{\ast}%
\colon\operatorname*{Perf}\mathcal{X}_{v}\longrightarrow\operatorname*{Perf}%
F_{v}\text{)}%
\]
with the maps induced from $\pi_{!}\pi^{!}\longrightarrow1$ is the
commutativity of Diagram \ref{lset1}. Since the back face downward arrows are
also fiber sequences, the diagram induces a unique map (the dashed arrow).
However, we can identify the entire back face with the diagram in Cor.
\ref{cor_v1} (after applying $K$-theory, mod $p^{r}$ and inverting $\tau$).
This finishes the proof of Diagram \ref{vchdiag4}.
\end{proof}

The descent spectral sequence for the sheaf $\mathcal{K}$ has the $E_{2}$-page%
\begin{equation}%
{
\begin{tikzcd}
	{E^{s,t}_{2}(\mathcal{X}):=H^{s}\left( \mathcal{O}_F , j_{!} {\pi}_{*} {\pi
}^{*}
\mathbf{Z}/{p^r}(-\tfrac{t}{2})\right)} & \Longrightarrow& {{\pi}_{-s-t}%
\left( j_{!} {\pi}_{*} {\pi}^{*} \mathcal{K} \right)(\operatorname
{Perf}\mathcal{O}_F)} \\
	{H^{s}_{c}\left( \mathcal{X} , \mathbf{Z}/{p^r}(-\tfrac{t}{2})\right)}
&& {{\pi}_{-s-t}\Sigma^{-1}K/{p^r}(\underline{{\mathsf{LC}}}_{\mathcal{X}%
})[{\tau}^{-1}]}
	\arrow[shift left=3, equals, from=1-1, to=2-1]
	\arrow[equals, from=1-3, to=2-3]
\end{tikzcd}
}
\label{lcih1}%
\end{equation}
with bidegree $(r,-r+1)$ differential\footnote{A spectral sequence `of
cohomological type', see \cite[Def. 5.2.3]{MR1269324}, starting on page $a=2$.
The conventions used\textit{ loc. cit.} are the same as we employ.}%
\[
d_{r}\colon E_{r}^{s,t}\longrightarrow E_{r}^{s+r,t-r+1}\text{,}%
\]
and $\mathbf{Z}/p^{r}(-\tfrac{t}{2})$ is to be read \textit{as a zero entry}
whenever $t$ is odd.\footnote{This is an old, by now standard, convention in
this area.} On the left, for the identifications expressed within Eq.
\ref{lcih1}, we used our convention for compactly supported \'{e}tale
cohomology (Remark \ref{rmk_CompactlySupportedEtaleCohomology}), and on the
right Prop. \ref{prop_Equiv1}.

\begin{remark}
The spectral sequence in Eq. \ref{lcih1} originates from Thomason
\cite[Theorem 4.1]{MR826102}, who however uses Bousfield--Kan indexing:
Replace our $j$ by $-j$ to obtain his notation (and differential $E_{r}%
^{i,j}\longrightarrow E_{r}^{i+r,j+r-1}$).
\end{remark}

An entirely analogous spectral sequence exists for $\operatorname*{Spec}%
\mathcal{O}_{S}$ itself, and they are connected through $\pi_{\ast}$ if $\pi$
is proper:

\begin{lemma}
\label{lemma_pi_proper1}Suppose $\pi$ is proper. Then the diagram%
\[%
{
\begin{tikzcd}
	{E^{s,t}_{2}(\mathcal{X}):=H^{s+2d}_{c}\left( \mathcal{X} , \mathbf{Z}%
/{p^r}(d-\tfrac{t}{2})\right)} & \Longrightarrow& {{\pi}_{-s-t}\Sigma
^{-1}K/{p^r}(\underline{{\mathsf{LC}}}_{\mathcal{X}})[{\tau}^{-1}]} \\
	\\
	{E^{s,t}_{2}(\mathcal{O}_S):=H^{s}_{c}\left( \mathcal{O}_S , \mathbf{Z}%
/{p^r}(-\tfrac{t}{2})\right)} & \Longrightarrow& {{\pi}_{-s-t}\Sigma
^{-1}K/{p^r}(\underline{{\mathsf{LC}}}_{\mathcal{O}_S})[{\tau}^{-1}]}
	\arrow["{{\pi}_{!}{\pi}^{!}\rightarrow1}"', from=1-1, to=3-1]
	\arrow["{{\pi}_{*}}", from=1-3, to=3-3]
\end{tikzcd}
}%
\]
commutes, where $\pi_{\ast}$ on the right is defined as in Eq. \ref{l_h0}%
-\ref{l_h0a}.
\end{lemma}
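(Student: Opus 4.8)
The plan is to exhibit the square as the shadow of a single morphism of étale hyperdescent spectral sequences, and then to identify the induced map on $E_2$-pages with the Poincaré--Tate trace. By Proposition~\ref{prop_Equiv1} the spectral sequence for $\mathcal{X}$ of Eq.~\ref{lcih1} is, up to the stated $\Sigma^{-1}$-shift, the hyperdescent spectral sequence of the hypersheaf $j_!\pi_\ast\pi^\ast\mathcal{K}$ on $\operatorname{Spec}\mathcal{O}_F$ evaluated on global sections, and the one for $\mathcal{O}_S$ is the same construction applied to $j_!\mathcal{K}$ (the case $d=0$); convergence is harmless because $\operatorname{Spec}\mathcal{O}_F$ has bounded $p$-cohomological dimension (Remark~\ref{rmk_hypersheaves}). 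The construction of this spectral sequence out of the Postnikov filtration of a hypersheaf of spectra is functorial: a morphism $\phi\colon\mathcal{G}_1\to\mathcal{G}_2$ of hypersheaves induces a morphism of the associated filtered spectra, hence a morphism of spectral sequences, whose map on $E_2$ is $H^s_{\mathrm{\acute{e}t}}(\operatorname{Spec}\mathcal{O}_F,-)$ of the induced map of homotopy sheaves and whose map on abutments is $\pi_{-s-t}$ of $\phi(\operatorname{Spec}\mathcal{O}_F)$.

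Next I would feed in Corollary~\ref{cor_v2}. It shows that the right-hand vertical map $\pi_\ast$ of the present lemma is, under the equivalences of Proposition~\ref{prop_Equiv1}, the global-sections map of the hypersheaf morphism $\phi:=j_!\bigl(\pi_!\pi^!\mathcal{K}\to\mathcal{K}\bigr)\colon j_!\pi_\ast\pi^\ast\mathcal{K}\to j_!\mathcal{K}$ on $\operatorname{Spec}\mathcal{O}_F$ induced by the co-unit $\pi_!\pi^!\to 1$, where one uses $\pi_!=\pi_\ast$ (properness) and the purity equivalence $\pi^!\mathcal{K}\cong\Sigma^{2d}\pi^\ast\mathcal{K}\otimes\mathbf{Z}/p^r(d)\cong\pi^\ast\mathcal{K}$ of Eq.~\ref{lset2} (smoothness, Bott periodicity). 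Applying the functoriality of the previous paragraph to this single $\phi$ produces a morphism of descent spectral sequences whose map on abutments is precisely the right vertical arrow of the lemma --- in short, the square is Corollary~\ref{cor_v2} read through the hyperdescent spectral sequence (equivalently, the back face of Diagram~\ref{vchdiag4} after passage to spectral sequences). This already establishes commutativity; what remains is to identify the induced $E_2$-map with the arrow displayed in the lemma.

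On $E_2$ the morphism is $H^s_{\mathrm{\acute{e}t}}(\operatorname{Spec}\mathcal{O}_F,j_!(-))$ applied to the map of homotopy sheaves coming from the co-unit $\pi_!\pi^!\mathbf{Z}/p^r(n)\to\mathbf{Z}/p^r(n)$ in weight $n$. Composing with the homotopy-sheaf shadow of Eq.~\ref{lset2}, i.e.\ the purity isomorphism $\pi^!\mathbf{Z}/p^r(n)\cong\pi^\ast\mathbf{Z}/p^r(n+d)[2d]$, this co-unit becomes --- by the very definition of $f_!f^!\to 1$ in the étale $6$-functor formalism for the smooth proper $\pi$ --- the trace map $\operatorname{tr}_\pi\colon R\pi_!\,\mathbf{Z}/p^r(n+d)[2d]\to\mathbf{Z}/p^r(n)$ of étale Poincaré duality, the map used in Saito \cite{MR1045856} and Geisser--Schmidt \cite{MR3867292}. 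Applying $j_!$, taking global sections, and reindexing with $n=-\tfrac{t}{2}$ --- so that $n+d=d-\tfrac{t}{2}$ and the shift $[2d]$ turns $H^s$ into $H^{s+2d}$ --- this is exactly $H^{s+2d}_c(\mathcal{X},\mathbf{Z}/p^r(d-\tfrac{t}{2}))\to H^s_c(\mathcal{O}_S,\mathbf{Z}/p^r(-\tfrac{t}{2}))$, where compactly supported cohomology is read with the convention of Remark~\ref{rmk_CompactlySupportedEtaleCohomology}.

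The step I expect to be the main obstacle is exactly this last identification: checking that the abstract purity equivalence of Eq.~\ref{lset2}, restricted to homotopy sheaves, is the classical étale orientation/purity isomorphism, so that the $K$-theoretic co-unit genuinely specializes to the classical Poincaré trace, and that the bookkeeping of the shifts $\Sigma$, $\Sigma^{-1}$, $[2d]$ and the Tate twists lands on the precise indexing of the lemma. The compatibility at issue is that the $\mathrm{KGL}/p^r$-orientation agrees with the $\mathbf{Z}/p^r(1)$-orientation of étale cohomology --- which is what makes $\tau$ a \emph{motivic} Bott element; granting this, the remainder is the formal functoriality of the hyperdescent spectral sequence already used above.
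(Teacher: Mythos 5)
Your proposal is correct and follows essentially the same route as the paper: the paper's proof likewise reads Corollary \ref{cor_v2} (i.e.\ the co-unit $\pi_!\pi^!\to 1$ together with $\pi_!=\pi_\ast$ from properness and the purity equivalence of Eq.~\ref{lset2} from smoothness) through the descent spectral sequence, and identifies the $E_2$-terms with compactly supported cohomology via Remark \ref{rmk_CompactlySupportedEtaleCohomology}. Your explicit appeal to the functoriality of the hyperdescent spectral sequence under a morphism of hypersheaves is exactly what the paper leaves implicit in its chain of squares \textsf{(A)}, \textsf{(B)}, \textsf{(C)}.
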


\begin{proof}
We must study what happens to the $K$-theory terms in the proof of Cor.
\ref{cor_v2} when we use the descent spectral sequence. The adjunction
$\pi_{!}\dashv\pi^{!}$ has the co-unit transformation $\pi_{!}\pi
^{!}\longrightarrow1$, which provides the vertical arrows in square\textsf{
(A)} below:%
\[%
\adjustbox{max width=\textwidth}{
\begin{tikzcd}
	{E_{2}^{s,t}(\mathcal{X}):=H^{s}_{c}\left({\mathcal{X}},\mathbf{Z}/{{p^r}%
}(d-\tfrac{t}{2}
)[2d]\right)} & \Longrightarrow& {\pi_{-s-t}\left({j_{!}}{\pi}_{*}{\pi}%
^{*}\mathcal{K}{(\mathcal{X})} \right)} \\
	& {\mathsf{(C)}} \\
	{E_{2}^{s,t}(\mathcal{X}):=H^{s}_{c}\left({\mathcal{X}},{\pi}^{!}\mathbf
{Z}/{{p^r}}(-\tfrac{t}{2} )\right)} & \Longrightarrow& {\pi_{-s-t}\left
({j_{!}}{\pi}_{*}\Sigma^{2d}{\pi}^{*}\mathcal{K}\otimes\mathbf{Z}/{{p^r}%
}(d){(\mathcal{O}_S)} \right)} \\
	& {\mathsf{(B)}} \\
	{E_{2}^{s,t}(\mathcal{X}):=H^{s}\left({\mathcal{O}_F},{j_{!}}{\pi}_{!}{\pi
}^{!}\mathbf{Z}/{{p^r}}(-\tfrac{t}{2} )\right)} & \Longrightarrow& {\pi
_{-s-t}\left({j_{!}}{\pi}_{!}{\pi}^{!}\mathcal{K}{(\mathcal{O}_S)}\right)} \\
	& {\mathsf{(A)}} \\
	{E_{2}^{s,t}{(\mathcal{O}_S)}:=H^{s}\left({\mathcal{O}_F},{j_{!}}\mathbf
{Z}/{{p^r}}(-\tfrac{t}{2} )\right)} & \Longrightarrow& {\pi_{-s-t}\left
({j_{!}}\mathcal{K}{(\mathcal{O}_F)}\right)}
	\arrow[equals, from=1-1, to=3-1]
	\arrow[equals, from=1-3, to=3-3]
	\arrow["{{{{{\pi}_{!}={\pi}_{*}}}}}", equals, from=3-1, to=5-1]
	\arrow["{{{{{\pi}_{!}={\pi}_{*}}}}}", equals, from=3-3, to=5-3]
	\arrow["{{{{{{{\pi}_{!}{\pi}^{!}\rightarrow1}}}}}}", from=5-1, to=7-1]
	\arrow["{{{{{{{\pi}_{!}{\pi}^{!}\rightarrow1}}}}}}", from=5-3, to=7-3]
\end{tikzcd}
}%
\]
The right square is Diagram \ref{lset1}. Since $\pi$ is proper, $\pi_{!}%
=\pi_{\ast}$, and since $\pi$ is smooth of relative dimension $d$, $\pi
^{!}\mathcal{F}\cong\Sigma^{2d}\pi^{\ast}\mathcal{F}\otimes\mathbf{Z}%
/p^{r}(d)$, providing us with the squares \textsf{(B)},\textsf{ (C)}. Finally,
the identifications with compactly supported cohomology rely just on Remark
\ref{rmk_CompactlySupportedEtaleCohomology}, both schemes $\mathcal{X}$ and
$\mathcal{S}=\operatorname*{Spec}\mathcal{O}_{S}$ come with natural structure
morphisms to $\mathcal{S}$.
\end{proof}

\begin{lemma}
\label{lemma_vanish_at_0_0}$H_{c}^{0}\left(  \mathcal{O}_{S},\mathbf{Z}%
/p^{r}(0)\right)  =0$.
\end{lemma}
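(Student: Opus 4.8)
The plan is to unwind the definition of $H_{c}^{0}$ and reduce it to the injectivity of a diagonal map. By Remark \ref{rmk_CompactlySupportedEtaleCohomology}, taking $\mathcal{X}=\mathcal{S}=\operatorname*{Spec}\mathcal{O}_{S}$ with structure morphism $\pi=\operatorname*{id}$ (so that $\pi_{!}=\operatorname*{id}$), and noting that $\mathbf{Z}/p^{r}(0)=\mathbf{Z}/p^{r}$ is the constant sheaf, one has
\[
H_{c}^{0}(\mathcal{O}_{S},\mathbf{Z}/p^{r}(0))=\pi_{0}\operatorname*{RHom}\nolimits_{\mathcal{O}_{F}}(1_{\mathcal{O}_{F}},j_{!}\mathbf{Z}/p^{r}),
\]
with $j\colon\operatorname*{Spec}\mathcal{O}_{S}\hookrightarrow\operatorname*{Spec}\mathcal{O}_{F}$. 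Since $j^{\ast}$ takes the constant sheaf $\mathbf{Z}/p^{r}$ on $\operatorname*{Spec}\mathcal{O}_{F}$ to the constant sheaf on $\operatorname*{Spec}\mathcal{O}_{S}$, we may write $j_{!}\mathbf{Z}/p^{r}=j_{!}j^{\ast}\mathbf{Z}/p^{r}$.

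First I would apply $\operatorname*{RHom}\nolimits_{\mathcal{O}_{F}}(1_{\mathcal{O}_{F}},-)$ to the standard recollement fiber sequence $j_{!}j^{\ast}\mathbf{Z}/p^{r}\rightarrow\mathbf{Z}/p^{r}\rightarrow i_{\ast}i^{\ast}\mathbf{Z}/p^{r}$ (as used in the proof of Proposition \ref{prop_Equiv1}), where $i\colon Z\hookrightarrow\operatorname*{Spec}\mathcal{O}_{F}$ is the reduced closed complement $Z=\{v_{1},\ldots,v_{r}\}$ of Diagram \ref{lstructdiag1}. Using the adjunction $(i^{\ast},i_{\ast})$ together with $i^{\ast}1_{\mathcal{O}_{F}}=1_{Z}$, the third term becomes $R\Gamma(Z,\mathbf{Z}/p^{r})$, and the associated long exact sequence begins
\[
0\longrightarrow H_{c}^{0}(\mathcal{O}_{S},\mathbf{Z}/p^{r})\longrightarrow H^{0}(\operatorname*{Spec}\mathcal{O}_{F},\mathbf{Z}/p^{r})\longrightarrow H^{0}(Z,\mathbf{Z}/p^{r}).
\]
Next I would identify the right-hand groups: $\operatorname*{Spec}\mathcal{O}_{F}$ is connected, so $H^{0}(\operatorname*{Spec}\mathcal{O}_{F},\mathbf{Z}/p^{r})=\mathbf{Z}/p^{r}$; and $Z$ is a finite disjoint union of spectra of finite fields, so $H^{0}(Z,\mathbf{Z}/p^{r})=(\mathbf{Z}/p^{r})^{r}$, with the restriction map being the diagonal embedding.

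The only genuinely nonformal input — and the step where the standing hypotheses enter — is that $Z\neq\emptyset$, i.e.\ $r\geq1$: this holds because $\frac{1}{p}\in\mathcal{O}_{S}$ forces the finite places of $F$ above $p$ to lie in $S$, hence in $Z$. Granting this, the diagonal $\mathbf{Z}/p^{r}\rightarrow(\mathbf{Z}/p^{r})^{r}$ is injective, and exactness of the displayed sequence yields $H_{c}^{0}(\mathcal{O}_{S},\mathbf{Z}/p^{r})=0$. I do not expect any real obstacle here. Alternatively, one can avoid the recollement altogether: a global section of $j_{!}\mathbf{Z}/p^{r}$ over $\operatorname*{Spec}\mathcal{O}_{F}$ is precisely a section of the constant sheaf over the connected scheme $\operatorname*{Spec}\mathcal{O}_{S}$ whose support is closed in $\operatorname*{Spec}\mathcal{O}_{F}$; since $\operatorname*{Spec}\mathcal{O}_{S}$ is a dense open properly contained in $\operatorname*{Spec}\mathcal{O}_{F}$, only the zero section has this property.
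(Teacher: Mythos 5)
Your proof is correct, and it takes a genuinely different route from the paper's. You work directly with $\mathbf{Z}/p^{r}$-coefficients and the recollement sequence $j_{!}j^{\ast}\mathcal{G}\to\mathcal{G}\to i_{\ast}i^{\ast}\mathcal{G}$ attached to the closed complement $Z\subset\operatorname*{Spec}\mathcal{O}_{F}$, so that $H_{c}^{0}$ becomes the kernel of the diagonal $\mathbf{Z}/p^{r}\to(\mathbf{Z}/p^{r})^{\#Z}$, which vanishes as soon as $Z\neq\emptyset$; you correctly isolate $\frac{1}{p}\in\mathcal{O}_{S}$ as the hypothesis guaranteeing this. The paper instead argues with integral coefficients: it invokes Milne's exact sequence comparing $H_{c}^{\bullet}(\mathcal{O}_{S},\mathbf{Z})$ with the cohomology of the completions $F_{v}$ for $v\in S$ (local fields, not residue fields) to get $H_{c}^{0}(\mathcal{O}_{S},\mathbf{Z})=0$, then shows via a snake-lemma argument that $H_{c}^{1}(\mathcal{O}_{S},\mathbf{Z})$ is an extension of a free group by $2$-torsion and hence has no $p$-torsion, and finally concludes by the Bockstein sequence for multiplication by $p^{r}$. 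Your argument is shorter and, given the paper's own definition of $H_{c}$ via $j_{!}$ (Remark \ref{rmk_CompactlySupportedEtaleCohomology}), does not even use that $p$ is odd, whereas the paper's route needs $p$ odd both to discard the Tate contributions at real places and to kill the $2$-torsion in $H_{c}^{1}(\mathcal{O}_{S},\mathbf{Z})$. The only caution is that your concluding "alternative" description of global sections of $j_{!}$ by support conditions is heuristic rather than a proof in the hypersheaf setting, but your main recollement argument does not rely on it.
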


\begin{proof}
There is an exact sequence%
\[
0\rightarrow H_{c}^{0}(\mathcal{O}_{S},\mathbf{Z})\rightarrow\mathbf{Z}%
\rightarrow\bigoplus_{v\in S\setminus S_{\infty}}H^{0}(F_{v},\mathbf{Z}%
)\oplus\bigoplus_{v\in S\cap S_{\infty}}H_{T}^{0}(F_{v},\mathbf{Z})\rightarrow
H_{c}^{1}(\mathcal{O}_{S},\mathbf{Z})\rightarrow0
\]
and the middle arrow is injective as $S$ contains at least one finite place by
our standing assumption that $\frac{1}{p}\in\mathcal{O}_{S}$. For this, see
the proof of \cite[Ch. II, Cor. 2.11]{milne2006}, and note that there is a
slight typo in the statement of the corollary (but the matter gets clear from
inspecting the proof). Moreover, in Milne's convention $H^{\bullet}(F_{v},-)$
refers to Tate cohomology for local fields. To avoid misunderstandings, we
have therefore (unlike \textit{loc. cit.}) split the direct sum into two sums
and wrote $H_{T}$ for Tate cohomology. We conclude that $H_{c}^{0}%
(\mathcal{O}_{S},\mathbf{Z})=0$. Letting $\wp$ be the projection onto the
direct summand of finite places, we obtain the commutative diagram%
\[%
{
\adjustbox{max width=\textwidth}{
\begin{tikzcd}
	{\mathbf{Z}} && {\bigoplus_{v\in S\setminus S_{\infty}}H^{0}(F_{v},\mathbf
{Z})\oplus\bigoplus_{v\in S_{\infty}}H_{T}^{0}(F_{v},\mathbf{Z})}
&& {H^{1}_c(\mathcal{O}_S,\mathbf{Z})} \\
	\\
	{\mathbf{Z}} && {\bigoplus_{v\in S\setminus S_{\infty}}H^{0}(F_{v},\mathbf
{Z})} && Q
	\arrow[hook, from=1-1, to=1-3]
	\arrow[equals, from=1-1, to=3-1]
	\arrow[two heads, from=1-3, to=1-5]
	\arrow["\wp", two heads, from=1-3, to=3-3]
	\arrow[from=1-5, to=3-5]
	\arrow["{\operatorname{diag}}"', hook, from=3-1, to=3-3]
	\arrow[two heads, from=3-3, to=3-5]
\end{tikzcd}
}
}%
\]
with exact rows, where $Q$ is a free abelian group since the lower left
horizontal arrow is the diagonal embedding of a copy of $\mathbf{Z}$ into
factors all of which satisfy $H^{0}(F_{v},\mathbf{Z})\cong\mathbf{Z}$ since
$v$ is a finite place. Moreover, $H_{T}^{0}(F_{v},\mathbf{Z})=0$ at complex
places and $H_{T}^{0}(F_{v},\mathbf{Z})$ is $2$-torsion at real places. The
snake lemma therefore shows that $H_{c}^{1}(\mathcal{O}_{S},\mathbf{Z})$ is an
extension of a free abelian group by a $2$-torsion group. We deduce that
$H_{c}^{1}(\mathcal{O}_{S},\mathbf{Z})$ has no $p$-torsion, as $p$ is odd. The
long exact sequence of multiplication with $p^{r}$ shows that%
\[
\cdots\overset{\cdot p^{r}}{\longrightarrow}\underset{=0}{H_{c}^{0}%
(\mathcal{O}_{S},\mathbf{Z})}\longrightarrow H_{c}^{0}(\mathcal{O}%
_{S},\mathbf{Z}/p^{r})\longrightarrow H_{c}^{1}(\mathcal{O}_{S},\mathbf{Z}%
)\overset{\cdot p^{r}}{\longrightarrow}H_{c}^{1}(\mathcal{O}_{S},\mathbf{Z})
\]
and since $H_{c}^{1}(\mathcal{O}_{S},\mathbf{Z})$ has no $p$-torsion, the
kernel of multiplication by $p^{r}$ must be trivial.
\end{proof}

The following observation is genuinely special to $\operatorname*{Spec}%
\mathbf{Z}\left[  \frac{1}{p}\right]  $. It works only since $p$ is odd and we
remove precisely one finite place.

\begin{lemma}
\label{lemma_special_vanishing}$H_{c}^{1}\left(  \mathbf{Z}\left[  \frac{1}%
{p}\right]  ,\mathbf{Z}/p^{r}(0)\right)  =0$.
\end{lemma}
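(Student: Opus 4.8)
The plan is to excise $\operatorname{Spec}\mathbf{Z}[\tfrac1p]$ from $\operatorname{Spec}\mathbf{Z}$ at the single closed point $(p)$ and combine the resulting localization sequence with class field theory. Write $j\colon\operatorname{Spec}\mathbf{Z}[\tfrac1p]\hookrightarrow\operatorname{Spec}\mathbf{Z}$ and $i\colon\operatorname{Spec}\mathbf{F}_p\hookrightarrow\operatorname{Spec}\mathbf{Z}$ for the complementary closed point. By our convention (Remark~\ref{rmk_CompactlySupportedEtaleCohomology}) one has $H^s_c(\mathbf{Z}[\tfrac1p],\mathbf{Z}/p^r(0))=H^s(\operatorname{Spec}\mathbf{Z},j_!\mathbf{Z}/p^r)$, and applying the recollement triangle $j_!j^\ast\to\operatorname{id}\to i_\ast i^\ast$ to $Rj_\ast\mathbf{Z}/p^r$ together with Hochschild--Serre for the inertia subgroup at $p$ yields a fiber sequence of spectra
\[
R\Gamma_c(\mathbf{Z}[\tfrac1p],\mathbf{Z}/p^r)\longrightarrow R\Gamma(\mathbf{Z}[\tfrac1p],\mathbf{Z}/p^r)\overset{\operatorname{res}_p}{\longrightarrow}R\Gamma(\mathbf{Q}_p,\mathbf{Z}/p^r).
\]
There is no archimedean contribution (this is visible from the convention in Remark~\ref{rmk_CompactlySupportedEtaleCohomology}; for odd $p$ the Tate groups at the real place are $p$-torsion-free and drop out anyway), and there is exactly one local term because we removed a single finite prime. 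On $H^0$ the map $\operatorname{res}_p$ is the identity of $\mathbf{Z}/p^r$, so $H^0_c=0$ (recovering Lemma~\ref{lemma_vanish_at_0_0}) and the boundary map $H^0(\mathbf{Q}_p,\mathbf{Z}/p^r)\to H^1_c$ is zero; the long exact sequence then collapses to
\[
H^1_c\big(\mathbf{Z}[\tfrac1p],\mathbf{Z}/p^r(0)\big)\;\cong\;\ker\!\Big(H^1\big(\mathbf{Z}[\tfrac1p],\mathbf{Z}/p^r\big)\overset{\operatorname{res}_p}{\longrightarrow}H^1\big(\mathbf{Q}_p,\mathbf{Z}/p^r\big)\Big).
\]

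So it remains to show that $\operatorname{res}_p$ is injective on $H^1$. As $\mathbf{Z}/p^r(0)$ is the constant sheaf, $H^1(\mathbf{Z}[\tfrac1p],\mathbf{Z}/p^r)=\operatorname{Hom}_{\mathrm{cont}}(\pi_1^{\mathrm{ab}}(\operatorname{Spec}\mathbf{Z}[\tfrac1p]),\mathbf{Z}/p^r)$ and $H^1(\mathbf{Q}_p,\mathbf{Z}/p^r)=\operatorname{Hom}_{\mathrm{cont}}(G_{\mathbf{Q}_p}^{\mathrm{ab}},\mathbf{Z}/p^r)$, with $\operatorname{res}_p$ given by precomposition with the canonical map $G_{\mathbf{Q}_p}^{\mathrm{ab}}\to\pi_1^{\mathrm{ab}}(\operatorname{Spec}\mathbf{Z}[\tfrac1p])$. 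By Kronecker--Weber the maximal abelian extension of $\mathbf{Q}$ unramified outside $p$ is $\mathbf{Q}(\zeta_{p^\infty})$ --- here it is used that only the single prime $p$ is removed, the inertia at any other rational prime inside a cyclotomic field being the full cyclic decomposition group --- so $\pi_1^{\mathrm{ab}}(\operatorname{Spec}\mathbf{Z}[\tfrac1p])\cong\operatorname{Gal}(\mathbf{Q}(\zeta_{p^\infty})/\mathbf{Q})\cong\mathbf{Z}_p^\times$. Since $p$ is totally ramified in $\mathbf{Q}(\zeta_{p^n})/\mathbf{Q}$, its decomposition group equals its inertia group equals the whole Galois group at every finite level; passing to the limit, $G_{\mathbf{Q}_p}^{\mathrm{ab}}\twoheadrightarrow\operatorname{Gal}(\mathbf{Q}(\zeta_{p^\infty})/\mathbf{Q})$. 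Precomposition with a surjection is injective on Hom-groups, so $\operatorname{res}_p$ is injective and hence $H^1_c(\mathbf{Z}[\tfrac1p],\mathbf{Z}/p^r(0))=0$.

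The only genuine input is the class field theory (Kronecker--Weber together with total ramification of $p$ in the $p$-cyclotomic tower); the rest is formal manipulation of the localization sequence, the main point of care being the treatment of the archimedean place. As an independent check, classical Artin--Verdier duality identifies $H^1_c(\mathbf{Z}[\tfrac1p],\mathbf{Z}/p^r(0))$ with the Pontryagin dual of $H^2(\mathbf{Z}[\tfrac1p],\mu_{p^r})$, which by Kummer theory and $\operatorname{Pic}(\mathbf{Z}[\tfrac1p])=0$ equals $\operatorname{Br}(\mathbf{Z}[\tfrac1p])[p^r]$; and $\operatorname{Br}(\mathbf{Z}[\tfrac1p])$, being the kernel of $\operatorname{Br}(\mathbf{Q}_p)\oplus\operatorname{Br}(\mathbf{R})\to\mathbf{Q}/\mathbf{Z}$, is isomorphic to $\mathbf{Z}/2$ and so has no $p$-torsion for $p$ odd.
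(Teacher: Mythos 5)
Your proof is correct, but it takes a genuinely different route from the paper's. The paper argues by duality: it computes $H^{2}(\mathbf{Z}[\tfrac{1}{p}],\mathbf{G}_{m})\cong\mathbf{Z}/2$ from the Brauer--Hasse--Noether sequence, feeds this and $\operatorname{Pic}(\mathbf{Z}[\tfrac{1}{p}])=0$ into the Kummer sequence to get $H^{2}(\mathbf{Z}[\tfrac{1}{p}],\mathbf{Z}/p^{r}(1))=0$, and then invokes Artin--Verdier duality to conclude $H^{1}_{c}(\mathbf{Z}[\tfrac{1}{p}],\mathbf{Z}/p^{r}(0))\cong H^{2}(\mathbf{Z}[\tfrac{1}{p}],\mathbf{Z}/p^{r}(1))^{\vee}=0$ --- which is precisely your closing ``independent check.'' You instead stay in weight $0$: the localization sequence identifies $H^{1}_{c}$ with $\ker\bigl(H^{1}(\mathbf{Z}[\tfrac{1}{p}],\mathbf{Z}/p^{r})\to H^{1}(\mathbf{Q}_{p},\mathbf{Z}/p^{r})\bigr)$, and injectivity of restriction follows from Kronecker--Weber plus total ramification of $p$ in the $p$-cyclotomic tower, i.e.\ from surjectivity of $G_{\mathbf{Q}_{p}}^{\mathrm{ab}}\to\pi_{1}^{\mathrm{ab}}(\operatorname{Spec}\mathbf{Z}[\tfrac{1}{p}])$. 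Both arguments exploit the same two special features (exactly one finite prime removed, $p$ odd), but yours avoids duality and the weight-$1$ twist altogether and makes the phenomenon the paper flags as ``genuinely special to $\operatorname{Spec}\mathbf{Z}[\tfrac{1}{p}]$'' visibly group-theoretic: the local Galois group at $p$ fills up the entire abelianized fundamental group. One pedantic point: $\pi_{1}^{\mathrm{ab}}(\operatorname{Spec}\mathbf{Z}[\tfrac{1}{p}])$ classifies abelian extensions unramified outside $\{p,\infty\}$, not just outside $p$; the answer is still $\operatorname{Gal}(\mathbf{Q}(\zeta_{p^{\infty}})/\mathbf{Q})\cong\mathbf{Z}_{p}^{\times}$, and even the totally real variant would not hurt the surjectivity you need, so this does not affect the argument.
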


\begin{proof}
(1) We use \cite[Ch. II, Prop. 2.1]{milne2006} for $U:=\mathbf{Z}\left[
\frac{1}{p}\right]  $ and $S:=\{p,\infty\}$. Following the computation in
\textit{loc. cit.} Remark 2.2. (a), we obtain the exact sequence%
\[
0\longrightarrow H^{2}\left(  \mathbf{Z}\left[  \tfrac{1}{p}\right]
,\mathbf{G}_{m}\right)  \longrightarrow\operatorname*{Br}(\mathbf{Q}%
_{p})\oplus\operatorname*{Br}(\mathbf{R})\overset{\Sigma}{\longrightarrow
}\mathbf{Q}/\mathbf{Z}\longrightarrow0
\]
and therefore $H^{2}(\mathbf{Z}\left[  \tfrac{1}{p}\right]  ,\mathbf{G}%
_{m})\cong\mathbf{Z}/2$, the Brauer group of the single real place. From the
Kummer sequence $\mathbf{Z}/p^{r}(1)\longrightarrow\mathbf{G}_{m}%
\overset{\cdot p^{r}}{\longrightarrow}\mathbf{G}_{m}$ we therefore obtain%
\[
\operatorname*{Pic}\left(  \mathbf{Z}\left[  \tfrac{1}{p}\right]  \right)
\longrightarrow H^{2}(\mathbf{Z}\left[  \tfrac{1}{p}\right]  ,\mathbf{Z}%
/p^{r}(1))\longrightarrow\mathbf{Z}/2\overset{\cdot p^{r}}{\longrightarrow
}\mathbf{Z}/2\text{,}%
\]
but since $\mathbf{Z}\left[  \frac{1}{p}\right]  $ is a principal ideal
domain, it has trivial class group and therefore $H^{2}(\mathbf{Z}\left[
\tfrac{1}{p}\right]  ,\mathbf{Z}/p^{r}(1))=0$. We write $q\colon
\operatorname*{Spec}\mathbf{Z}\left[  \tfrac{1}{p}\right]  \hookrightarrow
\operatorname*{Spec}\mathbf{Z}$ for the open immersion. From classical
Artin--Verdier duality we now obtain%
\[
H_{c}^{1}\left(  \mathbf{Z}\left[  \tfrac{1}{p}\right]  ,\mathbf{Z}%
/p^{r}(0)\right)  =H^{1}\left(  \mathbf{Z},q_{!}\mathbf{Z}/p^{r}(0)\right)
\cong H^{2}\left(  \mathbf{Z}\left[  \tfrac{1}{p}\right]  ,\mathbf{Z}%
/p^{r}(1)\right)  ^{\vee}=0\text{,}%
\]
which is what we needed to show for (1).
\end{proof}

Next, we make a series of observations which all stem from analyzing the
descent spectral sequence in settings of varying generality.

\begin{lemma}
\label{lemma_supp}For all $s>2d+3$ and $r\geq2$, we have $E_{r}^{s,t}%
(\mathcal{X})=0$.
\end{lemma}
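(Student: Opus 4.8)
The plan is to reduce to the $E_{2}$-page and then bound its cohomological amplitude by hand. Since for $r\geq 2$ the term $E_{r}^{s,t}$ is a subquotient of $E_{2}^{s,t}$ (inductively, $E_{r+1}^{\bullet,\bullet}$ is the cohomology of $(E_{r}^{\bullet,\bullet},d_{r})$ at the relevant spot), it suffices to show $E_{2}^{s,t}(\mathcal{X})=0$ whenever $s>2d+3$, for every $t$. By the definition of the descent spectral sequence in Eq.\ \ref{lcih1} we have $E_{2}^{s,t}(\mathcal{X})=H^{s}(\mathcal{O}_{F},j_{!}\pi_{\ast}\pi^{\ast}\mathbf{Z}/p^{r}(-\tfrac{t}{2}))$, with $\mathbf{Z}/p^{r}(-\tfrac{t}{2})$ read as zero for $t$ odd, so we may assume $t$ even and set $n:=-\tfrac{t}{2}$.

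First I would analyse the amplitude of $\pi_{\ast}\pi^{\ast}\mathbf{Z}/p^{r}(n)=R\pi_{\ast}\mathbf{Z}/p^{r}(n)$ as a complex of sheaves on $\operatorname{Spec}\mathcal{O}_{S}$. Since $\pi$ is proper (and smooth of relative dimension $d$), proper base change identifies the stalk of $R^{b}\pi_{\ast}\mathbf{Z}/p^{r}(n)$ at a geometric point over $x\in\operatorname{Spec}\mathcal{O}_{S}$ with $H^{b}$ of the fibre $\mathcal{X}_{\bar{x}}$, a smooth proper $d$-dimensional scheme over the separably closed residue field, whose characteristic is invertible because $\tfrac{1}{p}\in\mathcal{O}_{S}$. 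Artin's bound $\operatorname{cd}_{p}(\mathcal{X}_{\bar{x}})\leq 2d$ then forces $R^{b}\pi_{\ast}\mathbf{Z}/p^{r}(n)=0$ for $b>2d$, and each $R^{b}\pi_{\ast}\mathbf{Z}/p^{r}(n)$ is constructible and $p$-torsion.

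Next I would invoke classical Artin--Verdier duality on $\operatorname{Spec}\mathcal{O}_{F}$: for any constructible $p$-torsion sheaf $\mathcal{G}$ on $\operatorname{Spec}\mathcal{O}_{S}$ one has $H^{a}(\mathcal{O}_{F},j_{!}\mathcal{G})=H^{a}_{c}(\mathcal{O}_{S},\mathcal{G})=0$ for $a>3$; this uses $p$ odd together with $\tfrac{1}{p}\in\mathcal{O}_{S}$ and $S_{\infty}\subseteq S$, so that the would-be obstructions at the prime $p$ and at the real places vanish (see \cite{milne2006}, and compare Remark \ref{rmk_CompactlySupportedEtaleCohomology}). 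Feeding the two bounds into the hypercohomology spectral sequence $H^{a}(\mathcal{O}_{F},j_{!}R^{b}\pi_{\ast}\mathbf{Z}/p^{r}(n))\Rightarrow H^{a+b}(\mathcal{O}_{F},j_{!}R\pi_{\ast}\mathbf{Z}/p^{r}(n))$ --- legitimate since $j_{!}$ is exact, hence $\mathcal{H}^{b}(j_{!}\mathcal{C})=j_{!}\mathcal{H}^{b}(\mathcal{C})$ --- shows the abutment vanishes once $a+b>2d+3$, because each contributing term requires $a\leq 3$ and $b\leq 2d$. Hence $E_{2}^{s,t}(\mathcal{X})=0$ for $s>2d+3$, and the lemma follows.

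The step I expect to be the main obstacle is the cohomological-dimension bookkeeping, and in particular the role of properness: it is precisely proper base change that pins the amplitude of $R\pi_{\ast}$ to $[0,2d]$ with the sharp constant, so that $[0,2d]$ composed with the range $[0,3]$ of $H^{\bullet}_{c}(\mathcal{O}_{S},-)$ produces exactly $[0,2d+3]$. (For merely smooth $\pi$ the strictly local fibres over closed points are smooth $d$-folds over a strictly henselian trait, whose $p$-cohomological dimension I could only bound by $2d+1$, weakening the conclusion to $s>2d+4$; since Theorem \ref{introThmA} assumes $\pi$ proper this is immaterial here.) A minor point to verify is that the homotopy sheaves of the Bott-inverted mod $p^{r}$ $K$-theory sheaf $\mathcal{K}$ are the Tate twists $\mathbf{Z}/p^{r}(n)$, so that the reduction to one twist at a time --- which is exactly the shape of the $E_{2}$-page in Eq.\ \ref{lcih1} --- is valid.
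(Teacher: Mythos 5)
Your proof is correct and follows essentially the same route as the paper: bound the amplitude of $\pi_{\ast}\pi^{\ast}\mathbf{Z}/p^{r}(n)$ by $[0,2d]$, bound the compactly supported cohomology over $\operatorname{Spec}\mathcal{O}_{F}$ by $[0,3]$ using $\operatorname{cd}_{p}=3$ for $p$ odd, combine the two via the hypercohomology spectral sequence, and pass from the $E_{2}$-page to all later pages by the subquotient property. The only divergence is that you justify the amplitude bound via proper base change and Artin's theorem for the fibres, whereas the paper asserts it directly from the relative dimension $d$ without invoking properness (the lemma is stated for general $\pi$ as in \S\ref{sect_Setup}); your closing caveat that properness is what pins the constant at $2d$ rather than $2d+1$ is a fair and worthwhile observation.
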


\begin{proof}
Since $\pi\colon\mathcal{X}\rightarrow q$ is of relative dimension $d$ by
assumption (see \S \ref{sect_Setup}), the object $\pi_{\ast}\pi^{\ast
}\mathbf{Z}/p^{r}(-\frac{t}{2})$ is represented by a complex concentrated at
worst in degrees $[0,2d]$. As $p$ is odd, the full ring of integers
$\operatorname*{Spec}\mathcal{O}_{F}$ has $p$-cohomological dimension $3$, and
therefore any compactly supported \'{e}tale cohomology of a single torsion
sheaf is concentrated in degrees $[0,3]$. It follows that $H^{s}\left(
\mathcal{O}_{F},j_{!}\pi_{\ast}\pi^{\ast}\mathbf{Z}/p^{r}(-\frac{t}%
{2})\right)  =0$ as soon as $s>2d+3$. This proves the claim for the page
$r=2$, and the terms $E_{r}^{s,t}(\mathcal{X})$ then also must vanish on all
later pages.
\end{proof}

Hence, the spectral sequence is supported in the right half plane. It is only
supported in finitely many columns, and therefore it becomes stationary after
finitely many pages.

\begin{lemma}
\label{lem_OOSpecSeq}Our assumptions are as in \S \ref{sect_Setup} and $S$ is finite.

\begin{enumerate}
\item The spectral sequence naturally defines maps%
\begin{align}
&  H_{c}^{2d+3,-(2d+2)}(\mathcal{X},\mathbf{Z}/p^{r}(d+1))=E_{2}%
^{2d+3,-(2d+2)}(\mathcal{X})\label{lh07}\\
&  \qquad\qquad\qquad\underset{(\operatorname*{I})}{\twoheadrightarrow
}E_{\infty}^{2d+3,-(2d+2)}(\mathcal{X})\underset{(\operatorname*{II}%
)}{\hookrightarrow}\pi_{-1}\Sigma^{-1}K/p^{r}(\underline{{\mathsf{LC}}%
}_{\mathcal{O}_{S}})[\tau^{-1}]\text{,}\nonumber
\end{align}
arising from an edge map $(\operatorname*{I})$ and the lowest filtration step
inclusion of the filtration on the homotopy groups $(\operatorname*{II})$.

\item Suppose $d=0$. Then the filtration underlying the descent spectral
sequence provides us with the exact sequence%
\begin{equation}
H_{c}^{3}(\mathcal{O}_{S},\mathbf{Z}/p^{r}(1))\overset{e}{\hookrightarrow}%
\pi_{-1}\left(  \Sigma^{-1}K/p^{r}(\underline{{\mathsf{LC}}}_{\mathcal{O}_{S}%
})[\tau^{-1}]\right)  \twoheadrightarrow H_{c}^{1}(\mathcal{O}_{S}%
,\mathbf{Z}/p^{r}(0))\text{.} \label{lh06}%
\end{equation}

\item For $\mathcal{O}_{S}=\mathbf{Z}\left[  \frac{1}{p}\right]  $, the map
$e$ from Eq. \ref{lh06} specializes to an isomorphism%
\begin{equation}
\mathbf{Z}/p^{r}\cong H_{c}^{3}\left(  \mathbf{Z}\left[  \frac{1}{p}\right]
,\mathbf{Z}/p^{r}(1)\right)  \underset{e}{\cong}\pi_{-1}\left(  \Sigma
^{-1}K/p^{r}(\underline{{\mathsf{LC}}}_{\mathbf{Z}\left[  \frac{1}{p}\right]
})[\tau^{-1}]\right)  \text{.} \label{lh06b}%
\end{equation}

\end{enumerate}
\end{lemma}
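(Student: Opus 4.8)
The plan is to extract all three parts from elementary bookkeeping on the descent spectral sequence of Eq.~\ref{lcih1}, feeding in the three vanishing facts already established (Lemmas~\ref{lemma_supp}, \ref{lemma_vanish_at_0_0}, \ref{lemma_special_vanishing}) together with the classical Artin--Verdier trace. For part~(1): by Lemma~\ref{lemma_supp} the spectral sequence for $\mathcal{X}$ is concentrated in the columns $0\le s\le 2d+3$, so it degenerates after finitely many pages and its abutment carries a finite decreasing filtration. At the bidegree $(s,t)=(2d+3,-(2d+2))$ the entry is $H_{c}^{2d+3}(\mathcal{X},\mathbf{Z}/p^{r}(d+1))$ since $-t/2=d+1$; every outgoing differential lands in a column strictly larger than $2d+3$ and hence vanishes, so $E_{\infty}^{2d+3,-(2d+2)}(\mathcal{X})$ is a quotient of $E_{2}^{2d+3,-(2d+2)}(\mathcal{X})$ -- this is the edge surjection $(\operatorname*{I})$. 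Being the rightmost non-zero column, $E_{\infty}^{2d+3,-(2d+2)}(\mathcal{X})$ is the bottom step of the filtration on the homotopy group in total degree $-(2d+3)+(2d+2)=-1$, and the inclusion of that bottom step into $\pi_{-1}\Sigma^{-1}K/p^{r}(\underline{{\mathsf{LC}}}_{\mathcal{X}})[\tau^{-1}]$ -- the identification of the abutment being Prop.~\ref{prop_Equiv1} -- is $(\operatorname*{II})$.

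For part~(2) I would specialize to the relative-dimension-zero case $\mathcal{X}=\operatorname*{Spec}\mathcal{O}_{S}$, $\pi=\operatorname*{id}$, where the spectral sequence occupies only the columns $0\le s\le 3$. On the total-degree $-1$ line one has $t=1-s$, and $\mathbf{Z}/p^{r}(-t/2)$ is a zero entry for odd $t$, i.e.\ for even $s$; hence only $s=1$ (entry $H_{c}^{1}(\mathcal{O}_{S},\mathbf{Z}/p^{r}(0))$) and $s=3$ (entry $H_{c}^{3}(\mathcal{O}_{S},\mathbf{Z}/p^{r}(1))$) can contribute. One checks that both entries already coincide with their $E_{\infty}$-value: at $(3,-2)$ all outgoing differentials leave the support, the incoming $d_{2}$ starts at the odd-$t$ zero entry $(1,-1)$, and the incoming $d_{3}$ starts at $(0,0)$, which vanishes by Lemma~\ref{lemma_vanish_at_0_0}; at $(1,0)$ all incoming differentials start in negative columns and all outgoing ones hit odd-$t$ or out-of-support positions. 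Consequently the filtration on $\pi_{-1}\Sigma^{-1}K/p^{r}(\underline{{\mathsf{LC}}}_{\mathcal{O}_{S}})[\tau^{-1}]$ collapses to $\pi_{-1}=F^{0}=F^{1}\supseteq F^{2}=F^{3}\supseteq F^{4}=0$ with $F^{1}/F^{2}\cong H_{c}^{1}(\mathcal{O}_{S},\mathbf{Z}/p^{r}(0))$ and $F^{3}\cong H_{c}^{3}(\mathcal{O}_{S},\mathbf{Z}/p^{r}(1))$; reading off $0\to F^{3}\to\pi_{-1}\to\pi_{-1}/F^{3}\to 0$ gives precisely the short exact sequence of Eq.~\ref{lh06}, with $e$ the inclusion of the bottom step $F^{3}$.

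For part~(3), put $\mathcal{O}_{S}=\mathbf{Z}[\tfrac{1}{p}]$ in~(2): Lemma~\ref{lemma_special_vanishing} gives $H_{c}^{1}(\mathbf{Z}[\tfrac{1}{p}],\mathbf{Z}/p^{r}(0))=0$, so the surjection in Eq.~\ref{lh06} has zero target and $e$ becomes an isomorphism; combining this with the classical Artin--Verdier trace isomorphism $H_{c}^{3}(\mathbf{Z}[\tfrac{1}{p}],\mathbf{Z}/p^{r}(1))\cong\mathbf{Z}/p^{r}$ (the $p^{r}$-torsion in $H_{c}^{3}(\mathbf{Z}[\tfrac{1}{p}],\mathbf{G}_{m})\cong\mathbf{Q}/\mathbf{Z}$, cf.\ Eq.~\ref{lint1}) yields Eq.~\ref{lh06b}. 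I do not expect a genuinely hard step anywhere: the entire argument is $E_{2}$-page bookkeeping, and the only points needing care are tracking the directions of the differentials and the variance of the filtration, and -- in part~(2) -- recognizing that the sole potentially non-trivial differential landing on $(3,-2)$ is exactly the $d_{3}$ emanating from $(0,0)$ that Lemma~\ref{lemma_vanish_at_0_0} annihilates, so that no independent computation of $H_{c}^{3}(\mathcal{O}_{S},\mathbf{Z}/p^{r}(1))$ is required.
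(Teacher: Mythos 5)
Your proposal is correct and follows essentially the same route as the paper: the edge surjection and bottom-filtration inclusion from the support bound of Lemma \ref{lemma_supp}, the identification of the only two nonzero entries on the total-degree $-1$ diagonal for $d=0$ with the sole dangerous incoming differential $d_{3}^{0,0}$ killed by Lemma \ref{lemma_vanish_at_0_0}, and the collapse of the filtration via Lemma \ref{lemma_special_vanishing} for $\mathbf{Z}[\frac{1}{p}]$. No gaps.
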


\begin{proof}
Consider the $E_{2}$-page of the spectral sequence from Eq. \ref{lcih1}. We
may consider both $\mathcal{X}$ and $\operatorname*{Spec}\mathcal{O}_{S}$ (the
latter just amounts to letting $\mathcal{X}:=\operatorname*{Spec}%
\mathcal{O}_{S}$ so that the relative dimension is $d=0$). Consider the
special location $(2d+3,-(2d+2))$ on the second page:
\[
E_{2}^{2d+3,-(2d+2)}(\mathcal{X})=H_{c}^{2d+3}\left(  \mathcal{X}%
,\mathbf{Z}/p^{r}(d+1)\right)  \text{.}%
\]
The differential $d_{2}$, as well as the differentials originating from this
spot on all further pages, lead outside the support of the spectral sequence
by Lemma \ref{lemma_supp}, so when we form $\frac{\ker d_{r}}%
{\operatorname*{im}d_{r}}$ to compute each consecutive page, the kernel is
always the entire group. Solely the operation to form the quotient by
$\operatorname*{im}d_{r}$ might do something non-trivial to our group. Hence,
we obtain a chain of surjections%
\begin{equation}
E_{2}^{2d+3,-(2d+2)}(\mathcal{X})\twoheadrightarrow E_{3}^{2d+3,-(2d+2)}%
(\mathcal{X})\twoheadrightarrow\ldots\twoheadrightarrow E_{\infty
}^{2d+3,-(2d+2)}(\mathcal{X})\text{.} \label{lhj1}%
\end{equation}
This is the map $(\operatorname*{I})$ in Eq. \ref{lh07}. By our support
consideration, we have strong convergence and this sequence is stationary
after finitely many steps. Let us temporarily abbreviate%
\[
\mathbf{\Pi}_{-1}(\mathcal{X}):=\pi_{-(2d+3)+(2d+2)}(\Sigma^{-1}%
K/p^{r})(\underline{{\mathsf{LC}}}_{\mathcal{X}})[\tau^{-1}]\text{.}%
\]
There is a descending filtration on this group,
\begin{equation}
\ldots\subseteq F_{1}\mathbf{\Pi}_{-1}\subseteq F_{0}\mathbf{\Pi}_{-1}\text{,}
\label{lhj2}%
\end{equation}
along with a canonical identification%
\[
\frac{F_{2d+3}\mathbf{\Pi}_{-1}}{F_{2d+4}\mathbf{\Pi}_{-1}}\cong E_{\infty
}^{2d+3,-(2d+2)}(\mathcal{X})\text{.}%
\]
Since $F_{2d+4}\mathbf{\Pi}_{-1}=0$ for support reasons, this simplifies to%
\[
F_{2d+3}\mathbf{\Pi}_{-1}\cong E_{\infty}^{2d+3,-(2d+2)}(\mathcal{X})\text{.}%
\]
This is the lowest filtered part and the inclusion of this into $\mathbf{\Pi
}_{-1}$ is the map $(\operatorname*{II})$ of Eq. \ref{lh07}. This proves (1).
In the special case $\mathcal{X}:=\operatorname*{Spec}\mathcal{O}_{S}$ and
$d=0$ we can say more about the $E_{2}$-page. By Lemma \ref{lemma_supp} and
$d=0$ there are at worst four columns and the degree one diagonal is highlit
below:%
\begin{equation}%
\adjustbox{max width=\textwidth}{
\begin{tabular}
[c]{rcccccc}
$\vdots\,\,$ & \multicolumn{1}{|c}{} & $\vdots$ & $\vdots$ & $\vdots$ &
$\vdots$ & \\
$\mathsf{2}$ & \multicolumn{1}{|c}{$\quad0\quad$} & $H_{c}^{0}(\mathcal{O}%
_{S},\mathbf{Z}/p^{r}(-1))$ & $H_{c}^{1}(\mathcal{O}_{S},\mathbf{Z}%
/p^{r}(-1))$ & $H_{c}^{2}(\mathcal{O}_{S},\mathbf{Z}/p^{r}(-1))$ & $\quad
H_{c}^{3}(\mathcal{O}_{S},\mathbf{Z}/p^{r}(-1))\quad$ & $\quad0\quad$\\
$\mathsf{1}$ & \multicolumn{1}{|c}{$\quad0\quad$} & $0{\cellcolor{lg}}
$ & $0$ & $0$ & $\quad0\quad$ & $\quad0\quad$\\
$\mathsf{0}$ & \multicolumn{1}{|c}{$\quad0\quad$} & $H_{c}^{0}(\mathcal{O}%
_{S},\mathbf{Z}/p^{r}(0))$ & $H_{c}^{1}(\mathcal{O}_{S},\mathbf{Z}%
/p^{r}(0)) {\cellcolor{lg}} $ & $H_{c}^{2}(\mathcal{O}_{S},\mathbf{Z}%
/p^{r}(0))$ & $\quad H_{c}^{3}(\mathcal{O}_{S},\mathbf{Z}/p^{r}(0))\quad
$ & $\quad0\quad$\\
$\mathsf{-1}$ & \multicolumn{1}{|c}{$\quad0\quad$}
& $0$ & $0$ & $0 {\cellcolor{lg}} $ & $\quad0\quad$ & $\quad0\quad$\\
$\mathsf{-2}$ & \multicolumn{1}{|c}{$\quad0\quad$} & $H_{c}^{0}(\mathcal
{O}_{S},\mathbf{Z}/p^{r}(1))$ & $H_{c}^{1}(\mathcal{O}_{S},\mathbf{Z}%
/p^{r}(1))$
& $H_{c}^{2}(\mathcal{O}_{S},\mathbf{Z}/p^{r}(1))$ & $H_{c}^{3}(\mathcal
{O}_{S},\mathbf{Z}/p^{r}(1)) {\cellcolor{lg}} $ & $\quad0\quad$\\
$\vdots\,\,$ & \multicolumn{1}{|c}{} & $\vdots$ & $\vdots$ & $\vdots$ &
$\vdots$ & \\\cline{2-7}
& $\mathsf{-1}$ & $\mathsf{0}$ & $\mathsf{1}$ & $\mathsf{2}$ & $\mathsf{3}$ &
$\mathsf{\cdots}$
\end{tabular}
}
\label{lfrio1}%
\end{equation}
We are only interested in this diagonal. No term on this diagonal can change
as we proceed to the $E_{3}$-page. The only term which could possibly change
as we go to the $E_{4}$-page is $E_{3}^{3,-2}(\mathcal{O}_{S})$ since there is
an incoming differential%
\[
d_{3}^{0,0}\colon E_{3}^{0,0}\longrightarrow E_{3}^{3,-2}\text{,}%
\]
i.e.,%
\[
d_{3}^{0,0}\colon H_{c}^{0}(\mathcal{O}_{S},\mathbf{Z}/p^{r}%
(0))\longrightarrow H_{c}^{3}(\mathcal{O}_{S},\mathbf{Z}/p^{r}(1))\text{,}%
\]
but the left term is zero by Lemma \ref{lemma_vanish_at_0_0}. The diagonal, as
highlit in Diagram \ref{lfrio1} hence agrees with the diagonal on the
$E_{\infty}$-page. This settles (2). Finally, in the special case
$\mathcal{O}_{S}=\mathbf{Z}\left[  \frac{1}{p}\right]  $, Lemma
\ref{lemma_special_vanishing} now shows that the filtration $F_{\bullet
}\mathbf{\Pi}_{-1}\left(  \mathbf{Z}\left[  \frac{1}{p}\right]  \right)  $
trivializes and has only one step:%
\begin{align*}
\mathbf{Z}/p^{r}\overset{\operatorname*{tr}}{\longleftarrow}H_{c}^{3}\left(
\mathbf{Z}\left[  \frac{1}{p}\right]  ,\mathbf{Z}/p^{r}(1)\right)
\cong\mathbf{\Pi}_{-1}\left(  \mathbf{Z}\left[  \frac{1}{p}\right]  \right)
&  =\pi_{-1}(\Sigma^{-1}K/p^{r})({\mathsf{LC}}_{\mathbf{Z}\left[  \frac{1}%
{p}\right]  })[\tau^{-1}]\\
&  \cong\pi_{0}(K/p^{r})({\mathsf{LC}}_{\mathbf{Z}\left[  \frac{1}{p}\right]
})[\tau^{-1}]\text{.}%
\end{align*}

This settles (3).
\end{proof}

\subsection{Open $!$-pushforward for rings of integers}

We remain in the setting of \S \ref{sect_Setup}, but work with two distinct
choices of finitely many places of $F$: Suppose $S\subseteq S^{\prime}$ are
finite sets of places such that all infinite places of $F$ are contained in
$S$ (and thus also $S^{\prime}$). Moreover, suppose $\frac{1}{p}\in
\mathcal{O}_{S}$, and thus also $\frac{1}{p}\in\mathcal{O}_{S^{\prime}}$. We
obtain an open immersion\footnote{we chose upper case $J$ as $j$ is already in
use}%
\begin{equation}
J\colon\operatorname*{Spec}\mathcal{O}_{S^{\prime}}\hookrightarrow
\operatorname*{Spec}\mathcal{O}_{S}\text{.} \label{lbipsw1}%
\end{equation}
The co-unit of the adjunction $J_{!}\dashv J^{!}$ is the natural
transformation $J_{!}J^{!}\rightarrow1$ and it induces a $!$-pushforward on
\'{e}tale cohomology. In this section we show that this map has a lift to
$K({\mathsf{LC}}_{\mathcal{O}_{S^{\prime}}})\rightarrow K({\mathsf{LC}%
}_{\mathcal{O}_{S}})$, even without \'{e}tale (hyper)sheafification or working
$K(1)$-locally.

\begin{lemma}
\label{lem_OO1}The solid arrows in the following diagram commute:%
\[%
{
\begin{tikzcd}
	{\bigoplus\limits_{v\in{S'} \setminus S_{\infty}}\pi_{-1}\Sigma^{-1}%
K(F_{v})[\tau^{-1}]} && {\pi_{-1}\Sigma^{-1}K({\mathsf{LC}}_{\mathcal{O}_{S'}%
})[\tau^{-1}]} \\
	\\
	{\bigoplus\limits_{v\in{S} \setminus S_{\infty}}\pi_{-1}\Sigma^{-1}%
K(F_{v})[\tau^{-1}]} && {\pi_{-1}\Sigma^{-1}K({\mathsf{LC}}_{\mathcal{O}%
_S})[\tau^{-1}],}
	\arrow["{{{{{\tilde{E}'}}}}}"{description}, from=1-1, to=1-3]
	\arrow["{{{{{\tilde{P}}}}}}", shift left=3, dashed, from=1-1, to=3-1]
	\arrow["B"{description}, from=1-3, to=3-3]
	\arrow["{{{{{\tilde{I}}}}}}", shift left=3, from=3-1, to=1-1]
	\arrow["{{{{{\tilde{E}}}}}}"{description}, from=3-1, to=3-3]
\end{tikzcd}
}%
\]
where $B$ comes from the exact functor forgetting the $\mathcal{O}_{S^{\prime
}}$-module structure in favour of the $\mathcal{O}_{S}$-module structure (and
keeping the locally compact topology unchanged\footnote{This obviously sends
real vector spaces to real vector spaces.}), and $\tilde{E}$, $\tilde
{E}^{\prime}$ send a finite-dimensional $F_{v}$-vector space to itself, now
regarded as a locally compact module.\footnote{All local fields $F_{v}$ are
finite extensions of $\mathbf{Q}_{p}$ or $\mathbf{R}$ and therefore
finite-dimensional vector spaces over these fields carry a canonical locally
compact topology.}

\begin{enumerate}
\item The map $\tilde{I}$ is the inclusion of direct summands.

\item The map $\tilde{P}$ is the reverse projection onto the direct summands
with $v\in S$.

\item Evidently, $\tilde{P}\circ\tilde{I}=\operatorname*{id}$.
\end{enumerate}
\end{lemma}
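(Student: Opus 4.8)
The plan is to establish the commutativity of the solid square on the level of exact functors of exact categories and then transport it through $K$-theory. Observe first that all four solid maps arise in this way. The map $\tilde E$ (resp.\ $\tilde E'$) is induced, after applying $K$-theory, reducing mod $p^r$, inverting the Bott element, shifting by $\Sigma^{-1}$ and taking $\pi_{-1}$, from the exact functor $\beta\colon\bigoplus_{v\in S\setminus S_\infty}\operatorname*{Perf}F_v\to{\mathsf{LC}}_{\mathcal{O}_S}$ (resp.\ the analogous $\beta'$ over $\mathcal{O}_{S'}$) used in the proof of Lemma \ref{lem_TT2}; i.e.\ on the $v$-th summand it is the functor $\beta_v\colon\mathsf{Vect}_{F_v}^{fd}\to{\mathsf{LCA}}_{\mathcal{O}_S}\to{\mathsf{LC}}_{\mathcal{O}_S}$ sending a finite-dimensional $F_v$-space to itself, equipped with its (unique, by \cite[Thm. 6.2.1]{MR4175370}) $v$-adic topology and with $\mathcal{O}_S$-module structure restricted along $\mathcal{O}_S\hookrightarrow F_v$. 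The map $B$ is induced from restriction of scalars along $\mathcal{O}_S\hookrightarrow\mathcal{O}_{S'}$. Finally, since $S\subseteq S'$ we have $S\setminus S_\infty\subseteq S'\setminus S_\infty$, and $\tilde I$ is induced from the ``extend by zero'' inclusion of the corresponding direct summands (and $\tilde P$ from the complementary projection).

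Next I would check that restriction of scalars gives a well-defined exact functor ${\mathsf{LC}}_{\mathcal{O}_{S'}}\to{\mathsf{LC}}_{\mathcal{O}_S}$. It changes neither the underlying topological abelian group of an object nor the set of continuous homomorphisms between two objects; since the admissible monics (injective closed maps) and admissible epics (surjective open maps) of ${\mathsf{LCA}}_{\mathcal{O}_\bullet}$ depend only on the underlying map of LCA groups, it is exact on ${\mathsf{LCA}}$. It carries ${\mathsf{LCA}}_{\mathcal{O}_{S'},\mathbf{R}}$ into ${\mathsf{LCA}}_{\mathcal{O}_S,\mathbf{R}}$, because being a finite-dimensional real vector space is a property of the underlying LCA group, so it descends to the quotients of Definition \ref{def_C1}, yielding $B$. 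As it is $\mathcal{O}_S$-linear, $B$ is compatible with the module structures under which we may invert $\tau$.

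Then, for each fixed $v\in S\setminus S_\infty$, I would verify that the composite $\mathsf{Vect}_{F_v}^{fd}\xrightarrow{\beta'_v}{\mathsf{LC}}_{\mathcal{O}_{S'}}\xrightarrow{B}{\mathsf{LC}}_{\mathcal{O}_S}$ agrees, up to the evident natural isomorphism, with $\beta_v$. Indeed, on a finite-dimensional $F_v$-space $W$ both composites yield $W$ with its canonical $v$-adic topology; the $\mathcal{O}_S$-module structure produced by $B\circ\beta'_v$ is that obtained by restricting the $\mathcal{O}_{S'}$-structure on $W$ along $\mathcal{O}_S\hookrightarrow\mathcal{O}_{S'}$, and by transitivity of restriction of scalars this is the restriction of the $F_v$-structure along the composite $\mathcal{O}_S\hookrightarrow\mathcal{O}_{S'}\hookrightarrow F_v$, which is the inclusion $\mathcal{O}_S\hookrightarrow F_v$ used in defining $\beta_v$; on morphisms, both composites regard an $F_v$-linear map simply as itself. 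Summing over $v\in S'\setminus S_\infty$ and precomposing with the summand inclusion indexed by $v\in S\setminus S_\infty$ turns this into the identity $B\circ\beta'\circ(\text{summand inclusion})=\beta$ of exact functors; applying $K$-theory, $-/p^r$, $[\tau^{-1}]$, $\Sigma^{-1}$ and $\pi_{-1}$ gives exactly $B\circ\tilde E'\circ\tilde I=\tilde E$, the asserted commutativity of the solid arrows. Statements (1)--(3) are then immediate from the above identification of $\tilde I$ and $\tilde P$ with the summand inclusion and projection; in particular $\tilde P\circ\tilde I=\operatorname{id}$.

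The only step requiring genuine care is the well-definedness and exactness of $B$ on ${\mathsf{LC}}$; the remainder is transitivity of restriction of scalars, uniqueness of the canonical topology on a finite-dimensional $F_v$-space, and formal functoriality of the $K$-theoretic operations involved.
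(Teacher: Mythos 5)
Your proposal is correct and follows essentially the same route as the paper: the paper also establishes commutativity at the level of the underlying exact categories and exact functors (declaring the square of categories ``evidently commutative'') and then observes that applying $K$-theory, inverting $\tau$, shifting and taking $\pi_{-1}$ preserves it. Your write-up merely spells out the details (well-definedness and exactness of the restriction-of-scalars functor $B$ on ${\mathsf{LC}}$, and transitivity of restriction of scalars) that the paper leaves implicit.
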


\begin{proof}
The following diagram of exact categories and exact functors\footnote{Note
that any $F_{v}$-linear map between finite-dimensional $F_{v}$-vector spaces
is automatically continuous with respect to the topology.} is evidently
commutative%
\[%
{
\begin{tikzcd}
	{\bigoplus\limits_{v\in{S'}\setminus S_{\infty}}\mathsf{Vect}_{fd}(F_{v})}
&& {{\mathsf{LC}}_{\mathcal{O}_{S'}}} \\
	\\
	{\bigoplus\limits_{v\in{S}\setminus S_{\infty}}\mathsf{Vect}_{fd}(F_{v})}
&& {{\mathsf{LC}}_{\mathcal{O}_S}.}
	\arrow["{{{{{{\tilde{E}'}}}}}}"{description}, from=1-1, to=1-3]
	\arrow["B"{description}, from=1-3, to=3-3]
	\arrow["{{{{{{\tilde{I}}}}}}}", from=3-1, to=1-1]
	\arrow["{{{{{{\tilde{E}}}}}}}"{description}, from=3-1, to=3-3]
\end{tikzcd}
}%
\]
Applying $K$-theory, inverting the Bott element $\tau$, shifts and taking
$\pi_{-1}$, all preserve this commutativity.
\end{proof}

\begin{lemma}
\label{lem_OO2}The solid arrows in the following diagram commute%
\[%
{
\begin{tikzcd}
	{\bigoplus\limits_{v\in{S'}}\operatorname{Br}(F_{v})_{\circ}} && {H_{c}%
^{3}(\mathcal{O}_{S'},\mathbf{Z}/p^r(1))} \\
	\\
	{\bigoplus\limits_{v\in{S}}\operatorname{Br}(F_{v})_{\circ}} && {H_{c}%
^{3}(\mathcal{O}_{S},\mathbf{Z}/p^r(1)),}
	\arrow["{{{{E'}}}}"{description}, two heads, from=1-1, to=1-3]
	\arrow["P", shift left=3, dashed, from=1-1, to=3-1]
	\arrow["W"{description}, equals, from=1-3, to=3-3]
	\arrow["I", shift left=3, from=3-1, to=1-1]
	\arrow["E"{description}, two heads, from=3-1, to=3-3]
\end{tikzcd}
}%
\]
and the horizontal arrows are surjective, where we write $\operatorname*{Br}%
(F_{v})_{\circ}\subseteq\operatorname*{Br}(F_{v})$ for the subgroup of
elements which $E$ (or equivalently $E^{\prime}$) maps to elements $y\in
H_{c}^{3}(\mathcal{O}_{S^{\prime}},\mathbf{G}_{m})$ such that $p^{r}y=0$. We
will provide more details in the proof how $E$ and $E^{\prime}$ are set up.

\begin{enumerate}
\item The map $W$ is induced from $J_{!}J^{!}\rightarrow1$ for the open
immersion in Eq. \ref{lbipsw1}.

\item The map $I$ is the inclusion of direct summands.

\item The map $P$ is the reverse projection onto the direct summands with
$v\in S$.

\item Evidently, $P\circ I=\operatorname*{id}$.
\end{enumerate}
\end{lemma}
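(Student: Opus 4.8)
The plan is to identify the horizontal maps $E, E'$ explicitly using classical local class field theory, and then to check the diagram commutes by reducing everything to a statement about the inflation/restriction behaviour of Brauer groups under the open immersion $J$. First I would recall how the sum-of-local-invariants description of $H_c^3$ arises: from the long exact localization sequence attached to $\operatorname*{Spec}\mathcal{O}_{S'}\hookrightarrow\operatorname*{Spec}\mathbf{Z}$ (or $\operatorname*{Spec}\mathcal{O}_F$) applied to $\mathbf{G}_m$, together with classical Artin--Verdier duality in the form $H_c^3(\mathcal{O}_{S'},\mathbf{G}_m)\cong\mathbf{Q}/\mathbf{Z}$ and the fact that the local contributions are the local Brauer groups $\operatorname*{Br}(F_v)\cong\mathbf{Q}/\mathbf{Z}$ (and $\tfrac12\mathbf{Z}/\mathbf{Z}$ at real places, $0$ at complex places). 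Passing from $\mathbf{G}_m$ to $\mathbf{Z}/p^r(1)$ via the Kummer sequence, and noting that the relevant $H^2(\mathcal{O}_{S'},\mathbf{Z}/p^r(1))$ terms are controlled by the class group and the $p^r$-torsion in the Brauer obstruction, one gets a surjection $\bigoplus_{v\in S'}\operatorname*{Br}(F_v)_\circ\twoheadrightarrow H_c^3(\mathcal{O}_{S'},\mathbf{Z}/p^r(1))$; this is the map $E'$, and $E$ is the analogue for $S$. The surjectivity of $E, E'$ is then essentially the surjectivity of the sum-of-invariants map from the Hasse--Brauer--Noether sequence, restricted to $p^r$-torsion, which one reads off from \cite[Ch. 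II, Prop. 2.1]{milne2006} and its Remark 2.2, exactly as in the proof of Lemma~\ref{lemma_special_vanishing}.

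Next I would pin down the vertical maps. The map $I$ is by construction the inclusion of the summands indexed by $v\in S\subseteq S'$, and $P$ is the projection killing the summands with $v\in S'\setminus S$; then $P\circ I=\operatorname*{id}$ is immediate. The content is the identification of $W$, the map $H_c^3(\mathcal{O}_{S'},\mathbf{Z}/p^r(1))\to H_c^3(\mathcal{O}_S,\mathbf{Z}/p^r(1))$ induced by the co-unit $J_!J^!\to 1$ for $J\colon\operatorname*{Spec}\mathcal{O}_{S'}\hookrightarrow\operatorname*{Spec}\mathcal{O}_S$. Here I would use that compactly supported cohomology for $\mathcal{O}_{S'}$ factors through the further open immersion into $\operatorname*{Spec}\mathcal{O}_F$, so that $j^{S'}_! = j^S_! \circ J_!$ (with $j^S,j^{S'}$ the immersions into $\operatorname*{Spec}\mathcal{O}_F$), and therefore $H_c^\bullet(\mathcal{O}_{S'},-)$ and $H_c^\bullet(\mathcal{O}_S,-)$ are both computed as global sections over $\operatorname*{Spec}\mathcal{O}_F$ of $j^S_!$ of something, with $W$ literally induced by the co-unit $J_!J^!\to 1$ before applying $j^S_!$. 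On the level of the localization sequences this says precisely that the $v$-components for $v\in S'\setminus S$ (the extra closed points that become invertible when passing from $\mathcal{O}_S$ to $\mathcal{O}_{S'}$) get discarded, while the components for $v\in S$ are preserved identically — i.e. $W\circ E' = E\circ P$, which is the asserted commutativity of the solid square. Concretely this is the statement that $W$ is compatible with the partial sum of local invariants: summing all $v\in S'$ and then forgetting is the same as first forgetting the $v\in S'\setminus S$ summands and then summing over $v\in S$.

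The main obstacle I expect is bookkeeping: making the identification of $E, E'$ with the sum-of-local-invariants maps genuinely natural in $S$, so that the two localization sequences (for $\mathcal{O}_S$ and for $\mathcal{O}_{S'}$, both formed over $\operatorname*{Spec}\mathcal{O}_F$) sit in a morphism of long exact sequences with the $v$-indexed direct sums mapping by $(\tilde I,\tilde P)$-type maps. Once one sets this up carefully — using excision to localize the support to the individual closed points $\{v\}$ and $\mathcal{O}_{F,v}^{\operatorname{h}}$ as in the proof of Prop.~\ref{prop_Equiv1} — the commutativity is forced by functoriality of recollement in the open subscheme. The definition of $\operatorname*{Br}(F_v)_\circ$ as the preimage under $E$ (equivalently $E'$, by the very compatibility we are proving) of the $p^r$-torsion is designed so that all maps in sight are well-defined on these subgroups; I would remark explicitly that the two a priori distinct conditions ``$E$ lands in $p^r$-torsion'' and ``$E'$ lands in $p^r$-torsion'' coincide on the summands common to $S$ and $S'$, and that for $v\in S'\setminus S$ the subgroup $\operatorname*{Br}(F_v)_\circ$ only enters through $E'$ and is killed by $P$, so no inconsistency arises.
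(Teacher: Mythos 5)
Your overall strategy is the same as the paper's: identify $H_{c}^{3}(\mathcal{O}_{S},\mathbf{Z}/p^{r}(1))$ with the $p^{r}$-torsion of $H_{c}^{3}(\mathcal{O}_{S},\mathbf{G}_{m})$ via Kummer (the precise input is $H_{c}^{2}(\mathcal{O}_{S},\mathbf{G}_{m})=0$, \cite[Ch.~II, Prop.~2.6]{milne2006}, not a statement about $H^{2}(\mathcal{O}_{S'},\mathbf{Z}/p^r(1))$), get surjectivity of $E,E'$ from Milne's localization sequence $\bigoplus_{v}\operatorname{Br}(F_{v})\twoheadrightarrow H_{c}^{3}(\mathcal{O}_{S},\mathbf{G}_{m})$ together with $H^{3}(\mathcal{O}_{S},\mathbf{G}_{m})=0$, and deduce commutativity from the compatibility of the invariant maps $H_{c}^{3}(\mathcal{O}_{(-)},\mathbf{G}_{m})\overset{\sim}{\to}\mathbf{Q}/\mathbf{Z}$ under $J_{!}J^{!}\to 1$.

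However, there is a genuine error in your final step: you assert that the commutativity to be proved is $W\circ E'=E\circ P$ and that this holds because ``the $v$-components for $v\in S'\setminus S$ get discarded.'' That identity is false. Under the invariant identifications both $H_{c}^{3}$-groups are $\tfrac{1}{p^{r}}\mathbf{Z}/\mathbf{Z}$ and $W$ is the \emph{identity} (it is drawn as an equality in the diagram); it discards nothing. So $W(E'(x))=\sum_{v\in S'}\operatorname{inv}_{v}(x_{v})$ while $E(P(x))=\sum_{v\in S}\operatorname{inv}_{v}(x_{v})$, and these differ as soon as $x$ has a nonzero component at some finite $v_{0}\in S'\setminus S$ (take $x_{v_{0}}$ with invariant $1/p^{r}$ and all other components zero: the left side is $1/p^{r}$, the right side is $0$). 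The lemma deliberately asserts only that the \emph{solid} arrows commute, i.e.\ $W\circ E'\circ I=E$, where $I$ is the inclusion of the $S$-summands; $P$ is dashed precisely because the square does not commute with it. Your underlying mechanism does give the correct weaker statement (for $x$ supported on $S$, summing invariants over $S'$ equals summing over $S$), but you should not claim the stronger one: the failure of commutativity with $P$ is exactly what forces the preimage-chasing argument in the proof of Lemma~\ref{lem_OOCube}, where one can only use $P$ on specially chosen elements $x_{\operatorname{pre}}$.
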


\begin{proof}
\textit{(Step 1)} First, we need to set up the diagram at all. The Kummer
sequence induces%
\begin{equation}
H_{c}^{2}(\mathcal{O}_{S},\mathbf{G}_{m})\longrightarrow H_{c}^{3}%
(\mathcal{O}_{S},\mathbf{Z}/p^{r}(1))\longrightarrow H_{c}^{3}(\mathcal{O}%
_{S},\mathbf{G}_{m})\overset{\cdot p^{r}}{\longrightarrow}H_{c}^{3}%
(\mathcal{O}_{S},\mathbf{G}_{m}) \label{lhibso1}%
\end{equation}
and by \cite[Ch. II, Prop. 2.6]{milne2006} $H_{c}^{2}(\mathcal{O}%
_{S},\mathbf{G}_{m})=0$. Thus, $H_{c}^{3}(\mathcal{O}_{S},\mathbf{Z}%
/p^{r}(1))$ can be identified with the subgroup of $p^{r}$-annihilated
elements inside $H_{c}^{3}(\mathcal{O}_{S},\mathbf{G}_{m})$. By the first
lines of the proof of \cite[Ch. II, Prop. 2.6]{milne2006}, there is an exact
sequence%
\begin{equation}%
{\displaystyle\bigoplus\limits_{v\in S}}
\operatorname*{Br}(F_{v})\overset{L}{\longrightarrow}H_{c}^{3}(\mathcal{O}%
_{S},\mathbf{G}_{m})\longrightarrow H^{3}(\mathcal{O}_{S},\mathbf{G}%
_{m})\longrightarrow0\text{,} \label{lhibso3}%
\end{equation}
but $H^{3}(\mathcal{O}_{S},\mathbf{G}_{m})=0$ by \cite[Ch. II, Rmk. 2.2
(a)]{milne2006} and the fact that $\frac{1}{p}\in\mathcal{O}_{S}$ implies that
$S$ contains at least one finite place. Thus, the direct sum of Brauer groups
surjects onto $H_{c}^{3}(\mathcal{O}_{S},\mathbf{G}_{m})$. As
$\operatorname*{Br}(F_{v})_{\circ}\subseteq\operatorname*{Br}(F_{v})$ denotes
the subgroup of the Brauer group mapping to elements annihilated by $p^{r}$ in
$H_{c}^{3}(\mathcal{O}_{S},\mathbf{G}_{m})$, we therefore tautologically
obtain the following commutative diagram with surjective downward arrows
\[%
{
\begin{tikzcd}
	{\bigoplus\limits_{v\in{S}}\operatorname{Br}(F_{v})_{\circ}} && {\bigoplus
\limits_{v\in{S}}\operatorname{Br}(F_{v})} \\
	\\
	{H_{c}^{3}(\mathcal{O}_{S},\mathbf{Z}/p^r(1))} && {H_{c}^{3}(\mathcal{O}%
_{S},\mathbf{G}_m).}
	\arrow[hook, from=1-1, to=1-3]
	\arrow["E"{description}, two heads, from=1-1, to=3-1]
	\arrow["L"{description}, two heads, from=1-3, to=3-3]
	\arrow[hook, from=3-1, to=3-3]
\end{tikzcd}
}%
\]
This entire argument works just as well for $S^{\prime}$, and the
corresponding left downward arrow will be called $E^{\prime}$ and this settles
the construction of $E^{\prime}$. We note that our definition of
$\operatorname*{Br}(F_{v})_{\circ}$ does not depend on using $E$ or
$E^{\prime}$ since these maps do the same on each summand and only differ by
the set of places we sum over.\newline\textit{(Step 2) }The diagram below on
the left commutes%
\[%
{
\begin{tikzcd}
	{H_{c}^{3}(\mathcal{O}_{S'},\mathbf{G}_m)} && {\mathbf{Q}/\mathbf{Z}} \\
	\\
	{H_{c}^{3}(\mathcal{O}_{S},\mathbf{G}_m)} && {\mathbf{Q}/\mathbf{Z}}
	\arrow["\sim", from=1-1, to=1-3]
	\arrow[from=1-1, to=3-1]
	\arrow[equals, from=1-3, to=3-3]
	\arrow["\sim"', from=3-1, to=3-3]
\end{tikzcd}
}%
\qquad%
{
\begin{tikzcd}
	{H_{c}^{3}(\mathcal{O}_{S'},\mathbf{Z}/p^r(1))} && {\mathbf{Z}/p^r\mathbf{Z}}
\\
	\\
	{H_{c}^{3}(\mathcal{O}_{S},\mathbf{Z}/p^r(1))} && {\mathbf{Z}/p^r\mathbf{Z}}
	\arrow["\sim", from=1-1, to=1-3]
	\arrow["{J_! J^! \rightarrow1}"', from=1-1, to=3-1]
	\arrow[equals, from=1-3, to=3-3]
	\arrow["\sim"', from=3-1, to=3-3]
\end{tikzcd}
}%
\]
by \cite[Ch. II, beginning of \S 3, (a)]{milne2006}. From Eq. \ref{lhibso1} it
follows that the right diagram commutes as well. Now take $W$ to be the map
induced by $(J_{!}J^{!}\rightarrow1)$ on the compactly supported cohomology
groups. One can be more explicit, although we do not need to know this: For a
finite place, and using the isomorphism $\operatorname*{Br}(F_{v}%
)\cong\mathbf{Q}/\mathbf{Z}$, this is exactly the subgroup $\left.  \frac
{1}{p^{r}}\mathbf{Z}\right/  \mathbf{Z}$ and for infinite places
$\operatorname*{Br}(\mathbf{R})\cong\left.  \frac{1}{2}\mathbf{Z}\right/
\mathbf{Z}$ or $\operatorname*{Br}(\mathbf{C})=0$, so this subgroup is
trivial. Thus, if we choose to unravel all terms, the commutativity of the
square boils down to the trivially commutative square%
\[%
{
\begin{tikzcd}
	{\bigoplus\limits_{v\in{S'}\setminus S_{\infty}}{\left.  \frac{1}{p^{r}%
}\mathbf{Z}\right/  \mathbf{Z}}} && {\left.  \frac{1}{p^{r}}\mathbf{Z}%
\right/  \mathbf{Z}} \\
	\\
	{\bigoplus\limits_{v\in{S}\setminus S_{\infty}}{\left.  \frac{1}{p^{r}%
}\mathbf{Z}\right/  \mathbf{Z}}} && {\left.  \frac{1}{p^{r}}\mathbf{Z}%
\right/  \mathbf{Z}}
	\arrow["{+}", two heads, from=1-1, to=1-3]
	\arrow["P", shift left=3, from=1-1, to=3-1]
	\arrow[equals, from=1-3, to=3-3]
	\arrow["I", shift left=3, dashed, from=3-1, to=1-1]
	\arrow["{+}"', two heads, from=3-1, to=3-3]
\end{tikzcd}
}%
\]
However, for the other proofs it will be better to remember how these groups
arise in \'{e}tale cohomology.
\end{proof}

\begin{lemma}
\label{lem_OO3}The square%
\begin{equation}%
{
\begin{tikzcd}
	{\bigoplus\limits_{v \in S} \operatorname{Br}(F_{v})_{\circ}} && {H_{c}%
^{3}(\mathcal{O}_S,\mathbf{Z}/p^r(1))} \\
	\\
	{\pi_{-1}\bigoplus\limits_{v \in S \setminus S_{\infty}}\Sigma^{-1}
K/p^r(F_{v})[\tau^{-1}]} && {\pi_{-1}\Sigma^{-1} K/p^r({\mathsf{LC}}%
_{\mathcal{O}_{S}})[\tau^{-1}],}
	\arrow["E"{description}, from=1-1, to=1-3]
	\arrow["M"{description}, hook, from=1-1, to=3-1]
	\arrow["G"{description}, hook, from=1-3, to=3-3]
	\arrow["{{{{\tilde{E}}}}}"{description}, from=3-1, to=3-3]
\end{tikzcd}
}
\label{lct4}%
\end{equation}
commutes (and analogously for $S^{\prime}$), where $\tilde{E}$ is induced from
the functor sending an $F_{v}$-vector space to itself, regarded as a locally
compact module (or equivalently the map $\beta$ of Eq. \ref{l_d1}, or yet
equivalently the same map $\tilde{E}$ as in the statement of Lemma
\ref{lem_OO1}).
\end{lemma}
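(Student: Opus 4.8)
The plan is to recognize the bottom map $\tilde E$ together with the two edge maps $M$ and $G$ as features of a single fibre sequence of \emph{filtered} spectra — the localization sequence attached to the open immersion $j$ — and then to deduce the square from the behaviour of a filtered map on the deepest filtration step, together with the identification of $E$ with an \'{e}tale residue map. First I would recall that, by Corollary \ref{cor2} applied to $\mathcal{X}=\operatorname{Spec}\mathcal{O}_S$ (so that $\underline{\mathsf{LC}}_{\mathcal{X}}=\mathsf{LC}_{\mathcal{O}_S}$, Corollary \ref{cor1}) together with the term-by-term identifications carried out in the proof of Proposition \ref{prop_Equiv1}, there is a fibre sequence of spectra
\[
K/p^r(\operatorname{Perf}\mathcal{O}_S)[\tau^{-1}]\xrightarrow{\ \alpha\ }\bigoplus_{v\in S\setminus S_\infty}K/p^r(\operatorname{Perf}F_v)[\tau^{-1}]\xrightarrow{\ \beta\ }K/p^r(\mathsf{LC}_{\mathcal{O}_S})[\tau^{-1}]
\]
which, under the equivalence $K/p^r(\mathsf{LC}_{\mathcal{O}_S})[\tau^{-1}]\simeq\Sigma(j_!\mathcal{K})(\operatorname{Spec}\mathcal{O}_F)$, is exactly the localization fibre sequence $R\Gamma(\mathcal{O}_S,\mathcal{K})\to\bigoplus_v R\Gamma(F_v,\mathcal{K})\to\Sigma R\Gamma_c(\mathcal{O}_S,\mathcal{K})$ (with $H_c$ read as in Remark \ref{rmk_CompactlySupportedEtaleCohomology}). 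As $\beta$ is $K/p^r[\tau^{-1}]$ of the functor $\bigoplus_v\operatorname{Perf}F_v\to\mathsf{LC}_{\mathcal{O}_S}$ of Eq. \ref{l_d1}, the bottom map of the square is $\tilde E=\pi_{-1}(\Sigma^{-1}\beta)$.

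Next I would use that the three spectra carry their descent filtrations, that $\alpha,\beta$ are filtered, and that the induced map on $E_2$-pages is the \'{e}tale localization long exact sequence
\[
\cdots\to H^s(\mathcal{O}_S,\mathbf{Z}/p^r(n))\to\bigoplus_v H^s(F_v,\mathbf{Z}/p^r(n))\xrightarrow{\ \partial\ }H^{s+1}_c(\mathcal{O}_S,\mathbf{Z}/p^r(n))\to\cdots ;
\]
in particular $\beta$ realizes the connecting homomorphism $\partial$. Invoking $\operatorname{cd}_p F_v=2$ for finite $v$, $\operatorname{cd}_p\operatorname{Spec}\mathcal{O}_F=3$, and the vanishing of archimedean contributions ($p$ odd), the arguments of Lemma \ref{lemma_supp} and Lemma \ref{lem_OOSpecSeq} give that $H^2(F_v,\mathbf{Z}/p^r(1))$ — which is $\operatorname{Br}(F_v)_\circ$ for finite $v$ and $0$ otherwise — is the deepest nonzero filtration step of $\pi_0 K/p^r(F_v)[\tau^{-1}]$, and that $H^3_c(\mathcal{O}_S,\mathbf{Z}/p^r(1))$ is the deepest nonzero filtration step of $\pi_{-1}\Sigma^{-1}K/p^r(\mathsf{LC}_{\mathcal{O}_S})[\tau^{-1}]$, both being permanent cycles with no incoming differential. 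By their constructions $M$ and $G$ are the inclusions of these deepest steps, so the filtered map $\tilde E$ carries $\operatorname{Im}M$ into $\operatorname{Im}G$ and restricts there to the induced $E_\infty$-map, namely $\partial$ in bidegree $(s,n)=(2,1)$.

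It then remains to match $\partial$ in bidegree $(2,1)$ with $E$. The Kummer sequence $\mathbf{Z}/p^r(1)\to\mathbf{G}_m\xrightarrow{p^r}\mathbf{G}_m$ is a morphism of localization recollements, so $\partial$ with $\mathbf{Z}/p^r(1)$-coefficients is compatible with $\partial$ with $\mathbf{G}_m$-coefficients; the latter restricted to $p^r$-torsion is the map $L$ used to define $E$ in Step 1 of the proof of Lemma \ref{lem_OO2}, and $\operatorname{Br}(F_v)_\circ$ is by construction the part that lifts along Kummer to $H^2(F_v,\mathbf{Z}/p^r(1))$ — here one uses $H^2_c(\mathcal{O}_S,\mathbf{G}_m)=0$ to identify $H^3_c(\mathcal{O}_S,\mathbf{Z}/p^r(1))$ with the $p^r$-torsion of $H^3_c(\mathcal{O}_S,\mathbf{G}_m)$, exactly as in that proof. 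Combining the previous two paragraphs gives $\tilde E\circ M=G\circ E$, and the case of $S'$ is verbatim the same.

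The main obstacle is the assertion in the second paragraph that the $K$-theory fibre sequence of Corollary \ref{cor2} agrees, as a sequence of \emph{filtered} spectra, with the \'{e}tale localization fibre sequence, so that the induced map on $E_2$-pages is literally $\partial$ (with the evident $\Sigma$-shift in indexing). The underlying identifications of terms and maps are already in the proof of Proposition \ref{prop_Equiv1}; what must be added is that these identifications respect the descent filtrations — which they do, being induced by morphisms of \'{e}tale hypersheaves — together with a check that the edge maps $M$ and $G$ are normalized to be the claimed filtration-step inclusions.
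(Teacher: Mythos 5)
Your proposal is correct and follows essentially the same route as the paper: it identifies the $K$-theoretic localization fiber sequence with the \'{e}tale one via Prop. \ref{prop_Equiv1}/Cor. \ref{cor2}, observes that the resulting maps are compatible with the descent filtrations so that $M$ and $G$ become inclusions of the deepest (permanent-cycle) filtration steps, and then matches the induced boundary map with $E$ through the Kummer sequence and Milne's map $L$ from Lemma \ref{lem_OO2}. The "obstacle" you flag at the end — that the identification of fiber sequences respects the descent filtrations — is exactly the point the paper disposes of by noting that a map of (\'{e}tale hypersheaf) spectra induces a compatible map of descent spectral sequences.
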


\begin{remark}
There is no mismatch between the summands being indexed over $S$ resp. over
$S\setminus S_{\infty}$ in Diagram \ref{lct4}, as the $\operatorname*{Br}%
(F_{v})_{\circ}$ are trivial at infinite places.
\end{remark}

\begin{proof}
The equivalence of Proposition \ref{prop_Equiv1} (for $\mathcal{X}%
:=\operatorname*{Spec}\mathcal{O}_{S}$) extends to an identification of fiber
sequences%
\begin{equation}%
\adjustbox{max width=\textwidth}{
\begin{tikzcd}
	{\Sigma^{-1} K/p^r(\mathcal{O}_{S})[\tau^{-1}]} && {\bigoplus\limits_{v \in
S \setminus{S_{\infty}}}\Sigma^{-1} K/p^r(F_{v})[\tau^{-1}]} && {j_{!}\pi
_{*}\pi^{*}\mathcal{K}(\mathcal{O}_{S})} \\
	\\
	{\Sigma^{-1} K/p^r(\mathcal{O}_{S})[\tau^{-1}]} && {\bigoplus\limits_{v \in
S  \setminus{S_{\infty}}}\Sigma^{-1} K/p^r(F_{v})[\tau^{-1}]} && {\Sigma^{-1}
K/p^r({\mathsf{LC}}_{\mathcal{O}_{S}})[\tau^{-1}]}
	\arrow[from=1-1, to=1-3]
	\arrow[equals, from=1-1, to=3-1]
	\arrow["E"{description}, from=1-3, to=1-5]
	\arrow[equals, from=1-3, to=3-3]
	\arrow[equals, from=1-5, to=3-5]
	\arrow[from=3-1, to=3-3]
	\arrow["{{\tilde{E}}}"{description}, from=3-3, to=3-5]
\end{tikzcd}
}
\label{lwit1}%
\end{equation}
as is seen from the proof of \textit{loc. cit.} (it amounts to the statement
that the fiber sequence in the left column of Diagram \ref{vchdiag3} can be
identified with the fiber for Eq. \ref{lch2}). Once we evaluate $\pi_{-1}$,
this yields the commutative diagram%
\begin{equation}%
{
\begin{tikzcd}
	{\bigoplus\limits_{v \in S} H^{2}(F_{v},\mathbf{Z}/p^r(1))} && {H_{c}%
^{3}(\mathcal{O}_S,\mathbf{Z}/p^r(1))} \\
	\\
	{\pi_{-1} \bigoplus\limits_{v \in S \setminus S_{\infty}}\Sigma^{-1}
K/p^r(F_{v})[\tau^{-1}]} && {\pi_{-1} j_{!}\pi_{*}\pi^{*}\mathcal{K}%
(\mathcal{O}_{S})} \\
	{\pi_{-1}\bigoplus\limits_{v \in S \setminus S_{\infty}}\Sigma^{-1}
K/p^r(F_{v})[\tau^{-1}]} && {\pi_{-1}\Sigma^{-1} K/p^r({\mathsf{LC}}%
_{\mathcal{O}_{S}})[\tau^{-1}],}
	\arrow["e"{description}, from=1-1, to=1-3]
	\arrow[hook, from=1-1, to=3-1]
	\arrow[hook, from=1-3, to=3-3]
	\arrow[from=3-1, to=3-3]
	\arrow[equals, from=3-1, to=4-1]
	\arrow[equals, from=3-3, to=4-3]
	\arrow["{{{\tilde{E}}}}"{description}, from=4-1, to=4-3]
\end{tikzcd}
}
\label{lwit2}%
\end{equation}
where the bottom square comes from the identification of fiber sequences and
the top square from repeating the descent spectral sequence computation of
Lemma \ref{lem_OOSpecSeq} for the local fields $F_{v}$ and using that a map
induces a compatible map of spectral sequences (respecting the underlying
filtration on the homotopy groups $\pi_{\bullet}$). For the local fields at
finite places $v\in S\setminus S_{\infty}$ we have cohomological dimension
$2$, so the spectral sequence collapses earlier, has $E_{2}$-page%
\[%
\adjustbox{max width=\textwidth}{
\begin{tabular}
[c]{rccccc}$\vdots\,\,$ & \multicolumn{1}{|c}{} & $\vdots$ & $\vdots
$ & $\vdots$ & \\
$\mathsf{2}$ & \multicolumn{1}{|c}{$\quad0\quad$} & $H^{0}(F_{v},\mathbf
{Z}/p^{k}(-1))$ & $H^{1}(F_{v},\mathbf{Z}/p^{k}(-1))$ & $H^{2}(F_{v}%
,\mathbf{Z}/p^{k}(-1))$ & $\quad0\quad$\\
$\mathsf{1}$ & \multicolumn{1}{|c}{$\quad0\quad$} & $0$ & $0$ & $0$ &
$\quad0\quad$\\
$\mathsf{0}$ & \multicolumn{1}{|c}{$\quad0\quad$} & $H^{0}(F_{v},\mathbf
{Z}/p^{k}(0)){\cellcolor{lg}} $ & $H^{1}(F_{v},\mathbf{Z}/p^{k}(0))$ & $H^{2}%
(F_{v},\mathbf{Z}/p^{k}(0))$ &
$\quad0\quad$\\
$\mathsf{-1}$ & \multicolumn{1}{|c}{$\quad0\quad$} & $0$ & $0 {\cellcolor{lg}}
$ & $0$ & $\quad0\quad$\\
$\mathsf{-2}$ & \multicolumn{1}{|c}{$\quad0\quad$} & $H^{0}(F_{v},\mathbf
{Z}/p^{k}(1))$ & $H^{1}(F_{v},\mathbf{Z}/p^{k}(1))$ & $H^{2}(F_{v},\mathbf
{Z}/p^{k}(1)){\cellcolor{lg}} $ & $\quad0\quad$\\
$\vdots\,\,$ & \multicolumn{1}{|c}{} & $\vdots$ & $\vdots$ & $\vdots$ &
\\\cline{2-6}
& $\mathsf{-1}$ & $\mathsf{0}$ & $\mathsf{1}$ & $\mathsf{2}$ & $\mathsf
{\cdots}$
\end{tabular}
}%
\]
and thanks to the shift $\Sigma^{-1}$ we get the map denoted by $e$ in Diagram
\ref{lwit2}. It is compatible to the map $L$ of Eq. \ref{lhibso3} that we had
used from Milne \cite{milne2006} in the proof of Lemma \ref{lem_OO2}. Now use
the identification of $H^{2}(F_{v},\mathbf{G}_{m})$ with Brauer groups to show
that both formulations match up. For local fields at infinite places, $v\in
S_{\infty}$, we had already discussed that $\operatorname*{Br}(F_{v})_{\circ
}=0$, so we do not need to discuss them here.
\end{proof}

\begin{lemma}
\label{lem_OOCube}Suppose $S$ is finite. The cube%
\begin{equation}%
\adjustbox{max width=\textwidth}{
\begin{tikzcd}
	{\bigoplus\limits_{v\in{S'}}\operatorname{Br}(F_{v})_{\circ}} &&& {\bigoplus
\limits_{v\in{S'} \setminus S_{\infty}}\pi_{-1}\Sigma^{-1}K(F_{v})[\tau^{-1}]}
\\
	& {H_{c}^{3}(\mathcal{O}_{S'},\mathbf{Z}/p^r(1))} &&& {\pi_{-1}\Sigma
^{-1}K({\mathsf{LC}}_{\mathcal{O}_{S'}})[\tau^{-1}]} \\
	\\
	{\bigoplus\limits_{v\in{S}}\operatorname{Br}(F_{v})_{\circ}} &&& {\bigoplus
\limits_{v\in{S} \setminus S_{\infty}}\pi_{-1}\Sigma^{-1}K(F_{v})[\tau^{-1}]}
\\
	& {H_{c}^{3}(\mathcal{O}_{S},\mathbf{Z}/p^r(1))} &&& {\pi_{-1}\Sigma
^{-1}K({\mathsf{LC}}_{\mathcal{O}_S})[\tau^{-1}]}
	\arrow["{{M'}}"{description, pos=0.7}, hook, from=1-1, to=1-4]
	\arrow["{{E'}}"{description}, two heads, from=1-1, to=2-2]
	\arrow["P", shift left=3, dashed, from=1-1, to=4-1]
	\arrow["{{\tilde{E}'}}"{description}, from=1-4, to=2-5]
	\arrow["{{\tilde{P}}}", shift left=3, dashed, from=1-4, to=4-4]
	\arrow["{{G'}}"{description, pos=0.3}, hook, from=2-2, to=2-5]
	\arrow["W"{description}, equals, from=2-2, to=5-2]
	\arrow["B"{description}, from=2-5, to=5-5]
	\arrow["I", shift left=3, from=4-1, to=1-1]
	\arrow["M"{description, pos=0.7}, hook, from=4-1, to=4-4]
	\arrow["E"{description}, two heads, from=4-1, to=5-2]
	\arrow["{{\tilde{I}}}", shift left=3, from=4-4, to=1-4]
	\arrow["{{\tilde{E}}}"{description}, from=4-4, to=5-5]
	\arrow["G"{description, pos=0.3}, hook, from=5-2, to=5-5]
\end{tikzcd}
}
\label{lhrdtu5}%
\end{equation}
commutes, where the right face is the square of Lemma \ref{lem_OO1} and the
left face the square of Lemma \ref{lem_OO2}. The map $W$ agrees with the map
induced from $J_{!}J^{!}\rightarrow1$ for $J$ as in Eq. \ref{lbipsw1}.
\end{lemma}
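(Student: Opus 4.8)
The strategy is to assemble the cube from its six faces and to verify each one separately. Four of these faces are already in hand: the top and bottom faces are the commutative square of Lemma~\ref{lem_OO3} applied to $S'$ and to $S$ respectively, the left face is the commutative square of Lemma~\ref{lem_OO2}, and the right face is the commutative square of Lemma~\ref{lem_OO1}. Since every vertex of the cube is an abelian group (an \'{e}tale cohomology group, a Brauer-type subgroup, or $\pi_{-1}$ of a spectrum), commutativity is a statement about maps of abelian groups, with no higher coherence to track. So it remains to treat the back face and the front face.

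The back face is immediate from the shape of the maps involved. Both $M$ and $M'$ are diagonal: on the summand attached to a place $v$ each of them is the one and the same map $M_v\colon\operatorname{Br}(F_v)_\circ\hookrightarrow\pi_{-1}\Sigma^{-1}K(F_v)[\tau^{-1}]$ produced from the local descent spectral sequence over $F_v$ as in Diagram~\ref{lct4}, and this does not depend on the ambient set of places. Here one uses that $\operatorname{Br}(F_v)_\circ=0$ at the infinite places (as $p$ is odd), so that the relevant index sets on the two sides are $S'\setminus S_\infty$ and $S\setminus S_\infty$. Thus $M'\circ I=\tilde I\circ M$ (equivalently $\tilde P\circ M'=M\circ P$), which is the commutativity of the back face. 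The same observation records that $E$ and $E'$, and $\tilde E$ and $\tilde E'$, likewise agree summand-by-summand with maps $E_v$, $\tilde E_v$ depending only on the local field $F_v$; we use this below.

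For the front face we must show $B\circ G'=G\circ W$ as maps $H_c^3(\mathcal{O}_{S'},\mathbf{Z}/p^r(1))\to\pi_{-1}\Sigma^{-1}K({\mathsf{LC}}_{\mathcal{O}_S})[\tau^{-1}]$. Put $\sigma:=E'\circ I$, the composite of the summand-inclusion $I\colon\bigoplus_{v\in S}\operatorname{Br}(F_v)_\circ\hookrightarrow\bigoplus_{v\in S'}\operatorname{Br}(F_v)_\circ$ with $E'$. First, $\sigma$ is an epimorphism: by the standing hypothesis $\tfrac{1}{p}\in\mathcal{O}_S$ the set $S$ contains a finite place $v$, and already the single-summand component $E'_v\colon\operatorname{Br}(F_v)_\circ\to H_c^3(\mathcal{O}_{S'},\mathbf{Z}/p^r(1))$ is surjective --- in the identifications made in the proof of Lemma~\ref{lem_OO2} it is the isomorphism $\tfrac{1}{p^r}\mathbf{Z}/\mathbf{Z}\overset{\sim}{\longrightarrow}\mathbf{Z}/p^r$; since $\sigma$ is the sum of these components over $v\in S\setminus S_\infty$, it is onto. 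Now chase around the five faces already verified:
\begin{align*}
(B\circ G')\circ\sigma &= B\circ(G'\circ E')\circ I = B\circ\tilde{E}'\circ M'\circ I &&\text{(top face)}\\
&= B\circ\tilde{E}'\circ\tilde{I}\circ M &&\text{(back face)}\\
&= \tilde{E}\circ M &&\text{(right face)}\\
&= G\circ E &&\text{(bottom face)}\\
&= G\circ W\circ E'\circ I = (G\circ W)\circ\sigma &&\text{(left face).}
\end{align*}
Since $\sigma$ is epi, $B\circ G'=G\circ W$, so the front face commutes. Finally, the asserted identification of $W$ with the map induced by $J_!J^!\to1$ for the open immersion $J$ of Eq.~\ref{lbipsw1} is built into Lemma~\ref{lem_OO2}, so nothing further is required.

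The main obstacle, such as it is, is organisational rather than deep: one must arrange the chase so that it passes only through $S$-indexed summands, because that is where the right, back and left faces are literally the squares asserted by the earlier lemmas --- this is exactly why we precompose with $\sigma=E'\circ I$ rather than with $E'$ itself. Precomposing with $E'$ directly would force one to identify the classes of the finite-place fields $F_v$ with $v\in S'\setminus S$ inside $\pi_{-1}\Sigma^{-1}K({\mathsf{LC}}_{\mathcal{O}_S})[\tau^{-1}]$, a computation we neither need nor want. The single substantive ingredient is the surjectivity of one local component $E'_v$, which has already been extracted from Milne's computations in the proof of Lemma~\ref{lem_OO2}.
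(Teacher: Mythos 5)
Your proposal is correct and follows essentially the same route as the paper: assemble the five known faces and deduce the front face by a diagram chase, after precomposing with a surjection landing in the $S$-indexed summands. The paper produces the same chase by picking a preimage $x_{\operatorname*{pre}}$ under $E$ and using that $W$ is an isomorphism to conclude $x=E^{\prime}I(x_{\operatorname*{pre}})$, which is just a rephrasing of your surjectivity of $\sigma=E^{\prime}\circ I$.
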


\begin{proof}
We first construct the cube step by step: (1) The back face commutes since
$M,M^{\prime}$ are induced from summand-wise maps. The top and bottom face
stem from the compatible map between descent spectral sequences, Lemma
\ref{lem_OO3}. The left face commutes by Lemma \ref{lem_OO2}, and the right
face by Lemma \ref{lem_OO2}. (2) It remains to settle that the front face
commutes. To this end, let $x\in H_{c}^{3}(\mathcal{O}_{S^{\prime}}%
,\mathbf{Z}/p^{r}(1))$ be an arbitrary element. We wish to show that%
\[
BG^{\prime}(x)=GW(x)\text{.}%
\]
We proceed as follows: Read the isomorphism $W$ in Diagram \ref{lhrdtu5} as a
downward map. Consider $Wx\in H_{c}^{3}(\mathcal{O}_{S},\mathbf{Z}/p^{r}(1))$.
Since $E$ is surjective (Lemma \ref{lem_OO2}), we may pick some
$x_{\operatorname*{pre}}\in\bigoplus_{v\in S}\operatorname*{Br}(F_{v})_{\circ
}$ such that%
\[
Wx=E(x_{\operatorname*{pre}})\text{.}%
\]
Now, since $PI=\operatorname*{id}$,%
\begin{equation}
Wx=E(x_{\operatorname*{pre}})=EPI(x_{\operatorname*{pre}})=WE^{\prime
}I(x_{\operatorname*{pre}}) \label{lhrdu4}%
\end{equation}
as the restriction of $E^{\prime}$ to the summands with $v\in S$ agrees with
the map $E$ (i.e., for this special element $x_{\operatorname*{pre}}$ the left
face behaves almost as if the face also commutes with respect to the dashed
arrow $P$). As $W$ is an isomorphism, apply $W^{-1}$ to get%
\[
x=E^{\prime}I(x_{\operatorname*{pre}})\text{.}%
\]
Hence,%
\begin{align*}
BG^{\prime}(x)  &  =BG^{\prime}(E^{\prime}I(x_{\operatorname*{pre}}))\\
&  =B(\tilde{E}^{\prime}M^{\prime})I(x_{\operatorname*{pre}})\text{ (top
face)}\\
&  =B\tilde{E}^{\prime}(\tilde{I}M)(x_{\operatorname*{pre}})\text{ (back
face)}\\
&  =\tilde{E}\tilde{P}(\tilde{I}M)(x_{\operatorname*{pre}})\text{ (right
face)}\\
&  =\tilde{E}(\tilde{P}\tilde{I})M(x_{\operatorname*{pre}})\text{ (}\tilde
{P}\tilde{I}=\operatorname*{id}\text{)}\\
&  =\tilde{E}M(x_{\operatorname*{pre}})\\
&  =GE(x_{\operatorname*{pre}})\text{ (bottom face)}\\
&  =GW(x)\text{ (Eq. \ref{lhrdu4}),}%
\end{align*}
and this finishes the proof.
\end{proof}

\subsection{Duals and lifting the spectral trace morphism}

The trace map of Artin--Verdier duality for the scheme $\mathcal{X}$,%
\[
H_{c}^{2d+3}(\mathcal{X},\mathbf{Z}/p^{r}(d+1))\cong\mathbf{Z}/p^{r}\text{,}%
\]
appears as a map originating from $E_{2}^{2d+3,-(2d+2)}$ in the descent
spectral sequence for $K(\underline{{\mathsf{LC}}}_{\mathcal{X}})[\tau^{-1}]$.
In this section we shall construct a lift of this map to a map of spectra%
\[
K(\underline{{\mathsf{LC}}}_{\mathcal{X}})[\tau^{-1}]\longrightarrow
I_{\mathbf{Z}_{p}}\mathbb{S}_{\widehat{p}}\text{.}%
\]

We write $\mathbb{S}$ for the sphere spectrum. If $A$ denotes an abelian
group, write $\mathbb{S}A$ for the corresponding Moore spectrum. In particular
$\mathbb{S}\mathbf{Z}\cong\mathbb{S}$ is just the sphere spectrum again and
$\mathbb{S}\mathbf{Z}/p\cong\mathbb{S}/p\cong\operatorname*{cofib}%
(\mathbb{S}\overset{\cdot p}{\longrightarrow}\mathbb{S})$ are
equivalences\footnote{and moreover agrees with what other authors would call
the \textquotedblleft$\operatorname{mod}p$ Moore spectrum\textquotedblright,
and perhaps denote by $M_{p}$.}. The Moore spectrum of $\mathbf{Q}%
_{p}/\mathbf{Z}_{p}$ can also be realized through%
\[
\mathbb{S}\mathbf{Q}_{p}/\mathbf{Z}_{p}\cong\operatorname*{colim}%
\mathbb{S}\left(  \left.  \frac{1}{p^{r}}\mathbf{Z}\right/  \mathbf{Z}\right)
\]
under inclusion. Define $\mathcal{N}:=\Sigma^{-1}\mathbb{S}\mathbf{Q}%
_{p}/\mathbf{Z}_{p}$ in $\mathsf{Sp}$. The functors%
\[
X\longmapsto X\otimes_{\mathsf{Sp}}\mathcal{N}\qquad\text{and}\qquad
X\longmapsto\operatorname*{Hom}\nolimits_{\mathsf{Sp}}(\mathcal{N},X)
\]
induce an equivalence of the stable $\infty$-categories of $p$-complete
spectra and $p$-torsion spectra.

We write $\mathsf{K}$ for the $K(1)$-local stable homotopy category for the
prime $p$, considered as a subcategory of local objects inside $\mathsf{Sp}$.
We briefly recall that we shall denote the spectrum which represents the
cohomological functor%
\begin{align*}
\mathsf{K}  &  \longrightarrow\mathsf{Ab}\\
Z  &  \longmapsto\operatorname*{Hom}\nolimits_{\mathsf{Ab}}(\pi_{0}%
(Z\otimes_{\mathsf{Sp}}\mathcal{N}),\mathbf{Q}_{p}/\mathbf{Z}_{p})
\end{align*}
by $I_{\mathbf{Q}_{p}/\mathbf{Z}_{p}}\mathbb{S}_{\widehat{p}}$. Since $Z$ is
$p$-complete, $Z\otimes\mathcal{N}$ is a $p$-torsion spectrum so that
$\operatorname*{Hom}\nolimits_{\mathsf{Ab}}(\pi_{0}(Z\otimes\mathcal{N}%
),\mathbf{Q}_{p}/\mathbf{Z}_{p})$ is a derived $p$-complete abelian group. We
call%
\[
I_{\mathbf{Q}_{p}/\mathbf{Z}_{p}}X:=\operatorname*{Hom}\nolimits_{\mathsf{K}%
}(X,I_{\mathbf{Q}_{p}/\mathbf{Z}_{p}}\mathbb{S}_{\widehat{p}})
\]
the $p$\emph{-complete Brown--Comenetz dual}, following \cite[\S 8]%
{MR2327028}, and evidently this is not introducing an inconsistency into our
notation. Hence, by its very definition, we have a natural isomorphism%
\[
\operatorname*{Hom}\nolimits_{\mathsf{K}}(Z,I_{\mathbf{Q}_{p}/\mathbf{Z}_{p}%
}\mathbb{S}_{\widehat{p}})\cong\operatorname*{Hom}\nolimits_{\mathsf{Ab}}%
(\pi_{0}(Z\otimes\mathcal{N}),\mathbf{Q}_{p}/\mathbf{Z}_{p})\text{.}%
\]

\begin{definition}
[{\cite[\S 2]{MR2946825}, \cite{MR2327028}}]\label{def_padicAndersonDual}We
then call%
\begin{equation}
I_{\mathbf{Z}_{p}}X:=\Sigma^{-1}I_{\mathbf{Q}_{p}/\mathbf{Z}_{p}}X
\label{lch4}%
\end{equation}
the $p$\emph{-adic Anderson dual}\footnote{these names come from the ordinary
non-completed homotopy category, where the latter has a different definition
and where the two duals are connected by a fiber sequence instead of a mere
shift \cite{MR3901158}. For the present setting, the choice which dual to take
is entirely guided by what shift leads to a more typographically pleasant
formulation.}.
\end{definition}

Now Eq. \ref{lh06b} pins down a unique map%
\[
\pi_{0}\left(  K\otimes\left.  \left.  \frac{1}{p^{r}}\mathbb{S}\right/
\mathbb{S}\right.  \right)  ({\mathsf{LC}}_{\mathbf{Z}\left[  \frac{1}%
{p}\right]  })[\tau^{-1}]\longrightarrow\left.  \frac{1}{p^{r}}\mathbf{Z}%
\right/  \mathbf{Z}%
\]
where, if the reader does not like our somewhat unusual notation of a
$p$-power divided sphere, we note that we just mean%
\[
\left.  \left.  \frac{1}{p^{r}}\mathbb{S}\right/  \mathbb{S}\right.
:=\mathbb{S}/p^{r}\mathbb{S}\text{.}%
\]
and we may assemble these maps in an inductive system along increasing $r$ (if
you write it as $\mathbb{S}/p^{r}\mathbb{S}$, this means the map
$\mathbb{S}/p^{r}\mathbb{S}\overset{\cdot p^{s}}{\rightarrow}\mathbb{S}%
/p^{r+s}\mathbb{S}$, but for $\mathbf{Z}$, this just means inclusion $\frac
{1}{p^{r}}\mathbf{Z}\subseteq\frac{1}{p^{r+s}}\mathbf{Z}$). In the colimit, we
obtain%
\[
\pi_{0}\left(  K\otimes\left.  \mathbb{S}\mathbf{Q}_{p}/\mathbf{Z}_{p}\right.
\right)  ({\mathsf{LC}}_{\mathbf{Z}\left[  \frac{1}{p}\right]  })[\tau
^{-1}]\longrightarrow\mathbf{Q}_{p}/\mathbf{Z}_{p}\text{,}%
\]
or%
\[
\pi_{0}\left(  K({\mathsf{LC}}_{\mathbf{Z}\left[  \frac{1}{p}\right]  }%
)[\tau^{-1}]\otimes\Sigma\mathcal{N}\right)  \longrightarrow\mathbf{Q}%
_{p}/\mathbf{Z}_{p}\text{.}%
\]
Recalling our conventions around the $p$-adic Anderson dual, this uniquely
pins down a map%
\[
\Sigma K({\mathsf{LC}}_{\mathbf{Z}\left[  \frac{1}{p}\right]  })[\tau
^{-1}]\longrightarrow I_{\mathbf{Q}_{p}/\mathbf{Z}_{p}}\mathbb{S}_{\widehat
{p}}\text{,}%
\]
or rephrased along Eq. \ref{lch4},%
\begin{equation}
\operatorname*{tr}\nolimits_{\heartsuit}\colon K({\mathsf{LC}}_{\mathbf{Z}%
\left[  \frac{1}{p}\right]  })[\tau^{-1}]\longrightarrow I_{\mathbf{Z}_{p}%
}\mathbb{S}_{\widehat{p}}\text{.} \label{lciub1}%
\end{equation}
Our running assumption that $\frac{1}{p}\in\mathcal{O}_{S}$ implies that there
is an inclusion of rings $\mathcal{O}_{F}\left[  \frac{1}{p}\right]
\subseteq\mathcal{O}_{S}$, which (as $S$ is finite) geometrically becomes an
open immersion.

\begin{theorem}
\label{thmF}Consider the quasi-finite map%
\begin{equation}
u\colon\operatorname*{Spec}\mathcal{O}_{S}\hookrightarrow\operatorname*{Spec}%
\mathcal{O}_{F}\left[  \frac{1}{p}\right]  \overset{b}{\rightarrow
}\operatorname*{Spec}\mathbf{Z}\left[  \frac{1}{p}\right]  \text{.}
\label{lhuv7}%
\end{equation}
Then the co-unit $u_{!}u^{!}\rightarrow1$ induces a commutative diagram%
\[%
{
\begin{tikzcd}
	{H^3_c \left( \mathcal{O}_S , \mathbf{Z}/p^r(1) \right)} && {\pi_{-1}%
\Sigma^{-1}K/p^r(\underline{{\mathsf{LC}}}_{\mathcal{O}_S})[{\tau}^{-1}]} \\
	\\
	{H^3_c \left( {\mathbf{Z}\left[ \frac{1}{p} \right]} , \mathbf{Z}%
/p^r(1) \right)} && {\pi_{-1}\Sigma^{-1}K/p^r(\underline{{\mathsf{LC}}%
}_{{\mathbf{Z}\left[ \frac{1}{p} \right]}})[{\tau}^{-1}],}
	\arrow[hook, from=1-1, to=1-3]
	\arrow["{{{{u_! u^! \rightarrow1}}}}"', from=1-1, to=3-1]
	\arrow["{b_{*} \circ B}", from=1-3, to=3-3]
	\arrow["\sim"', from=3-1, to=3-3]
\end{tikzcd}
}%
\]
where $B$ is the functor which forgets the $\mathcal{O}_{S}$-module structure
on a locally compact abelian group in favour of the $\mathcal{O}_{F}\left[
\frac{1}{p}\right]  $-module structure. The lower horizontal arrow is an isomorphism.
\end{theorem}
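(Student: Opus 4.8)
The plan is to exploit the factorization $u=b\circ J$ exhibited in the statement, where $J\colon\operatorname{Spec}\mathcal{O}_{S}\hookrightarrow\operatorname{Spec}\mathcal{O}_{F}[\tfrac1p]$ is an open immersion and $b\colon\operatorname{Spec}\mathcal{O}_{F}[\tfrac1p]\to\operatorname{Spec}\mathbf{Z}[\tfrac1p]$ is the finite flat map obtained from $\operatorname{Spec}\mathcal{O}_{F}\to\operatorname{Spec}\mathbf{Z}$ by inverting $p$. (The first map is indeed an open immersion: $\tfrac1p\in\mathcal{O}_S$ and $S_\infty\subseteq S$ force $\mathcal{O}_F[\tfrac1p]=\mathcal{O}_{S_0}$ with $S_0:=S_\infty\cup\{v\mid p\}\subseteq S$.) Accordingly the co-unit factors as $u_!u^!=b_!J_!J^!b^!\to b_!b^!\to1$, so the square to be proven splits vertically into an ``open immersion'' square sitting above a ``finite morphism'' square, and it suffices to prove each commutes with the expected right-hand functor ($B$ for the first, $b_*$ for the second). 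Both rings here have relative dimension $0$ over themselves, so $\underline{\mathsf{LC}}=\mathsf{LC}$ throughout, and by Lemma \ref{lem_OOSpecSeq}(1)--(2) together with Lemma \ref{lemma_vanish_at_0_0} the horizontal arrows are the (injective) edge maps $G$ of Lemma \ref{lem_OO3}.

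First, the bottom row is an isomorphism: for $\mathbf{Z}[\tfrac1p]$ the extra vanishing $H^1_c(\mathbf{Z}[\tfrac1p],\mathbf{Z}/p^r(0))=0$ (Lemma \ref{lemma_special_vanishing}) collapses the filtration on $\pi_{-1}\Sigma^{-1}K/p^r(\mathsf{LC}_{\mathbf{Z}[\frac1p]})[\tau^{-1}]$ to one step, whence the edge map is exactly the isomorphism of Lemma \ref{lem_OOSpecSeq}(3), Eq.\ \ref{lh06b} --- this is the ``$\sim$'' of the statement. Second, the open-immersion square is the front face of the cube in Lemma \ref{lem_OOCube}, applied with $S':=S$ and $S:=S_0$ (so $\mathcal{O}_{S'}=\mathcal{O}_S$ and $\mathcal{O}_{S}=\mathcal{O}_F[\tfrac1p]$): it gives $B\circ G_{\mathcal{O}_S}=G_{\mathcal{O}_F[\frac1p]}\circ W_J$, where $W_J$ is the map on $H^3_c$ induced by $J_!J^!\to1$ and $B$ is the forgetful functor of the statement.

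It remains to prove the finite-morphism square, i.e.\ $b_*\circ G_{\mathcal{O}_F[\frac1p]}=G_{\mathbf{Z}[\frac1p]}\circ W_b$, where $W_b$ is induced by $b_!b^!\to1$ and $b_*$ is restriction of scalars along the finite flat extension $\mathbf{Z}[\tfrac1p]\subseteq\mathcal{O}_F[\tfrac1p]$ (which preserves perfect complexes, descends to $\mathsf{LC}$, and on $\operatorname{Perf}$ is the algebraic pushforward). One must match this $b_*$ with $b_!b^!\to1$ applied to the sheaf $\mathcal{K}$. Since $b$ is finite flat lci of relative dimension $0$, $b^!\mathcal{K}\simeq\omega_b\otimes b^*\mathcal{K}$ for the invertible relative dualizing sheaf $\omega_b$; as $\mathcal{K}$ is a module over the oriented periodic $\mathrm{KGL}$ this Thom twist is an equivalence $b^!\mathcal{K}\simeq\mathcal{K}_{\mathcal{O}_F[\frac1p]}$, so $b_!b^!\mathcal{K}\to\mathcal{K}$ becomes the transfer, which on global sections is the algebraic pushforward (the \'{e}tale-versus-algebraic comparison, cf.\ \cite[Lemma 3.3.4]{jinpaper}, here in the finite flat case). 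Hence $b_*$ induces a map of the descent spectral sequences of Eq.\ \ref{lcih1} respecting the filtrations; on the $E_2$-page at the spot $(3,-2)$ the induced map $H^3_c(\mathcal{O}_F[\tfrac1p],\mathbf{Z}/p^r(1))\to H^3_c(\mathbf{Z}[\tfrac1p],\mathbf{Z}/p^r(1))$ is $W_b$, which --- via the Kummer sequence and the invariant identification $H^3_c(-,\mathbf{G}_m)\cong\mathbf{Q}/\mathbf{Z}$, hence $H^3_c(-,\mathbf{Z}/p^r(1))\cong\mathbf{Z}/p^r$, exactly as in the proof of Lemma \ref{lem_OO2} --- is the identity of $\mathbf{Z}/p^r$ by \cite[Ch.\ II, beginning of \S 3, (a)]{milne2006}. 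Compatibility of $b_*$ with the edge maps then gives the square. Stacking the two squares and using $W_u=W_b\circ W_J$ yields $(b_*\circ B)\circ G_{\mathcal{O}_S}=G_{\mathbf{Z}[\frac1p]}\circ W_u$, the asserted diagram, with bottom arrow the isomorphism of the first step.

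\textbf{Main obstacle.} The delicate point is this last square: identifying the \'{e}tale $!$-pushforward $b_!b^!\mathcal{K}\to\mathcal{K}$ with the algebraic transfer $b_*$ for the \emph{ramified} finite flat $b$, which requires trivializing the Thom twist by $\omega_b$ --- harmless because $\mathrm{KGL}$ is oriented and periodic, so the twist is multiplication by a unit that leaves the invariant-map identification $H^3_c(-,\mathbf{Z}/p^r(1))\cong\mathbf{Z}/p^r$ undisturbed --- and knowing that the resulting transfer induces the standard functoriality of the fundamental class in compactly supported cohomology. Everything else is bookkeeping of the edge maps already established in Lemmas \ref{lem_OO1}--\ref{lem_OOCube}.
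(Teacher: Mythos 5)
Your proof is correct and follows essentially the same route as the paper: the paper's own argument is precisely the factorization $u=b\circ J$, with the open-immersion square obtained as the front face of the cube in Lemma \ref{lem_OOCube} (for $\operatorname{Spec}\mathcal{O}_{S}\hookrightarrow\operatorname{Spec}\mathcal{O}_{F}[\frac{1}{p}]$) and the finite-morphism square cited as an instance of Lemma \ref{lemma_pi_proper1}. Your treatment of the finite part is in fact somewhat more careful than the paper's one-line citation, since $b$ is ramified in general and so not literally covered by the smoothness hypothesis of Lemma \ref{lemma_pi_proper1}; your lci/orientation argument trivializing the twist by $\omega_{b}$ supplies exactly the justification that step needs.
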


\begin{proof}
This is a special case of the commutativity of the front face in Lemma
\ref{lem_OOCube} for the open immersion $\operatorname*{Spec}\mathcal{O}%
_{S}\hookrightarrow\operatorname*{Spec}\mathcal{O}_{F}\left[  \frac{1}%
{p}\right]  $, and a special case of Lemma \ref{lemma_pi_proper1} for the
finite (proper) morphism $b$.
\end{proof}

Denote the composition%
\begin{equation}
\mathcal{X}\overset{\pi}{\longrightarrow}\operatorname*{Spec}\mathcal{O}%
_{S}\overset{u}{\rightarrow}\operatorname*{Spec}\mathbf{Z}[\frac{1}{p}]
\label{lhuv9}%
\end{equation}
by $\rho$.

\begin{definition}
We define the $!$\emph{-pushforward} $\rho_{!}\colon\underline{{\mathsf{LC}}%
}_{\mathcal{X}}\longrightarrow{\mathsf{LC}}_{\mathbf{Z}\left[  \frac{1}%
{p}\right]  }$ as the composition%
\[
\rho_{!}\colon\underline{{\mathsf{LC}}}_{\mathcal{X}}\overset{\pi_{\ast}%
}{\longrightarrow}{\mathsf{LC}}_{\mathcal{O}_{S}}\overset{b_{\ast}\circ
B}{\longrightarrow}{\mathsf{LC}}_{\mathbf{Z}\left[  \frac{1}{p}\right]
}\text{,}%
\]
where $\pi_{\ast}$ is the pushforward of locally compact modules as defined in
Eq. \ref{l_h0a}, and $b_{\ast}\circ B$ is as in Theorem \ref{thmF}.
\end{definition}

Theorem \ref{thmF} justifies calling this a $!$-pushforward. One can show that
this definition does not depend on the choice of the \textquotedblleft base
field\textquotedblright\ $F$ and $\mathcal{O}_{S}$ in Eq. \ref{lhuv9}, but we
omit the details.

\begin{definition}
\label{def_GeneralTraceMap}Suppose $S$ is finite and $\pi$ is proper. We
define%
\[
\operatorname*{tr}\nolimits_{\mathcal{X}}\colon K(\underline{{\mathsf{LC}}%
}_{\mathcal{X}})\overset{\rho_{!}}{\longrightarrow}K({\mathsf{LC}}%
_{\mathbf{Z}\left[  \frac{1}{p}\right]  })\longrightarrow K({\mathsf{LC}%
}_{\mathbf{Z}\left[  \frac{1}{p}\right]  })[\tau^{-1}]\overset
{\operatorname*{tr}\nolimits_{\heartsuit}}{\longrightarrow}I_{\mathbf{Z}_{p}%
}\mathbb{S}\text{.}%
\]

\end{definition}

There is a module structure%
\begin{equation}
\operatorname*{Perf}\mathcal{X}\times\underline{{\mathsf{LC}}}_{\mathcal{X}%
}\longrightarrow\underline{{\mathsf{LC}}}_{\mathcal{X}} \label{lh04}%
\end{equation}
which reduces to the ordinary symmetric monoidal structure on perfect
complexes:%
\[%
{
\begin{tikzcd}
	{\operatorname{Perf}{\mathcal{X} } \times{\underline{{\mathsf{LC}}}%
_{\mathcal{X}}}} &&& {{\underline{{\mathsf{LC}}}_{\mathcal{X}}}} \\
	{\operatorname{Perf}{\mathcal{X} } \times\left( \operatorname{Perf}%
{\mathcal{X} } \times\operatorname{D}_{\infty}^{b} {{{\mathsf{LC}}}%
_{\mathcal{O}_S}} \right)} \\
	{\left( \operatorname{Perf}{\mathcal{X} } \times\operatorname{Perf}%
{\mathcal{X} } \right) \times\operatorname{D}_{\infty}^{b} {{{\mathsf{LC}}%
}_{\mathcal{O}_S}}} &&& { \operatorname{Perf}{\mathcal{X} }   \times
\operatorname{D}_{\infty}^{b} {{{\mathsf{LC}}}_{\mathcal{O}_S}}}
	\arrow[from=1-1, to=1-4]
	\arrow["{1 \times\mathrm{Def.\thinspace} \ref{def_LCA_X}}%
", equals, from=1-1, to=2-1]
	\arrow[equals, from=1-4, to=3-4]
	\arrow[equals, from=2-1, to=3-1]
	\arrow["{\times_{\mathrm{Perf}}}"', from=3-1, to=3-4]
\end{tikzcd}
}%
\]
and the very definition of $\underline{{\mathsf{LC}}}_{\mathcal{X}}$.\ It
renders $K(\underline{{\mathsf{LC}}}_{\mathcal{X}})$ a $K(\mathcal{X}%
)$-module. As the underlying bi-functor on perfect complexes is compatible
with the filtration underlying the descent spectral sequence, we get a
multiplicative structure%
\[%
\begin{tabular}
[c]{lll}%
$E_{r}^{i,j}(\mathcal{X})\otimes E_{r}^{i^{\prime},j^{\prime}}(\underline
{{\mathsf{LC}}}_{\mathcal{X}})\longrightarrow E_{r}^{i+i^{\prime},j+j^{\prime
}}(\underline{{\mathsf{LC}}}_{\mathcal{X}})$ & $\qquad$ & on the pages\\
$F_{i}\otimes F_{j}\longrightarrow F_{i+j}$ & $\qquad$ & on the filtration
\end{tabular}
\ \ \ \ \ \ \ \
\]
satsifying the Leibniz rule for products on the $r$-th page%
\begin{equation}
d_{r}(x\smile y)=d_{r}x\smile y+(-1)^{i+j}x\smile d_{r}y\text{.} \label{lw4}%
\end{equation}
We recall that a pairing (of spectra or abelian groups) $\mu\colon X\otimes
Y\rightarrow Z$ is called \emph{left perfect} if the Hom-tensor adjunction
$\operatorname*{Hom}(X\otimes Y,Z)\cong\operatorname*{Hom}%
(X,\operatorname*{Hom}(Y,Z))$ sends $\mu$ to an equivalence $X\overset{\sim
}{\rightarrow}\operatorname*{Hom}(Y,Z)$. Correspondingly, the pairing $\mu$ is
called \emph{right perfect} if the reversed pairing $\mu^{\operatorname*{op}%
}\colon Y\otimes X\rightarrow Z$ is a left perfect pairing. A \emph{perfect}
pairing is a pairing that is both left and right perfect.

\begin{theorem}
\label{thmA_inside}Let $p$ be an odd prime, $S$ is finite and $\frac{1}{p}%
\in\mathcal{O}_{S}$. We assume $\pi$ is proper. Suppose $F$ is a number field
without real places and let $S$ be a finite set of finite places such that
$\frac{1}{p}\in\mathcal{O}_{S}$. Then%
\begin{equation}
K(\mathcal{X})[\tau^{-1}]\otimes K(\underline{{\mathsf{LC}}}_{\mathcal{X}%
})[\tau^{-1}]\longrightarrow K(\underline{{\mathsf{LC}}}_{\mathcal{X}}%
)[\tau^{-1}]\overset{\operatorname*{tr}}{\longrightarrow}I_{\mathbf{Z}_{p}%
}\mathbb{S}_{\widehat{p}} \label{lh2}%
\end{equation}
is a perfect pairing, where

\begin{itemize}
\item the first map stems from the module structure set up in Eq. \ref{lh04}, and

\item the map $\operatorname*{tr}$ is the one of Definition
\ref{def_GeneralTraceMap}.
\end{itemize}
\end{theorem}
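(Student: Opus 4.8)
The plan is to reduce the statement to a perfect pairing one already controls, namely $K(1)$-local Poitou--Tate duality over $\mathbf{Z}[\frac{1}{p}]$, via descent along the structure maps. First I would reformulate both sides using the descent spectral sequence of Eq.~\ref{lcih1} and its companion for $K(\underline{{\mathsf{LC}}}_{\mathcal{X}})$, Prop.~\ref{prop_Equiv1}. Since $\tau$ is invertible, the spectra $K(\mathcal{X})[\tau^{-1}]$ and $K(\underline{{\mathsf{LC}}}_{\mathcal{X}})[\tau^{-1}]$ are the global sections of the Bott-inverted \'etale hypersheaves $\pi_{\ast}\pi^{\ast}\mathcal{K}$ and (a shift of) $j_{!}\pi_{\ast}\pi^{\ast}\mathcal{K}$. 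The pairing of Eq.~\ref{lh2}, being induced from the $\operatorname{Perf}\mathcal{X}$-module structure of Eq.~\ref{lh04} together with $\rho_{!}$ and $\operatorname{tr}_{\heartsuit}$, is on the level of hypersheaves the cup product $\pi_{\ast}\pi^{\ast}\mathcal{K}\otimes j_{!}\pi_{\ast}\pi^{\ast}\mathcal{K}\to j_{!}\pi_{\ast}\pi^{\ast}\mathcal{K}$ composed with the $!$-pushforward $\rho_{!}$ to $\mathbf{Z}[\frac{1}{p}]$ and the trace $\operatorname{tr}_{\heartsuit}$ of Eq.~\ref{lciub1}. So the claim becomes: the induced map $K(\mathcal{X})[\tau^{-1}]\to \operatorname{Hom}(K(\underline{{\mathsf{LC}}}_{\mathcal{X}})[\tau^{-1}], I_{\mathbf{Z}_p}\mathbb{S}_{\widehat p})$ is an equivalence (and symmetrically).

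Next I would identify the target. By the definition of the $p$-adic Anderson dual and the fact that $K(\underline{{\mathsf{LC}}}_{\mathcal{X}})[\tau^{-1}]$ is, up to a shift, the global sections over $\operatorname{Spec}\mathbf{Z}[\frac{1}{p}]$ of $\rho_{!}\rho^{\ast}\mathcal{K}$ (using $u_{!}$ of Eq.~\ref{lhuv7} and the smooth-proper $\pi_{!}=\pi_{\ast}$, with $\pi^{!}\mathcal{K}\cong\pi^{\ast}\mathcal{K}$ by Bott periodicity, Eq.~\ref{lset2}), the Brown--Comenetz/Anderson dual of this spectrum is $\operatorname{RHom}_{\mathbf{Z}[\frac1p]}(\rho_{!}\rho^{\ast}\mathcal{K}, \mathcal{I})$ for the appropriate dualizing object $\mathcal{I}$ (the Bott-inverted $K$-theoretic incarnation of $\mathbf{Z}/p^{r}(1)[3]$ on $\operatorname{Spec}\mathbf{Z}[\frac1p]$, colimited over $r$). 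Then by the $(\rho_{!},\rho^{!})$ adjunction of the \'etale $6$-functor formalism, $\operatorname{RHom}_{\mathbf{Z}[\frac1p]}(\rho_{!}\rho^{\ast}\mathcal{K},\mathcal{I})\cong\operatorname{RHom}_{\mathcal{X}}(\rho^{\ast}\mathcal{K},\rho^{!}\mathcal{I})$, and $\rho^{!}\mathcal{I}$ is again a shift/twist of $\mathcal{K}$ by smoothness and Bott periodicity, with the shift accounting precisely for the relative dimension $d$ (i.e.\ $\rho^{!}$ contributes $\Sigma^{2d}(d)\cong\operatorname{id}$ after inverting $\tau$). Taking global sections on $\mathcal{X}$ recovers $K(\mathcal{X})[\tau^{-1}]$, and tracking the shifts one sees the duality lands on the diagonal $2d+3$, matching Eq.~\ref{lh07}. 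Thus the pairing is a perfect pairing of spectra as soon as one knows the corresponding statement of \'etale local duality for $\mathcal{K}$-modules on $\operatorname{Spec}\mathbf{Z}[\frac1p]$, which in turn follows from Blumberg--Mandell's $K(1)$-local Poitou--Tate duality for $\mathbf{Z}[\frac1p]$ together with the trace $\operatorname{tr}_{\heartsuit}$ being the correct evaluation, established through Eq.~\ref{lh06b} and Theorem~\ref{thmF}.

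The step I expect to be the main obstacle is showing that the functorially-defined $!$-pushforward $\rho_{!}\colon\underline{{\mathsf{LC}}}_{\mathcal{X}}\to{\mathsf{LC}}_{\mathbf{Z}[\frac1p]}$ (built concretely from $\pi_{\ast}$ of locally compact modules, the forgetful functor $B$, and $b_{\ast}$) really induces, after $K$-theory, mod $p^{r}$, and inverting $\tau$, the \emph{same} map as the \'etale $!$-pushforward $\rho_{!}\rho^{!}\to 1$ of the $6$-functor formalism. The excerpt only claims this is verified ``to the extent needed''; concretely I would assemble it from Lemma~\ref{lem_OOCube} (the cube whose front face compares $W$, induced by $J_{!}J^{!}\to 1$, with the forgetful $B$ on the $K$-theory side) together with Lemma~\ref{lemma_pi_proper1} (compatibility of $\pi_{\ast}$ with $\pi_{!}\pi^{!}\to 1$ for the proper smooth $\pi$) and Theorem~\ref{thmF}. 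One then has to check that the cup product of Eq.~\ref{lh04}, which is manifestly $K(\mathcal{X})$-linear, is intertwined with the \'etale cup product under these identifications, and that the Leibniz-compatible multiplicative structure of Eq.~\ref{lw4} degenerates on the diagonal so that on $E_2$ the pairing is exactly Eq.~\ref{lint5}; this last point is essentially bookkeeping once the hypersheaf-level identification is in place. The remaining verification --- that the composite pairing is nondegenerate on each $E_2$-page and hence, by strong convergence (finitely many columns, Lemma~\ref{lemma_supp}), an equivalence of spectra --- is then a standard comparison-of-spectral-sequences argument, invoking the classical perfect pairing of Saito and Geisser--Schmidt, Eq.~\ref{lint5}, together with the Mittag-Leffler/finiteness needed to pass from the mod $p^{r}$ statements to the $p$-complete one.
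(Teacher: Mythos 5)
Your operative argument --- the final paragraph --- is the paper's argument: equip both descent spectral sequences with the multiplicative structure coming from the module pairing of Eq.~\ref{lh04}, identify the $E_{2}$-page pairing with the classical cup-product pairing of ordinary against compactly supported \'etale cohomology, invoke the known perfect pairing of Eq.~\ref{lint5} there, propagate perfectness to $E_{\infty}$ and through the finite filtration (finitely many columns, Lemma~\ref{lemma_supp}), and conclude. You also correctly locate the genuine difficulty (that the categorically defined $\rho_{!}$ and $\operatorname{tr}$ realize the \'etale trace) and the right ingredients for it (Lemma~\ref{lem_OOCube}, Lemma~\ref{lemma_pi_proper1}, Theorem~\ref{thmF}).

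Two remarks on where you diverge. First, your second paragraph proposes to prove the duality at the sheaf level, via the $(\rho_{!},\rho^{!})$ adjunction applied to $\rho_{!}\rho^{\ast}\mathcal{K}$ and a dualizing object $\mathcal{I}$, and only then take global sections. The paper deliberately does not do this: the identification of the functor $\rho_{!}$ on locally compact modules with the co-unit $\rho_{!}\rho^{!}\rightarrow 1$ of the \'etale six-functor formalism is stated as a conjecture and is only verified on the particular spectral-sequence entries needed (the $(2d+3,-(2d+2))$ spot and the trace), so a wholesale appeal to the adjunction at the level of sheaves of spectra is not available with what is proven. Your own third paragraph recognizes this and retreats to the spectral-sequence comparison, which is the correct move; just be aware that the second paragraph cannot stand on its own. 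Second, the passage from mod $p^{r}$ to the $p$-complete statement needs no Mittag--Leffler argument: a map of $p$-complete spectra is an equivalence if and only if it is one after smashing with $\mathbb{S}/p$, so perfectness of the mod $p$ pairing of homotopy groups already suffices.
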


The proof follows the technique of Blumberg--Mandell \cite{MR4121155}, i.e.,
we bootstrap the perfect pairing from classical Artin--Verdier duality for
arithmetic schemes on the $E_{2}$-page.

\begin{proof}
In this proof, we write

\begin{itemize}
\item $E_{r}^{i^{\prime},j^{\prime}}(\underline{{\mathsf{LC}}}_{\mathcal{X}})$
for the spectral sequence converging to%
\[
\pi_{-\bullet-\bullet}\Sigma^{-1}K/p^{r}(\underline{{\mathsf{LC}}%
}_{\mathcal{O}_{S}})[\tau^{-1}]
\]
that we had considered in Lemma \ref{lem_OOSpecSeq} and,

\item $E_{r}^{i,j}(\mathcal{X})$ for the descent spectral sequence of
$K(\mathcal{X})[\tau^{-1}]$.
\end{itemize}

Both exist, are convergent, and the first arrow in Eq. \ref{lh2} induces a map
between them because Prop. \ref{prop_Equiv1} had shown that instead of
$K/p^{r}(\underline{{\mathsf{LC}}}_{\mathcal{O}_{S}})$ we could also work with
$j_{!}\pi_{\ast}{{\pi}^{\ast}\mathcal{K}}$, so in either case we deal with the
descent spectral sequence of an \'{e}tale hypersheaf, evaluated to get its
global sections on $\mathcal{X}$. The pairing of $K(\mathcal{X})$-module
spectra%
\[
K(\mathcal{X})[\tau^{-1}]\otimes K(\underline{{\mathsf{LC}}}_{\mathcal{X}%
})[\tau^{-1}]\longrightarrow K(\underline{{\mathsf{LC}}}_{\mathcal{X}}%
)[\tau^{-1}]
\]
induces a multiplicative structure on the descent spectral sequence. This
entails having induced pairings on the cycle and coboundary groups,%
\begin{align*}
Z_{r}^{i}(\mathcal{X})\otimes Z_{r}^{i^{\prime}}(\underline{{\mathsf{LC}}%
}_{\mathcal{X}})  &  \longrightarrow Z_{r}^{i+i^{\prime}}(\underline
{{\mathsf{LC}}}_{\mathcal{X}})\text{,}\\
B_{r}^{i}(\mathcal{X})\otimes Z_{r}^{i^{\prime}}(\underline{{\mathsf{LC}}%
}_{\mathcal{X}})  &  \longrightarrow B_{r}^{i+i^{\prime}}(\underline
{{\mathsf{LC}}}_{\mathcal{X}})\text{,}\\
Z_{r}^{i}(\mathcal{X})\otimes B_{r}^{i^{\prime}}(\underline{{\mathsf{LC}}%
}_{\mathcal{X}})  &  \longrightarrow B_{r}^{i+i^{\prime}}(\underline
{{\mathsf{LC}}}_{\mathcal{X}})\text{,}%
\end{align*}
as well as on all pages (including the case $r=\infty$)%
\begin{equation}
\bigoplus_{\substack{i+i^{\prime}=m\\j+j^{\prime}=n}}E_{r}^{i,j}%
(\mathcal{X})\otimes E_{r}^{i^{\prime},j^{\prime}}(\underline{{\mathsf{LC}}%
}_{\mathcal{X}})\longrightarrow E_{r}^{m,n}(\underline{{\mathsf{LC}}%
}_{\mathcal{X}}) \label{lwin5}%
\end{equation}
satisfying the Leibniz rule of Eq. \ref{lw4}. On the $E_{\infty}$-page, the
entries correspond to the associated graded pieces of the respective
filtrations of the homotopy groups. Recall the following general principle: If
the induced pairing is a perfect pairing of abelian groups on the $E_{r}$-page
(for some $r\geq2$), then so is the pairing on the next page, and therefore
also on the $E_{\infty}$-page. We will use this criterion for $r=2$: The trace
map%
\[
K(\underline{{\mathsf{LC}}}_{\mathcal{X}})[\tau^{-1}]\overset
{\operatorname*{tr}}{\longrightarrow}I_{\mathbf{Z}_{p}}\mathbb{S}%
\]
corresponds, by construction, to the evaluation depicted below, originating
from $E_{2}^{2d+3,-(2d+2)}(\underline{{\mathsf{LC}}}_{\mathcal{X}})$ and going
around the full right semi-circle to $H_{c}^{3}(\mathbf{Z}\left[  \frac{1}%
{p}\right]  ,\mathbf{Z}/p^{r}(1))$ on the lower left.%
\[%
\adjustbox{max width=\textwidth}{
\begin{tikzcd}
	& {E_{2}^{2d+3,-(2d+2)}({\underline{{\mathsf{LC}}}_{\mathcal{X} }})}
& {E_{\infty}^{2d+3,-(2d+2)}({\underline{{\mathsf{LC}}}_{\mathcal{X} }})}
& {F_{2d+3}\mathbf{\Pi}_{-1}(\mathcal{X})} \\
	\\
	& {H_{c}^{2d+3}\left( \mathcal{X},\mathbf{Z}/p^{r}(d+1)\right)}
&&& {\mathbf{\Pi}_{-1}(\mathcal{X})} \\
	{E_{2}^{3,-2}\left({\underline{{\mathsf{LC}}}_{\mathcal{O}_{S} }}\right)}
&& {E_{\infty}^{3,-2}\left({\underline{{\mathsf{LC}}}_{\mathcal{O}_{S} }%
}\right)} & {F_{3}\mathbf{\Pi}_{-1}\left({\mathcal{O}_S}\right)} \\
	\\
	{H_{c}^{3}\left({\mathcal{O}_S},\mathbf{Z}/p^{r}(1)\right)} & {\mathbf
{Z}/p^r} &&& {\mathbf{\Pi}_{-1}\left({\mathcal{O}_S}\right)} \\
	\\
	& {H_{c}^{3}\left({{\mathbf{Z}\left[ \frac{1}{p} \right]}},\mathbf{Z}%
/p^{r}(1)\right)} && {\mathbf{\Pi}_{-1}\left({\mathbf{Z}\left[ \frac{1}{p}
	\right]}\right)}
	\arrow[two heads, from=1-2, to=1-3]
	\arrow[equals, from=1-2, to=3-2]
	\arrow["{{{{{{\pi}_{!} {\pi}^{!}\rightarrow1}}}}}"', from=1-2, to=4-1]
	\arrow[equals, from=1-3, to=1-4]
	\arrow[hook, from=1-4, to=3-5]
	\arrow["{{{{{\pi_{*}}}}}}"', from=1-4, to=4-4]
	\arrow["{{{{{{\pi}_{!} {\pi}^{!}\rightarrow1}}}}}", from=3-2, to=6-1]
	\arrow[equals, from=3-2, to=6-2]
	\arrow["{{{{{\pi_*}}}}}", from=3-5, to=6-5]
	\arrow[two heads, from=4-1, to=4-3]
	\arrow[equals, from=4-1, to=6-1]
	\arrow[equals, from=4-3, to=4-4]
	\arrow[hook, from=4-4, to=6-5]
	\arrow[equals, from=6-1, to=8-2]
	\arrow["B", from=6-5, to=8-4]
	\arrow["{{{{\operatorname{tr}}}}}", equals, from=8-2, to=6-2]
	\arrow[equals, from=8-4, to=8-2]
\end{tikzcd}
}%
\]
This map agrees (by definition!) with the map of Definition
\ref{def_GeneralTraceMap}. Going around the diagram on the left side instead,
we see that%
\[
E_{2}^{2d+3,-(2d+2)}(\underline{{\mathsf{LC}}}_{\mathcal{X}})\longrightarrow
E_{2}^{3,-2}(\underline{{\mathsf{LC}}}_{\mathcal{O}_{S}})\longrightarrow
H_{c}^{3}(\mathbf{Z}\left[  \frac{1}{p}\right]  ,\mathbf{Z}/p^{r}%
(1))\cong\mathbf{Z}/p^{r}%
\]
agrees with the usual trace map from Artin--Verdier duality for arithmetic
schemes. Thus, the pairing on the $E_{2}$-page reads%
\[
\underset{\text{\textsf{(A)}}}{E_{2}^{i,j}(\mathcal{X})}\otimes\underset
{\text{\textsf{(B)}}}{E_{2}^{i^{\prime},j^{\prime}}(\underline{{\mathsf{LC}}%
}_{\mathcal{O}_{S}})}\longrightarrow\underset{\text{\textsf{(C)}}}%
{E_{2}^{i+i^{\prime},j+j^{\prime}}(\underline{{\mathsf{LC}}}_{\mathcal{O}_{S}%
})}%
\]
and \textsf{(A)} unravels to be $H^{i}(\mathcal{X},\mathbf{Z}/p^{r}(-\tfrac
{j}{2}))$ thanks to the ordinary descent spectral sequence for $\mathcal{X}$,
\textsf{(B)} unravels to be $H_{c}^{i^{\prime}}(\mathcal{X},\mathbf{Z}%
/p^{r}(-\tfrac{j^{\prime}}{2}))$ by identifying the spectrum with compactly
supported \'{e}tale cohomology (Prop. \ref{prop_Equiv1}) and the descent
spectral sequence, and \textsf{(C)} unravels to be $H_{c}^{i+i^{\prime}%
}(\mathcal{X},\mathbf{Z}/p^{r}(-\tfrac{j+j^{\prime}}{2}))$ by the same reason.
Moreover, the pairing agrees with the cup product on \'{e}tale cohomology as
we can identify $K/p^{r}(\underline{{\mathsf{LC}}}_{\mathcal{O}_{S}}%
)[\tau^{-1}]$ with a fiber just as in Blumberg--Mandell, Cor. \ref{cor2} and
\cite[Thm. 3.1 and 3.2]{MR4121155} (\textit{loc. cit.} also shows that one can
phrase the pairing just as well as a Yoneda $\operatorname*{Ext}$-pairing).
However, classical Artin--Verdier duality guarantees that the cup product
pairing
\[
\underset{\text{\textsf{(A)}}}{H^{s}(\mathcal{X},\mathcal{F})}\otimes
\underset{\text{\textsf{(B)}}}{H_{c}^{2d+3-s}(\mathcal{X},\mathcal{F}^{\ast}%
)}\longrightarrow\underset{\text{\textsf{(C)}}}{H_{c}^{2d+3}(\mathcal{X}%
,\mathbf{Z}/p^{r}(d+1))}\cong\mathbf{Z}/p^{r}%
\]
is a perfect pairing of finite abelian groups (\cite[Ch. II, \S 7, Cor.
7.7]{milne2006} or \cite[Theorem 1.19]{MR1327282}, or the even broader
strength of \cite[Theorem 4.6]{MR3867292}). There is a subtlety hiding here:
All these sources define $H_{c}^{\bullet}$ as to include certain Tate
cohomology summands at the infinite places $S_{\infty}$, but we have never
introduced them. This is fine for the reasons explained in Rmk.
\ref{rmk_CompactlySupportedEtaleCohomology}.\footnote{Had we introduced them,
we would actually be in trouble, as the descent spectral sequence always
outputs ordinary \'{e}tale cohomology and not Tate cohomology.} We deduce that
the pairing of Eq. \ref{lwin5} is perfect on the $E_{\infty}$-page. Since our
filtration of the homotopy groups is finite in each case (since the support of
the spectral sequence is horizontally bounded), this inductively shows that
the pairing of homotopy groups%
\[
\pi_{n}K/p^{r}(\mathcal{X})[\tau^{-1}]\otimes\pi_{-n}K/p^{r}(\underline
{{\mathsf{LC}}}_{\mathcal{X}})[\tau^{-1}]\longrightarrow\mathbf{Z}/p^{r}%
\]
is perfect. For $p$-complete spectra, a map $f\colon X\rightarrow Z$ is an
equivalence iff $f/p\colon X/p\rightarrow Z/p$ is an equivalence, so this is
sufficient to show that the pairing in Eq. \ref{lh2} is perfect.
\end{proof}

We are ready to prove Theorem \ref{introThmA}.

\begin{proof}
[Proof of Theorem \ref{introThmA}]The claim follows from Theorem
\ref{thmA_inside}, and the fact that inverting the Bott elements agrees with
$K(1)$-localization \cite{MR826102}, \cite[\S 4 Appendix, p. 193]{MR764579}.
\end{proof}

Let us briefly explain how classical class field theory for $\mathcal{X}%
=\operatorname*{Spec}\mathcal{O}_{S}$ can be recovered from this.

\begin{definition}
[Artin map]\label{def_ArtinMap}Call the composition of the solid arrows%
\begin{equation}%
{
\begin{tikzcd}
	{\pi_{1}K(\underline{{\mathsf{LC}}}_{\mathcal{X}})} && {\underset
{r}{\underleftarrow{\lim}} \thinspace\pi_{1}^{\operatorname*{ab} }(\mathcal
{X})/p^{r}} \\
	{\pi_{1}L_{K(1)}K(\underline{{\mathsf{LC}}}_{\mathcal{X}})} && {\pi
_{1}I_{\mathbf{Z}_{p}}L_{K(1)}K({\mathcal{X}})}
	\arrow[dashed, from=1-1, to=1-3]
	\arrow[from=1-1, to=2-1]
	\arrow["w", hook, from=1-3, to=2-3]
	\arrow["{{\operatorname{D}}}", from=2-1, to=2-3]
\end{tikzcd}
}
\label{lDiag7}%
\end{equation}
the $K$\emph{-theoretic Artin map} of the arithmetic scheme $\mathcal{X}$,
where the

\begin{enumerate}
\item left downward arrow is the $K(1)$-localization, the

\item lower horizontal arrow is the duality of Theorem \ref{introThmA},

\item the arrow $w$ is the $p$-adic Anderson dual of the suitable edge map of
the descent spectral sequence for $L_{K(1)}K(\mathcal{X})$.
\end{enumerate}
\end{definition}

The solid arrows make sense for arbitrary $\pi\colon\mathcal{X}\rightarrow
\operatorname*{Spec}\mathcal{O}_{S}$ as in \S \ref{sect_Setup}. But in general
$w$ is just an injection. However, for $\mathcal{X=O}_{S}$, the map $w$
becomes an isomorphism, so the dashed arrow in Diagram \ref{lDiag7} exists,
and one recovers the classical Artin map (except for $p=2$, as we had
generally excluded this prime in our treatment).

\begin{theorem}
\label{thm_W}If $\mathcal{X}=\mathcal{O}_{S}$, the map $w$ is an isomorphism.
The objects and arrows in the diagram in Definition \ref{def_ArtinMap} unravel
to be%
\[
\frac{\underset{v\in S\;}{%
{\textstyle\prod^{\prime}}
}F_{v}^{\times}}{F^{\times}\cdot\prod_{w\notin S}\mathcal{O}_{w}^{\times}%
}\cong\pi_{1}K({\mathsf{LCA}}_{\mathcal{O}_{S}})\longrightarrow\pi_{1}%
L_{K(1)}K({\mathsf{LC}}_{\mathcal{O}_{S}})\overset{\operatorname*{D}%
}{\longrightarrow}\widehat{\pi_{1}^{\operatorname*{ab}}(\mathcal{O}_{S}%
)}^{(p)}%
\]
and this map agrees with the pro-$p$ completed output of the ordinary Artin
reciprocity map with ramification restricted to $S$. This means that each
$x\in F_{v}^{\times}$ is sent to the local reciprocity map of the local field
$F_{v}$,%
\[
F_{v}^{\times}\longrightarrow\operatorname*{Gal}(F_{v}^{\operatorname*{ab}%
}/F_{v})\text{,}%
\]
and upon identifying the latter Galois group with a decomposition group over
the place $v$, it embeds (canonically) into the abelianized \'{e}tale
fundamental group.
\end{theorem}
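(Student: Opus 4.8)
The plan is to prove the statement in three stages: (i) that $w$ is an isomorphism when $\mathcal{X}=\operatorname{Spec}\mathcal{O}_{S}$, so that the dashed arrow in Diagram~\ref{lDiag7} exists; (ii) the unravelling of the three terms and the outer arrows; and (iii) the identification of the resulting homomorphism with $S$-ramified Artin reciprocity, which carries the real content. For (i): the map $w$ is the $p$-adic Anderson dual of the edge map of the descent spectral sequence of Eq.~\ref{lcih1}, applied with $\mathcal{X}=\operatorname{Spec}\mathcal{O}_{S}$ and $d=0$, into $\pi_{1}L_{K(1)}K(\mathcal{O}_{S})$. I would repeat the bookkeeping of Lemma~\ref{lem_OOSpecSeq}: on the $\pi_{1}$-diagonal the only lattice points with possibly non-zero $E_{2}$-entry are $(s,t)=(1,-2)$, contributing $H^{1}(\mathcal{O}_{S},\mathbf{Z}/p^{r}(1))$, and $(3,-4)$, contributing $H^{3}(\mathcal{O}_{S},\mathbf{Z}/p^{r}(2))$. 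The latter vanishes because $p$ is odd, so $H^{s}(\mathcal{O}_{S},\mathcal{F})=0$ for all $s\geq 3$ and $p$-torsion $\mathcal{F}$ (one may alternatively invoke Artin--Verdier duality, which identifies it with $H_{c}^{0}(\mathcal{O}_{S},\mathbf{Z}/p^{r}(-1))=0$ by the argument of Lemma~\ref{lemma_vanish_at_0_0}, using $S\neq\emptyset$). There are no differentials into or out of $(1,-2)$, so the edge map is an isomorphism $\pi_{1}L_{K(1)}K/p^{r}(\mathcal{O}_{S})\xrightarrow{\ \sim\ }H^{1}(\mathcal{O}_{S},\mathbf{Z}/p^{r}(1))$. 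Anderson-dualizing, using that inverting $\tau$ trivializes Tate twists and that $H^{1}(\mathcal{O}_{S},\mathbf{Z}/p^{r})^{\vee}\cong\pi_{1}^{\mathrm{ab}}(\mathcal{O}_{S})/p^{r}$ (finiteness of $\pi_{1}^{\mathrm{ab}}(\mathcal{O}_{S})/p^{r}$), and passing to $\varprojlim_{r}$, shows $w$ is an isomorphism onto $\widehat{\pi_{1}^{\mathrm{ab}}(\mathcal{O}_{S})}^{(p)}$.

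For (ii): the identification $\pi_{1}K(\mathsf{LCA}_{\mathcal{O}_{S}})\cong\mathbb{A}^{\times}/(F^{\times}\cdot\prod_{w\notin S}\mathcal{O}_{w}^{\times})=\frac{\prod^{\prime}_{v\in S}F_{v}^{\times}}{F^{\times}\cdot\prod_{w\notin S}\mathcal{O}_{w}^{\times}}$ is from \cite{klca1,noncomclassgroup}. The first arrow of the displayed composite is the quotient functor $\mathsf{LCA}_{\mathcal{O}_{S}}\to\mathsf{LC}_{\mathcal{O}_{S}}$ of Definition~\ref{def_C1} (noting $\underline{\mathsf{LC}}_{\mathcal{O}_{S}}\simeq\mathsf{LC}_{\mathcal{O}_{S}}$ by Corollary~\ref{cor1}) followed by $K(1)$-localization; from the localization sequence $K(\mathsf{LCA}_{\mathcal{O}_{S},\mathbf{R}})\to K(\mathsf{LCA}_{\mathcal{O}_{S}})\to K(\mathsf{LC}_{\mathcal{O}_{S}})$ together with $\mathsf{LCA}_{\mathcal{O}_{S},\mathbf{R}}\simeq\prod_{v\mid\infty}\mathsf{Vect}_{F_{v}}^{fd}$ (Lemma~\ref{lem_TT1}(3)) one reads off that the archimedean factors $\prod_{v\mid\infty}F_{v}^{\times}=\pi_{1}K(\mathsf{LCA}_{\mathcal{O}_{S},\mathbf{R}})$ die after $K(1)$-localization, since $\mathbf{R}^{\times}$ and $\mathbf{C}^{\times}$ are $p$-adically trivial for $p$ odd; this is the same phenomenon as in Remark~\ref{rmk_CompactlySupportedEtaleCohomology}. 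On the right, chasing the descent spectral sequence for $L_{K(1)}K(\mathsf{LC}_{\mathcal{O}_{S}})$ on the $\pi_{1}$-diagonal (only $(s,t)=(2,-2)$ survives, since $H_{c}^{0}(\mathcal{O}_{S},\mathbf{Z}/p^{r}(0))=0$ by Lemma~\ref{lemma_vanish_at_0_0} and $H_{c}^{s}=0$ for $s>3$) identifies $\pi_{1}L_{K(1)}K/p^{r}(\mathsf{LC}_{\mathcal{O}_{S}})$ with $H_{c}^{2}(\mathcal{O}_{S},\mathbf{Z}/p^{r}(1))\cong H_{c}^{2}(\mathcal{O}_{S},\mathbf{Z}/p^{r})$; via the equivalence of the corollary to Theorem~\ref{introThmA} and the map $w$ from (i), the composite of $D$ with $w^{-1}$ becomes a homomorphism $H_{c}^{2}(\mathcal{O}_{S},\mathbf{Z}/p^{r})\to H^{1}(\mathcal{O}_{S},\mathbf{Z}/p^{r})^{\vee}=\pi_{1}^{\mathrm{ab}}(\mathcal{O}_{S})/p^{r}$.

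For (iii): here I invoke the perfect-pairing result, Theorem~\ref{thmA_inside} and its proof, which on the bottom filtration steps of the two descent spectral sequences identifies the $K$-theoretic pairing with the classical Artin--Verdier/Poitou--Tate cup product
\[
H^{1}(\mathcal{O}_{S},\mathbf{Z}/p^{r}(1))\times H_{c}^{2}(\mathcal{O}_{S},\mathbf{Z}/p^{r}(1))\longrightarrow H_{c}^{3}(\mathcal{O}_{S},\mathbf{Z}/p^{r}(1))\xrightarrow{\ \operatorname{tr}\ }\mathbf{Z}/p^{r},
\]
with $\operatorname{tr}$ the Artin--Verdier trace fixed in Lemma~\ref{lem_OOSpecSeq}(3) and Theorem~\ref{thmF}. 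Hence the homomorphism $H_{c}^{2}(\mathcal{O}_{S},\mathbf{Z}/p^{r})\to\pi_{1}^{\mathrm{ab}}(\mathcal{O}_{S})/p^{r}$ of (ii) is the one induced by this pairing, i.e.\ the Artin--Verdier duality isomorphism. It remains to match it with reciprocity: I would use the localization recollement on the compactly supported side, through which $H_{c}^{2}(\mathcal{O}_{S},\mathbf{Z}/p^{r}(1))$ receives $\bigoplus_{v\in S}H^{1}(F_{v},\mathbf{Z}/p^{r}(1))=\bigoplus_{v\in S}F_{v}^{\times}/(F_{v}^{\times})^{p^{r}}$ compatibly with the map out of $\pi_{1}K(\mathsf{LCA}_{\mathcal{O}_{S}})$ of (ii) (this compatibility is exactly how $\pi_{1}K(\mathsf{LCA}_{\mathcal{O}_{S}})$ is computed in \cite{klca1}, via the d\'evissage fiber sequence whose boundary is the adelic valuation), while the Poitou--Tate pairing restricted to each such summand is the local Tate duality pairing $H^{1}(F_{v},\mathbf{Z}/p^{r}(1))\times H^{1}(F_{v},\mathbf{Z}/p^{r})\to H^{2}(F_{v},\mathbf{Z}/p^{r}(1))\xrightarrow{\operatorname{inv}_{v}}\mathbf{Z}/p^{r}$. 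The classical identification of local Tate duality with local reciprocity says that this pairing is $(x,\chi)\mapsto\chi(\operatorname{rec}_{v}(x))$; assembling over $v\in S$ and taking $\varprojlim_{r}$ yields that $x\in F_{v}^{\times}$ goes to the image of $\operatorname{rec}_{v}(x)\in\operatorname{Gal}(F_{v}^{\mathrm{ab}}/F_{v})$ under the inclusion of the decomposition group at $v$ into $\widehat{\pi_{1}^{\mathrm{ab}}(\mathcal{O}_{S})}^{(p)}$, which is the $S$-ramified Artin map. The triviality of $F^{\times}$ in the source is then the global reciprocity law (product formula $\sum_{v}\operatorname{inv}_{v}\circ\operatorname{rec}_{v}=0$), and the triviality of $\prod_{w\notin S}\mathcal{O}_{w}^{\times}$ reflects that $\pi_{1}^{\mathrm{ab}}(\mathcal{O}_{S})$ only sees covers unramified outside $S$.

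The main obstacle is step (iii): one has to carry several identifications at once — the $K$-theory/\'etale comparison of Prop.~\ref{prop_Equiv1} on the compactly supported side, the d\'evissage computation of $\pi_{1}K(\mathsf{LCA}_{\mathcal{O}_{S}})$ from \cite{klca1} and its compatibility with the adelic localization sequence, the Bott-trivialization of Tate twists under $\tau$-inversion, and the local--global compatibility of Tate duality with the reciprocity maps — and then verify that the trace normalization chosen when lifting the spectral trace morphism (via $H_{c}^{3}(\mathbf{Z}[\tfrac{1}{p}],\mathbf{Z}/p^{r}(1))\cong\mathbf{Z}/p^{r}$) is precisely the one for which the local invariant maps $\operatorname{inv}_{v}$ sum to zero, so that no spurious normalization factor creeps into the final formula.
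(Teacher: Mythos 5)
Your proposal is correct and its skeleton coincides with the paper's: identify the source with the $S$-truncated id\`ele class group, identify the target with $\widehat{\pi_{1}^{\operatorname*{ab}}(\mathcal{O}_{S})}^{(p)}$ via the descent spectral sequence plus Anderson/Pontryagin dualization (using that the relevant diagonals carry a one-step filtration because $\operatorname{cd}_{p}\mathcal{O}_{S}\leq2$ for $p$ odd), and reduce the identification of the map to the classical fact that Artin--Verdier duality realizes the reciprocity map. The difference is only in where the work is placed: the paper actually re-derives $\pi_{1}K(\mathsf{LCA}_{\mathcal{O}_{S}})\cong\prod'_{v\in S}F_{v}^{\times}/(F^{\times}\cdot\prod_{w\notin S}\mathcal{O}_{w}^{\times})$ from the fiber sequence of \cite[Thm.\ 4.30]{klca1} by a snake-lemma argument (the boundary to $K_{0}(\mathcal{O}_{S})$ kills the class group because it maps a finite group to a torsion-free one), whereas you cite this; conversely, the paper outsources the final local--global reciprocity identification entirely to Saito \cite[\S 0, 0.1--0.3]{MR1045856}, whereas you spell it out via the recollement, local Tate duality $(x,\chi)\mapsto\chi(\operatorname{rec}_{v}(x))$, and the product formula. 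Both distributions of effort are legitimate, and your more explicit step (iii) is arguably a useful addition. One small slip: in your displayed global pairing the Tate twists do not add up --- it should read $H^{1}(\mathcal{O}_{S},\mathbf{Z}/p^{r})\times H_{c}^{2}(\mathcal{O}_{S},\mathbf{Z}/p^{r}(1))\to H_{c}^{3}(\mathcal{O}_{S},\mathbf{Z}/p^{r}(1))$ (your local version has the correct twists), a bookkeeping point you yourself flag under ``Bott-trivialization of Tate twists.''
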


The first three arrows in Definition \ref{def_ArtinMap} are just the
specialization of maps of spectra to $\pi_{1}$. There is no need to restrict
to $\pi_{1}$, and then the map%
\[
K({\mathsf{LC}}_{\mathcal{X}})\longrightarrow L_{K(1)}K({\mathsf{LC}%
}_{\mathcal{X}})\overset{\operatorname*{D}}{\longrightarrow}I_{\mathbf{Z}_{p}%
}L_{K(1)}K(\mathcal{X})\text{,}%
\]
specialized to $\mathcal{X=O}_{S}$ and precomposed with $\mathsf{LCA}%
_{\mathcal{O}_{S}}\rightarrow\mathsf{LC}_{\mathcal{O}_{S}}$, corresponds to
the adjoint of Clausen's Artin map of \cite{clausen}\footnote{Loc. cit. this
is phrased in terms of\textit{ Selmer }$K$\textit{-homology}, which is a
(slightly different from Anderson) dual to Selmer $K$-theory
(\cite{clausen,MR4110725,MR4296353}), but as we work exclusively in
characteristic zero and with $p$ odd, all of the subtle differences
evaporate.}. For higher-dimensional $\mathcal{X}$ this morphism does
\textit{not} readily output the reciprocity map of higher-dimensional class
field theory on $\pi_{1}$, or any other $\pi_{n}$.\footnote{For the reason
that the correct objects are found in motivic cohomology and not $K$-theory,
so they are hidden in subquotients of the weight filtration. For
$\mathcal{X=O}_{S}$ the filtration is short enough to conceal this mismatch.}

\begin{proof}
First, we unravel the non-$K(1)$-local left side. Define the outside
$S$-unramified id\`{e}le group $J_{F,S}:=\left.
{\textstyle\prod\nolimits_{v\in S}}
F_{v}^{\times}\right/  \mathcal{O}_{S}^{\times}$ with $\mathcal{O}_{S}%
^{\times}$ being embedded diagonally, as in \cite[Ch. I, \S 4]{milne2006} or
\cite[\S 15.5]{MR4174395}. Consider the long exact sequence of \cite[Theorem
4.30]{klca1}, but this time without going $K(1)$-local. We find the
commutative diagram%
\[%
\adjustbox{max width=\textwidth}{
\begin{tikzcd}
	&& {\frac{\underset{v\in S\;}{\prod} F_{v}^{\times}}{ \mathcal{O}_{S}%
^{\times}}} && {\frac{\underset{v\in S\;}{\prod^{\prime} } F_{v}^{\times}%
}{F^{\times} \cdot\underset{w\notin S\;}{\prod}\mathcal{O}_{w}^{\times}}}
&& {\operatorname{Cl}(\mathcal{O}_{S})} \\
	\\
	{K_{1}(\mathcal{O}_{S})} && {\underset{v\in S\;}{\prod}F_{v}^{\times}}
&& {K_{1}(\mathsf{LCA}_{\mathcal{O}_{S}})} && {K_{0}(\mathcal{O}_{S})}
&& {\underset{v\in S\;}{\prod}\mathbf{Z},}
	\arrow[hook, from=1-3, to=1-5]
	\arrow[two heads, from=1-5, to=1-7]
	\arrow[from=3-1, to=3-3]
	\arrow[from=3-3, to=1-3]
	\arrow[from=3-3, to=3-5]
	\arrow[from=3-5, to=1-5]
	\arrow[from=3-5, to=3-7]
	\arrow[two heads, from=3-7, to=1-7]
	\arrow["\gamma", from=3-7, to=3-9]
\end{tikzcd}
}%
\]
where the upper row identifies with the exact sequence of \cite[Ch. I, Lemma
4.1]{milne2006} or \cite[Lemma 15.39]{MR4174395}. Since $K_{0}(\mathcal{O}%
_{S})\cong\mathbf{Z}\oplus\operatorname*{Cl}(\mathcal{O}_{S})$ and $\gamma
\mid_{\mathbf{Z}}$ is clearly injective on the rank summand, $\gamma
\mid_{\operatorname*{Cl}(\mathcal{O}_{S})}$ must be the zero map (it maps a
finite group to a torsion-free group). Moreover, $K_{1}(\mathcal{O}_{S}%
)\cong\mathcal{O}_{S}^{\times}$, so we obtain the isomorphism%
\[
K_{1}(\mathsf{LCA}_{\mathcal{O}_{S}})\cong\frac{\underset{v\in S\;}{%
{\textstyle\prod^{\prime}}
}F_{v}^{\times}}{F^{\times}\cdot\prod_{w\notin S}\mathcal{O}_{w}^{\times}%
}\text{.}%
\]
In order to unravel $I_{\mathbf{Z}_{p}}L_{K(1)}K(\mathcal{X})$, first use the
descent spectral sequence for $L_{K(1)}K(\mathcal{X})$, we leave the details
to the reader, and then we may access the dual via%
\begin{align*}
\pi_{n}I_{\mathbf{Z}_{p}}X  &  \cong\pi_{n}(\Sigma^{-1}I_{\mathbf{Q}%
_{p}/\mathbf{Z}_{p}}X)\\
&  \cong\pi_{0}\operatorname*{Hom}\nolimits_{\mathsf{Sp}}(\Sigma^{n}%
\mathbb{S}_{\widehat{p}},\Sigma^{-1}I_{\mathbf{Q}_{p}/\mathbf{Z}_{p}}X)\\
&  \cong\pi_{0}\operatorname*{Hom}\nolimits_{\mathsf{Sp}}(\Sigma
^{n+1}\mathbb{S}_{\widehat{p}},I_{\mathbf{Q}_{p}/\mathbf{Z}_{p}}X)\\
&  \cong\pi_{0}\operatorname*{Hom}\nolimits_{\mathsf{Sp}}(\Sigma
^{n+1}\mathbb{S}_{\widehat{p}},\operatorname*{Hom}\nolimits_{\mathsf{Sp}%
}(X,I_{\mathbf{Q}_{p}/\mathbf{Z}_{p}}\mathbb{S}_{\widehat{p}}))\\
&  \cong\pi_{0}\operatorname*{Hom}\nolimits_{\mathsf{Sp}}(\Sigma
^{n+1}\mathbb{S}_{\widehat{p}}\otimes_{\mathsf{K}}X,I_{\mathbf{Q}%
_{p}/\mathbf{Z}_{p}}\mathbb{S}_{\widehat{p}})\text{\qquad(}\sharp\text{)}\\
&  \cong\pi_{0}\operatorname*{Hom}\nolimits_{\mathsf{Sp}}(\Sigma
^{n+1}X,I_{\mathbf{Q}_{p}/\mathbf{Z}_{p}}\mathbb{S}_{\widehat{p}})\\
&  \cong\operatorname*{Hom}\nolimits_{\mathsf{Ab}}(\pi_{0}\left(  \Sigma
^{n+1}X\otimes_{\mathsf{Sp}}\mathcal{N}\right)  ,\mathbf{Q}_{p}/\mathbf{Z}%
_{p})\\
&  \cong\pi_{0}\left(  \Sigma^{n+1}X\otimes_{\mathsf{Sp}}\Sigma^{-1}%
\mathbb{S}\mathbf{Q}_{p}/\mathbf{Z}_{p}\right)  ^{\ast}\\
&  \cong\pi_{0}\left(  \Sigma^{n}X\otimes_{\mathsf{Sp}}\mathbb{S}%
\mathbf{Q}_{p}/\mathbf{Z}_{p}\right)  ^{\ast}\\
&  \cong\pi_{-n}\left(  X\otimes_{\mathsf{Sp}}\mathbb{S}\mathbf{Q}%
_{p}/\mathbf{Z}_{p}\right)  ^{\ast}\text{,}%
\end{align*}
where $\operatorname*{Hom}\nolimits_{\mathsf{Sp}}$ denotes the intrinsic
mapping spectrum and ($\sharp$): $\operatorname*{Hom}\nolimits_{\mathsf{Sp}%
}(X,I_{\mathbf{Q}_{p}/\mathbf{Z}_{p}}\mathbb{S}_{\widehat{p}})\cong
\operatorname*{Hom}\nolimits_{\mathsf{K}}(X,I_{\mathbf{Q}_{p}/\mathbf{Z}_{p}%
}\mathbb{S}_{\widehat{p}})$ as the arguments are $K(1)$-local, and we use the
Hom-tensor adjunction intrinsic to $\mathsf{K}$ (for this $\mathbb{S}%
_{\widehat{p}}$ is the tensor unit). Hence,%
\[
\pi_{1}\left(  I_{\mathbf{Z}_{p}}K(\mathcal{X})[\tau^{-1}]\right)  \cong
\pi_{-1}\left(  K(\mathcal{X})[\tau^{-1}]\otimes_{\mathsf{Sp}}\mathbb{S}%
\mathbf{Q}_{p}/\mathbf{Z}_{p}\right)  ^{\ast}\longrightarrow E_{\infty}%
^{1,0}(\mathbf{Q}_{p}/\mathbf{Z}_{p})^{\ast}%
\]
If $\mathcal{X}=\mathcal{O}_{S}$ so that the relative dimension $d$ is zero
and the spectral sequence supported at worst in columns $[0,3]$, the
filtration trivializes to one single step, i.e., in this special case%
\[
\pi_{1}\left(  I_{\mathbf{Z}_{p}}K(\mathcal{O}_{S})[\tau^{-1}]\right)  \cong
E_{\infty}^{1,0}(\mathbf{Q}_{p}/\mathbf{Z}_{p})^{\ast}\cong E_{2}%
^{1,0}(\mathbf{Q}_{p}/\mathbf{Z}_{p}))^{\ast}\cong\underset{r}{\underleftarrow
{\lim}}\,\pi_{1}^{\operatorname*{ab}}(\mathcal{O}_{S})/p^{r}\text{,}%
\]
the $p$-power completion of the abelianized \'{e}tale fundamental group. As
$S$ was assumed to contain only finitely many places, $\pi_{1}%
^{\operatorname*{ab}}(\mathcal{O}_{S})$ is topologically finitely generated
\cite[Ch. X, \S 2]{MR2392026}, and therefore%
\[
\pi_{1}\left(  I_{\mathbf{Z}_{p}}K(\mathcal{O}_{S})[\tau^{-1}]\right)
\cong\widehat{\pi_{1}^{\operatorname*{ab}}(\mathcal{O}_{S})}^{(p)}\text{,}%
\]
the pro-$p$ completion. The identification of the map with the Artin map then
follows from the identification of the underlying map from Artin--Verdier
duality with the reciprocity map, as explained for example in \cite[\S 0,
0.1-0.3]{MR1045856}.
\end{proof}

\section{Definition via co-descent}

In this section a fixed scheme $X$, integral separated and of finite type over
$\operatorname*{Spec}F$, will be the main object of interest. Occasionally, we
shall pick an integral model $\pi\colon\mathcal{X}\rightarrow
\operatorname*{Spec}\mathcal{O}_{S}$ such that $X\cong\mathcal{X}%
\times_{\mathcal{O}_{S}}\operatorname*{Spec}F$. In this section, we will often
be able to get by with less restrictive assumptions than in
\S \ref{sect_Setup}, so we will always state precisely what is needed for what.

\subsection{Definition}

Let $R$ be a Noetherian commutative ring. We use Definition \ref{def_LCA_OS}
to set up the quasi-abelian exact category $\mathsf{LCA}_{R}$. Suppose $f\in
R$ is a nonzerodivisor, then we have the localization%
\[
R\longrightarrow R\left[  \frac{1}{f}\right]
\]
and a corresponding corestriction exact functor%
\begin{equation}
\mathsf{LCA}_{R[f^{-1}]}\longrightarrow\mathsf{LCA}_{R}\text{,} \label{lcca1}%
\end{equation}
sending $X\mapsto X$, merely forgetting the $R[f^{-1}]$-module structure in
favour of the $R$-module structure. Just as any exact functor would, this
determines a unique map on $K$-theory spectra%
\[
K(\mathsf{LCA}_{R[f^{-1}]})\longrightarrow K(\mathsf{LCA}_{R})
\]
and we will call it the \emph{corestriction map}. Recall that a co-sheaf
$\mathcal{G}$ with values in $\mathsf{Sp}$ is a sheaf with values in
$\mathsf{Sp}^{\operatorname*{op}}$. By a standard construction, a cosheaf for
the Zariski topology can be constructed by defining it on a basis of the
topology. We may take the distinguished affine opens in a scheme.

\begin{definition}
\label{def_lcax}For any affine open $U\subseteq X$ define $K{\mathsf{LC}%
}_{X,\operatorname*{pre}}^{\operatorname*{naive}}(U)$ to be%
\[
K{\mathsf{LC}}_{X,\operatorname*{pre}}^{\operatorname*{naive}}(U):=K\left(
\mathsf{LCA}_{\mathcal{O}_{X}(U)}\right)  \text{.}%
\]
Then define $K{\mathsf{LC}}_{X}^{\operatorname*{naive}}(U)$ for distinguished
affine opens $U$ by Zariski co-descent from $K{\mathsf{LC}}%
_{X,\operatorname*{pre}}^{\operatorname*{naive}}(U)$ on distinguished affine
opens $U$ and along the co-restriction functors of Eq. \ref{lcca1}.
\end{definition}

The existence of the\ Zariski co-sheaf $K{\mathsf{LC}}_{X}%
^{\operatorname*{naive}}$ follows from the generalities of co-sheafification.
Note that there is no reason why $K{\mathsf{LC}}_{X}^{\operatorname*{naive}%
}(U)=K{\mathsf{LC}}_{X,\operatorname*{pre}}^{\operatorname*{naive}}(U)$ would
have to hold, even if $U$ is a distinguished affine open.\footnote{Think of
the example that once we impose Zariski descent to the prestack of finitely
generated \textit{free} $\mathcal{O}_{X}$-modules, we end up with
\textit{locally} \textit{free} $\mathcal{O}_{X}$-modules, so even on an affine
open the sections need not agree with the original presheaf. We expect a
similar behaviour here, but with respect to local compactness instead of
freeness. However, it would clearly be linguistically unhealthy to call this
\textit{\textquotedblleft local local compactness\textquotedblright.}}

In the next section we will isolate a special type of affine open, where the
computation of $K{\mathsf{LC}}_{X,\operatorname*{pre}}^{\operatorname*{naive}%
}(U)$ is easy.

\subsection{Rings spanned by units}

\begin{definition}
\label{def_RingSpannedByUnits}A ring $R$ is \emph{spanned by units} if for
every $x\in R$ there exists a (possibly empty) finite list of units
$u_{1},\ldots,u_{r}\in R^{\times}$ such that $x=u_{1}+\cdots+u_{r}$.
\end{definition}

The remark about a possibly empty finite list of units is not really
necessary. One may always write $0=1+(-1)$. Note that this even works in the
zero ring because then $0$ is a unit itself.

\begin{example}
The integers $\mathbf{Z}$ are spanned by units. A polynomial ring $R[T]$ over
a domain $R$ is never spanned by units since $R[T]^{\times}=R^{\times}$.
\end{example}

\begin{remark}
The property in Definition \ref{def_RingSpannedByUnits} has been studied in
the literature under various names. There has been quite some work towards
finding examples, e.g., \cite{MR2675727,MR1645560,MR2827171,MR2881334}, in
particular Henriksen's result \cite{MR349745} that every ring is Morita
equivalent to a (non-commutative) one which is spanned by units. However,
these investigations largely focus on rings which are quite remote from
coordinate rings of affine varieties.
\end{remark}

We begin with obvious facts.

\begin{lemma}
Let $F$ be a field and $R$ an $F$-algebra. Then $R$ is spanned by units if and
only if every $x\in R$ is a (possibly empty) finite $F$-linear combination of
units in $R$, i.e., $x=\sum a_{i}u_{i}$ with $a_{i}\in F$ and $u_{i}\in
R^{\times}$.
\end{lemma}

\begin{lemma}
\label{lemma_OneCharacterizationOfBeingSpannedByUnits}A ring $R$ is spanned by
units if and only if there exist surjective ring homomorphisms $\mathbf{Z}%
[T_{1}^{\pm1},\ldots,T_{c(i)}^{\pm1}]\twoheadrightarrow R_{i}$ for a poset of
subrings $(R_{i})_{i\in I}$ and integers $c(i)\geq1$ ($I$ any indexing poset)
such that $R=\bigcup R_{i}$.
\end{lemma}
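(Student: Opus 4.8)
The plan is to prove both implications of Lemma~\ref{lemma_OneCharacterizationOfBeingSpannedByUnits} directly from the definitions, the main point being to recognize ``spanned by units'' as a property witnessed by finitely many units at a time, which is therefore compatible with directed unions.

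First I would do the easy direction ($\Leftarrow$). Suppose we are given surjections $\varphi_i\colon\mathbf{Z}[T_1^{\pm1},\ldots,T_{c(i)}^{\pm1}]\twoheadrightarrow R_i$ and $R=\bigcup_i R_i$. Pick $x\in R$. Since the $R_i$ form a directed poset under inclusion (being a poset of subrings whose union is $R$, I will take ``poset'' here to mean directed under the induced order, i.e.\ $R_i\subseteq R_j$ for $i\le j$ — if the paper intends merely a filtered family this is automatic), there is some $i$ with $x\in R_i$, and a preimage $P\in\mathbf{Z}[T_1^{\pm1},\ldots,T_{c(i)}^{\pm1}]$ with $\varphi_i(P)=x$. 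Now $P$ is a finite $\mathbf{Z}$-linear combination of Laurent monomials $\pm T_1^{a_1}\cdots T_{c(i)}^{a_{c(i)}}$, each of which is a unit in $\mathbf{Z}[T_1^{\pm1},\ldots,T_{c(i)}^{\pm1}]$; a $\mathbf{Z}$-linear combination of units is a finite sum of $\pm$ copies of units, hence itself a finite sum of units. Applying the ring homomorphism $\varphi_i$, and using that ring homomorphisms send units to units, we express $x$ as a finite sum of units of $R_i$, which are in particular units of $R$. Hence $R$ is spanned by units.

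For the converse ($\Rightarrow$), suppose $R$ is spanned by units. I would build the family $(R_i)$ indexed by the finite subsets $i=\{r_1,\ldots,r_n\}$ of $R$, ordered by inclusion — a directed poset with union clearly all of $R$ (each singleton $\{r\}$ already contains $r$). For such a finite subset, write each $r_k=u_{k,1}+\cdots+u_{k,m_k}$ with $u_{k,\ell}\in R^\times$; collect \emph{all} the units $u_{k,\ell}$ appearing, together with their inverses $u_{k,\ell}^{-1}$, into a finite list $v_1,\ldots,v_c$ (with $c=c(i)\ge1$; if the list is empty, throw in $v_1=1$). Let $R_i\subseteq R$ be the subring generated by $v_1,\ldots,v_c$. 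Then there is an evident surjective ring homomorphism $\mathbf{Z}[T_1^{\pm1},\ldots,T_c^{\pm1}]\twoheadrightarrow R_i$, $T_j\mapsto v_j$: it is well-defined because each $v_j$ is a unit in $R$ (so the $T_j^{-1}$ have a target), and it is surjective onto $R_i$ by construction since $R_i$ is generated by the $v_j$. The one thing to check for the family to have union $R$ is that $r\in R_{\{r\}}$; but $r$ is by construction a sum of some of the generators $v_j$ of $R_{\{r\}}$, so $r\in R_{\{r\}}$. Taking the union over all finite subsets recovers $R$, completing the proof.

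I do not expect a serious obstacle here; the statement is essentially a bookkeeping reformulation. The only point that needs a little care is the exact meaning of ``poset of subrings'' in the hypothesis: the argument for ($\Leftarrow$) needs the family to be filtered (so that a given $x$ lands in a single $R_i$), and I would make this explicit rather than leave it implicit, perhaps with a one-line remark that without directedness the statement would fail (e.g.\ $\mathbf{Z}$ is the union of the two non-comparable subrings $2\mathbf{Z}+1\cdot?$ — better to just note that one should read ``poset'' as ``directed poset'', matching the way the restricted-product colimit over finite subsets is used elsewhere in the paper). A secondary cosmetic choice is whether to allow $c(i)=0$ for the zero ring or trivial subring; taking $c(i)\ge1$ as the paper does and padding the generator list with $1$ handles this uniformly.
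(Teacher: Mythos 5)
Your proof is correct and follows essentially the same route as the paper's: for ($\Leftarrow$) expand a Laurent polynomial as a $\mathbf{Z}$-linear combination of monomial units and push it forward, and for ($\Rightarrow$) index over finite subsets of $R$ and take the subring generated by enough units. One small remark: your concern about directedness is unnecessary for the ($\Leftarrow$) direction, since $R=\bigcup_i R_i$ already places any given $x$ in a single $R_i$ without needing the family to be filtered.
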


\begin{proof}
The image of each $T_{j}^{\pm}$ must be a unit, so we can expand $\sum
n_{t}x_{t}$ with $n_{t}\in\mathbf{Z}$ and $x_{t}\in R^{\times}$ by writing
$n_{t}=1+1+\ldots+1$ and replace $x_{t}\in R^{\times}$ by $-x_{t}\in
R^{\times}$ if $n_{t}<0$. Conversely, let $I$ be the family of all finite
subsets $\{x_{1},\ldots,x_{\ell}\}$ of $R$, let $u_{1},\ldots,u_{r}$ be enough
units to write each $x_{j}$ as a sub-sum of them, then take $c(i)=r$ and send
$T_{i}$ to $u_{i}$, and $R_{i}$ the subring generated by the image.
\end{proof}

\begin{lemma}
\label{lem_propertiespreservedforringsspannedbyunits}Suppose $R$ is a domain
spanned by units.

\begin{enumerate}
\item If $S\subseteq R$ is a multiplicatively closed subset, then the
localization $R_{S}$ is spanned by units.

\item If $I\subseteq R$ is an ideal, then $R/I$ is spanned by units.
\end{enumerate}
\end{lemma}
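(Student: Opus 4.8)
The plan is to prove both parts directly from the definition, using the characterization in Lemma \ref{lemma_OneCharacterizationOfBeingSpannedByUnits} where convenient but really just unwinding what ``spanned by units'' means. The key point in both cases is that units map to units under the relevant ring homomorphism, and the defining equations $x = u_1 + \cdots + u_r$ are preserved by any ring homomorphism.

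\textbf{Part (2), quotients.} First I would handle $R/I$, which is the easier of the two. Let $\overline{x} \in R/I$ be arbitrary, and lift it to some $x \in R$. Since $R$ is spanned by units, write $x = u_1 + \cdots + u_r$ with each $u_i \in R^{\times}$. Applying the quotient map $\pi\colon R \to R/I$, which is a ring homomorphism, we get $\overline{x} = \overline{u_1} + \cdots + \overline{u_r}$, and each $\overline{u_i}$ is a unit in $R/I$ because the image of a unit under any ring homomorphism is a unit (if $u_i v_i = 1$ then $\overline{u_i}\,\overline{v_i} = \overline{1} = 1$). Hence $R/I$ is spanned by units. Note this argument does not even use that $R$ is a domain.

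\textbf{Part (1), localizations.} For $R_S = S^{-1}R$, let $\tfrac{x}{s} \in R_S$ be an arbitrary element, with $x \in R$ and $s \in S$. Write $\tfrac{x}{s} = \tfrac{1}{s} \cdot x$. Now $\tfrac{1}{s}$ is a unit in $R_S$ (with inverse $\tfrac{s}{1}$), so it suffices to show that the image of $x$ under the localization map $R \to R_S$ is a finite sum of units, and then multiply through by the unit $\tfrac{1}{s}$ — but a unit times a unit is a unit, so a unit times a finite sum of units is again a finite sum of units. Since $R$ is spanned by units, $x = u_1 + \cdots + u_r$ with $u_i \in R^{\times}$; each $\tfrac{u_i}{1}$ is a unit in $R_S$, so $\tfrac{x}{s} = \tfrac{u_1}{s} + \cdots + \tfrac{u_r}{s}$ exhibits $\tfrac{x}{s}$ as a finite sum of units in $R_S$. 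Thus $R_S$ is spanned by units. Again the domain hypothesis is not strictly needed, though it guarantees $R \to R_S$ is injective and keeps the setting clean.

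\textbf{Main obstacle.} Honestly there is no serious obstacle here — both statements are formal consequences of ``ring homomorphisms send units to units'' together with the observation that the defining sum-of-units equations transport along ring maps, plus the extra trick in the localization case of absorbing the denominator into a unit factor. The only mild subtlety worth a sentence in the write-up is making sure the empty-list convention (i.e.\ handling $0$, and the zero ring) is respected: in $R/I$ or $R_S$ one still has $0 = 1 + (-1)$, so no edge case fails. I would keep the proof to a few lines.
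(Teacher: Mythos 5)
Your proof is correct and follows essentially the same route as the paper: for localizations, write $\tfrac{x}{s}=\sum \tfrac{u_i}{s}$ and note each $\tfrac{u_i}{s}$ is a unit; for quotients, lift, decompose into units, and use that ring homomorphisms send units to units (the paper gives exactly this as its "alternative" argument for part (2)). Your side remark that the domain hypothesis is not actually needed is also accurate.
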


\begin{proof}
(1) Every $x\in R_{S}$ can be written as $\frac{r}{s}$ with $r\in R$ and $s\in
S$, and writing $r=\sum u_{i}$ with $u_{i}\in R^{\times}$, we get $\frac{r}%
{s}=\sum\frac{u_{i}}{s}$, but $u_{i}/s\in R_{S}^{\times}$. (2) Follows
directly from Lemma \ref{lemma_OneCharacterizationOfBeingSpannedByUnits}.
Alternatively: Write a preimage of $\overline{r}\in R/I$ as $r=\sum u_{i}$ and
use that the image of a unit in $R/I$ must be a unit again.
\end{proof}

\begin{lemma}
\label{lemma_refine_to_be_spanned_by_units}Let $F$ be a field and $X/F$ a
finite type integral affine scheme, and $x\in X$ a closed point. Then there
exists an affine dense open neighbourhood $U\subseteq X$ of $x$ such that
$\operatorname*{Spec}\mathcal{O}_{X}(U)$ is spanned by units.
\end{lemma}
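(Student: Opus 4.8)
The plan is to realise the neighbourhood as a single distinguished affine open $D(g)\subseteq X$, where $g$ is the product of a well-chosen finite generating set of the coordinate ring, arranged so that on $D(g)$ all the chosen generators become units; a ring generated over $F$ by units is then immediately seen to be spanned by units.

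First I would write $X=\operatorname{Spec}A$ with $A$ a finitely generated $F$-algebra and a domain, and let $\mathfrak{m}$ be the maximal ideal of $x$, so that $\kappa(x)=A/\mathfrak{m}$ is a finite extension of $F$ by the Nullstellensatz. Choosing any finite set $y_{1},\dots,y_{m}$ of $F$-algebra generators of $A$, the image $\overline{y_{i}}\in\kappa(x)$ agrees with at most one element of $F$ (the inclusion $F\hookrightarrow\kappa(x)$ being injective), so since $|F|\ge 2$ I can pick $a_{i}\in F$ with $\overline{y_{i}}\ne a_{i}$ and set $t_{i}:=y_{i}-a_{i}$. The $t_{i}$ still generate $A$ over $F$, and now $t_{i}\notin\mathfrak{m}$ for every $i$. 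Put $g:=t_{1}\cdots t_{m}$; then $g\notin\mathfrak{m}$, so $x\in D(g)$, and since $X$ is irreducible the nonempty open $U:=D(g)$ is dense and affine with $\mathcal{O}_{X}(U)=A[g^{-1}]$.

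It then remains to see that $A[g^{-1}]$ is spanned by units. In $A[g^{-1}]$ each $t_{i}$ is invertible because $t_{i}\cdot\prod_{j\ne i}t_{j}=g$ is a unit, and $g^{-1}=\prod_{i}t_{i}^{-1}$, so $A[g^{-1}]$ is generated as an $F$-algebra by the units $t_{1}^{\pm1},\dots,t_{m}^{\pm1}$; equivalently, the $F$-algebra map $F[T_{1}^{\pm1},\dots,T_{m}^{\pm1}]\twoheadrightarrow A[g^{-1}]$, $T_{i}\mapsto t_{i}$, is surjective. The Laurent polynomial ring $F[T_{1}^{\pm1},\dots,T_{m}^{\pm1}]$ is an integral domain which is spanned by units (every element is a finite $F$-linear combination of Laurent monomials, and a nonzero scalar multiple of a Laurent monomial is a unit, so it is a finite sum of units in the sense of Definition \ref{def_RingSpannedByUnits}), hence Lemma \ref{lem_propertiespreservedforringsspannedbyunits}(2) shows that the quotient $A[g^{-1}]$ is spanned by units as well; alternatively one can invoke Lemma \ref{lemma_OneCharacterizationOfBeingSpannedByUnits} directly. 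I do not anticipate a genuine obstacle here; the only point requiring a moment's care is the translation step, where one must be sure that even over a small field such as $\mathbf{F}_{2}$ there is still a legal choice of each $a_{i}$, which holds because the inclusion $F\hookrightarrow\kappa(x)$ rules out at most one value per generator.
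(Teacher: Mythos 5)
Your proof is correct and follows essentially the same route as the paper's: translate the generators so that none of them lies in $\mathfrak{m}$, pass to the distinguished open where they all become invertible (your $D(g)$ is exactly the paper's $\bigcap_i D(\varphi(T_i))$), and conclude via the surjection from a Laurent polynomial ring. The only cosmetic difference is that the paper translates an offending generator by $+1$ (using $0\neq 1$ in $\kappa(x)$) rather than by a general constant $a_i\in F$, which sidesteps your small-field discussion.
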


\begin{proof}
Write $X=\operatorname*{Spec}R$ and let $\mathfrak{m}\subset R$ be the maximal
ideal corresponding to the closed point $x$. Then $R$ is a finitely generated
$F$-algebra, so there exists some $n\geq0$ and a surjective ring homomorphism
$\varphi\colon F[T_{1},\ldots,T_{n}]\twoheadrightarrow R$. We may assume
without loss of generality that each $T_{i}$ maps under the composition%
\[
F[T_{1},\ldots,T_{n}]\overset{\varphi}{\twoheadrightarrow}R\twoheadrightarrow
R/\mathfrak{m}%
\]
to a non-zero element (otherwise: $F$-algebra maps from a polynomial ring are
uniquely determined by choosing the value of each variable $T_{i}$; now change
$\varphi$ such that it maps $T_{i}$ to $\varphi(T_{i})+1$ instead. This cannot
also be zero since in each field, in particularly in $R/\mathfrak{m}$, we have
$0\neq1$. Also, $\varphi$ is still surjective onto $R$). As $R$ is a domain,
the images $\varphi(T_{i})$ in $R$ are nonzerodivisors, so the principal
(a.k.a. distinguished) opens $D(\varphi(T_{i}))\subseteq X$ each are dense.
Hence, the finite intersection of them, $U:=\bigcap_{i}D(\varphi(T_{i}))$, is
also a dense open, and also affine. By the universal property of localization,
$\varphi$ now factors through $F[T_{1}^{\pm1},\ldots,T_{n}^{\pm1}%
]\twoheadrightarrow\mathcal{O}_{X}(U)$ (every fraction $\frac{r}{s}$ now has
both $s$ a unit and $r$ a sum of units, as in the proof of Lemma
\ref{lem_propertiespreservedforringsspannedbyunits}). Thus, $\mathcal{O}%
_{X}(U)$ is spanned by units. We have $x\in U$ by construction.
\end{proof}

\begin{lemma}
\label{lem3}Let $F$ be a field and $X/F$ a finite type separated integral
scheme. Then $X$ admits a finite open cover $X=\bigcup U_{i}$ such that each
$U_{i}$ is affine and $\mathcal{O}_{X}(U_{i})$ is spanned by units.
\end{lemma}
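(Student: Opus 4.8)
The plan is to reduce to Lemma \ref{lemma_refine_to_be_spanned_by_units} by working one closed point at a time, and then to upgrade the resulting family of open neighbourhoods of closed points to an honest open cover of $X$ by invoking the fact that $X$ is a Jacobson space.

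First I would record the standing facts: since $X$ is of finite type over the field $F$, it is Noetherian (hence quasi-compact) and it is a Jacobson scheme, because finite type schemes over a Jacobson ring are Jacobson and fields are trivially Jacobson. We may assume $X\neq\emptyset$, the empty cover handling the other case. Then I would fix an arbitrary closed point $x\in X$ and choose any affine open $V\subseteq X$ with $x\in V$. An open subscheme of an integral scheme is again integral (open subsets of irreducible spaces are irreducible, and reducedness is inherited), and $V$ is of finite type over $F$, so $V$ is a finite type integral affine $F$-scheme; moreover $\{x\}$ is closed in $X$, hence closed in $V$, so $x$ is a closed point of $V$. Applying Lemma \ref{lemma_refine_to_be_spanned_by_units} to the pair $(V,x)$ produces an affine open $U_x\subseteq V\subseteq X$ with $x\in U_x$ and with $\mathcal{O}_X(U_x)$ spanned by units.

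Next I would argue that the $U_x$ already cover $X$. Set $W:=\bigcup_{x}U_x$, the union taken over all closed points $x\in X$; this is an open subset of $X$ containing every closed point of $X$. If $X\setminus W$ were nonempty, then, being a nonempty closed subset of the Jacobson space $X$, it would contain a closed point of $X$ — contradicting the fact that every closed point $x$ lies in $U_x\subseteq W$. Hence $W=X$. Finally, since $X$ is quasi-compact, finitely many of the $U_x$ suffice to cover $X$, and these are the desired affine opens $U_i$ with $\mathcal{O}_X(U_i)$ spanned by units.

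There is essentially no serious obstacle beyond the one conceptual point: the opens supplied by Lemma \ref{lemma_refine_to_be_spanned_by_units} are only known to be \emph{dense} in an ambient affine chart, so it is not a priori clear that a family of them covers $X$ — and this is exactly what the Jacobson property delivers. The remaining ingredients (open subschemes of integral schemes are integral; a closed point of $X$ lying in an open $V$ is closed in $V$; finite type over a field implies Noetherian and Jacobson) are standard and require no computation.
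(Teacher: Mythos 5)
Your proof is correct and follows essentially the same route as the paper: apply Lemma \ref{lemma_refine_to_be_spanned_by_units} at each closed point and conclude by quasi-compactness. You additionally make explicit, via the Jacobson property, why the neighbourhoods of closed points actually cover all of $X$ — a point the paper's proof leaves implicit — which is a welcome bit of extra care.
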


\begin{proof}
First assume $X$ is affine. Pick an open neighbourhood for each closed point
in $X$ as obtained through Lemma \ref{lemma_refine_to_be_spanned_by_units}. By
quasi-compactness of $X$, finitely many of these suffice. If $X$ is not
affine, by the same argument, it can be covered by finitely many affines to
which we may apply the previous refinement argument.
\end{proof}

Let $F$ be a number field and $R$ a finitely generated $F$-algebra. The
forgetful functor%
\begin{equation}
\mathsf{LCA}_{R}\longrightarrow\mathsf{LCA}_{F} \label{lcof1}%
\end{equation}
which forgets the $R$-module structure in favour of the $F$-module structure,
is faithful, exact and reflects exactness.

We call objects in $\mathsf{LCA}_{R}$ \emph{adelic}, \emph{quasi-adelic}, etc.
if they have these properties when we regard them as an object of
$\mathsf{LCA}_{F}$, cf. \cite{kthyartin}.

\begin{definition}
\label{def_PropF}We say that a finitely generated $F$-algebra $R$ has
\emph{Property }$(\mathbf{F})$ if the following properties all hold:

\begin{enumerate}
\item Every object in $\mathsf{LCA}_{R}$ has a canonical $3$-step ascending
admissible\footnote{A filtration is \emph{admissible} if each filtered step
$\mathcal{F}_{i}X\hookrightarrow\mathcal{F}_{j}X$ (for $i\leq j$) is an
admissible monic. Assuming this, the associated graded exists.} filtration%
\[
0=\mathcal{F}_{-1}X\hookrightarrow\mathcal{F}_{0}X\hookrightarrow
\mathcal{F}_{1}X\hookrightarrow\mathcal{F}_{2}X=X
\]
with associated graded pieces $K:=\operatorname{gr}_{\mathcal{F}}(X)_{0}$
compact, $A:=\operatorname{gr}_{\mathcal{F}}(X)_{1}$ adelic, and
$D:=\operatorname{gr}_{\mathcal{F}}(X)_{2}$ discrete.

\item All morphisms $f\colon X\rightarrow Y$ in $\mathsf{LCA}_{R}$ respect the
filtration of $(1)$ in the sense that $f(\mathcal{F}_{i}X)\subseteq
\mathcal{F}_{i}Y$.

\item Each step of the filtration splits (not necessarily canonically), so
that each object admits a (possibly non-canonical) direct sum decomposition%
\begin{equation}
X\simeq K\oplus A\oplus D \label{lwwmix1}%
\end{equation}
with $K,A,D\in\mathsf{LCA}_{R}$ as in (1).
\end{enumerate}
\end{definition}

\begin{remark}
\label{rmk_MaxQASubobjectIsF1}Suppose Property $(\mathbf{F})$ holds. Then
$\mathcal{F}_{1}X$ is the (unique) maximal quasi-adelic subobject of $X$. The
filtration is modelled after a (less restrictive) filtration in the work of
Hoffmann and Spitzweck \cite[Prop. 2.2]{MR2329311}.
\end{remark}

\begin{lemma}
\label{lemma_LiftDecompToAffineInTorus}Let $F$ be a number field. Suppose $R$
is a finitely generated $F$-algebra which is spanned by units. Then $R$ has
Property $(\mathbf{F})$.
\end{lemma}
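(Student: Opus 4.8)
The plan is to reduce the general case to the case of a Laurent polynomial ring over $F$ and then transport Property $(\mathbf{F})$ along the surjection $F[T_1^{\pm1},\ldots,T_n^{\pm1}]\twoheadrightarrow R$ guaranteed by Lemma \ref{lemma_OneCharacterizationOfBeingSpannedByUnits} (or its $F$-algebra variant). First I would observe that $\mathsf{LCA}_{R}$ is a full subcategory of $\mathsf{LCA}_{F[T_1^{\pm1},\ldots,T_n^{\pm1}]}$, stable under all the relevant structure (kernels, cokernels, admissible sub- and quotient objects, Pontryagin duality), because the corestriction functor along a surjection of rings is fully faithful, exact, and reflects exactness (analogous to Eq. \ref{lcof1}). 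Hence it suffices to establish the canonical $3$-step filtration, its functoriality, and its splitting for $\mathsf{LCA}_{F[T_1^{\pm1},\ldots,T_n^{\pm1}]}$; the filtration of an object of $\mathsf{LCA}_R$ will then automatically consist of $R$-submodules (the $T_i$ act invertibly, so they preserve any subobject characterized intrinsically by the topology, such as the maximal compact subgroup or the maximal connected subgroup), and the pieces $K,A,D$ will again be $R$-modules.

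Next I would treat the Laurent polynomial case. This is where I invoke the earlier joint work with Arndt \cite{kthyartin}: the point of Property $(\mathbf{F})$ is precisely the adelic structure theory developed there. For $n=1$, i.e.\ $F[T^{\pm1}]=\mathcal{O}_X(U)$ with $U=\mathbf{G}_{m,F}$, an object of $\mathsf{LCA}_{F[T^{\pm1}]}$ is a locally compact $F$-vector space with a continuous invertible operator $T$, and the structure theory of adelic blocks from \cite{kthyartin} produces the filtration with compact part $K$, adelic part $A$ (built from restricted products over the places of the function field, or rather over the closed points contributing to the completed tensor structure), and discrete part $D$; the relevant statement is modelled on Hoffmann--Spitzweck \cite[Prop. 2.2]{MR2329311} as noted in Remark \ref{rmk_MaxQASubobjectIsF1}. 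For general $n$ one iterates, or uses that $F[T_1^{\pm1},\ldots,T_n^{\pm1}]$ is still handled by the same machinery since it is a localization of a polynomial ring and the adelic theory of \cite{kthyartin} is set up for finitely generated $F$-algebras. Functoriality of the filtration (part (2)) follows because each filtration step is characterized intrinsically — $\mathcal{F}_0 X$ as the maximal compact subobject, $\mathcal{F}_1 X$ as the maximal quasi-adelic subobject (Remark \ref{rmk_MaxQASubobjectIsF1}) — so any morphism automatically respects it.

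The splitting in part (3) I would get from the injective/projective properties of the pieces: discrete projective-type objects behave well and compact objects are their Pontryagin duals, and the extensions defining the filtration split in $\mathsf{LCA}_R$ because the relevant $\operatorname{Ext}^1$-groups vanish (for $D$ discrete and $A$ quasi-adelic the extension $\mathcal{F}_1 X\hookrightarrow X\twoheadrightarrow D$ splits, since $D$, being a finitely generated discrete $R$-module, is projective in $\mathsf{LCA}_R$ once we note finitely presented flat modules are projective, cf.\ the footnotes around Eq. \ref{l_d4}; dually the compact part splits off by Pontryagin duality). The main obstacle I expect is not any single step but making precise that the adelic structure theory of \cite{kthyartin}, which is phrased for $\mathsf{LCA}_F$-objects with extra $R$-action, really does produce a \emph{canonical} (not merely existing) filtration that is compatible with the $R$-module structure and with all morphisms simultaneously — in other words, checking that the "spanned by units" hypothesis is exactly what forces the $R$-action to preserve the canonical adelic decomposition. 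Concretely, the key input is that a unit $u\in R^\times$ acts by a topological automorphism, hence preserves the maximal compact subgroup and the maximal connected subgroup and the adelic part, and since every element of $R$ is a finite sum of such units, all of $R$ preserves the filtration; once this is isolated the rest is bookkeeping within the framework of \cite{kthyartin}.
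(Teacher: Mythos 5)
Your central mechanism --- a unit of $R$ acts by a homeomorphism, hence preserves the canonical structure supplied by the $\mathsf{LCA}_{F}$-level theory of \cite{kthyartin}, and the set of elements of $R$ with this property is closed under addition, so the hypothesis that $R$ is spanned by units forces all of $R$ to preserve it --- is exactly the argument the paper uses, and it correctly delivers parts (1) and (2) of Property $(\mathbf{F})$. The detour through a presentation $F[T_{1}^{\pm1},\ldots,T_{n}^{\pm1}]\twoheadrightarrow R$ is harmless but buys nothing: you still have to run the unit argument for the Laurent ring, and the paper simply runs it for $R$ directly after forgetting along the functor of Eq. \ref{lcof1}.

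The genuine gap is in part (3), the splitting. You propose to split $\mathcal{F}_{1}X\hookrightarrow X\twoheadrightarrow D$ in $\mathsf{LCA}_{R}$ on the grounds that $D$ is a ``finitely generated discrete $R$-module, hence projective''. Neither half of this is available: Property $(\mathbf{F})$ only asks $D$ to be discrete, with no finiteness imposed, and even a finitely generated discrete $R$-module is almost never projective --- already $D=F[T^{\pm1}]/(T-1)$ fails to be projective in $\mathsf{Mod}_{F[T^{\pm1}]}$, let alone as an object of $\mathsf{LCA}_{F[T^{\pm1}]}$. (The footnote you invoke concerns $\mathcal{O}_{F^{\prime}}$ over $\mathcal{O}_{F}$, which is flat; a general $D$ is not.) A splitting certainly exists in $\mathsf{LCA}_{F}$, where every discrete $F$-vector space is projective, but the whole difficulty is to produce an $R$-linear one. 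The paper sidesteps Ext-vanishing entirely: \cite[Thm. 3.10]{kthyartin} already furnishes a (non-canonical) direct-sum decomposition $X\simeq K\oplus A\oplus D$ in $\mathsf{LCA}_{F}$, and the same unit-plus-sums argument you use for the filtration is applied to this decomposition itself, promoting it to a decomposition in $\mathsf{LCA}_{R}$. If you extend your closure-under-addition observation from ``preserves the filtration'' to ``preserves the chosen direct-sum decomposition'', you recover the paper's proof; as written, your route to (3) does not go through.
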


\begin{proof}
As $X\in\mathsf{LCA}_{R}$, we may in particular treat it as an object of
$\mathsf{LCA}_{F}$ via the functor of Eq. \ref{lcof1}. According to \cite[Thm.
3.10]{kthyartin} there is a canonical $3$-step filtration on $X$ which
(non-canonically) splits $X$ as a direct sum having the properties as in our
claim%
\begin{equation}
X\simeq K\oplus A\oplus D\text{,} \label{lttj1}%
\end{equation}
but only in the category $\mathsf{LCA}_{F}$, i.e., without taking the
$R$-module structure into account. As $X\in\mathsf{LCA}_{R}$, there is an
additional action (through continuous $F$-linear maps) by all elements of
$r\in R$. If $r\in R^{\times}$ is a unit, this action must be by a
homeomorphism, it preserves \textit{(a)} the canonical filtration and
\textit{(b)} the direct sum decomposition in Eq. \ref{lttj1}. Next, note that
if the action by $r_{1},r_{2}\in R$ has these properties \textit{(a), (b)},
then so does their sum $r_{1}+r_{2}$. Hence, since $R$ is spanned by units,
\textit{any} $r\in R$ satisfies \textit{(a), (b)}. We deduce that the action
of any $r$ splits into separate actions $r\mid_{Y}\colon Y\rightarrow Y$ for
each $Y\in\{K,A,D\}$, and thus Eq. \ref{lttj1} can be promoted to a filtration
and direct sum splitting in the category $\mathsf{LCA}_{R}$. Our claim follows.
\end{proof}

\begin{example}
The $n$-torus $(\mathbf{G}_{m})^{n}:=\operatorname*{Spec}F[T_{1}^{\pm}%
,\ldots,T_{n}^{\pm}]$ over a number field is the affine scheme attached to a
ring spanned by units, namely $F^{\times}\cup\{T_{1},\ldots,T_{n}\}$ and
therefore the $F$-algebra $F[T_{1}^{\pm},\ldots,T_{n}^{\pm}]$ satisfies
Property $\left(  \mathbf{F}\right)  $.
\end{example}

\begin{example}
The rings $\mathbf{Q}[T]$ and $\mathbf{Q}[T]/(T^{2})$ do \emph{not} satisfy
Property $(\mathbf{F})$. We can take $X:=\mathbf{Q}\oplus\mathbf{R}$ as an
object in $\mathsf{LCA}_{\mathbf{Q}}$ and define $T(a,b):=(0,a)$. As
$\mathbf{Q}$ is discrete, $T$ is a continuous nilpotent endomorphism of $X$
and renders $X$ an object of $\mathsf{LCA}_{\mathbf{Q}[T]}$ (resp.
$\mathsf{LCA}_{\mathbf{Q}[T]/(T^{2})}$). The only direct sum splitting
possibly satisfying the requirements needed for Property $(\mathbf{F})$ is
$X\simeq\mathbf{Q}\oplus\mathbf{R}$, but this splitting in $\mathsf{LCA}%
_{\mathbf{Q}}$ cannot be promoted to $\mathsf{LCA}_{R}$ for either ring.
\end{example}

\subsection{A vanishing result for $\operatorname*{Ext}^{1}$-classes}

We now develop some homological tools in order to compute $K{\mathsf{LC}%
}_{X,\operatorname*{pre}}^{\operatorname*{naive}}(U)$ on affine opens such
that $\mathcal{O}_{X}(U)$ has Property $(\mathbf{F})$.

\subsubsection{Support and injective hulls\label{section_ReminderOnSupport}}

As $R$ is Noetherian, the finitely generated $R$-modules $\mathsf{Mod}_{R,fg}$
form an abelian category. We recall that if $M^{\prime}\hookrightarrow
M\twoheadrightarrow M^{\prime\prime}$ is an exact sequence of (not necessarily
finitely generated) $R$-modules, then%
\begin{equation}
\operatorname*{supp}M=\operatorname*{supp}M^{\prime}\cup\operatorname*{supp}%
M^{\prime\prime}\text{.} \label{lhh1}%
\end{equation}
As before, $\mathsf{Mod}_{R}$ is the abelian category of all $R$-modules.
Write $\mathsf{Mod}_{R,0}\subset\mathsf{Mod}_{R,fg}$ for the category of
finitely generated $R$-modules of at most zero-dimensional support. As seen
from Eq. \ref{lhh1}, this is a Serre subcategory in the abelian category
$\mathsf{Mod}_{R,fg}$, and thus abelian itself. Moreover, if $E_{R}(M)$
denotes an injective hull of $M$, then $E_{R_{S}}(M_{S})\cong E_{R}(M)_{S}$
holds for any multiplicatively closed subset $S\subset R$, \cite[Lemma
3.2.5]{MR1251956}. Thus,%
\begin{equation}
\operatorname*{supp}E_{R}(M)\subseteq\operatorname*{supp}M\text{.}
\label{lhh2}%
\end{equation}
Of course, the injective hull is usually not finitely generated.

The following observation is both entirely elementary, yet the key obstruction
why the $K$-theory of $\mathsf{LCA}_{R}$ will only see phenomena with
zero-dimensional support. We will employ it for $k=F$ or completions $F_{v}$.

\begin{lemma}
\label{lemma_ModHasZeroDimSupportIffFiniteDimensional}Let $k$ be a field and
let $R$ be a finitely generated $k$-algebra. A finitely generated $R$-module
$M$ satisfies $\dim\operatorname*{supp}M=0$ if and only if it is
finite-dimensional as an $k$-vector space.
\end{lemma}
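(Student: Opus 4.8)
The statement is a standard commutative-algebra fact, and I would prove it by using that a finitely generated module $M$ over a Noetherian ring $R$ has a finite filtration with quotients of the form $R/\mathfrak{p}$, $\mathfrak{p}\in\operatorname{Ass}(M)$ or more crudely $\mathfrak{p}\in\operatorname{supp}(M)$. First I would record the two directions separately. For the easy direction, if $M$ is finite-dimensional as a $k$-vector space then it is a module of finite length over $R$, so every prime in its support is a maximal ideal; hence $\dim\operatorname{supp}M=0$ (or $M=0$, which I'll treat as the empty-support convention, or just note $\operatorname{supp}M$ consists of maximal ideals). For the converse, the plan is: choose a filtration $0=M_0\subset M_1\subset\cdots\subset M_n=M$ with $M_i/M_{i-1}\cong R/\mathfrak{p}_i$ for primes $\mathfrak{p}_i$; by Eq.~\ref{lhh1} each $\mathfrak{p}_i\in\operatorname{supp}M$, so $\dim R/\mathfrak{p}_i\le\dim\operatorname{supp}M=0$, meaning each $\mathfrak{p}_i$ is a maximal ideal. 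Then $R/\mathfrak{p}_i$ is a field which is a finitely generated $k$-algebra, so by the Nullstellensatz (Zariski's lemma) $R/\mathfrak{p}_i$ is a finite field extension of $k$, in particular finite-dimensional over $k$. Since finite-dimensionality over $k$ is closed under extensions, induction on $n$ gives that $M$ is finite-dimensional over $k$.

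The one genuine input here is Zariski's lemma: a field that is finitely generated as an algebra over a field $k$ is finite-dimensional over $k$. This is where the hypothesis that $R$ is a \emph{finitely generated} $k$-algebra is used (each $R/\mathfrak{p}_i$ inherits this), and it is the only nontrivial step; everything else is the observation that $\operatorname{supp}$ is additive on short exact sequences (Eq.~\ref{lhh1}) together with the existence of prime filtrations of finitely generated modules over Noetherian rings. I expect no real obstacle: the main point to be careful about is the degenerate case $M=0$, which one should either exclude at the outset or fold into the convention $\operatorname{supp}0=\varnothing$, and the bookkeeping that ``$\dim\operatorname{supp}M=0$'' is to be read as ``every prime in $\operatorname{supp}M$ is maximal'' (allowing the empty support).

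Concretely the write-up would be about a paragraph: state the prime filtration, invoke Eq.~\ref{lhh1} to see each associated quotient $R/\mathfrak{p}_i$ has support in $\operatorname{supp}M$ hence $\mathfrak{p}_i$ maximal, invoke Zariski's lemma to conclude each $R/\mathfrak{p}_i$ is finite over $k$, then induct; and conversely a finite-dimensional $k$-vector space is a finite-length $R$-module so its support consists of maximal ideals. No further lemmas from the paper are needed beyond Eq.~\ref{lhh1}.
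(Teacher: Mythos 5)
Your proposal is correct and complete; the paper's own proof of this lemma is literally the single word ``Clear.'', so you have simply supplied the standard argument (prime filtration, additivity of support, Zariski's lemma for one direction; finite length for the other) that the author left implicit. Nothing to fix.
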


\begin{proof}
Clear.
\end{proof}

\subsubsection{Structure of adelic
objects\label{sect_StructAdelicObjectsIntegralModel}}

\subsubsection{Restricted product of categories}

Suppose $(\mathsf{C}_{v})_{v\in I}$ are exact categories for some indexing set
$I$. Then $\prod_{v\in I}\mathsf{C}_{v}$ is an exact category, where the
objects are vectors $X=(X_{v})_{v\in I}$ such that $X_{v}\in\mathsf{C}_{v}$,
morphisms are taken entrywise, and a kernel-cokernel
sequence\footnote{\cite[\S 2, for kernel-cokernel pairs]{MR2606234}} is called
exact if it is exact entrywise in each $\mathsf{C}_{v}$. Assume we are given
exact functors $\xi_{v}\colon\mathsf{C}_{v}\rightarrow\mathsf{D}_{v}$ with
$(\mathsf{C}_{v})_{v\in I},(\mathsf{D}_{v})_{v\in I}$ for the same indexing
set $I$. For any subset $I^{\prime}\subseteq I$, we define $\mathsf{J}%
^{(I^{\prime})}:=\prod_{v\in I\setminus I^{\prime}}\mathsf{C}_{v}\times
\prod_{v\in I^{\prime}}\mathsf{D}_{v}$ and whenever $I^{\prime}\subseteq
I^{\prime\prime}$, there is an exact functor $\xi_{I^{\prime},I^{\prime\prime
}}\colon\mathsf{J}^{(I^{\prime})}\rightarrow\mathsf{J}^{(I^{\prime\prime})}$,
defined by applying the functor $\xi_{v}$ entrywise for all $v\in
I^{\prime\prime}\setminus I^{\prime}$, and as the identity elsewhere. We
define the \emph{restricted product} of exact categories as%
\begin{equation}
\underset{v\in I\;}{\prod\nolimits^{\prime}}(\mathsf{D}_{v}:\mathsf{C}%
_{v}):=\left.  \underset{I^{\prime}\subseteq I\text{, }\#I^{\prime}<\infty
}{\operatorname*{colim}}\left.  \mathsf{J}^{(I^{\prime})}\right.  \right.
\text{,} \label{l_RestrictedProductAsColimitJFormula}%
\end{equation}
where the $\xi_{v}$ are implicit, and the notation is (although perhaps deemed
unnaturally ordered) inspired from the traditional notation in topological
group theory.

\subsubsection{Comparison to adelic objects}

Recall our conventions from \S \ref{sect_Setup}, especially that we had set
$\mathcal{O}_{v}=F_{v}$ if $v$ is an infinite place. The full subcategory of
adelic objects in $\mathsf{LCA}_{F}$ is denoted by $\mathsf{LCA}_{F,ad}$.

\begin{proposition}
[{\cite[\S 3]{kthyartin}}]\label{prop_IdentifyAdelicBlocks}There is an
$F$-linear\footnote{see Rmk. \ref{rmk_FLinearityOfAdeles} for the $F$-linear
structure of the restricted product category on the left, which is a little
subtle.} exact equivalence of exact categories%
\begin{equation}
\Psi\colon\underset{v\in S\;}{\prod\nolimits^{\prime}}(\mathsf{Proj}%
_{F_{v},fg}:\mathsf{Proj}_{\mathcal{O}_{v},fg})\overset{\sim}{\rightarrow
}\mathsf{LCA}_{F,ad}\text{.} \label{lmx1}%
\end{equation}
The functor $\Psi$ sends an array of modules $(M_{v})_{v\in S}$ to itself, but
equipped with the real topology for $v\in S_{\infty}$, and with the adic
topology for $v\in S\setminus S_{\infty}$.
\end{proposition}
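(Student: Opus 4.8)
The plan is to deduce Proposition~\ref{prop_IdentifyAdelicBlocks} from the structure theory already established for adelic objects over a number field, essentially by bookkeeping the local data place-by-place. First I would recall the meaning of ``adelic'' in $\mathsf{LCA}_F$: an object $X$ is adelic if, viewed as an LCA group, it is isomorphic to a finite restricted product $\prod'_v (F_v^{n_v} : \mathcal{O}_v^{n_v})$, and the local components carry the archimedean (real) topology for $v \in S_\infty$ and the $v$-adic topology otherwise. This is exactly the class of objects singled out in \cite[\S 3]{kthyartin}, so the underlying LCA-level statement is available. The task is to upgrade this to an $F$-linear exact equivalence where the right-hand side records the $F$-module structure intrinsically and the left-hand side is the restricted product of the exact categories $\mathsf{Proj}_{F_v,fg}$ (compact-open generated by $\mathcal{O}_v$-lattices).

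The key steps, in order: (1) Define $\Psi$ on objects by sending $(M_v)_{v\in S}$ to $\bigoplus$ (restricted) of the $M_v$ equipped with the real topology at infinite places and the adic topology at finite places; check local compactness and that this lands in $\mathsf{LCA}_{F,ad}$. (2) Check $\Psi$ is exact: a kernel-cokernel sequence in the restricted product is exact iff it is exact at each place, and at each place exactness of sequences of finitely generated projective $\mathcal{O}_v$- or $F_v$-modules with the canonical topology corresponds to admissible monics (closed injections) and admissible epics (open surjections) in $\mathsf{LCA}_{\mathcal{O}_v}$ resp.\ $\mathsf{LCA}_{F_v}$; this is a local computation. (3) Show $\Psi$ is fully faithful: a morphism $X \to Y$ of adelic objects, viewed place-by-place after choosing splittings, decomposes into its local components because the $\mathcal{O}_v$-lattice part of $Y$ at $v$ is the unique maximal compact subobject of the $v$-component, forcing compatibility; here one uses that continuous $F$-linear (hence $F_v$-linear after completion, by continuity) maps between the local models are exactly $\mathsf{Proj}$-morphisms. (4) Show $\Psi$ is essentially surjective: this is precisely the classification of adelic objects from \cite{kthyartin}, i.e.\ every adelic object admits such a restricted-product presentation. (5) Finally, verify $F$-linearity: the subtle point flagged in the footnote (Remark~\ref{rmk_FLinearityOfAdeles}) is that the diagonal $F$-action on the restricted product is not simply the product of the local $F_v$-actions, since $F$ sits diagonally in $\prod_v F_v$ and only this diagonal copy acts; one must check $\Psi$ intertwines this diagonal $F$-action with the genuine $F$-module structure on the adelic object, which again reduces to the fact that $F \hookrightarrow F_v$ densely and the actions agree by continuity.

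The main obstacle I expect is step~(3), full faithfulness, specifically pinning down why an arbitrary continuous $F$-linear map between two adelic objects must be ``diagonal'' in the local coordinates rather than mixing different places. The resolution is that the restricted-product topology forces any morphism to be eventually an $\mathcal{O}_v$-module map at almost all $v$ (the compact-open topology constrains where lattices can go), and a continuous $F_v$-linear map between finite free $F_v$-modules that sends a lattice into a lattice at cofinitely many places is exactly a morphism in the restricted product category; the cross-terms between distinct places vanish because the topology on a restricted product is generated by the individual factors and $F_v$ is not topologically isomorphic to $F_w$ for $v \neq w$. A secondary subtlety is making sure the equivalence is compatible with the exact structures on both sides simultaneously with the $F$-linear enrichment, i.e.\ that one genuinely gets an equivalence in the category of $F$-linear exact categories and not merely an equivalence of underlying exact categories; this follows once (2) and (5) are in place, since an exact $F$-linear functor that is an equivalence of underlying exact categories is automatically an equivalence of $F$-linear exact categories.
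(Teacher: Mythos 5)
First, be aware that the paper does not prove this proposition at all: the bracketed citation in the theorem header attributes it to \cite[\S 3]{kthyartin}, and it is imported from there without argument. So there is no internal proof to compare yours against; what you are really sketching is a reconstruction of the cited result. With that caveat, your outline of the necessary steps (define $\Psi$, check exactness, full faithfulness, essential surjectivity, $F$-linearity) is the right shape, but one step contains a genuine error and another is circular as written.

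The error is in your step (3). You justify the vanishing of cross-components $M_v\rightarrow N_w$ for $v\neq w$ by saying that ``$F_v$ is not topologically isomorphic to $F_w$ for $v\neq w$''. This is false: two distinct real places of a real quadratic field both have completion $\mathbf{R}$, and distinct finite places can have isomorphic local fields. The non-isomorphism of completions is therefore not the mechanism, and if it were the only available argument the claim would fail. What actually kills the cross-terms is the $F$-linearity combined with the approximation theorem: for inequivalent places $v\neq w$ choose $\alpha\in F$ with $|\alpha|_v<1<|\alpha|_w$; then for any continuous $F$-linear $f\colon M_v\rightarrow N_w$ one has $\alpha^n x\rightarrow 0$ in $M_v$ while $|\alpha^n f(x)|_w=|\alpha|_w^n\,|f(x)|_w\rightarrow\infty$ unless $f(x)=0$, so $f=0$. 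This is precisely the point where the $F$-module structure does irreplaceable work (the statement is not true for bare LCA groups), so it deserves to be made explicit rather than attributed to a topological distinction between the local fields. Separately, your step (4) discharges essential surjectivity by invoking ``the classification of adelic objects from \cite{kthyartin}'', which is the substantive content of the proposition itself; that is fine if you intend to quote the reference (as the paper does), but it means your argument is not an independent proof, and you should say which of the two you mean.
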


\begin{remark}
[$F$-linearity of the adelic objects]\label{rmk_FLinearityOfAdeles}The
category $\mathsf{LCA}_{F,ad}$ is $F$-linear, but the categories
$\mathsf{Proj}_{\mathcal{O}_{v},fg}$ only admit an $\mathcal{O}_{v}$-linear
structure. Nonetheless, the restricted product $\prod\nolimits^{\prime
}(\mathsf{Proj}_{F_{v},fg}:\mathsf{Proj}_{\mathcal{O}_{v},fg})$ carries a
canonical $F$-linear structure by the following observation: Any $\alpha\in F$
can be written as $\frac{\alpha_{0}}{n}$ with $\alpha_{0}\in\mathcal{O}_{F}$
and $n\geq1$ some integer. As there are only finitely many primes dividing
$n$, take $I^{\prime}$ in Eq. \ref{l_RestrictedProductAsColimitJFormula} big
enough to contain all primes over these (these are still only finitely many),
and then $\frac{\alpha_{0}}{n}$ is invertible in the $\mathcal{O}_{v}$-linear
structure of all factors of $\mathsf{J}^{(I^{\prime})}$ loc. cit.
\end{remark}

This result can easily be lifted to the present setting.

\begin{lemma}
\label{lemma_IdentifyAdelicBlocksVersion2}Pick a finitely generated
$\mathcal{O}_{F}$-flat\footnote{= torsion-free (as $\mathcal{O}_{F}$ is
Dedekind)} $\mathcal{O}_{F}$-algebra $\mathcal{R}$ such that $\mathcal{R}%
\otimes_{\mathcal{O}_{F}}F\cong R$ (an \textquotedblleft integral
model\textquotedblright). For any place $v$ of $F$, define%
\begin{equation}
\mathcal{R}_{v}:=\mathcal{R}\otimes_{\mathcal{O}_{F}}\mathcal{O}_{v}%
\qquad\text{and}\qquad R_{v}:=R\otimes_{F}F_{v}\text{.} \label{lsoc1}%
\end{equation}
Write $\mathsf{Mod}_{\mathcal{R}_{v},0}^{R_{v}}\subseteq\mathsf{Mod}%
_{\mathcal{R}_{v}}$ for the exact category of $\mathcal{O}_{F}$-flat finitely
generated $\mathcal{R}_{v}$-modules $M$ such that $M\otimes_{\mathcal{O}_{v}%
}F_{v}$ has zero-dimensional support.\footnote{Without imposing the
$\mathcal{O}_{F}$-flatness condition, this would be an abelian category since
the functor $M\mapsto M$ $\otimes_{\mathcal{O}_{v}}F_{v}$ is exact. Now the
full subcategory of those modules which are additionally $\mathcal{O}_{F}%
$-flat is extension-closed in this abelian category, and therefore is a fully
exact subcategory. This is the exact structure in question.} There is an exact
equivalence of exact categories%
\begin{equation}
\Psi\colon\underset{v\in S\;}{\prod\nolimits^{\prime}}(\mathsf{Mod}_{R_{v}%
,0}:\mathsf{Mod}_{\mathcal{R}_{v},0}^{R_{v}})\overset{\sim}{\rightarrow
}\mathsf{LCA}_{R,ad}\text{.} \label{lwups4}%
\end{equation}
Upon forgetting the $R$-module structure in favour of the $F$-vector space
structure, $\Psi$ reduces to the exact equivalence of Prop.
\ref{prop_IdentifyAdelicBlocks}.
\end{lemma}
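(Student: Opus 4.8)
The plan is to bootstrap the result from Proposition \ref{prop_IdentifyAdelicBlocks} by keeping careful track of the additional $R$-module structure present on an adelic object in $\mathsf{LCA}_{R,ad}$, just as Lemma \ref{lemma_LiftDecompToAffineInTorus} bootstrapped Property $(\mathbf{F})$ from the number-field case. First I would recall that, by definition, an object $X\in\mathsf{LCA}_{R,ad}$ is precisely an object of $\mathsf{LCA}_{R}$ whose underlying LCA $F$-module (via Eq. \ref{lcof1}) is adelic, hence lies in the essential image of $\Psi$ of Prop. \ref{prop_IdentifyAdelicBlocks}. So there is a unique array $(M_v)_{v\in S}$ with $M_v\in\mathsf{Proj}_{F_v,fg}$ for all $v$ and $M_v\in\mathsf{Proj}_{\mathcal{O}_v,fg}$ for all but finitely many $v$, such that $X\cong\Psi((M_v)_v)$ as objects of $\mathsf{LCA}_F$. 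The extra $R$-action on $X$ is given by continuous $F$-linear endomorphisms $r\cdot(-)$ for $r\in R$. Because $\Psi$ sends $(M_v)_v$ to the array itself with the $v$-adic (resp. real) topology, and because a continuous $F$-linear map between such adelic arrays must preserve the summand decomposition into the individual $M_v$ (this is the content of \cite[\S 3]{kthyartin}: morphisms of adelic objects are entrywise almost everywhere, i.e. correspond to morphisms in the restricted product category), each $r\in R$ acts entrywise, giving each $M_v$ the structure of an $R_v:=R\otimes_F F_v$-module that is moreover finitely generated over $R_v$.

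Next I would verify the two conditions cutting out the restricted product $\prod'(\mathsf{Mod}_{R_v,0}:\mathsf{Mod}^{R_v}_{\mathcal{R}_v,0})$. First, local compactness of $X$ as an $F$-module already forces $M_v$ to be finite-dimensional over $F_v$, hence by Lemma \ref{lemma_ModHasZeroDimSupportIffFiniteDimensional} the $R_v$-module $M_v$ has zero-dimensional support, i.e. $M_v\in\mathsf{Mod}_{R_v,0}$. Second, for the cofinitely many $v$ where the underlying $\mathcal{O}_v$-module is $M_v\in\mathsf{Proj}_{\mathcal{O}_v,fg}$, the $\mathcal{R}_v$-action (coming from restricting the $R_v$-action along $\mathcal{R}_v\to R_v$, which makes sense once we fix an integral model $\mathcal{R}$ and observe that $\mathcal{R}_v$ acts by continuous $\mathcal{O}_v$-linear maps preserving the compact-open lattice for almost all $v$) exhibits $M_v$ as a finitely generated $\mathcal{R}_v$-module that is $\mathcal{O}_F$-flat (being $\mathcal{O}_v$-projective) and becomes zero-dimensional after $\otimes_{\mathcal{O}_v}F_v$, i.e. $M_v\in\mathsf{Mod}^{R_v}_{\mathcal{R}_v,0}$. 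The main technical point here is that the integral lattice structure present for almost all $v$ in the adelic picture is stable under the $\mathcal{R}_v$-action: for a fixed finite set of $R$-algebra generators $r_1,\dots,r_n$ of $R$ over $F$, clearing denominators places each $r_i$ in $\mathcal{R}$, and for all but finitely many $v$ the endomorphism $r_i\cdot(-)$ preserves the chosen compact-open $\mathcal{O}_v$-submodule of $M_v$; since $\mathcal{R}$ is generated by the $r_i$, the whole $\mathcal{R}$-action preserves the lattice, and one gets an $\mathcal{R}_v$-module structure there. This is the step I expect to require the most care, because one must check the generators' denominators and the ``preserves the lattice for almost all $v$'' claim simultaneously and uniformly.

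With the object-level bijection in hand, I would then promote it to an exact functor $\Psi$ by the same formula as in Prop. \ref{prop_IdentifyAdelicBlocks} — send an array $(M_v)_v$ in the restricted product to itself equipped with the real topology at infinite places and the adic topology at finite places — and check: (i) it is well-defined on morphisms, using that a morphism in $\prod'(\mathsf{Mod}_{R_v,0}:\mathsf{Mod}^{R_v}_{\mathcal{R}_v,0})$ is an $R_v$-linear map entrywise which is almost everywhere $\mathcal{R}_v$-linear, and such maps are automatically continuous for the respective topologies (finite-dimensional $F_v$-vector spaces carry a unique Hausdorff topology, and $\mathcal{R}_v$-linear maps of finitely generated $\mathcal{R}_v$-modules preserve the adic topology); (ii) it is exact, fully faithful, and essentially surjective onto $\mathsf{LCA}_{R,ad}$ — essential surjectivity being exactly the object-level analysis above, and full faithfulness following from the underlying statement of Prop. \ref{prop_IdentifyAdelicBlocks} together with the observation that the $R$-linearity condition on a morphism of adelic $F$-objects corresponds exactly to the $R_v$-linearity-almost-everywhere-$\mathcal{R}_v$-linearity condition in the restricted product. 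Exactness is inherited because a kernel--cokernel pair in $\mathsf{LCA}_{R,ad}$ is exact iff it is exact after forgetting to $\mathsf{LCA}_F$ (the forgetful functor reflects exactness, Eq. \ref{lcof1}), and likewise a kernel--cokernel pair in the restricted product is exact iff it is exact entrywise, which matches under $\Psi$ by Prop. \ref{prop_IdentifyAdelicBlocks}. The final sentence of the statement — that forgetting the $R$-module structure recovers the equivalence of Prop. \ref{prop_IdentifyAdelicBlocks} — is then immediate from the fact that both $\Psi$'s are ``the identity on underlying arrays'' by construction, so the forgetful square commutes on the nose.
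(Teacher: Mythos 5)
Your proposal is correct and follows essentially the same route as the paper: transport the $R$-module structure through the equivalence of Prop.~\ref{prop_IdentifyAdelicBlocks} and identify the resulting entrywise module categories with $\mathsf{Mod}_{R_{v},0}$ and $\mathsf{Mod}_{\mathcal{R}_{v},0}^{R_{v}}$. The one step the paper spells out that you elide is the converse identification needed for $\Psi$ to be well-defined on objects --- namely that a module in $\mathsf{Mod}_{\mathcal{R}_{v},0}^{R_{v}}$, being finitely generated and torsion-free over $\mathcal{O}_{v}$ with finite-dimensional generic fibre, is automatically free of finite rank over $\mathcal{O}_{v}$, so that the underlying array is genuinely adelic (locally compact); conversely, your careful check that the $\mathcal{O}_{v}$-lattices are stable under the $\mathcal{R}_{v}$-action for almost all $v$ fills in a detail the paper glosses over.
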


\begin{proof}
We use Prop. \ref{prop_IdentifyAdelicBlocks} in $\mathsf{LCA}_{F}$. Then we
note that the $R$-module structure on objects in $\mathsf{LCA}_{R,ad}$, by
transport of structure along $\Psi$, equips the objects in $\underset{v\in
S\;}{\prod\nolimits^{\prime}}(\mathsf{Proj}_{F_{v},fg}:\mathsf{Proj}%
_{\mathcal{O}_{v},fg})$ of Eq. \ref{lmx1} with an $R$-module structure. Thus,
instead of $\mathsf{Proj}_{\mathcal{O}_{v},fg}$ (resp. $\mathsf{Proj}%
_{F_{v},fg}$) we get the category of $\mathcal{R}_{v}$-modules (resp. $R_{v}%
$-modules) which are finite rank free over $\mathcal{O}_{v}$ (resp. $F_{v}$).
In either case, this forces the modules to be torsion-free over $\mathcal{O}%
_{F}$ (equivalently: as an abelian group) and therefore to be $\mathcal{O}%
_{F}$-flat. By Lemma \ref{lemma_ModHasZeroDimSupportIffFiniteDimensional} the
condition to have finite rank over $F_{v}$ is equivalent to asking the module
to have zero-dimensional support over $R_{v}$. Analogously, being a finitely
generated torsion-free $\mathcal{O}_{v}$-module, $\dim_{F_{v}}(M\otimes
_{\mathcal{O}_{v}}F_{v})<\infty$ forces $M$ to be free of finite rank. Hence,
we arrive at the categories $\mathsf{Mod}_{\mathcal{R}_{v},0}^{R_{v}}$ (resp.
$\mathsf{Mod}_{R_{v},0}$) in our claim.
\end{proof}

\begin{remark}
\label{rmk_SimilarOrders}If $\mathcal{R},\mathcal{R}^{\prime}$ are two
integral models (as in Lemma \ref{lemma_IdentifyAdelicBlocksVersion2}), we
have $\mathcal{R}_{v}\simeq\mathcal{R}_{v}^{\prime}$ for all but finitely many
places $v$, giving a direct argument why the left side in Eq. \ref{lwups4} is
independent of the choice of the integral model.
\end{remark}

\begin{example}
\label{example_Adeles}Define the \emph{ad\`{e}les} $\mathbb{A}_{S}$ as the
restricted product%
\begin{equation}
\mathbb{A}_{S}:=\underset{v\in S\;}{\prod\nolimits^{\prime}}F_{v}\text{,}
\label{lcva1}%
\end{equation}
with the usual restricted product topology. Then $\mathbb{A}_{S}%
\in\mathsf{LCA}_{F}$. This object can be promoted to an object of
$\mathsf{LCA}_{R}$, for $R$ an $F$-algebra, by tensoring $\mathbb{A}%
_{S}\otimes_{F}M$ for $M$ an $R$-module which is finite-dimensional as an
$F$-vector space. If $\dim_{F}M=\infty$, the tensor product $\mathbb{A}%
_{S}\otimes_{F}M$ fails to be locally compact.
\end{example}

\subsubsection{Constructing splittings}

In the entire section we assume that $R$ is a finitely generated algebra over
a number field $F$ and that $R$ has Property $(\mathbf{F})$.

\begin{lemma}
\label{lemma_YonedaTrick1}Suppose $A\in\mathsf{LCA}_{R}$ is adelic. Then for
every exact sequence $A\hookrightarrow X\twoheadrightarrow X^{\prime\prime}$
in $\mathsf{LCA}_{R}$ there exists an admissible monic $a\colon
A\hookrightarrow\widehat{A}$ with $\widehat{A}$ adelic such that in the
pushout%
\begin{equation}%
\xymatrix{
A \ar@{^{(}->}[r] \ar@{^{(}->}[d]_{a} \ar@{}[dr] |{\operatorname{pushout}}
& X \ar@{->>}[r] \ar[d] & X^{\prime} \ar@{=}[d] \\
\widehat{A} \ar@{^{(}->}[r] & \widehat{A} \cup_{A} X \ar@{->>}[r] & X^{\prime}
}
\label{lwups2}%
\end{equation}
the lower row splits.
\end{lemma}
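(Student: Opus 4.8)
The plan is to exploit Property $(\mathbf{F})$ to reduce the splitting problem to a vanishing statement for $\operatorname{Ext}^1$-classes between an adelic object and a quotient, and then to kill such a class by enlarging $A$ along an admissible monic into a bigger adelic object. Concretely, since the lower row of \eqref{lwups2} is classified by the element of $\operatorname{Ext}^1_{\mathsf{LCA}_R}(X'',\widehat A)$ obtained as the image of the class $\xi \in \operatorname{Ext}^1_{\mathsf{LCA}_R}(X'',A)$ of the top row under $a_* \colon \operatorname{Ext}^1(X'',A)\to\operatorname{Ext}^1(X'',\widehat A)$, I need to produce an admissible monic $a\colon A\hookrightarrow\widehat A$ with $\widehat A$ adelic such that $a_*\xi = 0$. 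First I would replace $X''$ by its canonical $3$-step filtration from Property $(\mathbf{F})$, $0\subseteq\mathcal F_0 X''\subseteq\mathcal F_1 X''\subseteq X''$, with graded pieces compact, adelic, and discrete respectively; by the long exact sequence in $\operatorname{Ext}^\bullet(-,A)$ associated to these sub/quotient steps, it suffices to kill the pullback of $\xi$ along $Y\to X''$ for each $Y\in\{K'',A'',D''\}$ separately (more precisely, to handle the discrete quotient, the adelic subquotient, and the compact bottom in turn). The discrete part is the generic point contribution and is handled by an injective-hull argument in $\mathsf{Mod}_R$; the compact part is dealt with by Pontryagin duality, turning it into a statement about discrete modules again.

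The heart of the matter is the adelic-vs-adelic case: given an extension $A\hookrightarrow E\twoheadrightarrow A''$ with $A,A''$ both adelic, I want an adelic $\widehat A$ and admissible monic $A\hookrightarrow\widehat A$ after which the extension splits. Here I would invoke the identification of adelic objects with the restricted product category $\prod'_{v\in S}(\mathsf{Mod}_{R_v,0}:\mathsf{Mod}_{\mathcal R_v,0}^{R_v})$ from Lemma~\ref{lemma_IdentifyAdelicBlocksVersion2}. An extension in the restricted product is, place by place, an extension of finite-length $R_v$-modules (away from a finite set of places it is an extension of the integral models $\mathcal R_v$-modules). Over each local factor, a finite-length module $A_v$ embeds admissibly into an injective object of $\mathsf{Mod}_{R_v,0}$ — an injective hull $E_{R_v}(A_v)$, which by the support estimate \eqref{lhh2} still has zero-dimensional support, hence is again finite-dimensional over $F_v$ by Lemma~\ref{lemma_ModHasZeroDimSupportIffFiniteDimensional} — and along this embedding the class dies. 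The subtlety is coherence across the infinitely many places: one must check that these local injective hulls can be chosen so that for almost all $v$ the integral extension $\mathcal R_v$-module version already splits (so no enlargement is needed there), which follows because the extension $E$, being a single object of $\mathsf{LCA}_R$, corresponds to an array that lies in $\mathsf J^{(I')}$ for some finite $I'$, i.e., is integral and automatically "unramified" outside $I'$, and over a regular (or at least sufficiently nice) local ring a torsion-free-modulo-something extension of the relevant shape splits outside a controlled locus. Assembling the local choices gives an admissible monic in the restricted product category, hence an adelic $\widehat A$ with $a_*\xi=0$.

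Finally I would glue the three cases: run the argument for $D''$ first (enlarging $A$ to some adelic $\widehat A_1$), then for $A''$ (enlarging $\widehat A_1$ further to $\widehat A_2$, noting that an admissible monic of adelic objects composed with the previous one is still admissible with adelic target), then for $K''$; since $\widehat A$ stays adelic at each stage and admissible monics compose, the composite $A\hookrightarrow\widehat A$ is the desired admissible monic and the total class vanishes, so the bottom row of \eqref{lwups2} splits. The main obstacle I anticipate is precisely the uniformity over all places $v\in S$ in the adelic case: one needs the enlargement $A_v\hookrightarrow\widehat A_v$ to be the identity for all but finitely many $v$, which requires knowing that the $\mathcal R_v$-module extension splits integrally outside a finite set — I would prove this by tracking that the original object $X$ is itself a genuine object of $\mathsf{LCA}_R$, hence integral outside a finite set of places, and invoking that extensions of $\mathcal R_v$-lattices of the prescribed support type are split for the cofinitely many $v$ where $\mathcal R_v$ is étale/regular enough, using the explicit description of the restricted-product colimit \eqref{l_RestrictedProductAsColimitJFormula}.
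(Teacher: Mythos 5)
Your core mechanism for the adelic case --- pass to the restricted-product description of Lemma \ref{lemma_IdentifyAdelicBlocksVersion2}, push into local injective hulls, cut down to finitely generated submodules, and use the support estimates \eqref{lhh1}--\eqref{lhh2} with Lemma \ref{lemma_ModHasZeroDimSupportIffFiniteDimensional} to stay inside the zero-dimensional/adelic world --- is exactly the engine of the paper's proof. But your reduction to that case has a genuine gap. You decompose the \emph{quotient} $X''$ via Property $(\mathbf{F})$ and propose to kill the three components of $\xi\in\operatorname{Ext}^1(X'',A)$ separately; the compact and discrete components are then dispatched only by slogans that do not work as stated. For the discrete piece, an ``injective-hull argument in $\mathsf{Mod}_R$'' enlarges the discrete module, but the lemma only allows you to modify the extension by pushing forward along $a\colon A\hookrightarrow\widehat A$, i.e., you must enlarge the adelic sub, not the discrete quotient; nothing you wrote addresses $\operatorname{Ext}^1(D'',A)$. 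For the compact piece, Pontryagin duality converts pushforward along $A\hookrightarrow\widehat A$ into \emph{pullback} along the admissible epic $\widehat A^\vee\twoheadrightarrow A^\vee$, i.e., a co-effaceability problem for an extension of an adelic object by a discrete one --- not ``a statement about discrete modules again,'' and not something you have proved. The paper avoids both cases entirely by decomposing the \emph{middle} term $X\simeq\tilde K\oplus\tilde A\oplus\tilde D$: since morphisms respect the canonical filtration, the composite $A\to X\to\tilde D$ vanishes, and $A\cap\tilde K=0$ because a compact $F$-subspace of an adelic object is trivial; hence $A\hookrightarrow X$ factors through the adelic summand $\tilde A$ alone, and the entire problem collapses to extending the inclusion $A\hookrightarrow E_{\mathcal R_v}(A_v)$ over $f_v\colon A_v\hookrightarrow\tilde A_v$ place by place. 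The splitting of the pushout is then exhibited directly by the universal property (the retraction $(\tilde a,\operatorname{id}_{\widehat A})$ on $\tilde A\cup_A\widehat A$), with no $\operatorname{Ext}$-class bookkeeping needed.

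Your worry about uniformity over the infinitely many places is also misdirected. You do not need the integral extension of $\mathcal R_v$-lattices to split for almost all $v$ (your appeal to $\mathcal R_v$ being ``\'etale/regular enough'' is unsubstantiated and would be false for general orders); you only need the array $(\widehat A_v)_v$ and the maps $a_v$ to be \emph{morphisms and objects of the integral categories} $\mathsf{Mod}_{\mathcal R_v,0}^{R_v}$ for almost all $v$, which is what the restricted-product colimit \eqref{l_RestrictedProductAsColimitJFormula} actually requires. Taking the injective hull over $\mathcal R_v$ (not over $R_v$) and cutting down to a finitely generated $\mathcal R_v$-submodule containing the relevant images produces an $\mathcal O_v$-torsion-free lattice of zero-dimensional generic support at every finite place, so the array is automatically a legitimate object of the restricted product; no almost-everywhere splitting is needed. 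If you repair the reduction by decomposing $X$ instead of $X''$, the rest of your argument goes through along the paper's lines.
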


Some readers may wish to interpret this lemma as an effaceable functor
property for $\operatorname*{Ext}^{1}$, but the concept of effaceability
appears to be disappearing from the literature.

\begin{proof}
Using Property $\mathbf{(F)}$ for the arrow $A\rightarrow X$, we get a diagram%
\[%
\xymatrix{
& \tilde{A} \oplus\tilde{K} \ar@{^{(}->}[d] \\
A \ar[dr]_{0} \ar@{.>}[ur]_{h} \ar@{^{(}->}[r] & X \ar@{->>}[d] \\
& \tilde{D},
}%
\]
where the downward diagonal arrow must be zero by the filtration $\mathcal{F}%
$. Thus, the upward dotted diagonal arrow exists and by \cite[Prop.
7.6]{MR2606234} it must be an admissible monic. Hence, we get $h\colon
A\hookrightarrow\tilde{A}\oplus\tilde{K}$. We may consider $A\cap\tilde{K}$ in
$\mathsf{LCA}_{F}$, and it is a compact $F$-vector space contained in an
adelic object. This is only possible if $A\cap\tilde{K}=0$. Thus, we get an
admissible monic $f\colon A\hookrightarrow\tilde{A}$ and we learn that the
monic $A\hookrightarrow X$ actually comes from an admissible subobject of the
summand $\tilde{A}$ alone. Now employ the exact equivalence of categories
provided by Lemma \ref{lemma_IdentifyAdelicBlocksVersion2}. The admissible
monic $f$ gets identified with a vector $(f_{v})$ of admissible monics%
\[
f_{v}\colon A_{v}\hookrightarrow\tilde{A}_{v}\qquad\qquad\text{(}v\text{ a
place of }F\text{)}%
\]
in $\mathsf{Mod}_{\mathcal{R}_{v},0}$ (resp. $\mathsf{Mod}_{R_{v},0}$, as in
Eq. \ref{lsoc1}). Now we deal with each place $v$ individually:\ Pick an
injective envelope $A_{v}\hookrightarrow E_{\mathcal{R}_{v}}(A_{v})$. As
$E_{\mathcal{R}_{v}}(A_{v})$ is an injective module (similarly for $R_{v}$),
we can find a lift, giving us the solid arrows in the following diagram
(ignore the dotted arrows and ${\widehat{A}}_{v}$ for the moment!)%
\begin{equation}%
\Scale[0.9]{\xymatrix{
A_{v} \ar@{^{(}->}[rr]^{f_{v}} \ar@{^{(}->}[ddr] \ar@{^{(}..>}[dr]_{a}
& & \tilde{A}_{v}
\ar[ddl] \ar@{..>}[dl]^{\tilde{a}} \\
& {\widehat{A}}_{v} \ar@{^{(}..>}[d] & \\
& E_{\mathcal{R}_{v}}(A_{v}).
}}
\label{lmiau1}%
\end{equation}
Since both $A_{v}$ and $\tilde{A}_{v}$ are finitely generated $\mathcal{R}%
_{v}$-modules (resp. $R_{v}$), their respective images in $E_{\mathcal{R}_{v}%
}(A_{v})$ (resp. $E_{\mathcal{R}_{v}}(A_{v})\otimes_{\mathcal{R}_{v}}R_{v}$)
are also finitely generated, so we find a finitely generated $\mathcal{R}_{v}%
$-submodule ${\widehat{A}}_{v}$ such that both arrows factor through it,
providing us with the dotted arrows $a,\tilde{a}$ in Diagram \ref{lmiau1}. As
$E_{\mathcal{R}_{v}}(A_{v})$ has zero-dimensional support after base change to
$R_{v}$ (by Eq. \ref{lhh2}) and ${\widehat{A}}_{v}$ is a submodule, we must
have $\dim\operatorname*{supp}_{R_{v}}{\widehat{A}}_{v}=0$ by Eq. \ref{lhh1},
and therefore, again after base change to $R_{v}$, it is finite rank over over
$F_{v}$ by\ Lemma \ref{lemma_ModHasZeroDimSupportIffFiniteDimensional}. Hence,
the vector $({\widehat{A}}_{v})_{v}$ defines an object of the restricted
product category, and running the reverse direction of Lemma
\ref{lemma_IdentifyAdelicBlocksVersion2}, we obtain the factorization
(depicted below on the left) in $\mathsf{LCA}_{R}$. Taking the pushout along
$A\hookrightarrow\widehat{A}$, we get the diagram below on the right.%
\begin{equation}%
\xymatrix{
A \ar@{^{(}->}[rr]^{f} \ar@{^{(}->}[dr]_{a} && \tilde{A} \ar@{->}%
[dl]^{\tilde{a}} \\
& {\widehat{A}}
}%
\qquad\qquad%
\xymatrix{
A \ar@{^{(}->}[r] \ar@{^{(}->}[d]_{a} \ar@{}[dr] |{\operatorname{pushout}}
& \tilde{K} \oplus\tilde{D} \oplus\tilde{A} \ar@{->>}[r] \ar[d] & X^{\prime}
\ar@{=}[d] \\
\widehat{A} \ar@{^{(}->}[r] & \tilde{K} \oplus\tilde{D} \oplus(\tilde{A}
\cup_{A} \widehat{A}) \ar@/^1pc/[l]^{b} \ar@{->>}[r] & X^{\prime}.
}
\label{lwups1}%
\end{equation}
Defining a morphism $b$ out of $\tilde{A}\cup_{A}\widehat{A}$ is (by the
universal property of pushouts) equivalent to giving morphisms $\tilde
{A}\rightarrow\widehat{A}$, $\widehat{A}\rightarrow\widehat{A}$ which agree on
$A$. We take $(\tilde{a},\operatorname*{id}_{\widehat{A}})$, and the agreement
is just the left diagram in Eq. \ref{lwups1}. By inspection, $b$ is a left
splitting of the exact sequence in the bottom row on the right in Eq.
\ref{lwups1} (as $\widehat{A}$ is a subobject of the summand $\tilde{A}%
\cup_{A}\widehat{A}$ alone, it is sufficient to set up a splitting defined on
this summand). Forgetting the summand decomposition, we obtain Eq.
\ref{lwups2}.
\end{proof}

\begin{corollary}
\label{cor_YonedaTrick2}Suppose $A,X\in\mathsf{LCA}_{R}$ and assume $A$ is
adelic. Then for every $\phi\in\operatorname*{Ext}\nolimits^{1}(X,A)$ there
exists an admissible monic $a\colon A\hookrightarrow\widehat{A}$ with
$\widehat{A}$ adelic such that the Yoneda product%
\[
\operatorname*{Hom}(A,\widehat{A})\otimes\operatorname*{Ext}\nolimits^{1}%
(X,A)\longrightarrow\operatorname*{Ext}\nolimits^{1}(X,\widehat{A})
\]
sends $a\otimes\phi$ to zero.
\end{corollary}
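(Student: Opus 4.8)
The plan is to read the corollary off Lemma~\ref{lemma_YonedaTrick1} once we recall how the Yoneda product on $\operatorname{Ext}^{1}$ is computed in an exact category. For the quasi-abelian category $\mathsf{LCA}_{R}$ with its canonical exact structure (admissible monics the injective closed maps, admissible epics the surjective open maps), the group $\operatorname{Ext}^{1}(X,A)$ admits the usual Yoneda description: its elements are equivalence classes of conflations $A\hookrightarrow E\twoheadrightarrow X$, the zero class is represented by \emph{exactly} the split conflations, and for a morphism $a\colon A\to\widehat{A}$ the image of $a\otimes\phi$ under the Yoneda product $\operatorname{Hom}(A,\widehat{A})\otimes\operatorname{Ext}^{1}(X,A)\to\operatorname{Ext}^{1}(X,\widehat{A})$ is represented by the bottom row of the pushout of a representing conflation along $a$. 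This is standard exact-category homological algebra; see \cite[\S 2]{MR2606234}, together with the fact that the pushout of an admissible monic along an arbitrary morphism exists and is again an admissible monic, so every square in sight genuinely lives in $\mathsf{LCA}_{R}$.

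With this dictionary the argument is immediate. Given $\phi\in\operatorname{Ext}^{1}(X,A)$, choose a conflation $A\hookrightarrow E\twoheadrightarrow X$ representing it. Apply Lemma~\ref{lemma_YonedaTrick1} to this conflation, where the middle term $E$ plays the role of ``$X$'' there and our $X$ plays the role of the quotient: this yields an admissible monic $a\colon A\hookrightarrow\widehat{A}$ with $\widehat{A}$ adelic such that in the pushout square of Eq.~\ref{lwups2} the bottom row splits. But that bottom row is a conflation representing $a_{*}\phi$, i.e.\ the image of $a\otimes\phi$ under the Yoneda product; being split, its class is $0$. Hence $a\otimes\phi$ maps to $0$ in $\operatorname{Ext}^{1}(X,\widehat{A})$, which is the assertion.

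The remaining points are purely bookkeeping: that $\widehat{A}$ produced by Lemma~\ref{lemma_YonedaTrick1} is adelic and that $a$ is an admissible monic (both hold by the construction carried out in its proof), and that the Yoneda $\operatorname{Ext}^{1}$ formalism applies verbatim to $\mathsf{LCA}_{R}$ (it does, being an exact category). I do not expect any real obstacle here: the whole substance of the corollary sits in Lemma~\ref{lemma_YonedaTrick1}, and what remains is only its translation into $\operatorname{Ext}$-theoretic language.
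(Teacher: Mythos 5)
Your proof is correct and is essentially identical to the paper's: both take a Yoneda representative $A\hookrightarrow E\twoheadrightarrow X$ of $\phi$, invoke Lemma \ref{lemma_YonedaTrick1} to obtain the admissible monic $a$ whose pushout splits, and conclude since the Yoneda product with $a$ is computed by that pushout and split conflations represent zero. No gaps.
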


\begin{proof}
We work with Yoneda's presentation of $\operatorname*{Ext}$%
-groups\footnote{This is valid in all exact categories and does not depend on
the availability of enough injectives or projectives ($\mathsf{LCA}_{R}$ has
neither; \cite[Prop. 8.1]{MR4028830} or \cite[Thm. 4.15]{noncomclassgroup}
classify injectives and projectives for certain $R$).}. The class $[\phi]$
represents an exact sequence $A\hookrightarrow U\twoheadrightarrow X$ in
$\mathsf{LCA}_{R}$. The Yoneda product with an arbitrary morphism $a\colon
A\rightarrow\widehat{A}$ then amounts to taking the pushout of this exact
sequence by $a$. As in any exact category, the pushout of an admissible monic
along an arbitrary morphism is again an admissible monic. Now take $a$ as
supplied by Lemma \ref{lemma_YonedaTrick1} and use that \textit{split} exact
sequences are trivial in $\operatorname*{Ext}\nolimits^{1}$.
\end{proof}

\subsection{The functor of the `discrete part'}

In the entire section we assume that $R$ is a finitely generated algebra over
a number field $F$ and that $R$ has Property $(\mathbf{F})$.

We consider%
\[
J\colon\mathsf{LCA}_{R}\longrightarrow\mathsf{Mod}_{R}\text{,}\qquad
X\mapsto\operatorname{gr}_{F}(X)_{2}=X/\mathcal{F}_{1}X\text{,}%
\]
i.e., the functor which sends each $X$ to its discrete filtered piece. As the
filtration is canonical, this is a well-defined functor.

\begin{example}
This functor fails to be exact. Even for $R:=F$ itself (just a single closed
point), the ad\`{e}le sequence $F\hookrightarrow\mathbb{A}_{S}%
\twoheadrightarrow\mathbb{A}_{S}/F$ gets sent to $F\rightarrow0\rightarrow0$.
\end{example}

A slight variant of $J$ turns out to be exact.

\begin{lemma}
\label{lemma_JOverlineIsExact}The composite functor $\overline{J}%
\colon\mathsf{LCA}_{R}\overset{J}{\rightarrow}\mathsf{Mod}_{R}\rightarrow
\mathsf{Mod}_{R}/\mathsf{Mod}_{R,0}$ is an exact functor of exact categories.
\end{lemma}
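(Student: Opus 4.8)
The plan is to show that $\overline{J}$ sends every admissible short exact sequence $X' \hookrightarrow X \twoheadrightarrow X''$ in $\mathsf{LCA}_R$ to a short exact sequence in the quotient abelian category $\mathsf{Mod}_R/\mathsf{Mod}_{R,0}$. By Property $(\mathbf{F})$(2), every morphism respects the canonical filtration, so applying $J = \operatorname{gr}_{\mathcal F}(-)_2$ at least yields a complex $J(X') \to J(X) \to J(X'')$ of $R$-modules; the content is that after passing to the quotient modulo $\mathsf{Mod}_{R,0}$ it becomes exact. First I would fix such a sequence and, using the splitting from $(\mathbf{F})$(3), write $X \simeq K \oplus A \oplus D$ and similarly for $X', X''$. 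Then $J(X) = D$ up to the relevant subquotient identifications.

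The key step is a diagram chase on the $3$-step filtrations. Consider the admissible subobjects $\mathcal F_1 X' \hookrightarrow \mathcal F_1 X$ and the induced maps on $\mathcal F_1 X \to \mathcal F_1 X''$. Because $\mathcal F_1 X'$ is (by Remark \ref{rmk_MaxQASubobjectIsF1}) the maximal quasi-adelic subobject of $X'$, and the filtration is functorial, I would show: (a) the cokernel of $\mathcal F_1 X' \hookrightarrow \mathcal F_1 X$, as a subobject of $\mathcal F_1 X''$, differs from all of $\mathcal F_1 X''$ only by something built from compact and adelic pieces — hence after quotienting by $\mathcal F_1(-)$ the discrepancy lands in modules that are finite-dimensional over $F$ (or over the relevant $F_v$), i.e.\ have zero-dimensional support by Lemma \ref{lemma_ModHasZeroDimSupportIffFiniteDimensional}; and (b) the kernel of $J(X) \to J(X'')$ is $J(X')$ up to such a finite-dimensional, hence zero-dimensional-support, discrepancy. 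The snake lemma applied to the map of filtered objects, together with Eq.\ \ref{lhh1} (support is additive in exact sequences), then confines every homology object of $J(X') \to J(X) \to J(X'')$ to $\mathsf{Mod}_{R,0}$, so these objects vanish in $\mathsf{Mod}_R/\mathsf{Mod}_{R,0}$.

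The subtle point — and the main obstacle — is controlling the failure of $J$ to be exact precisely at the interface between the adelic part $\mathcal F_1/\mathcal F_0$ and the discrete part $\mathcal F_2/\mathcal F_1$: an admissible monic $X' \hookrightarrow X$ can carry a chunk of $X'$'s discrete part into $X$'s adelic part (think of the adèle sequence $F \hookrightarrow \mathbb A_S$, where $J$ collapses $F$ to $0$). Here I would invoke Corollary \ref{cor_YonedaTrick2} and Lemma \ref{lemma_YonedaTrick1}: the relevant $\operatorname{Ext}^1$-classes measuring how discrete parts get absorbed into adelic parts are effaceable by enlarging the adelic object inside an injective hull, and crucially the enlargement stays within finitely generated modules with zero-dimensional support over each $R_v$ (by Eq.\ \ref{lhh2} and the injective-hull computation of \S\ref{section_ReminderOnSupport}). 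Thus the absorbed material is always of zero-dimensional support, so it dies in the quotient. Assembling these local-at-$v$ statements via the restricted-product equivalence of Lemma \ref{lemma_IdentifyAdelicBlocksVersion2} and reassembling globally gives exactness of $\overline{J}$. Finally I would note that $\overline{J}$ preserves the zero object and is additive (it is a composite of additive functors), so it is genuinely an exact functor of exact categories, completing the proof.
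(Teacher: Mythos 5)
Your overall skeleton is the same as the paper's: apply the snake lemma to the map of canonical filtrations of $X'\hookrightarrow X$, observe that the only failure of exactness of $J$ is a ``discrepancy'' object, and kill it in the quotient $\mathsf{Mod}_{R}/\mathsf{Mod}_{R,0}$ because it has zero-dimensional support. But there is a genuine gap at the one step that carries all the weight: you never actually prove that the discrepancy --- concretely, the kernel of the induced map $j\colon J(X')\to J(X)$ --- is finite-dimensional over $F$. The tools you invoke for this (Lemma \ref{lemma_YonedaTrick1}, Corollary \ref{cor_YonedaTrick2}, injective hulls, and the restricted-product decomposition of Lemma \ref{lemma_IdentifyAdelicBlocksVersion2}) are the wrong ones: they efface an $\operatorname*{Ext}^{1}$-class by enlarging an adelic object and thereby construct a \emph{splitting} of an extension of a fixed quotient by an adelic subobject. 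Here you are not free to modify the extension --- the admissible monic $X'\hookrightarrow X$ is given --- and what you must control is the kernel of the induced map on discrete quotients, which is a statement about morphisms, not about extension classes. The correct input is the structure theory of \cite{kthyartin}: the snake lemma exhibits $\ker(j)$ as the kernel of a morphism out of the quasi-adelic object $\mathcal{F}_{1}X/\mathcal{F}_{1}X'$, and by \cite[Thm.~2.19]{kthyartin} such a kernel is a direct sum of a quasi-adelic object and a finite-dimensional $F$-vector space; since $\ker(j)$ is discrete, the quasi-adelic summand vanishes and $\ker(j)$ is finite-dimensional, hence lies in $\mathsf{Mod}_{R,0}$ by Lemma \ref{lemma_ModHasZeroDimSupportIffFiniteDimensional}. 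Without some substitute for this structure theorem your argument does not close.

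Two smaller points. First, your step (a) is more complicated than necessary and its logic is off: the image of $\mathcal{F}_{1}X/\mathcal{F}_{1}X'$ in $X''$ is quasi-adelic with discrete cokernel, so by the canonicity of the filtration (Remark \ref{rmk_MaxQASubobjectIsF1}) it \emph{equals} $\mathcal{F}_{1}X''$ and $\operatorname*{coker}(j)=J(X'')$ on the nose --- there is no ``finite-dimensional discrepancy'' at that end, and a discrepancy ``built from compact and adelic pieces'' would in any case not be finite-dimensional (it would instead die under $J$ for a different reason). Second, one must check that the monic $\mathcal{F}_{1}X'\hookrightarrow\mathcal{F}_{1}X$ supplied by functoriality is an \emph{admissible} monic before the snake lemma of \cite[Cor.~8.13]{MR2606234} applies; this uses weak idempotent completeness of $\mathsf{LCA}_{R}$ and the obscure-axiom statement \cite[Prop.~7.6]{MR2606234}, a point your proposal passes over silently.
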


\begin{proof}
Suppose $X^{\prime}\overset{i}{\hookrightarrow}X\twoheadrightarrow
X^{\prime\prime}$ is an exact sequence in $\mathsf{LCA}_{R}$. Using the
filtration of Def. \ref{def_PropF} for both $X^{\prime}$ and $X$, we obtain
the diagram%
\begin{equation}%
\xymatrix{
{\mathcal{F}}_{1} X^{\prime} \ar@{..>}[d] \ar@{^{(}->}[r] & X^{\prime}
\ar@{^{(}->}[d]_{i}
\ar@{->>}[r] & J(X^{\prime}) \ar@{..>}[d]^{j} \\
{\mathcal{F}}_{1} X           \ar@{^{(}->}[r] & X           \ar@
{->>}[r] & J(X), \\
}
\label{lcux1}%
\end{equation}
where the dotted arrows come from the naturality of the filtration. To prepare
the following arguments, we recall that $\mathsf{LCA}_{R}$ is weakly
idempotent complete (this is implied by being quasi-abelian, so all kernels
exist, in particular those of idempotents). By \cite[Prop. 7.6, dualized]%
{MR2606234} the left dotted arrow in Eq. \ref{lcux1} must be an admissible
monic. We apply the snake lemma, in the version of \cite[Cor. 8.13]%
{MR2606234}. All downward arrows in Eq. \ref{lcux1} are admissible morphisms
(for the left two because they are admissible monics, and for the right one
since it is between discrete modules, i.e., lives in a full subcategory which
is abelian). We obtain the exact sequence%
\begin{equation}
0\rightarrow\ker(j)\rightarrow\mathcal{F}_{1}X/\mathcal{F}_{1}X^{\prime
}\overset{t}{\rightarrow}X^{\prime\prime}\rightarrow\operatorname*{coker}%
(j)\rightarrow0 \label{lcux2}%
\end{equation}
in the category $\mathsf{LCA}_{R}$. As $\mathcal{F}_{1}X/\mathcal{F}%
_{1}X^{\prime}$ is quasi-adelic (by\ Rmk. \ref{rmk_MaxQASubobjectIsF1} both
$\mathcal{F}_{1}X$ and $\mathcal{F}_{1}X^{\prime}$ are, and therefore so is
their quotient by \cite[Lem. 2.18]{kthyartin}). As $\ker(j)$ is simultaneously
the kernel of the arrow $t$ in the above sequence, $\ker(j)$ is a direct sum
of something quasi-adelic and a \textit{finite-dimensional} $F$-vector space
(\cite[Thm. 2.19]{kthyartin} and unravel the definition of $\mathsf{LCA}%
_{F,qab+fd}$ in the notation \textit{loc. cit.}). However, since $\ker(j)$ is
discrete, $\ker(j)$ is a finite-dimensional $F$-vector space itself. Now two
steps remain: \textit{(Step 1)} Eq. \ref{lcux2} induces%
\begin{equation}
(\mathcal{F}_{1}X/\mathcal{F}_{1}X^{\prime})/\ker(j)\hookrightarrow
X^{\prime\prime}\twoheadrightarrow\operatorname*{coker}(j)\text{.}
\label{lcux2b}%
\end{equation}
Since $j$ is a morphism between discrete objects, $\operatorname*{coker}(j)$
is discrete. And the left term is, as a quotient of something quasi-adelic,
itself quasi-adelic\footnote{If $q\colon A\twoheadrightarrow B$ is an epic and
$\mathcal{F}_{1}A=A$, then $q(A)\subseteq\mathcal{F}_{1}B$ must be all of $B$,
so $\mathcal{F}_{1}B=B$, i.e., $B$ is quasi-adelic.}. Thus, since the
filtration of Def. \ref{def_PropF} is canonical, we see that Eq. \ref{lcux2b}
agrees with $\mathcal{F}_{1}X^{\prime\prime}\hookrightarrow X^{\prime\prime
}\twoheadrightarrow J(X^{\prime\prime})$, i.e., $\operatorname*{coker}%
(j)=J(X^{\prime\prime})$. \textit{(Step 2)} As $\ker(j)$ is, by construction,
an $R$-module and simultaneously finite-dimensional as an $F$-vector space, it
can have at most zero-dimensional support as a discrete $R$-module, i.e.,
$\ker(j)\in\mathsf{Mod}_{R,0}$ (Lemma
\ref{lemma_ModHasZeroDimSupportIffFiniteDimensional}). Read in the quotient
exact category $\mathsf{Mod}_{R}/\mathsf{Mod}_{R,0}$, the right column of the
snake lemma diagram of Eq. \ref{lcux1} therefore becomes $\overline
{J}(X^{\prime})\overset{j}{\hookrightarrow}\overline{J}(X)\twoheadrightarrow
\overline{J}(X^{\prime\prime})$. This finishes the proof.
\end{proof}

\begin{remark}
The proof also shows that the functor $J$ is right exact. However, we have no
need for this property.
\end{remark}

\begin{definition}
Let $\mathsf{M}\subseteq\mathsf{LCA}_{R}$ be the full subcategory of objects
which admit an isomorphism\footnote{Equivalently, one can demand that this
holds for \textit{any} such isomorphism. This is seen by the properties of the
filtration $\mathcal{F}$.} $M\simeq K\oplus A\oplus D$ (as in Eq.
\ref{lwwmix1}) such that $D\in\mathsf{Mod}_{R,0}$.
\end{definition}

\begin{lemma}
\label{lemma_toolbox}Suppose $\mathsf{A}$ is an abelian category and
$\mathsf{A}^{\prime}\subseteq\mathsf{A}$ a (weak) Serre subcategory. Recall
that $\mathsf{D}_{\mathsf{A}^{\prime}}^{b}(\mathsf{A})\subseteq\mathsf{D}%
^{b}(\mathsf{A})$ denotes the full subcategory of bounded complexes in
$\mathsf{A}$ whose homology lies in $\mathsf{A}^{\prime}$ (as in \cite[Def.
13.2.7]{MR2182076}).

\begin{enumerate}
\item The inclusion $\mathsf{D}^{b}(\mathsf{Mod}_{R,fg})\overset{\sim
}{\rightarrow}\mathsf{D}_{\mathsf{Mod}_{R,fg}}^{b}(\mathsf{Mod}_{R})$ is an
equivalence of triangulated categories.

\item The inclusion $\mathsf{D}^{b}(\mathsf{Mod}_{R,0})\overset{\sim
}{\rightarrow}\mathsf{D}_{\mathsf{Mod}_{R,0}}^{b}(\mathsf{Mod}_{R,fg})$ is an
equivalence of triangulated categories.

\item The inclusion functor $\mathsf{D}^{b}(\mathsf{M})\rightarrow
\mathsf{D}^{b}(\mathsf{LCA}_{R})$ is fully faithful.
\end{enumerate}
\end{lemma}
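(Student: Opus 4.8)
The plan is to establish parts (1)--(3) of Lemma~\ref{lemma_toolbox} separately, with (1) and (2) being standard dévissage statements and (3) the genuinely new input, which will be deduced from the $\operatorname{Ext}^{1}$-vanishing of Corollary~\ref{cor_YonedaTrick2}. For (1) and (2) I would invoke the general principle (as in \cite[Def.~13.2.7, and the surrounding discussion]{MR2182076}) that if $\mathsf{A}^{\prime}\subseteq\mathsf{A}$ is a Serre (or weak Serre) subcategory such that every object of $\mathsf{A}^{\prime}$ admits an epimorphism (or, dually, a monomorphism) from an object lying in a chosen subcategory, then the natural functor $\mathsf{D}^{b}(\mathsf{A}^{\prime})\rightarrow\mathsf{D}^{b}_{\mathsf{A}^{\prime}}(\mathsf{A})$ is an equivalence. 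Concretely, for (1) one uses that $R$ is Noetherian, so every object of $\mathsf{Mod}_{R}$ is a filtered colimit of finitely generated submodules and every bounded complex with finitely generated homology is quasi-isomorphic to a bounded complex of finitely generated modules; this is the classical statement that $\mathsf{D}^{b}(\mathsf{Mod}_{R,fg})\simeq\mathsf{D}^{b}_{\mathrm{coh}}(\mathsf{Mod}_{R})$. For (2) one uses that $\mathsf{Mod}_{R,0}\subseteq\mathsf{Mod}_{R,fg}$ is a Serre subcategory (already observed in \S\ref{section_ReminderOnSupport}, via Eq.~\ref{lhh1}) and that it is closed under subobjects and quotients inside $\mathsf{Mod}_{R,fg}$, so the same dévissage/truncation argument applies verbatim. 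Neither of these requires Property~$(\mathbf{F})$, so I would state them with only the Noetherian hypothesis in force.

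For part (3) the strategy is to verify full faithfulness directly on $\operatorname{Hom}$-groups, i.e.\ to show that for $X,Y\in\mathsf{M}$ the map
\[
\operatorname{Hom}_{\mathsf{D}^{b}(\mathsf{M})}(X,\Sigma^{n}Y)\longrightarrow\operatorname{Hom}_{\mathsf{D}^{b}(\mathsf{LCA}_{R})}(X,\Sigma^{n}Y)
\]
is an isomorphism for all $n$. Since $\mathsf{M}$ is an extension-closed full subcategory of the exact category $\mathsf{LCA}_{R}$ (one checks closure under extensions using the decomposition $X\simeq K\oplus A\oplus D$ and the canonicity of the filtration $\mathcal{F}$ from Def.~\ref{def_PropF}), it is itself an exact category and the statement reduces, by the usual comparison between morphisms in the bounded derived category and Yoneda $\operatorname{Ext}$-groups, to showing that
\[
\operatorname{Ext}^{n}_{\mathsf{M}}(X,Y)\longrightarrow\operatorname{Ext}^{n}_{\mathsf{LCA}_{R}}(X,Y)
\]
is an isomorphism for $n\geq 0$ (it is trivially an equality for $n=0$). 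For this I would use the standard criterion (e.g.\ along the lines of Keller's or Neeman's treatment of derived categories of exact categories, cf.\ \cite{MR1813503}): the inclusion of an extension-closed subcategory induces a fully faithful functor on bounded derived categories provided the subcategory is ``right (or left) cofinal'' for the purposes of resolving $\operatorname{Ext}$, which here takes the concrete form that every extension in $\mathsf{LCA}_{R}$ with outer terms in $\mathsf{M}$ can, after pushout/pullback along a morphism of $\mathsf{M}$, be trivialised. This is exactly what Corollary~\ref{cor_YonedaTrick2} provides for the adelic summand, and the compact and discrete summands are handled by Pontryagin duality and by part~(2) respectively.

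In more detail, for a class $\phi\in\operatorname{Ext}^{1}_{\mathsf{LCA}_{R}}(X,Y)$ with $X,Y\in\mathsf{M}$ I would decompose $Y\simeq K_{Y}\oplus A_{Y}\oplus D_{Y}$ with $D_{Y}\in\mathsf{Mod}_{R,0}$, and treat the three components of $\phi$ separately. The adelic component lies in $\operatorname{Ext}^{1}(X,A_{Y})$, and Corollary~\ref{cor_YonedaTrick2} supplies an admissible monic $a\colon A_{Y}\hookrightarrow\widehat{A}$ with $\widehat{A}$ adelic killing it under Yoneda product; since $\widehat{A}\in\mathsf{M}$ and the cokernel of $a$ is again adelic (hence in $\mathsf{M}$), this exhibits the ambient extension as coming, up to the equivalence, from within $\mathsf{M}$. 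The compact component is dual to an adelic one under $(-)^{\vee}$, which is an exact equivalence $\mathsf{LCA}_{R}^{\operatorname{op}}\simeq\mathsf{LCA}_{R}$ preserving $\mathsf{M}$, so the same trick applies after dualising. The discrete component lies in $\operatorname{Ext}^{1}$ of objects of $\mathsf{Mod}_{R,0}$, where part~(2) already gives the comparison. Iterating this ``effacement'' argument (formally: building an injective-type resolution of $Y$ inside $\mathsf{M}$ one step at a time, or inductively reducing $\operatorname{Ext}^{n}$ to $\operatorname{Ext}^{n-1}$) yields the isomorphism on all higher $\operatorname{Ext}$-groups and hence full faithfulness. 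The main obstacle I anticipate is bookkeeping: making the three-summand decomposition interact cleanly with the Yoneda-product computation and with the passage from $\operatorname{Ext}^{1}$ to $\operatorname{Ext}^{n}$, i.e.\ checking that the auxiliary objects $\widehat{A}$, the duals of compact pieces, and the resolutions in $\mathsf{Mod}_{R,0}$ can be assembled into a single object of $\mathsf{M}$ at each inductive step without losing the filtration-compatibility guaranteed by Property~$(\mathbf{F})$. Once that is arranged, the rest is formal.
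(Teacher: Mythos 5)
Parts (1) and (2) are fine in outline, but for (2) your justification is too weak: being a Serre subcategory closed under subobjects and quotients does \emph{not} by itself give $\mathsf{D}^{b}(\mathsf{A}')\simeq\mathsf{D}^{b}_{\mathsf{A}'}(\mathsf{A})$. You need the cofinality hypothesis of \cite[Thm.~13.2.8]{MR2182076}: every monic $C\hookrightarrow X$ with $C\in\mathsf{Mod}_{R,0}$, $X\in\mathsf{Mod}_{R,fg}$ must extend to a map $X\to C''$ with $C''\in\mathsf{Mod}_{R,0}$ and $C\to C''$ still monic. This is where the injective hull enters: $\operatorname{supp}E_{R}(C)\subseteq\operatorname{supp}C$ (Eq.~\ref{lhh2}), and one replaces $E_{R}(C)$ by a finitely generated submodule containing the images of $C$ and $X$, which then again has zero-dimensional support. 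Without that input, (2) is not "verbatim" the same dévissage as (1).

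The serious gap is in (3). Your reduction to showing $\operatorname{Ext}^{n}_{\mathsf{M}}(X,Y)\to\operatorname{Ext}^{n}_{\mathsf{LCA}_{R}}(X,Y)$ is an isomorphism is legitimate, but your verification only ever considers extensions whose outer terms \emph{both} lie in $\mathsf{M}$. For $n=1$ that case is vacuous: $\mathsf{M}$ is extension-closed (Lemma~\ref{lemma_MIsFullyExactSubcat}), so the middle term of any such extension is automatically in $\mathsf{M}$ and there is nothing to efface. All the content sits in $n\geq2$, and there your induction does not close: splicing an $n$-fold Yoneda extension produces intermediate objects that are arbitrary objects of $\mathsf{LCA}_{R}$, so "reducing $\operatorname{Ext}^{n}$ to $\operatorname{Ext}^{n-1}$" forces you to control $\operatorname{Ext}^{1}(Z,M')$ for $Z\notin\mathsf{M}$ — exactly the case you never treat. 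The criterion that actually works (Keller's condition (b) in \cite[\S 12, Thm.~12.1]{MR1421815}, i.e.\ "right special" in the sense of \cite[Def.~2.12]{MR3510209}) demands: for \emph{every} exact sequence $M'\hookrightarrow X\twoheadrightarrow X''$ with only $M'\in\mathsf{M}$ and $X,X''$ arbitrary, a map to an exact sequence in $\mathsf{M}$ that is the identity on $M'$. Two further assertions in your sketch are false as stated: the compact summand is not "dual to an adelic one" — Pontryagin duality exchanges compact with discrete and fixes adelic objects — and part (2) says nothing about $\operatorname{Ext}^{1}_{\mathsf{LCA}_{R}}(X,D)$ for a general locally compact $X$ and discrete $D$; it only compares derived categories of discrete modules. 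The discrete summand $D'$ of the subobject is precisely where all the work lies: one embeds $D'\hookrightarrow D'\otimes_{F}\mathbb{A}_{S}$ and must produce a \emph{continuous} lift $X\to\widehat{A}$ of this embedding, whose obstruction lives in $\operatorname{Ext}^{1}(X/D',A)$ and is killed by Corollary~\ref{cor_YonedaTrick2}; the compact and adelic summands of $M'$ need no effacement at all, since by Property $(\mathbf{F})$ they already map into $\mathcal{F}_{1}X$.
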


\begin{proof}
(1) We shall employ the dual version of \cite[Thm. 13.2.8]{MR2182076}: Since
$R$ is Noetherian by assumption, $\mathsf{Mod}_{R,fg}\subseteq\mathsf{Mod}%
_{R}$ is a Serre subcategory. Hence, given an epic $e\colon
X\twoheadrightarrow C$ with $X\in\mathsf{Mod}_{R}$ and $C\in\mathsf{Mod}%
_{R,fg}$, we can pick a finitely generated free $R$-module $C^{\prime}%
:=R^{n}\in\mathsf{Mod}_{R,fg}$ surjecting onto $C$, and being projective, a
lift $f^{\prime}$ exists, as depicted below on the left:%
\begin{equation}%
{
\begin{tikzcd}
	{C'} \\
	\\
	X && {C.}
	\arrow["{f'}"', dashed, from=1-1, to=3-1]
	\arrow["f", two heads, from=1-1, to=3-3]
	\arrow["e"', two heads, from=3-1, to=3-3]
\end{tikzcd}
}%
\qquad%
{
\begin{tikzcd}
	C && X \\
	\\
	&& {E_R(C).}
	\arrow["m", hook, from=1-1, to=1-3]
	\arrow["f"', from=1-1, to=3-3]
	\arrow["{f'}", dashed, from=1-3, to=3-3]
\end{tikzcd}
}
\label{lg6}%
\end{equation}
(2) Here we employ \cite[Thm. 13.2.8]{MR2182076} directly: $\mathsf{Mod}%
_{R,0}\subseteq\mathsf{Mod}_{R,fg}$ is a Serre subcategory (see the discussion
in \S \ref{section_ReminderOnSupport}). Given a monic $m\colon
C\hookrightarrow X$ with $C\in\mathsf{Mod}_{R,0}$ and $X\in\mathsf{Mod}%
_{R,fg}$, an injective hull $E_{R}(C)\in\mathsf{Mod}_{R}$ (usually not
finitely generated!) is injective and we obtain the lift $f^{\prime}$ in Eq.
\ref{lg6} on the right. Since both $C$ and $X$ are finitely generated, their
joint images inside $E_{R}(C)$ will be finitely generated, so we may replace
$E_{R}(C)$ by a submodule from $\mathsf{Mod}_{R,fg}$, and by the discussion in
\S \ref{section_ReminderOnSupport}, $E_{R}(C)$ has zero-dimensional support by
Eq. \ref{lhh2}, and so must any of its submodules. Thus, we may even replace
it by a submodule from $\mathsf{Mod}_{R,0}$.\newline(3) Since $\mathsf{LCA}%
_{R}$ is not an abelian category, we need to use a slightly different toolbox.
By Lemma \ref{lemma_MIsFullyExactSubcat} $\mathsf{M}$ is a fully exact
subcategory of $\mathsf{LCA}_{R}$. We may therefore employ Criterion
\textit{C2} via \cite[\S 12, Thm. 12.1 (b)]{MR1421815} to ensure that the
induced functor $\mathsf{D}^{b}(\mathsf{M})\rightarrow\mathsf{D}%
^{b}(\mathsf{LCA}_{R})$ is fully faithful. Thus, we need to show that for all
exact sequences $M^{\prime}\hookrightarrow X\twoheadrightarrow X^{\prime
\prime}$ in $\mathsf{LCA}_{R}$ with $M^{\prime}\in\mathsf{M}$ there exists a
commutative diagram%
\begin{equation}%
\xymatrix{
M^{\prime} \ar@{^{(}->}[r]^{u} \ar@{=}[d] & X \ar@{->>}[r] \ar[d] & X^{\prime
\prime} \ar[d] \\
M^{\prime} \ar@{^{(}->}[r] & M \ar@{->>}[r] & M^{\prime\prime} \\
}
\label{lwups3}%
\end{equation}
such that the lower row is an exact sequence in $\mathsf{M}$. We proceed as
follows: Write $M^{\prime}\simeq K^{\prime}\oplus A^{\prime}\oplus D^{\prime}$
with $D^{\prime}\in\mathsf{Mod}_{R,0}$. As $D^{\prime}$ is a
finite-dimensional $F$-vector space (Lemma
\ref{lemma_ModHasZeroDimSupportIffFiniteDimensional}), we can form the exact
sequence (for $S$ being the set of all places of $F$)%
\begin{equation}
D^{\prime}\overset{\phi}{\hookrightarrow}D^{\prime}\otimes_{F}\mathbb{A}%
_{S}\twoheadrightarrow Q\text{,} \label{lsivi1a}%
\end{equation}
where $D^{\prime}$ carries (of course) the discrete topology, but we equip the
middle term with the ad\`{e}le topology. Only because $\dim_{F}D^{\prime
}<\infty$, this is indeed an adelic object in $\mathsf{LCA}_{R}$ and the
inclusion $\phi$ is a closed continuous map (Example \ref{example_Adeles}).
The object $Q$ is just defined as the quotient (as usual, this a compact and
connected LCA group, topologically). Naturally, all three objects inherit the
$R$-module structure induced from $D^{\prime}$. We abbreviate $A:=D^{\prime
}\otimes_{F}\mathbb{A}_{S}$. From the top row in Eq. \ref{lwups3} we get the
composite admissible monic $i\colon D^{\prime}\hookrightarrow M^{\prime
}\hookrightarrow X$. We wish to lift the map $\phi$ along $i$ as in%
\begin{equation}%
\xymatrix{
D^{\prime} \ar@{^{(}->}[rr]^{i } \ar@{^{(}->}[dr]_{\phi} && X \ar@
{..>}[dl]^{\widehat{\phi}} \ar@{->>}[r] & X/D^{\prime} \\
& A.
}
\label{lsivi1}%
\end{equation}
The obstruction to the existence of such a lift is analyzed by applying
$\operatorname*{Hom}(-,A)$ to the top sequence. We get the first row of the
following diagram%
\begin{equation}%
\Scale[0.87]{\xymatrix{
\operatorname{Hom}(X/D^{\prime}, A) \ar[r] \ar[d] & \operatorname
{Hom}(X, A) \ar[r] \ar[d] & \overset{\phi\in}{\operatorname{Hom}(D^{\prime
}, A)} \ar[r]^-{\partial} \ar[d] & \operatorname{Ext}^{1}(X/D^{\prime}%
, A) \ar[r] \ar[d] & \cdots\\
\operatorname{Hom}(X/D^{\prime}, \widehat{A}) \ar[r] & \underset{\widehat
{\phi} \in}{\operatorname{Hom}(X, \widehat{A})} \ar[r] & \underset{\tilde
{\phi} \in}{\operatorname{Hom}(D^{\prime}, \widehat{A})} \ar[r]_-{\partial}
& \operatorname{Ext}^{1}(X/D^{\prime}, \widehat{A}) \ar[r] & \cdots}}
\label{lsivi2}%
\end{equation}
and by the exactness of rows, a lift exists if and only if $\partial\phi=0$.
However, whatever the class $[\partial\phi]\in\operatorname*{Ext}%
\nolimits^{1}(X/D^{\prime},A)$ may be, using Cor. \ref{cor_YonedaTrick2} there
exists an admissible monic $a\colon A\hookrightarrow\widehat{A}$ such that the
image of $[\partial\phi]$ in the bottom row is zero. Hence, at least the
composite $\tilde{\phi}\colon D^{\prime}\hookrightarrow A\hookrightarrow
\widehat{A}$ admits a lift, call it $\widehat{\phi}$. Thus, we may form the
diagram%
\begin{equation}%
\xymatrix{
M^{\prime} = K^{\prime} \oplus A^{\prime} \oplus D^{\prime} \ar@{^{(}%
->}[r]^-{u} \ar@{=}[d] & K \oplus A \oplus D = X \ar@{->>}[r] \ar
[d]_{\operatorname{id}_{K\oplus A} + \widehat{\phi}} & X^{\prime\prime}
\ar[d]^{d} \\
M^{\prime} = K^{\prime} \oplus A^{\prime} \oplus D^{\prime} \ar@{^{(}%
->}[r]_{u\mid_{K^{\prime} \oplus A^{\prime}} + \tilde{\phi}} & \underset
{=:M}{K \oplus A \oplus\widehat{A}} \ar@{->>}[r] & M^{\prime\prime}. \\
}
\label{lsivi3}%
\end{equation}
The map $u$ comes from $X^{\prime}\hookrightarrow X$ in Eq. \ref{lwups3} (by
the filtration $\mathcal{F}$ the image of $K^{\prime}\oplus A^{\prime}$ lands
in $K\oplus A$). We need to check that the left square commutes: This can be
checked on all direct summands $K^{\prime}\oplus A^{\prime}\oplus D^{\prime}$
individually. On $K^{\prime}\oplus A^{\prime}$ it is clear: By the filtration,
$u$ sends this to $K\oplus A$, and on this summand the middle downward arrow
is just the identity. On the other hand, on the direct summand $D^{\prime}$,
the commutativity stems from the commutativity of Diagram \ref{lsivi1} (resp.
the computation in Eq. \ref{lsivi2}). This settles the commutativity of the
left square in Eq. \ref{lsivi3}. The object $M^{\prime\prime}$ is merely
defined as the quotient to make the bottom row exact. As $K,A$ and
$\widehat{A}$ are all quasi-adelic, so must be $M^{\prime\prime}$ (This
follows from the filtration $\mathcal{F}$ since the map to $M^{\prime\prime}$
is surjective, so there cannot be a discrete part. Alternatively, one may
consider the forgetful functor to $\mathsf{LCA}_{F}$ and rely on
\cite{kthyartin}). The right square stems from the universal properties of
cokernels. Finally, observe that the bottom row in Eq. \ref{lsivi3} lies in
$\mathsf{M}$. This finishes the proof of Condition \textit{C2}.
\end{proof}

We point out that in the previous proof the key complication is to ensure that
the lift $\widehat{\phi}$ is continuous. The existence of a merely algebraic
lift is much easier.

\begin{example}
The Condition \textit{C2} in the proof is the same as demanding that
$\mathsf{M}\hookrightarrow\mathsf{LCA}_{R}$ is right special in the sense of
\cite[Def. 2.12]{MR3510209}. However, $\mathsf{M}\hookrightarrow
\mathsf{LCA}_{R}$ is \textit{not} right filtering: Take $R:=\mathbf{Q}%
[T,T^{-1}]$, which is spanned by units and thus has Property $(\mathbf{F})$.
There is a continuous injective map%
\[
f\colon\mathbf{Q}[T,T^{-1}]\longrightarrow\mathbf{R}%
\]
sending $T$ to $\pi$ (or any other transcendental number). If $\mathsf{M}%
\hookrightarrow\mathsf{LCA}_{R}$ were right filtering, there would exist a
factorization of $f$ of the shape $\mathbf{Q}[T,T^{-1}]\overset{g}%
{\twoheadrightarrow}D\rightarrow\mathbf{R}$ with $D\in\mathsf{Mod}_{R,0}$. As
$f$ is injective, $\ker(g)=0$ and thus such a $D$ cannot exist. In summary,
$\mathsf{M}\hookrightarrow\mathsf{LCA}_{R}$ is not right $s$-filtering and
therefore Schlichting's localization techniques from \cite{MR2079996} cannot
be employed. However, the inclusion $\mathsf{M}\hookrightarrow\mathsf{LCA}%
_{R}$ is left filtering.
\end{example}

\subsection{Computations}

For an exact category $\mathsf{E}$, let $(\mathsf{C}^{b}(\mathsf{E}),q)$
denote the dg category (as well as complicial exact category with weak
equivalences \cite[\S 3.2.9, but see \S 3.2.2-\S 3.2.10 for the wider
context]{MR2762556}) whose objects are bounded complexes in $\mathsf{E}$ and
the weak equivalences, denoted by $q$, are the quasi-isomorphisms. Thanks to
Lemma \ref{lemma_JOverlineIsExact}, the functor $\overline{J}$, induces a dg
functor%
\[
(\mathsf{C}^{b}(\mathsf{LCA}_{R}),q)\overset{\overline{J}}{\longrightarrow
}(\mathsf{C}^{b}(\mathsf{Mod}_{R}/\mathsf{Mod}_{R,0}),q)\text{.}%
\]

\begin{lemma}
\label{lemma_MIsFullyExactSubcat}The category $\mathsf{M}$ is fully exact (cf.
\cite[Def. 10.21]{MR2606234}) inside $\mathsf{LCA}_{R}$.
\end{lemma}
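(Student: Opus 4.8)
The goal is to show that $\mathsf{M}\subseteq\mathsf{LCA}_{R}$ is a \emph{fully exact} subcategory, i.e.\ it is an extension-closed full subcategory, which then automatically inherits an exact structure from the ambient quasi-abelian category (see \cite[Def.\ 10.21 and the surrounding discussion]{MR2606234}). By definition $\mathsf{M}$ consists of those $X\in\mathsf{LCA}_{R}$ admitting a (any) decomposition $X\simeq K\oplus A\oplus D$ as in Eq.\ \ref{lwwmix1} with $D\in\mathsf{Mod}_{R,0}$. So the plan is: first note that $\mathsf{M}$ is closed under isomorphism and contains $0$ (trivial), and then verify extension-closedness.

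The main step is the following: suppose $M'\hookrightarrow X\twoheadrightarrow M''$ is an exact sequence in $\mathsf{LCA}_{R}$ with $M',M''\in\mathsf{M}$, and I must show $X\in\mathsf{M}$. Since $R$ has Property $(\mathbf{F})$, every object, in particular $X$, already carries the canonical $3$-step filtration with compact/adelic/discrete graded pieces and already splits as $X\simeq K_X\oplus A_X\oplus D_X$. So the only thing to check is that $D_X=\operatorname{gr}_{\mathcal{F}}(X)_2\in\mathsf{Mod}_{R,0}$, i.e.\ that the discrete part of $X$ has zero-dimensional support. By Property $(\mathbf{F})$(2) the inclusion and quotient maps respect the filtration, so applying $\overline{J}$ (Lemma \ref{lemma_JOverlineIsExact}) — or more directly the snake-lemma analysis carried out in the proof of Lemma \ref{lemma_JOverlineIsExact} — the sequence of discrete parts $J(M')\to J(X)\to J(M'')$ fits into an exact sequence
\[
0\to \ker(j)\to \mathcal{F}_1X/\mathcal{F}_1M'\to M''\to \operatorname{coker}(j)\to 0
\]
with $\ker(j)$ a finite-dimensional $F$-vector space and $\operatorname{coker}(j)=J(M'')=D_{M''}$; moreover in $\mathsf{Mod}_R/\mathsf{Mod}_{R,0}$ one gets a short exact sequence $\overline{J}(M')\hookrightarrow\overline{J}(X)\twoheadrightarrow\overline{J}(M'')$. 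Since $D_{M'},D_{M''}\in\mathsf{Mod}_{R,0}$ by hypothesis and $\mathsf{Mod}_{R,0}$ is a Serre subcategory of $\mathsf{Mod}_{R,fg}$ (Eq.\ \ref{lhh1}, \S\ref{section_ReminderOnSupport}), it follows that $\overline{J}(X)=0$ in the quotient category, i.e.\ $D_X=J(X)\in\mathsf{Mod}_{R,0}$. Hence $X\in\mathsf{M}$.

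The one point requiring a little care — and the place I'd expect to spend the most effort — is the bookkeeping that $\ker(j)$ and $\operatorname{coker}(j)$ genuinely land in $\mathsf{Mod}_{R,0}$: $\ker(j)$ is discrete and finite-dimensional over $F$, hence of zero-dimensional support by Lemma \ref{lemma_ModHasZeroDimSupportIffFiniteDimensional}, while the identification $\operatorname{coker}(j)=D_{M''}$ uses the canonicity of the filtration $\mathcal{F}$ exactly as in Step 1 of the proof of Lemma \ref{lemma_JOverlineIsExact}. Since all of this has effectively been established there, the proof of the present lemma reduces to invoking Lemma \ref{lemma_JOverlineIsExact} together with the Serre property of $\mathsf{Mod}_{R,0}$ and the fact that $X$ itself splits by Property $(\mathbf{F})$. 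Finally I would record that $\mathsf{M}$ is closed under isomorphisms — immediate from the footnote to its definition, which says the splitting condition can be tested on any decomposition — so $\mathsf{M}$ is an extension-closed replete full subcategory of $\mathsf{LCA}_R$ and therefore fully exact.
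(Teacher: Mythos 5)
Your proposal is correct and follows the paper's own argument: the paper likewise reduces to extension-closedness and applies the exact functor $\overline{J}$ of Lemma \ref{lemma_JOverlineIsExact} to the sequence $M'\hookrightarrow X\twoheadrightarrow M''$, obtaining $0\hookrightarrow\overline{J}X\twoheadrightarrow 0$ and hence $\overline{J}X=0$, i.e.\ $X\in\mathsf{M}$. The extra snake-lemma bookkeeping you redo is already contained in the proof of Lemma \ref{lemma_JOverlineIsExact} and need not be repeated.
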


\begin{proof}
It suffices to check that the full subcategory $\mathsf{M}$ is
extension-closed. If $X^{\prime}\hookrightarrow X\twoheadrightarrow
X^{\prime\prime}$ is an exact sequence in $\mathsf{LCA}_{R}$ with $X^{\prime
},X^{\prime\prime}\in\mathsf{M}$, applying $\overline{J}$ yields the exact
sequence $0\hookrightarrow\overline{J}X\twoheadrightarrow0$, forcing
$\overline{J}X=0$, thus $X\in\mathsf{M}$.
\end{proof}

Consider the commutative diagram%
\begin{equation}%
{
\begin{tikzcd}
	{(\mathsf{C}^{b}(\mathsf{M}),q)} \\
	\\
	{(\mathsf{C}^{b}(\mathsf{LCA}_{R})^{w},q)} && {(\mathsf{C}^{b}(\mathsf
{LCA}_{R}),q)} && {(\mathsf{C}^{b}(\mathsf{Mod}_{R}/\mathsf{Mod}_{R,0}),q),}
	\arrow["U"', from=1-1, to=3-1]
	\arrow[from=3-1, to=3-3]
	\arrow["{ \overline{J}}", from=3-3, to=3-5]
\end{tikzcd}
}
\label{lvm1}%
\end{equation}
where the superscript `$(-)^{w}$' signifies the dg subcategory\footnote{and
fully exact subcategory with weak equivalences \cite[\S 3.2.10]{MR2762556}} of
objects $X$ such that $\overline{J}(X)$ is weakly equivalent to zero. The
lower row is a Verdier localization sequence by construction. The left
downward arrow $U$ induces an equivalence on the level of homotopy categories.
We provide a detailed argument: First, define%
\[
U\colon\mathsf{M}\longrightarrow\mathsf{LCA}_{R}\text{,}%
\]
as an exact functor between exact categories ($U$ preserves exactness by Lemma
\ref{lemma_MIsFullyExactSubcat}). This, in turn, induces a dg functor of dg
categories%
\[
\mathsf{C}^{b}(\mathsf{M})\overset{U}{\longrightarrow}\mathsf{C}%
^{b}(\mathsf{LCA}_{R})\text{.}%
\]
Since $\overline{J}(X)=0$ for all $X\in\mathsf{M}$ (write $X\simeq K\oplus
A\oplus D$ as in Eq. \ref{lwwmix1}, and evidently each summand gets sent to
zero), this functor restricts to a dg functor%
\[
\mathsf{C}^{b}(\mathsf{M})\overset{U}{\longrightarrow}\mathsf{C}%
^{b}(\mathsf{LCA}_{R})^{w}\text{.}%
\]
We will now prove the following:

\begin{proposition}
\label{prop_EquivOfDerCats}The functor $U$ induces an equivalence on
$K$-theory%
\[
K(\mathsf{M})\overset{\sim}{\longrightarrow}K(\mathsf{C}^{b}(\mathsf{LCA}%
_{R})^{w})\text{.}%
\]

\end{proposition}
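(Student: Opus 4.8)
The goal is to show that the exact functor $U\colon\mathsf{M}\to\mathsf{C}^{b}(\mathsf{LCA}_{R})^{w}$ of diagram \eqref{lvm1} induces an equivalence on $K$-theory. My plan is to upgrade the fully-faithfulness statement of Lemma \ref{lemma_toolbox}~(3) to an equivalence of the relevant dg/derived categories, and then invoke invariance of $K$-theory under equivalences of (complicial) exact categories with weak equivalences, e.g.\ \cite{MR2762556}. Concretely, $\mathsf{C}^{b}(\mathsf{LCA}_{R})^{w}$ comes with the quasi-isomorphisms $q$ as weak equivalences, so its $K$-theory is the $K$-theory of its associated homotopy (triangulated) category, which is $\mathsf{D}^{b}_{\mathsf{M}}(\mathsf{LCA}_{R})$: the full subcategory of $\mathsf{D}^{b}(\mathsf{LCA}_{R})$ spanned by complexes all of whose homology objects lie in $\mathsf{M}$ (this is where I use that $\overline{J}$ is exact, Lemma \ref{lemma_JOverlineIsExact}, so that ``$\overline{J}(X)$ weakly equivalent to zero'' is detected on homology). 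So the statement reduces to proving that the natural functor
\[
\mathsf{D}^{b}(\mathsf{M})\longrightarrow\mathsf{D}^{b}_{\mathsf{M}}(\mathsf{LCA}_{R})
\]
is an equivalence of triangulated categories.

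\textbf{Key steps.} First I would record that $\mathsf{M}\hookrightarrow\mathsf{LCA}_{R}$ is fully exact (Lemma \ref{lemma_MIsFullyExactSubcat}), so $\mathsf{D}^{b}(\mathsf{M})$ makes sense and the comparison functor is well-defined and exact. Second, full faithfulness of $\mathsf{D}^{b}(\mathsf{M})\to\mathsf{D}^{b}(\mathsf{LCA}_{R})$ is exactly Lemma \ref{lemma_toolbox}~(3); since the target of the comparison functor is a full subcategory of $\mathsf{D}^{b}(\mathsf{LCA}_{R})$, full faithfulness onto $\mathsf{D}^{b}_{\mathsf{M}}(\mathsf{LCA}_{R})$ follows. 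Third — the essential surjectivity — I must show every $C\in\mathsf{D}^{b}(\mathsf{LCA}_{R})$ with $H^{n}(C)\in\mathsf{M}$ for all $n$ is isomorphic, in the derived category, to a bounded complex with entries in $\mathsf{M}$. The standard device is a dévissage/truncation argument: using the stupid or canonical truncation together with Lemma \ref{lemma_toolbox}~(3)'s mechanism (producing, for each relevant short exact sequence $M'\hookrightarrow X\twoheadrightarrow X''$ with $M'\in\mathsf{M}$, a replacement whose middle term is in $\mathsf{M}$), I would inductively replace $C$ by a quasi-isomorphic complex built from objects of $\mathsf{M}$, working one homological degree at a time from the top. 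Equivalently, this is precisely Criterion \textit{C2} of \cite[\S 12, Thm.\ 12.1]{MR1421815} already verified inside the proof of Lemma \ref{lemma_toolbox}~(3), which gives both the full faithfulness \emph{and} the statement that $\mathsf{D}^{b}(\mathsf{M})\to\mathsf{D}^{b}_{\mathsf{M}}(\mathsf{LCA}_{R})$ is an equivalence. Finally, translating back through \cite{MR2762556}: an equivalence of the homotopy categories underlying $(\mathsf{C}^{b}(\mathsf{M}),q)$ and $(\mathsf{C}^{b}(\mathsf{LCA}_{R})^{w},q)$ induces an equivalence on (non-connective) $K$-theory, giving $K(\mathsf{M})\xrightarrow{\sim}K(\mathsf{C}^{b}(\mathsf{LCA}_{R})^{w})$.

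\textbf{Main obstacle.} The only real content is essential surjectivity at the level of derived categories, i.e.\ that having homology in $\mathsf{M}$ forces a representative with entries in $\mathsf{M}$. This is not formal because $\mathsf{LCA}_{R}$ is merely quasi-abelian, so one cannot simply take kernels/images freely; the resolution-type arguments must be carried out with admissible monics/epics, and the continuity of the maps involved (the recurring subtlety, e.g.\ the lift $\widehat{\phi}$ in Lemma \ref{lemma_toolbox}) is what makes it delicate. Fortunately this has already been done: the verification of Condition \textit{C2} in the proof of Lemma \ref{lemma_toolbox}~(3) supplies exactly the required replacement for short exact sequences, and the general machinery of \cite[Thm.\ 12.1]{MR1421815} (or the complicial framework of \cite[\S 3.2]{MR2762556}) then propagates it to the statement about bounded complexes. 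So the proof is a bookkeeping assembly of Lemmas \ref{lemma_JOverlineIsExact}, \ref{lemma_toolbox}, and \ref{lemma_MIsFullyExactSubcat} together with invariance of $K$-theory, with the genuinely hard analytic point already dispatched.
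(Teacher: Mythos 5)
Your reduction of the statement to ``$\mathsf{D}^{b}(\mathsf{M})\to\mathsf{D}^{b}_{\mathsf{M}}(\mathsf{LCA}_{R})$ is an equivalence'' is where the gap sits. First, the homotopy category of $(\mathsf{C}^{b}(\mathsf{LCA}_{R})^{w},q)$ is by definition the full subcategory of complexes $C_{\bullet}$ such that $\overline{J}(C_{\bullet})$ is acyclic in $\mathsf{Mod}_{R}/\mathsf{Mod}_{R,0}$; this is a condition on the homology of the image complex under the (exact but highly non-faithful) functor $\overline{J}$, not the condition that ``the homology objects of $C_{\bullet}$ lie in $\mathsf{M}$'' --- and the latter is not even well-defined, since $\mathsf{LCA}_{R}$ is only quasi-abelian. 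Second, and more seriously, Condition \textit{C2} via \cite[Thm.\ 12.1 (b)]{MR1421815} yields only that $\mathsf{D}^{b}(\mathsf{M})\to\mathsf{D}^{b}(\mathsf{LCA}_{R})$ is fully faithful (which is exactly what Lemma \ref{lemma_toolbox} (3) extracts from it); it does not identify the essential image. So the essential surjectivity onto the $\overline{J}$-acyclic complexes --- the actual content of the proposition --- is not supplied by the machinery you cite, and your proposed degreewise truncation induction would require forming kernels and images of the differentials of $C_{\bullet}$ inside $\mathsf{LCA}_{R}$ in order to produce the short exact sequences that \textit{C2} acts on, which is precisely what the quasi-abelian setting does not permit.

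The paper closes this gap by a different device: given $C_{\bullet}$ with $\overline{J}(C_{\bullet})$ acyclic, the naturality of the canonical filtration of Property $(\mathbf{F})$ produces an exact sequence of complexes $\mathcal{F}_{1}C_{\bullet}\hookrightarrow C_{\bullet}\twoheadrightarrow C_{\bullet}/\mathcal{F}_{1}C_{\bullet}$. The subcomplex has all entries in $\mathsf{M}$, hence lies in $\mathsf{D}^{b}(\mathsf{M})$; the quotient is a bounded complex of discrete $R$-modules whose homology lies in $\mathsf{Mod}_{R,0}$, and the two abelian-category d\'evissage statements, Lemma \ref{lemma_toolbox} (1) and (2) --- which your proposal never invokes --- show that it is quasi-isomorphic to a bounded complex with entries in $\mathsf{Mod}_{R,0}\subseteq\mathsf{M}$. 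Since the essential image of the fully faithful functor $\mathsf{D}^{b}(\mathsf{M})\to\mathsf{D}^{b}(\mathsf{LCA}_{R})$ is a full triangulated subcategory, the distinguished triangle coming from this exact sequence forces $C_{\bullet}$ to lie in it. You would need to supply this (or an equivalent) argument for essential surjectivity before the final passage to $K$-theory is justified.
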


\begin{proof}
First, recall that%
\begin{equation}
\mathsf{D}^{b}(\mathsf{M})\rightarrow\mathsf{D}^{b}(\mathsf{LCA}_{R})
\label{lh4}%
\end{equation}
is fully faithful by Lemma \ref{lemma_toolbox} (3), so we may regard
$\mathsf{D}^{b}(\mathsf{M})$ as a full subcategory. To start our proof, we
need to identify what $(\mathsf{C}^{b}(\mathsf{LCA}_{R})^{w},q)$ means: An
object in $(\mathsf{C}^{b}(\mathsf{LCA}_{R})^{w},q)$ is a bounded complex
$C_{\bullet}$ in $\mathsf{LCA}_{R}$ which, after applying $\overline{J}$,
becomes weakly equivalent to the zero complex in $\mathsf{Mod}_{R}%
/\mathsf{Mod}_{R,0}$. Rephrased: the complex $\overline{J}(C_{\bullet})$ is
acyclic in $\mathsf{Mod}_{R}/\mathsf{Mod}_{R,0}$.\newline We can use this as
follows: Suppose $C_{\bullet}$ is such a complex. Then by the naturality of
the filtration in Definition \ref{def_PropF} we obtain a subcomplex%
\begin{equation}
\mathcal{F}_{1}C_{\bullet}\hookrightarrow C_{\bullet}\twoheadrightarrow
C_{\bullet}/\mathcal{F}_{1}C_{\bullet}\text{.} \label{lh7}%
\end{equation}
As a subcomplex and quotient complex, both $\mathcal{F}_{1}C_{\bullet}$ and
$C_{\bullet}/\mathcal{F}_{1}C_{\bullet}$ are also bounded complexes in
$\mathsf{LCA}_{R}$. Since the objects in $\mathcal{F}_{1}C_{\bullet}$ have no
discrete pieces $D$ in their direct sum decomposition of Eq. \ref{lwwmix1}, we
obtain $\mathcal{F}_{1}C_{\bullet}\in\mathsf{C}^{b}(\mathsf{M})$ and so
$\mathcal{F}_{1}C_{\bullet}\in\mathsf{D}^{b}(\mathsf{M})$, recalling that we
may treat $\mathsf{D}^{b}(\mathsf{M})$ as a full subcategory of $\mathsf{D}%
^{b}(\mathsf{LCA}_{R})$. Next, we know that $C_{\bullet}/\mathcal{F}%
_{1}C_{\bullet}=\overline{J}(C_{\bullet})$ is a bounded acyclic complex, when
considered in $\mathsf{Mod}_{R}/\mathsf{Mod}_{R,0}$. So, rephrased,
$C_{\bullet}/\mathcal{F}_{1}C_{\bullet}$ is a bounded complex in
$\mathsf{Mod}_{R}$ whose homology lies in $\mathsf{Mod}_{R,0}$. As its
homology is finitely generated by Lemma \ref{lemma_toolbox} (1)%
\[
\mathsf{D}^{b}(\mathsf{Mod}_{R,fg})\overset{\sim}{\rightarrow}\mathsf{D}%
_{\mathsf{Mod}_{R,fg}}^{b}(\mathsf{Mod}_{R})
\]
it is quasi-isomorphic to a bounded complex of finitely generated $R$-modules.
Its homology has not changed, so instead of plain $\mathsf{D}^{b}%
(\mathsf{Mod}_{R,fg})$, our complex even lies in $\mathsf{D}_{\mathsf{Mod}%
_{R,0}}^{b}(\mathsf{Mod}_{R,fg})$. Next, Lemma \ref{lemma_toolbox} (2)%
\[
\mathsf{D}^{b}(\mathsf{Mod}_{R,0})\overset{\sim}{\rightarrow}\mathsf{D}%
_{\mathsf{Mod}_{R,0}}^{b}(\mathsf{Mod}_{R,fg})
\]
shows that it is quasi-isomorphic to a bounded complex of modules from
$\mathsf{Mod}_{R,0}$. Hence, $C_{\bullet}/\mathcal{F}_{1}C_{\bullet}%
\in\mathsf{D}^{b}(\mathsf{M})$. As Eq. \ref{lh7} is exact, it determines a
distinguished triangle in $\mathsf{D}^{b}$, so we deduce that $C_{\bullet}%
\in\mathsf{D}^{b}(\mathsf{M})$ as the other two terms lie in $\mathsf{D}%
^{b}(\mathsf{M})$. It follows that%
\[
(\mathsf{C}^{b}(\mathsf{M}),q)\overset{U}{\longrightarrow}(\mathsf{C}%
^{b}(\mathsf{LCA}_{R})^{w},q)
\]
induces an essentially surjective functor on the level of derived categories%
\[
\mathsf{D}^{b}(\mathsf{M})=\mathsf{C}^{b}(\mathsf{M})[\operatorname*{qis}%
\nolimits^{-1}]\longrightarrow\mathsf{C}^{b}(\mathsf{LCA}_{R})^{w}%
[\operatorname*{qis}\nolimits^{-1}]\text{,}%
\]
but as in Eq. \ref{lh4}, we already know that the functor is also fully
faithful. Thus, $U$ induces an equivalence on the homotopy categories, and
thus on the level of non-commutative motives. In particular,%
\[
K(\mathsf{M})\overset{\sim}{\longrightarrow}K((\mathsf{C}^{b}(\mathsf{LCA}%
_{R})^{w},q))
\]
is an equivalence on $K$-theory.
\end{proof}

\subsection{Proof of Theorem \ref{introThmB}}

We recall that for a scheme $X$ over $\operatorname*{Spec}\mathcal{O}_{F}$, we
write $X_{F}:=X\times_{\mathcal{O}_{F}}F$ for the generic fiber. Write $j$ for
the induced map $j\colon X_{F}\rightarrow X$. For brevity, we denote by
$\mathcal{M}_{F}$ the pullback $j^{\ast}\mathcal{M}$ of a sheaf on $X$. Let
$\mathcal{N}_{\leq d}\mathsf{Coh}_{X}$ denote full subcategory of
$\mathcal{O}_{F}$-flat coherent sheaves on $X$ such that $\mathcal{M}_{F}$ has
support of dimension $\leq d$. Since $\mathsf{Coh}_{X}$ is an abelian category
and the functor $(-)_{F}=j^{\ast}$ is exact, invoking Eq. \ref{lhh1} shows
that $\mathcal{N}_{\leq d}\mathsf{Coh}_{X}$ is an extension-closed subcategory
of $\mathsf{Coh}_{X}$. In particular, it is itself an exact category. For
those schemes whose structure morphism $X\rightarrow\operatorname*{Spec}%
\mathcal{O}_{F}$ factors over $\operatorname*{Spec}F$ (in brief: schemes
defined over $F$), all sheaves are automatically $\mathcal{O}_{F}$-flat, so
$\mathcal{N}_{\leq d}\mathsf{Coh}_{X}$ is the usual niveau/dimension
filtration of the category of coherent sheaves. In particular, in this case
$\mathcal{N}_{\leq d}\mathsf{Coh}_{X}$ is even an abelian category (because it
is a Serre subcategory in all coherent sheaves).

\begin{lemma}
\label{keylocallemma}Let $R$ be a finitely generated $F$-algebra satisfying
Property $(\mathbf{F})$. Let $\mathcal{R}$ be a finitely generated flat
$\mathcal{O}_{F}$-algebra such that $\mathcal{R}\otimes_{\mathcal{O}_{F}%
}F\simeq R$. Define%
\[
X:=\operatorname*{Spec}R\qquad\text{and}\qquad\mathcal{X}%
:=\operatorname*{Spec}\mathcal{R}\text{.}%
\]
As usual, $X_{v}:=\operatorname*{Spec}R\otimes_{F}F_{v}$ and $\mathcal{X}%
_{v}:=\operatorname*{Spec}\mathcal{R}\otimes_{\mathcal{O}_{F}}\mathcal{O}_{v}%
$. Then the sequence%
\[
K(\mathcal{N}_{\leq0}\mathsf{Coh}_{X})\longrightarrow\left.  \underset{v\in
S\;}{\prod\nolimits^{\prime}}\right.  K(\mathcal{N}_{\leq0}\mathsf{Coh}%
_{X_{v}}):K(\mathcal{N}_{\leq0}\mathsf{Coh}_{\mathcal{X}_{v}})\longrightarrow
K(\mathsf{LCA}_{R})
\]
induced by the exact functor $(-)\mapsto(-)\otimes_{F}\mathbb{A}_{S}$ as the
first arrow, is a fiber sequence of spectra.
\end{lemma}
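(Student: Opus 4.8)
The plan is to decompose $\mathsf{LCA}_R$ via the canonical $3$-step filtration of Property $(\mathbf{F})$ and to match the three associated graded strata---compact, adelic, discrete---with the three terms appearing in the claimed fiber sequence, so that the sequence becomes the $K$-theoretic shadow of the localization sequence $\mathsf{M}\hookrightarrow\mathsf{LCA}_R\to\mathsf{Mod}_R/\mathsf{Mod}_{R,0}$. More precisely, Diagram \ref{lvm1} together with Proposition \ref{prop_EquivOfDerCats} already gives a fiber sequence
\[
K(\mathsf{M})\overset{\sim}{\longrightarrow}K(\mathsf{C}^b(\mathsf{LCA}_R)^w,q)\longrightarrow K(\mathsf{LCA}_R)\longrightarrow K(\mathsf{C}^b(\mathsf{Mod}_R/\mathsf{Mod}_{R,0}),q)\simeq K(\mathsf{Mod}_R/\mathsf{Mod}_{R,0}),
\]
so the first step is to rewrite the cofiber of $K(\mathsf{M})\to K(\mathsf{LCA}_R)$ as $K(\mathsf{Mod}_R/\mathsf{Mod}_{R,0})$, and then to identify $K(\mathsf{Mod}_R/\mathsf{Mod}_{R,0})$ with $\Sigma$ of the fiber of the first arrow in the statement. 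By the localization theorem applied to the Serre subcategory $\mathsf{Mod}_{R,0}\subseteq\mathsf{Mod}_{R,fg}$ (using Lemma \ref{lemma_toolbox} (1),(2) to replace $\mathsf{Mod}_R/\mathsf{Mod}_{R,0}$ by $\mathsf{Mod}_{R,fg}/\mathsf{Mod}_{R,0}$ up to $K$-theory), there is a fiber sequence $K(\mathsf{Mod}_{R,0})\to K(\mathsf{Mod}_{R,fg})\to K(\mathsf{Mod}_R/\mathsf{Mod}_{R,0})$; and since $\mathsf{Mod}_{R,0}=\mathcal{N}_{\leq 0}\mathsf{Coh}_X$ and $K(\mathsf{Mod}_{R,fg})\simeq K(X)$ is a summand we must account for, the cofiber in question wants to be built from $\mathcal{N}_{\leq 0}\mathsf{Coh}$ at the generic and local levels.

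Next I would handle $K(\mathsf{M})$ itself. Since every object of $\mathsf{M}$ splits (non-canonically, but functorially enough at the level of $K$-theory) as $K\oplus A\oplus D$ with $K$ compact, $A$ adelic, $D\in\mathsf{Mod}_{R,0}$, and since each summand category is extension-closed, one gets $K(\mathsf{M})\simeq K(\mathsf{LCA}_{R,\mathrm{cpt}})\oplus K(\mathsf{LCA}_{R,ad})\oplus K(\mathsf{Mod}_{R,0})$---or more carefully a filtration whose graded pieces are these, which splits because the filtration admits sections. Pontryagin duality (as in Example \ref{ex_PropertiesInLCAOS}) identifies $K(\mathsf{LCA}_{R,\mathrm{cpt}})$ with $K$ of discrete modules, i.e.\ another copy tied to $\mathsf{Mod}_{R,0}$ via duality on zero-dimensional support, and Lemma \ref{lemma_IdentifyAdelicBlocksVersion2} identifies $K(\mathsf{LCA}_{R,ad})$ with $K$ of the restricted product $\prod'_{v}(\mathsf{Mod}_{R_v,0}:\mathsf{Mod}_{\mathcal{R}_v,0}^{R_v})$. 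Assembling: the adelic stratum contributes precisely $K\bigl(\prod'_v(\mathsf{Mod}_{R_v,0}:\mathsf{Mod}_{\mathcal{R}_v,0}^{R_v})\bigr)$, the compact and discrete strata contribute copies of $K(\mathsf{Mod}_{R,0})=K(\mathcal{N}_{\leq 0}\mathsf{Coh}_X)$ which I expect to cancel against the $K(\mathsf{Mod}_{R,0})$ appearing in the localization sequence for the cofiber, leaving exactly the asserted three-term sequence with the arrow $(-)\mapsto (-)\otimes_F\mathbb{A}_S$. One must also observe that $\mathsf{Mod}_{R_v,0}\simeq\mathcal{N}_{\leq 0}\mathsf{Coh}_{X_v}$ and $\mathsf{Mod}_{\mathcal{R}_v,0}^{R_v}\simeq\mathcal{N}_{\leq 0}\mathsf{Coh}_{\mathcal{X}_v}$ by Lemma \ref{lemma_ModHasZeroDimSupportIffFiniteDimensional} and its integral-model analogue, which is what makes the restricted product in the statement literally the restricted product of categories from $\S$\ref{l_RestrictedProductAsColimitJFormula}.

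The main obstacle I anticipate is bookkeeping the cancellations coherently---i.e.\ showing that the several copies of $K(\mathsf{Mod}_{R,0})$ (one from the compact stratum via Pontryagin duality, one from the discrete stratum, one from the localization sequence for $\mathsf{Mod}_R/\mathsf{Mod}_{R,0}$) really do assemble into a single fiber sequence with the stated three terms, rather than just abstractly having matching homotopy types. The clean way to do this is to \emph{not} split $\mathsf{M}$ into three pieces at all, but instead to run a single big commutative diagram of fiber sequences: on one axis the filtration $\mathcal{F}_1\hookrightarrow \mathrm{id}\twoheadrightarrow \overline{J}$ on $\mathsf{LCA}_R$ (giving $K(\mathcal{F}_1\text{-part})\to K(\mathsf{LCA}_R)\to K(\mathsf{LCA}_R)^w{}'$-type cofiber), and on the other the Serre localization for $\mathsf{Mod}_{R,0}$, then invoke the universal property of the cofiber to produce the comparison map and check it is an equivalence on each stratum using Lemmas \ref{lemma_IdentifyAdelicBlocksVersion2}, \ref{lemma_toolbox} and Proposition \ref{prop_EquivOfDerCats}. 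A secondary technical point is that $\prod'_v$ is a filtered colimit of product categories (Eq.\ \ref{l_RestrictedProductAsColimitJFormula}), so one needs $K$-theory to commute with this filtered colimit and with the relevant infinite products indexed by the ``integral'' places---both standard for non-connective $K$-theory of exact categories, but worth citing. Once the comparison map is in place, exactness of $(-)\otimes_F\mathbb{A}_S$ and the identification of its effect with the inclusion $D'\hookrightarrow D'\otimes_F\mathbb{A}_S$ from Example \ref{example_Adeles} pin down the first arrow of the sequence, completing the proof.
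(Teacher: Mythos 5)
Your skeleton---Diagram \ref{lvm1} plus Proposition \ref{prop_EquivOfDerCats} for the cofiber sequence $K(\mathsf{M})\to K(\mathsf{LCA}_R)\to K(\mathsf{Mod}_R/\mathsf{Mod}_{R,0})$, Lemma \ref{lemma_IdentifyAdelicBlocksVersion2} for the middle term, and the ad\`ele sequence for the first arrow---is the right one, but two of your intermediate steps would fail, and the ``bookkeeping of cancellations'' you flag as the main obstacle is exactly where the missing idea sits: the Eilenberg swindle. First, you cannot replace $\mathsf{Mod}_R/\mathsf{Mod}_{R,0}$ by $\mathsf{Mod}_{R,fg}/\mathsf{Mod}_{R,0}$; Lemma \ref{lemma_toolbox} compares derived categories with bounded homology of a prescribed type, not these quotients, and the two quotients have different $K$-theory (the latter receives $G(R)$, which is precisely the spurious ``summand $K(X)$'' you then have to fight). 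The correct move is the opposite one: keep all of $\mathsf{Mod}_R$, note $K(\mathsf{Mod}_R)=0$ by the swindle, and conclude directly that $K(\mathsf{Mod}_{R,0})\overset{\eta}{\to} K(\mathsf{M})\to K(\mathsf{LCA}_R)$ is a fiber sequence, with $\eta$ induced by ``equip with the discrete topology.'' Second, your analysis of $K(\mathsf{M})$ is off: Pontryagin duality does not tie $K(\mathsf{LCA}_{R,\operatorname{compact}})$ to $K(\mathsf{Mod}_{R,0})$---that spectrum is simply zero, again by the swindle---and there is no three-fold direct sum of $K$-theories to cancel. What is actually needed is to resolve each $D\in\mathsf{Mod}_{R,0}$ by the ad\`ele sequence $D\hookrightarrow D\otimes_F\mathbb{A}_S\twoheadrightarrow Q$ (adelic, then compact), giving $\operatorname{D}^b_\infty(\mathsf{M}')\simeq\operatorname{D}^b_\infty(\mathsf{M})$ for $\mathsf{M}'$ the full subcategory of objects of the form $K\oplus A$, and then the localization $\mathsf{LCA}_{R,\operatorname{compact}}\hookrightarrow\mathsf{M}'\twoheadrightarrow\mathsf{LCA}_{R,ad}$ with vanishing fiber yields $K(\mathsf{M})\simeq K(\mathsf{LCA}_{R,ad})$.

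Finally, identifying $\eta$ with $(-)\otimes_F\mathbb{A}_S$ is not automatic and is the one place the Additivity Theorem is genuinely needed: the ad\`ele sequence defines an exact functor $\mathsf{Mod}_{R,0}\to\mathcal{E}(\mathsf{M})$ into the category of exact sequences, so $i_{2\ast}=i_{1\ast}+i_{3\ast}$ for the three constituent functors, and $i_{3\ast}=0$ because it factors through $K(\mathsf{LCA}_{R,\operatorname{compact}})=0$; hence the discrete inclusion $i_1$ and the adelic embedding $i_2=(-)\otimes_F\mathbb{A}_S$ induce the same map on $K$-theory. With these three repairs your outline becomes the paper's proof.
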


\begin{proof}
\textit{(Step 1)} We will construct a commutative diagram%
\[%
\xymatrix{
K(\mathsf{Mod}_{R,0}) \ar[r] \ar[d]_{\eta} & K(\mathsf{Mod}_{R}) \ar
[r] \ar[d] & K({\mathsf{Mod}_{R}}/{\mathsf{Mod}_{R,0}}) \ar@{=}[d] \\
K(\mathsf{M}) \ar[r] & K(\mathsf{LCA}_{R}) \ar[r]_-{\overline{J}}
& K({\mathsf{Mod}_{R}}/{\mathsf{Mod}_{R,0}})
}%
\]
in spectra, where $\eta$ and the middle downward arrow send a module to
itself, equipped with the discrete topology. The bottom row induces a fiber
sequence in $K$-theory, attached to the bottom row in Diag. \ref{lvm1}, along
with the identification of Prop. \ref{prop_EquivOfDerCats} for the left
downward arrow \textit{loc. cit.} The top row can be obtained as the
$K$-theory localization sequence using that $\mathsf{Mod}_{R,0}$ is a Serre
subcategory of the abelian category $\mathsf{Mod}_{R}$. We leave the details
to the reader. As the right downward arrow is (trivially) an equivalence, the
left square is bi-Cartesian. Then $K(\mathsf{Mod}_{R})=0$ by the Eilenberg
swindle, giving the fiber sequence%
\[
K(\mathsf{Mod}_{R,0})\overset{\eta}{\longrightarrow}K(\mathsf{M}%
)\longrightarrow K(\mathsf{LCA}_{R})\text{.}%
\]
\textit{(Step 2)} Let $\mathsf{M}^{\prime}\subseteq\mathsf{M}$ be the full
subcategory of objects isomorphic to $K\oplus A$ alone. Suppose $D\in
\mathsf{Mod}_{R,0}$. Then $D$ is finite-dimensional as an $F$-vector space.
Hence, we may tensor the ad\`{e}le sequence (over $F$) with $D$, giving an
exact sequence $D\hookrightarrow D\otimes_{F}\mathbb{A}_{S}\twoheadrightarrow
(D\otimes_{F}\mathbb{A}_{S})/D$ in $\mathsf{LCA}_{F}$, but since $D$ still
carries the left action of $R$, this exact sequence also lives in
$\mathsf{LCA}_{R}$ (this is the same construction that we had already used in
Eq. \ref{lsivi1a}). Clearly, $D\otimes_{F}\mathbb{A}_{S}$ is adelic and the
quotient compact, so this is a resolution of $D$ by objects in $\mathsf{M}%
^{\prime}$. Modifying the proof of Prop. \cite[Prop. 4.26]{klca1} using this
idea, we obtain an equivalence of stable $\infty$-categories $\mathsf{D}%
_{\infty}^{b}\left(  \mathsf{M}^{\prime}\right)  \overset{\sim}{\rightarrow
}\mathsf{D}_{\infty}^{b}\left(  \mathsf{M}\right)  $. There is a further
localization sequence of exact categories $\mathsf{LCA}%
_{R,\operatorname*{compact}}\hookrightarrow\mathsf{M}^{\prime}%
\twoheadrightarrow$ $\mathsf{LCA}_{R,ad}$, proven similarly as in our proof of
\cite[Thm. 4.30]{klca1}. Next, $K(\mathsf{LCA}_{R,\operatorname*{compact}})=0$
by the Eilenberg swindle. This yields the fiber sequence%
\[
K(\mathsf{Mod}_{R,0})\longrightarrow K(\mathsf{LCA}_{R,ad})\longrightarrow
K(\mathsf{LCA}_{R})\text{.}%
\]
Finally, Lemma \ref{lemma_IdentifyAdelicBlocksVersion2} and the commutativity
of $K$-theory with infinite products (resp. filtering colimits) of exact
categories and yields our claim, except for the explicit description of $\eta
$.\newline\textit{(Step 3)} Finally, we claim that the map $\eta$ is also
induced by the functor $(-)\mapsto(-)\otimes_{F}\mathbb{A}_{S}$, i.e., the
functor in the claim of our result. This is based on the Additivity Theorem
for $K$-theory. Write $\mathcal{E}(-)$ for the exact category of exact
sequences, cf. \cite[Exercise 3.9]{MR2606234}. There is an exact functor%
\[
K(\mathsf{Mod}_{R,0})\rightarrow\mathcal{E}K(\mathsf{M})\text{,}\qquad
M\mapsto\left(  M\hookrightarrow M\otimes_{F}\mathbb{A}_{S}\twoheadrightarrow
(M\otimes_{F}\mathbb{A}_{S})/M\right)  \text{,}%
\]
where the three terms are equipped with the discrete, adelic resp. compact
quotient topology respectively. By Additivity, if $i_{1},i_{2},i_{3}$ denotes
the exact functors to the three individual terms in the short exact sequence,
$i_{2\ast}=i_{1\ast}+i_{3\ast}$ holds for the induced maps $K(\mathsf{Mod}%
_{R,0})\rightarrow K(\mathsf{M})$. As $i_{3}$ factors over the full
subcategory of compact modules, $K(\mathsf{Mod}_{R,0})\rightarrow
K(\mathsf{LCA}_{R,\operatorname*{compact}})\rightarrow K(\mathsf{M})$, but
$K(\mathsf{LCA}_{R,\operatorname*{compact}})=0$ (as we had already used
earlier), so we deduce $i_{3\ast}=0$. Hence $\eta_{\ast}=i_{1\ast}=i_{2\ast}$,
settling the claim we had made at the beginning of this step.\newline
\end{proof}

This being settled, we are ready to prove Theorem \ref{introThmB}.

\begin{theorem}
\label{ThmBc}Let $S$ be the set of all places and suppose $\pi\colon
X\rightarrow\operatorname*{Spec}F$ is an integral separated scheme of finite
type over the number field $F$. Then for any open $U\subseteq X$,
$K{\mathsf{LC}}_{X}^{\operatorname*{naive}}(U)$ agrees with%
\[
\operatorname{cofib}\left(  \bigoplus\limits_{x\in U_{(0)}}K\left(
\kappa(x)\right)  \longrightarrow\left.  \underset{v\in S\;}{\prod
\nolimits^{\prime}}\right.  \bigoplus\limits_{x\in U_{v,(0)}}K\left(
\kappa(x)\right)  :\bigoplus\limits_{x\in U_{v,(0)}}K(\mathcal{O}_{\kappa
(x)})\right)  \text{,}%
\]
where $U_{(0)}$ denotes the set of closed points in the open $U$,
$U_{v}:=U\times_{F}F_{v}$ and $U_{v,(0)}$ the closed points of $U_{v}$, and
$\kappa(x)$ the respective residue fields. For $x\in U_{v,(0)}$, the residue
field $\kappa(x)$ is a finite field extension of $F_{v}$, and $\mathcal{O}%
_{\kappa(x)}$ denotes its ring of integers (or $F_{v}$ itself in the case $v$
is a real or complex place, as in \S \ref{sect_Setup}). If $U$ is affine and
$\mathcal{O}_{X}(U)$ satisfies Property $(\mathbf{F})$, we also have%
\[
K{\mathsf{LC}}_{X}^{\operatorname*{naive}}(U)=K{\mathsf{LC}}%
_{X,\operatorname*{pre}}^{\operatorname*{naive}}(U)\text{,}%
\]
i.e., $K{\mathsf{LC}}_{X}^{\operatorname*{naive}}(U)$ agrees with the
$K$-theory of $\mathsf{LCA}_{\mathcal{O}_{X}(U)}$.
\end{theorem}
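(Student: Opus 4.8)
The plan is to combine two ingredients: the local computation of $K(\mathsf{LCA}_R)$ for rings with Property $(\mathbf{F})$ (Lemma \ref{keylocallemma}), and the gluing behaviour of the co-sheafification $K{\mathsf{LC}}_X^{\operatorname*{naive}}$. First I would reduce everything to affine opens with Property $(\mathbf{F})$. By Lemma \ref{lem3}, $X$ admits a finite open cover $X=\bigcup U_i$ by affine opens with $\mathcal{O}_X(U_i)$ spanned by units, hence (Lemma \ref{lemma_LiftDecompToAffineInTorus}) with Property $(\mathbf{F})$; moreover, by Lemma \ref{lem_propertiespreservedforringsspannedbyunits}(1) any distinguished affine subopen of such a $U_i$ again has a coordinate ring spanned by units, so Property $(\mathbf{F})$ passes to an entire basis of distinguished opens refining the cover. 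The co-sheaf $K{\mathsf{LC}}_X^{\operatorname*{naive}}$ is computed as Zariski co-descent (i.e.\ a colimit over \v{C}ech-type diagrams, or equivalently a limit in $\mathsf{Sp}^{\mathrm{op}}$) from its values on this basis. Therefore it suffices to: (a) identify $K{\mathsf{LC}}_{X,\operatorname*{pre}}^{\operatorname*{naive}}(U)=K(\mathsf{LCA}_{\mathcal{O}_X(U)})$ with the claimed cofiber formula for all basis opens $U$ with Property $(\mathbf{F})$; (b) check that this identification is natural in the corestriction maps; and (c) verify the resulting assignment already satisfies co-descent, so that co-sheafification changes nothing on basis opens, and then feed an arbitrary open $U$ through the same colimit.

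For step (a), I would apply Lemma \ref{keylocallemma} to $R:=\mathcal{O}_X(U)$ with an integral model $\mathcal{R}$ (e.g.\ a finitely generated flat $\mathcal{O}_F$-subalgebra of $R$ with $\mathcal{R}\otimes_{\mathcal{O}_F}F\cong R$, which exists since $R$ is a finitely generated $F$-algebra). This gives the fiber sequence
\[
K(\mathcal{N}_{\leq 0}\mathsf{Coh}_{X|_U})\longrightarrow \Bigl.\underset{v\in S}{\prod\nolimits'}\Bigr.\,K(\mathcal{N}_{\leq 0}\mathsf{Coh}_{(X|_U)_v}):K(\mathcal{N}_{\leq 0}\mathsf{Coh}_{\mathcal{X}_v})\longrightarrow K(\mathsf{LCA}_R),
\]
so $K(\mathsf{LCA}_R)$ is the cofiber of the first map. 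It remains to deviss\'e the terms. By Lemma \ref{lemma_ModHasZeroDimSupportIffFiniteDimensional}, $\mathcal{N}_{\leq 0}\mathsf{Coh}_{X|_U}$ is the category of coherent sheaves supported at closed points, i.e.\ finite length; d\'evissage (Quillen) identifies its $K$-theory with $\bigoplus_{x\in U_{(0)}}K(\kappa(x))$. For the finite places $v$, $(X|_U)_v$ is finite type over the local field $F_v$ and $\mathcal{X}_v$ over $\mathcal{O}_v$; the category $\mathsf{Mod}_{\mathcal{R}_v,0}^{R_v}$ of $\mathcal{O}_F$-flat (equivalently $\mathcal{O}_v$-flat) finitely generated $\mathcal{R}_v$-modules whose generic fiber has zero-dimensional support devisses — using that over a complete DVR a finitely generated torsion-free module with finite-dimensional generic fiber is a submodule of a finite product of $\mathcal{O}_{\kappa(x)}$'s — to $\bigoplus_{x\in (U_v)_{(0)}}K(\mathcal{O}_{\kappa(x)})$, while $\mathcal{N}_{\leq 0}\mathsf{Coh}_{(X|_U)_v}$ devisses to $\bigoplus_{x\in U_{v,(0)}}K(\kappa(x))$; at archimedean places $\mathcal{O}_v=F_v$ and both reduce to the same thing, consistent with the statement. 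Assembling and matching the transition maps in the restricted product with the inclusions $K(\mathcal{O}_{\kappa(x)})\to K(\kappa(x))$ gives the asserted cofiber formula, and the last sentence $K{\mathsf{LC}}_X^{\operatorname*{naive}}(U)=K{\mathsf{LC}}_{X,\operatorname*{pre}}^{\operatorname*{naive}}(U)$ on Property $(\mathbf{F})$ opens is then immediate.

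For steps (b) and (c): naturality in corestriction follows because each d\'evissage identification is functorial for the relevant restriction of support, and the localization of a closed point under passing to a distinguished open just removes that point from the indexing set — so on basis opens the assignment $U\mapsto[\text{cofiber}]$ is visibly already a Zariski co-sheaf (it is a colimit of co-sheaves $U\mapsto\bigoplus_{x\in U_{(0)}}(\cdots)$ and restricted products thereof, and a cofiber of co-sheaves is a co-sheaf). Hence co-sheafifying does nothing new, and for a general open $U\subseteq X$ one covers $U$ by Property-$(\mathbf{F})$ basis opens and passes to the colimit; the formula with $U_{(0)}$, $U_{v,(0)}$, $\kappa(x)$ literally glues because each closed point of $U$ lies in exactly one basis open up to the \v{C}ech contributions, which cancel. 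I expect the main obstacle to be step (a), specifically the d\'evissage of the ``integral'' category $\mathsf{Mod}_{\mathcal{R}_v,0}^{R_v}$ at finite places: one must carefully justify that imposing $\mathcal{O}_F$-flatness still leaves a category admitting a d\'evissage filtration with graded pieces the $\mathsf{Mod}_{\mathcal{O}_{\kappa(x)}}$ (filtering by $v$-adic powers and reducing to modules over the residue rings), and that the resulting $K$-theory is $\bigoplus_{x} K(\mathcal{O}_{\kappa(x)})$ rather than something involving residue fields only; controlling the restricted product over $v$ while doing this uniformly — using Remark \ref{rmk_SimilarOrders} that $\mathcal{R}_v$ is independent of the integral model for almost all $v$ — is the delicate bookkeeping part.
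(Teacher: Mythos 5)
Your overall route is the same as the paper's: compute $K(\mathsf{LCA}_{\mathcal{O}_X(U)})$ on affine opens with Property $(\mathbf{F})$ via Lemma \ref{keylocallemma} plus an identification of the three terms, observe that the resulting cofiber formula is itself a Zariski co-sheaf on the basis of Property-$(\mathbf{F})$ affines supplied by Lemmas \ref{lem3} and \ref{lem_propertiespreservedforringsspannedbyunits}, and conclude by co-descent. Steps (b) and (c) of your plan match the paper's closing paragraph almost verbatim.

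The one place where your proposal would go wrong as written is the mechanism you suggest for the integral term at finite places. You propose to handle $\mathsf{Mod}_{\mathcal{R}_v,0}^{R_v}$ by a d\'evissage filtration, ``filtering by $v$-adic powers and reducing to modules over the residue rings.'' That cannot work inside this category: its objects are required to be $\mathcal{O}_F$-flat (equivalently torsion-free over $\mathcal{O}_v$), and the graded pieces of a $v$-adic filtration are killed by a power of $v$, hence torsion, hence not objects of the category at all. D\'evissage in Quillen's form needs an abelian ambient category and a Serre subcategory; here the category is only exact (it does not contain cokernels, precisely because of the flatness condition), so there is no d\'evissage to run. The paper instead observes that each object is a lattice over the $\mathcal{O}_v$-order which is the image of $\mathcal{R}_v$ in $\operatorname{End}(\mathcal{M}_F)$, invokes Remark \ref{rmk_SimilarOrders} to see that changing the integral model (and hence the order) only affects finitely many $v$ --- which the restricted product absorbs --- and then reduces to lattices over the \emph{maximal} order $\prod_x\mathcal{O}_{\kappa(x)}$, whose finitely generated torsion-free modules are projective, giving $\bigoplus_x K(\mathcal{O}_{\kappa(x)})$ directly. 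Your parenthetical remark that a torsion-free module with finite-dimensional generic fiber embeds into a product of $\mathcal{O}_{\kappa(x)}$'s is the right starting point for this order-theoretic argument, and you correctly cite Remark \ref{rmk_SimilarOrders}; but the d\'evissage framing should be dropped for this term (it is fine for the generic-fiber terms $\mathcal{N}_{\leq 0}\mathsf{Coh}_{X|_U}$ and $\mathcal{N}_{\leq 0}\mathsf{Coh}_{(X|_U)_v}$, which are abelian, exactly as the paper does via length-one sheaves).
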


We do not know whether $K{\mathsf{LC}}_{X}^{\operatorname*{naive}}(U)$ agrees
with $K{\mathsf{LC}}_{X,\operatorname*{pre}}^{\operatorname*{naive}}(U)$ on
all affine opens.

\begin{proof}
We first assume that $X$ is affine and satisfies Property $(\mathbf{F})$. We
unravel the terms in Lemma \ref{keylocallemma}. We claim that $K(\mathcal{N}%
_{\leq0}\mathsf{Coh}_{X})\cong\bigoplus_{x\in X_{(0)}}K(\kappa(x))$ (and
analogously for $X_{v}$). This is harmless: For any Noetherian scheme $Y$
defined over $F$, all coherent sheaves are automatically $\mathcal{O}_{F}%
$-flat and $\mathcal{M}\cong\mathcal{M}_{F}$, so the filtration $\mathcal{N}%
_{\leq d}\mathsf{Coh}_{X}$ simplifies to agree with the niveau filtration.
Using this, every object $M\in\mathcal{N}_{\leq0}\mathsf{Coh}_{Y}$ can first
be written as a finite direct sum over its finite support, i.e.,%
\[
M\simeq\bigoplus_{y\in\operatorname*{supp}M\subseteq Y_{(0)}}i_{y\ast}%
i_{y}^{\ast}M\text{,}%
\]
where each $i_{y\ast}i_{y}^{\ast}M$ is supported on the single closed point
$y$. Each such $i_{y\ast}i_{y}^{\ast}M$ then has finite length. Since
$\mathcal{N}_{\leq0}\mathsf{Coh}_{Y}$ is an abelian category, devissage
applies with respect to the full subcategory of sheaves which are additionally
of length one \cite[Ch. V, Thm. 4.1]{MR3076731}. This category in turn is
equivalent to $\bigoplus_{y\in Y_{(0)}}\mathsf{Mod}_{\kappa(y),fg}$. Hence,
$K(\mathcal{N}_{\leq0}\mathsf{Coh}_{Y})\cong\bigoplus_{y\in X_{(0)}}%
K(\kappa(y))$. As the used hypotheses are satisfied for $X$ and $X_{v}$, the
claim is proven. A computation of $K(\mathcal{N}_{\leq0}\mathsf{Coh}%
_{\mathcal{X}_{v}})$ is a little more complicated. Unlike for the schemes over
$F$, $\mathcal{N}_{\leq0}\mathsf{Coh}_{\mathcal{X}_{v}}$ is merely an exact
category which does not contain all cokernels (because of the $\mathcal{O}%
_{F}$-flatness condition, which for example need not hold for the cokernel of
multiplication by $n\geq2$). For each $\mathcal{M}\in\mathcal{N}_{\leq
0}\mathsf{Coh}_{\mathcal{X}_{v}}$, the base change $\mathcal{M}_{F}$ has
zero-dimensional support and since $\mathcal{M}\subseteq\mathcal{M}_{F}$ must
have full rank, $\mathcal{M}$ is an $\mathcal{O}_{v}$-order in the
finite-dimensional $F_{v}$-algebra $\mathcal{M}_{F}$ (as in \cite{MR1972204}).
This order depends on the choice of our integral model $\mathcal{R}$ in Lemma
\ref{keylocallemma}, and as discussed in Rmk. \ref{rmk_SimilarOrders},
changing $\mathcal{R}$ will only affect finitely many factors $v\in S$ and the
adelic product washes out this difference. We may assume it is the maximal
order in $\kappa(x)$ for $x\in X_{v,(0)}$. Since $\kappa(x)$ is a finite
extension of $F_{v}$, we write $\mathcal{O}_{\kappa(x)}$ for its ring of
integers. We then get%
\begin{equation}
K(\mathsf{LCA}_{X})\cong\operatorname{cofib}\left(  \bigoplus\limits_{x\in
X_{(0)}}K\left(  \kappa(x)\right)  \longrightarrow\left.  \underset{v\in
S\;}{\prod\nolimits^{\prime}}\right.  \bigoplus\limits_{x\in X_{v,(0)}%
}K\left(  \kappa(x)\right)  :\bigoplus\limits_{x\in X_{v,(0)}}K(\mathcal{O}%
_{\kappa(x)})\right)  \label{lsim1}%
\end{equation}
for $X$ affine and satisfying $(\mathbf{F})$. However, the right side in Eq.
\ref{lsim1} satisfies Zariski co-descent on the subfamily of affines such that
$(\mathbf{F})$ holds. More precisely, under corestriction from a distinguished
affine open%
\[
V\subseteq U\qquad\text{with}\qquad R=\mathcal{O}_{X}(U)\text{\qquad
and\qquad}R\left[  \frac{1}{f}\right]  =\mathcal{O}_{X}(V)
\]
such that $R$ and $R[\frac{1}{f}]$ satisfy $(\mathbf{F})$ (for example, this
holds if $R$ is spanned by units, and then Lemma
\ref{lem_propertiespreservedforringsspannedbyunits} implies the same for
$R[\frac{1}{f}]$), the functor
\[
K\left(  \mathsf{LCA}_{R\left[  \frac{1}{f}\right]  }\right)  \longrightarrow
K\left(  \mathsf{LCA}_{R}\right)
\]
corresponds to the inclusion of direct summands over closed points in Eq.
\ref{lsim1}. Thanks to Lemma \ref{lem3} every finite Zariski open cover can be
refined to one such that each open is affine and satisfies $(\mathbf{F})$. As
the left-hand side is defined by co-descent (Definition \ref{def_lcax}), this
shows agreement in general.
\end{proof}

\begin{acknowledgement}
I thank Dustin Clausen, Markus Spitzweck and Matthias\ Wendt for some of their
ideas that they have shared very generously with me, and probably have shaped
my thinking conscious- and subconsciously. Fangzhou Jin and Markus Spitzweck
kindly answered some technical questions which I\ later felt embarrassed to
have asked. I thank P. Arndt, M. Groechenig, D. Macias Castillo, K.
R\"{u}lling for their support and interest.
\end{acknowledgement}

\bibliographystyle{amsalpha}
\bibliography{ollinewbib}

\end{document}